\setlist{noitemsep}
\newcommand*{\fullref}[1]{\hyperref[{#1}]{\nameref*{#1} (\ref*{#1})}}
\newcommand*{\locallabel}[1]{\csname label\endcsname{locallabel: sec: \thesection subsec: \thesubsection thm: \thetheorem label: #1}}
\newcommand*{\localref}[1]{\hyperref[locallabel: sec: \thesection subsec: \thesubsection thm: \thetheorem label: #1]{(\ref*{locallabel: sec: \thesection subsec: \thesubsection thm: \thetheorem label: #1})}}
\newcommand{\nightmode}{0} 
\definecolor{sauron}{RGB}{31, 35, 41}
\definecolor{gandalf}{RGB}{168, 176, 192}
\xpatchcmd{\@maketitle}{\huge}{\Large}{}{}
\newtheorem{maintheorem}{Theorem}
\crefname{maintheorem}{theorem}{theorems}
\Crefname{maintheorem}{Theorem}{Theorems}
\newtheorem{maincorollary}[maintheorem]{Corollary}
\crefname{maincorollary}{corollary}{corollaries}
\Crefname{maincorollary}{Corollary}{Corollaries}
\newenvironment{property}[1]%
{\innerproperty}
{\endinnerproperty}
\newcounter{diagram}
\NewDocumentEnvironment{diagram*}{o}
{
    $$\IfNoValueTF{#1}{\begin{tikzcd}}{\begin{tikzcd}[#1]}
}{
\end{tikzcd}$$
}
\NewDocumentEnvironment{diagram}{o}
{
\setcounter{diagram}{\value{equation}}
\stepcounter{equation}
\refstepcounter{diagram}
    $$\IfNoValueTF{#1}{\begin{tikzcd}}{\begin{tikzcd}[#1]}
}{
\end{tikzcd}
\eqno\hbox{\normalfont\normalcolor(\thediagram)}$$\@ignoretrue
}
\crefname{diagram}{diagram}{diagrams}
\Crefname{diagram}{Diagram}{Diagrams}
\tikzset{%
	Point/.style  = {circle,  inner sep = 0, fill = black, minimum size      = 4pt},
	GPoint/.style = {circle,  inner sep = 0, fill = DimGray, minimum size   = 4pt},
	BPoint/.style = {circle, inner sep  = 0, fill = SteelBlue, minimum size  = 4pt},
	OPoint/.style = {circle, inner sep  = 0, fill = DarkOrange, minimum size = 4pt},
	GLine/.style  = {draw = DimGray, semithick},
	BLine/.style  = {draw = SteelBlue, semithick},
	OLine/.style  = {draw = DarkOrange, semithick},
	GShape/.style = {fill = Gray, fill opacity = 0.3},
	BShape/.style = {fill = LightSkyBlue, fill opacity = 0.3},
	OShape/.style = {fill = Orange,   fill opacity = 0.3},
	Label/.style  = {inner sep = 0, outer sep = 0},
	mapsto/.style = {|->},
	circle through 3 points/.style n args={3}{%
			insert path={let
					\p1=($(#1)!0.5!(#2)$),
					\p2=($(#1)!0.5!(#3)$),
					\p3=($(#1)!0.5!(#2)!1!-90:(#2)$),
					\p4=($(#1)!0.5!(#3)!1!90:(#3)$),
					\p5=(intersection of \p1--\p3 and \p2--\p4)
					in},
			at={(\p5)},
			circle through={(#1)}
		}}
\newcommand{\mathdefault}[1][]{}
\DeclareMathOperator{\Crit}{Crit}
\DeclareMathOperator{\Alpha}{\Acal} 
\DeclareMathOperator{\Rad}{Rad} 
\DeclareMathOperator{\Offset}{\Ocal} 
\DeclareMathOperator{\Del}{Del} 
\DeclareMathOperator{\Cech}{\check{C}} 
\DeclareMathOperator{\DelCech}{\Del\Cech} 
\DeclareMathOperator{\DelVR}{\Del VR} 
\DeclareMathOperator{\VR}{VR} 
\DeclareMathOperator{\Aff}{Aff} 
\DeclareMathOperator{\Lin}{Lin} 
\DeclareMathOperator{\Conv}{Conv} 
\DeclareMathOperator{\On}{On}
\DeclareMathOperator{\Incl}{Incl}
\DeclareMathOperator{\Excl}{Excl}
\DeclareMathOperator{\Front}{Front}
\DeclareMathOperator{\Back}{Back}
\DeclareMathOperator{\dis}{dis} 
\DeclareMathOperator{\codim}{codim}
\def\blfootnote{\xdef\@thefnmark{}\@footnotetext}
\title{\vspace{-1.5cm}Morse theory for chromatic Delaunay triangulations}
\author{
	Abhinav Natarajan\textsuperscript{1,\textasteriskcentered}, Thomas Chaplin\textsuperscript{1}, Adam Brown\textsuperscript{1}, and Maria-Jose Jimenez\textsuperscript{2,\textdagger}\\
	\smaller\textsuperscript{1}Mathematical Institute, University of Oxford, United Kingdom.\\
	\smaller\textsuperscript{2}Universidad de Sevilla, Spain.
}
\date{September 2025}
\begin{document}
\maketitle
\blfootnote{\textit{2020 Mathematics Subject Classification.} Primary: 55N31; Secondary: 55U10, 52-08.}
\blfootnote{\textit{Keywords and phrases:} Chromatic Delaunay triangulation, chromatic Delaunay--\v{C}ech, chromatic Delaunay--Rips, filtration, topological data analysis, simplicial collapse, discrete Morse theory.}
\blfootnote{\textsuperscript{\textasteriskcentered}Corresponding author: Abhinav Natarajan (\texttt{\href{mailto:abhinav.natarajan@maths.ox.ac.uk}{abhinav.natarajan@maths.ox.ac.uk}}).}
\blfootnote{\textsuperscript{\textdagger}\texttt{\href{mailto:majiro@us.es}{majiro@us.es}}.}
\vspace{-1.2cm}
\begin{abstract}
	\small The chromatic alpha filtration is a generalization of the alpha filtration that can encode spatial relationships among classes of labelled point cloud data, and has applications in topological data analysis of multi-species data.
	In this paper we introduce the \textit{chromatic Delaunay--\v{C}ech} and \textit{chromatic Delaunay--Rips} filtrations, which are computationally favourable alternatives to the chromatic alpha filtration.
	We use generalized discrete Morse theory to show that the \v{C}ech, chromatic Delaunay--\v{C}ech, and chromatic alpha filtrations are related by simplicial collapses.
	Our result generalizes a result of Bauer and Edelsbrunner from the non-chromatic to the chromatic setting.
	We also show that the chromatic Delaunay--Rips filtration is locally stable to perturbations of the underlying point cloud.
	Our results provide theoretical justification for the use of chromatic Delaunay--\v{C}ech and chromatic Delaunay--Rips filtrations in applications, and we demonstrate their computational advantage with numerical experiments.
\end{abstract}
\renewcommand{\baselinestretch}{0.5}
{\footnotesize \tableofcontents} 
\renewcommand{\baselinestretch}{1.1}
\section{Introduction}\label{sec: introduction}
\subsection{Motivation}
The field of topological data analysis (TDA) is founded on the idea of interpreting datasets as geometric objects that can be studied with tools from algebraic topology.
When the dataset is a point cloud $X \subset \RR^d$, we build a filtration of topological spaces, typically a filtered simplicial complex, to serve as a multiscale approximation of the hypothetical space from which the data are assumed to be sampled.
We can then describe the geometry of the dataset through topological invariants of the filtration, such as \textit{persistent homology}~\cite{edelsbrunner_topological_2000,zomorodian_computing_2005}, that are stable with respect to perturbations of the data.

More recent data collection techniques in fields like cancer biology, geospatial analysis, and ecology produce \textit{chromatic} (labelled) data with interactions between the points of each colour.
In these cases one is interested not just in the spatial structure of the data as a whole, but also in the spatial relationships among subsets of the data produced by the colouring; see \Cref{fig: example relational}.

\begin{figure}[h]
	\centering
	\resizebox{\width}{!}{\input{figures/intro_examples/points.pgf}}
	\caption{
		The point cloud above does not have interesting topological features \textit{a priori}, but when considering the colouring, the blue points enclose a void that is filled in by the orange points.
		This spatial relationship can be captured by the chromatic alpha filtration.
	}
	\label{fig: example relational}
\end{figure}

One way to formalize this is to study the inclusion maps among such subsets of points.
Mathematically speaking, a \textit{chromatic set} is given by a set of points $X \subset \RR^d$ and a surjective function $\mu: X \to \{0, \ldots, s\}$ for some $s \in \NN$.
Given sets of colours $I \subset J \subset \{0, \ldots, s\}$, the spatial relationship between the set of points with colours in $I$ and $J$ respectively is captured by the spatial properties of the inclusion map $\mu^{-1}(I) \hookrightarrow \mu^{-1}(J)$.
Here too $X$ is represented by a filtered simplicial complex.
If the subsets in the partition of $X$ produced by the colouring correspond to filtered subcomplexes of the original filtration---in other words, the construction of the filtration is functorial with respect to the input dataset---then we can study the induced inclusion maps among these filtered subcomplexes.

Common choices for building a filtration in TDA are the \textit{\v{C}ech filtration} $\Cech_{\bullet}(X)$, the \textit{Vietoris--Rips filtration} $\VR_{\bullet}(X)$, and the \textit{alpha filtration} $\Alpha_{\bullet}(X)$.
The \v{C}ech and Vietoris--Rips filtrations have the necessary functoriality property---indeed, $\Cech_{\bullet}(\mu^{-1}(I))$ and $\VR_{\bullet}(\mu^{-1}(I))$ are sub-filtrations of $\Cech_{\bullet}(\mu^{-1}(J))$ and $\VR_{\bullet}(\mu^{-1}(J))$ respectively.
Unfortunately, both of these constructions are filtrations of the complete simplicial complex on the vertex set $X$, in which the number of simplices scales exponentially with the number of data points $n$.
This makes their computation impractical for large datasets.
On the other hand, if $X$ is in general position, then the alpha filtration, which was first defined in~\cite{edelsbrunner_union_1995}, is a filtration of the Delaunay triangulation $\Del(X)$ associated to $X$~\cite{delaunay_1934}.
The number of simplices in $\Del(X)$ is bounded by a polynomial in $n$ of degree $\left\lfloor \frac{d}{2} \right\rfloor$~\cite{mcmullen_maximum_1970,seidel_upper_1995}, and when the points of $X$ are sampled independently and uniformly from the unit ball, then the expected number of simplices of $\Del(X)$ is linear in $n$~\cite{dwyer_higher-dimensional_1991}.
This makes it feasible to compute the alpha filtration when the ambient space is low-dimensional.
Moreover, $\Alpha_r(X)$ is homotopy equivalent to $\Cech_r(X)$ for any $r \geq 0$, so the alpha filtration captures the same topological information as the \v{C}ech filtration.
Unfortunately, alpha filtrations and the underlying Delaunay triangulations are not functorial with respect to inclusions of point clouds; that is, $\Del(\mu^{-1}(I))$ is not necessarily a sub-complex of $\Del(\mu^{-1}(J))$ (see \Cref{fig:del_non_functoriality}).
This deficiency is addressed by the \textit{chromatic Delaunay triangulation} $\Del(X,\mu)$, introduced by Biswas et al.~\cite{biswas2022size}, and the corresponding \textit{chromatic alpha filtration} $\Alpha_{\bullet}(X,\mu)$, introduced by~\cite{Montesano2025chromatic}.
The chromatic alpha filtration has the following properties:
\begin{enumerate}
	\item $\Alpha_r(X,\mu)$ is a deformation retract of $\Cech_r(X)$ for any colouring $\mu$ of the data.
	\item Given sets of colours $I \subset J \subset \{0, \ldots, s\}$, the chromatic alpha filtration associated to the subset of points with colours in $I$ is a filtered subcomplex of the chromatic alpha filtration associated to the subset of points with colours in $J$.
	\item For a random colouring, the expected number of simplices in $\Alpha_{\infty}(X,\mu)$ is $O(n^{\ceiling{\frac{d}{2}}})$~\cite{biswas2022size}, which is far fewer than the number of simplices in the \v{C}ech filtration.
\end{enumerate}

Thus, the chromatic alpha filtration interpolates between the functoriality of the \v{C}ech filtration and the sparsity of the alpha filtration, while preserving the same spatial information.
In fact, the \v{C}ech and alpha filtrations are extremal cases of chromatic alpha filtrations (see \Cref{sec: chromatic alpha filtration} for the precise relationship).

\begin{figure}
	\centering
	\begin{subfigure}{0.48\linewidth}
	\centering
	\begin{tikzpicture}[scale = 3]
		\coordinate (a) at (0.6, 0.8);
		\coordinate (b) at (0.4, 0.15);
		\coordinate (c) at (0.75, -0.05);
		\coordinate (d) at (0.95, 0.15);

		\draw[BShape, BLine] (a) -- (b) -- (c) -- (d) -- cycle;
		\draw[BLine] (b) -- (d);

		\node[BPoint] at (a) {} node[Label, above = 3pt of a] {$a$};
		\node[BPoint] at (b) {} node[Label, left  = 3pt of b] {$b$};
		\node[BPoint] at (c) {} node[Label, below = 3pt of c] {$c$};
		\node[BPoint] at (d) {} node[Label, right = 3pt of d] {$d$};
	\end{tikzpicture}
	\caption{$\Del(X)$}
\end{subfigure}
\hfill
\begin{subfigure}{0.48\linewidth}
	\centering
	\begin{tikzpicture}[scale = 3]
		\coordinate (a) at (0.6, 0.8);
		\coordinate (b) at (0.4, 0.15);
		\coordinate (c) at (0.75, -0.05);
		\coordinate (d) at (0.95, 0.15);
		\coordinate (e) at (0.6, 0.45);

		\draw[BShape, BLine] (a) -- (b) -- (c) -- (d) -- cycle;
		\draw[BLine] (a) -- (e);
		\draw[BLine] (b) -- (e);
		\draw[BLine] (c) -- (e);
		\draw[BLine] (d) -- (e);

		\node[BPoint] at (a) {} node[Label, above       = 3pt of a] {$a$};
		\node[BPoint] at (b) {} node[Label, left        = 3pt of b] {$b$};
		\node[BPoint] at (c) {} node[Label, below       = 3pt of c] {$c$};
		\node[BPoint] at (d) {} node[Label, right       = 3pt of d] {$d$};
		\node[BPoint] at (e) {} node[Label, above right = 3pt of e] {$e$};
	\end{tikzpicture}
	\caption{$\Del(X \cup \{e\})$}
\end{subfigure}
	\caption{
		In the above, $a=(0.6, 0.8)$, $b = (0.4, 0.15)$, $c = (0.75, -0.05)$, $d = (0.95, 0.15)$, and $e = (0.6, 0.45)$.
		Denoting $X = \{a, b, c, d\}$, the inclusion $X \hookrightarrow X \cup \{e\}$ does not induce a simplicial map $\Del(X) \hookrightarrow \Del(X \cup \{e\})$ since $\Del(X)$ contains the 1-simplex $bd$ while $\Del(X \cup \{e\})$ does not.
	}
	\label{fig:del_non_functoriality}
\end{figure}

\subsection{Contributions}
The properties stated above make the chromatic alpha filtration an attractive choice for quantifying spatial relations in low-dimensional data.
However, the sparsity of the filtration is partially offset by the computational complexity of producing the filtration values themselves.
To simplify computations, we introduce the \textit{chromatic Delaunay--\v{C}ech} filtration $\DelCech_{\bullet}(X,\mu)$ and the \textit{chromatic Delaunay--Rips} filtration $\DelVR_{\bullet}(X,\mu)$.
Under a general position assumption, these are filtrations of the chromatic Delaunay triangulation where the filtration value of each simplex is given by its filtration value in the \v{C}ech and Vietoris--Rips filtration respectively (see \Cref{sec: chromatic DelCech and chromatic DelRips filtrations} for the precise definitions).
Previous work has already shown the effectiveness of the monochromatic Delaunay--Rips filtration in machine learning applications~\cite{mishra_stability_2023}.
Moreover, the chromatic Delaunay--Rips filtration is substantially faster to compute than the chromatic alpha filtration (see \Cref{sec: consequences for practical computations}).
Our goal in this paper is to provide theoretical justification for the use of our constructions.

If $\mu$ and $\nu$ are colourings of $X$, we say that $\nu$ is a \textit{refinement} of $\mu$, and we write $\nu \preceq \mu$, if the partition of $X$ induced by $\nu$ is a refinement of the partition of $X$ induced by $\mu$.
In this case, for fixed $r \geq 0$ the various filtrations mentioned above fit into the following commutative diagram, where $\DelCech_{\bullet}(X)$ in the bottom row denotes the monochromatic Delaunay--\v{C}ech filtration (cf. \Cref{dgm: functoriality of chromatic DelCech filtration} in \Cref{sec: background}).
\begin{diagram}\label{dgm: diagram in introduction}
	\Alpha_{r}(X,\nu) \arrow[r, hook]  & \DelCech_{r}(X,\nu) \arrow[r, hook] & \Cech_{r}(X)\\
	\Alpha_{r}(X,\mu) \arrow[r, hook] \arrow[u, hook] & \DelCech_{r}(X,\mu) \arrow[r, hook] \arrow[u, hook] & \Cech_{r}(X) \arrow[u, equal]\\
	\Alpha_{r}(X) \arrow[r, hook] \arrow[u, hook] & \DelCech_{r}(X) \arrow[r, hook] \arrow[u, hook] & \Cech_{r}(X) \arrow[u, equal]
\end{diagram}
Using a generalization of discrete Morse theory, Bauer and Edelsbrunner~\cite{bauer_morse_2016} showed that the complexes in the bottom row are related by simplicial collapses---in particular, the inclusions are homotopy equivalences and induce isomorphisms on homology.
Montesano et al.~\cite{Montesano2025chromatic} proved that the inclusions in the left column are homotopy equivalences.
We use generalized discrete Morse theory to prove that there are simplicial collapses in the middle column and in each row:
\begin{maintheorem}\label{thm: theorem A}
	Let $X \subset \RR^d$ be a finite set of points in general position, and let $\mu, \nu$ be colourings of $X$ such that $\nu$ refines $\mu$.
	For any $r \geq 0$ there is a simplicial collapse $\DelCech_r(X,\nu) \searrow \DelCech_r(X,\mu)$.
\end{maintheorem}
\begin{maintheorem}\label{thm: theorem B}
	Let $X \subset \RR^d$ be a finite set of points in general position and let $\mu$ be a colouring of $X$.
	Then for any $r \geq 0$ there are simplicial collapses $\Cech_r(X) \searrow \DelCech_r(X,\mu) \searrow \Alpha_r(X,\mu)$.
\end{maintheorem}
\Cref{thm: theorem B} generalizes the main result of~\cite{bauer_morse_2016} to the chromatic case, thereby answering a question posed in~\cite{Montesano2025chromatic}.

Our proof of \Cref{thm: theorem A} relies on the construction of a novel generalized discrete Morse function on the chromatic Delaunay triangulation that is structurally compatible with the \v{C}ech filtration.
The details of this construction are given in \Cref{sec: collapse of chromatic triangulation}.
On the other hand, our proof of \Cref{thm: theorem B} is an adaptation of the proof in~\cite{bauer_morse_2016} to the chromatic setting.
To enable this adaptation, in \Cref{sec: KKT for stacks and pairing lemmas} we use tools from convex optimization to study the discrete Morse-theoretic structure of a family of filtrations which includes the \v{C}ech, chromatic alpha, and chromatic Delaunay--\v{C}ech filtrations.
We prove that for each of these filtrations, its filtration function (which assigns to each simplex its filtration value) is a generalized discrete Morse function, and we relate the combinatorial structure of the corresponding Morse gradient to the geometry of the underlying point cloud.
The Morse theoretic structure of filtration functions has been described before for the \v{C}ech filtration~\cite{attali_vietorisrips_2013} and alpha filtration~\cite{edelsbrunner_surface_2003,van_manen_power_2014}, and these results were unified in~\cite{bauer_morse_2016}.
More recently, it was shown in~\cite{Montesano2025chromatic} that the chromatic alpha filtration function is a generalized discrete Morse function.
Our work simultaneously extends each of these results to a larger class of filtrations.
Finally, in \Cref{sec: proof of main theorems} we prove \Cref{thm: theorem A,thm: theorem B} using the results from \Cref{sec: collapse of chromatic triangulation,sec: KKT for stacks and pairing lemmas}.

Simplicial expansions and collapses are examples of \textit{simple homotopy equivalence}, which is a combinatorial strengthening of the notion of homotopy equivalence for CW-complexes, and which is algebraically characterized by the vanishing of Whitehead torsion~\cite{cohen_course_1973}.
In light of this, \Cref{thm: theorem A,thm: theorem B} immediately imply the following result.
\begin{maincorollary}
	For any $r \geq 0$, each inclusion in \Cref{dgm: diagram in introduction} is a simple homotopy equivalence, and therefore each simplicial complex in that diagram has the same simple homotopy type.
\end{maincorollary}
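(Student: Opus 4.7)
The plan is to assemble the corollary from Theorems A and B together with the two standard facts that (i) a simplicial collapse induces a simple homotopy equivalence, and (ii) simple homotopy equivalences satisfy the 2-out-of-3 property, so that in any commutative square of inclusions in which three of the sides are simple homotopy equivalences the fourth is one too. With these facts in hand, the task reduces to identifying which arrows in \Cref{dgm: diagram in introduction} are handled directly by Theorems A and B, and then filling in the remainder by a diagram chase.

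First I would dispose of the easy entries. The right column consists of identity maps on $\Cech_r(X)$ and is trivially a simple homotopy equivalence. The three horizontal rows are handled immediately by \Cref{thm: theorem B}: applying it with the trivial one-colour labelling gives the bottom row, with the labelling $\mu$ gives the middle row, and with the labelling $\nu$ gives the top row. This yields that all six horizontal inclusions are composites of simplicial collapses and therefore simple homotopy equivalences. For the middle column, \Cref{thm: theorem A} applied with the trivial labelling refined by $\mu$ gives a collapse $\DelCech^{\mu}_r(X)\searrow\DelCech_r(X)$, and applied with $\mu$ refined by $\nu$ gives a collapse $\DelCech^{\nu}_r(X)\searrow\DelCech^{\mu}_r(X)$; hence both middle-column inclusions are simple homotopy equivalences.

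The only remaining entries are the two inclusions in the left column, $\Alpha_r(X)\hookrightarrow\Alpha_r^{\mu}(X)$ and $\Alpha_r^{\mu}(X)\hookrightarrow\Alpha_r^{\nu}(X)$. For the upper of these, I would look at the commutative square whose vertices are $\Alpha_r^{\mu}(X),\DelCech_r^{\mu}(X),\Alpha_r^{\nu}(X),\DelCech_r^{\nu}(X)$. The two horizontal inclusions are simple homotopy equivalences by \Cref{thm: theorem B}, and the right-hand vertical inclusion is a simple homotopy equivalence by \Cref{thm: theorem A}, so the composite around three sides is a simple homotopy equivalence. Commutativity identifies this composite with the remaining side post-composed by a simple homotopy equivalence, and the 2-out-of-3 property then forces the remaining side $\Alpha_r^{\mu}(X)\hookrightarrow\Alpha_r^{\nu}(X)$ to be a simple homotopy equivalence. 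The identical argument applied to the square with vertices $\Alpha_r(X),\DelCech_r(X),\Alpha_r^{\mu}(X),\DelCech_r^{\mu}(X)$ handles the lower left-column arrow.

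There is no real obstacle here; the only delicate point is to verify that the 2-out-of-3 principle is applicable to the relevant inclusions of finite simplicial complexes, which follows from the algebraic characterization of simple homotopy equivalence by the vanishing of Whitehead torsion~\cite{cohen_course_1973} together with the additivity of torsion under composition. Once all inclusions in \Cref{dgm: diagram in introduction} are shown to be simple homotopy equivalences, the fact that all twelve complexes share a common simple homotopy type is immediate by transitivity.
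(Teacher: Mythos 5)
Your proposal is correct and is essentially the argument the paper leaves implicit: the paper states that Theorems A and B ``immediately imply'' the corollary in light of the algebraic characterization of simple homotopy equivalence by Whitehead torsion, which is exactly the combination of direct collapses (from Theorems A and B, handling the rows and middle column) plus the two-out-of-three property of vanishing torsion (handling the left column, for which the paper explicitly notes no direct collapse is established). Your write-up merely spells out the diagram chase that the paper compresses into one sentence, and correctly flags the only subtlety, namely that two-out-of-three is justified by the composition formula for Whitehead torsion.
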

Our result does not imply that there are simplicial collapses along the first column in \Cref{dgm: diagram in introduction}; indeed, our methods cannot be extended for this case, as we explain in \Cref{rem: counterexample to chromatic alpha collapse}.

From a computational perspective, the main consequences of our results are:
\begin{enumerate}
	\item The inclusion of the chromatic Delaunay--\v{C}ech filtration into the \v{C}ech filtration is a homotopy equivalence at each filtration level, which makes the chromatic Delaunay--\v{C}ech filtration a drop-in alternative to the \v{C}ech and chromatic alpha filtrations for the purpose of quantifying spatial relationships through persistent homology.
	\item The chromatic Delaunay--Rips filtration, which is related to the chromatic Delaunay--\v{C}ech filtration via nested inclusions (see \Cref{sec: consequences for practical computations}), serves to approximate the \v{C}ech filtration for persistent homology computations.
\end{enumerate}

To further justify the use of the chromatic Delaunay--Rips filtration in practical computations, in \Cref{sec: consequences for practical computations} we show that the chromatic Delaunay--Rips filtration associated to a point cloud is locally stable to perturbations of the underlying point cloud.
In fact, we show that this local stability extends to maps between chromatic Delaunay--Rips filtrations that are induced by inclusions of the underlying point clouds (\Cref{cor:local_stability_of_delvr}).
This is a generalization of the stability result in~\cite{mishra_stability_2023}, in which the authors prove that the monochromatic Delaunay--Rips filtration is locally stable to perturbations of the underlying point cloud.
In \Cref{sec: consequences for practical computations} we also document the empirical runtime performance of the chromatic Delaunay--\v{C}ech and chromatic Delaunay--Rips filtrations, vis-\`a-vis the chromatic alpha filtration.
The numerical evidence suggests that the chromatic Delaunay--Rips filtration is significantly faster to compute than the chromatic alpha filtration, and is therefore a viable alternative for analysing spatial relationships in low-dimensional point clouds.

\subsection{Related Work}

The \v{C}ech filtration originated in the work of Alexandroff and \v{C}ech in algebraic topology, and gained importance in TDA due to the work of E. Carlsson, G. Carlsson, and de Silva~\cite{carlsson_algebraic_2006,silva_topological_2004}.
Its foundational role in TDA is justified by the fact that the \v{C}ech complex $\Cech_r(X)$ has the same homotopy type as the union of closed balls of radius $r$ centred on the points in $X$ (see \Cref{sec: offset and Cech filtrations}).
Vietoris--Rips complexes originated in the work of Vietoris~\cite{vietoris_uber_1927}, and were popularized in the work of Rips and Gromov~\cite{gromov_hyperbolic_1987}.
While the Vietoris--Rips filtration does not have the same geometric interpretation as the \v{C}ech filtration, it still contains meaningful information about the spatial structure of $X$~\cite{hausmann_vietoris-rips_1996,latschev_vietoris-rips_2001,chambers_vietorisrips_2010,attali_vietorisrips_2013}.
Moreover, it is much faster to compute the filtration values of the Vietoris--Rips filtration, and this advantage carries over to the chromatic Delaunay--Rips filtration.
The chromatic alpha filtration was first defined for the special case of a bi-colouring $\mu:X \to \{0, 1\}$ by Reani and Bobrowski~\cite{reani_coupled_2023}, under the name \textit{coupled alpha filtration}.

The chromatic alpha filtration is also related to the Delaunay bi-filtration~\cite{alonso_delaunay_2023}, which is defined for a function on a point cloud, in the case when the class labels are hierarchically nested.
We leave the consideration of this special case for future work.
Aside from chromatic alpha filtrations, other approaches for studying spatial relations have been previously considered.
Bauer and Edelsbrunner~\cite{bauer_morse_2016} generalize alpha filtrations to ``selective Delaunay filtrations''.
Given point clouds $X \subset Y$, they construct filtrations $\Del_{\bullet}(Y, Y) \subset \Del_{\bullet}(X, Y)$ that are pointwise homotopy equivalent to $\Cech_{\bullet}(Y)$ and $\Cech_{\bullet}(X)$ respectively (see \Cref{sec: convex optimization for stacks} for more details).
We will make use of their construction in our proofs, but we remark that the chromatic filtrations in this paper are functorial over a larger class of inclusions (for example, see \Cref{rem: functoriality of chromatic alpha filtration} for a precise description of the functoriality of the chromatic alpha filtration).
Stolz et al.~\cite{stolz2023relational} analyse spatial relations using Dowker filtrations~\cite{dowker_1952_homology,chowdhury2018functorial} and multi-species witness filtrations.
The multi-species witness filtration is based upon the witness complex~\cite{desilva2004topological}, which can often be seen as a Delaunay triangulation with respect to an intrinsic metric on the data points~\cite{de_silva_weak_2008}.

Discrete Morse theory, the analogue of Morse theory for finite CW-complexes, was originally formulated by Forman~\cite{forman_morse_1998} in terms of real-valued functions on the set of cells of the complex.
Chari~\cite{chari_discrete_2000} showed that the theory could equivalently be described in terms of pairings in the face poset of the complex.
In our work we use a generalization of discrete Morse theory, proposed by~\cite{freij_equivariant_2009}, that considers arbitrary intervals in the face poset instead of pairings.
Similar ideas have been considered earlier in the form of the ``Cluster Lemma''~\cite{hersh2005optimizing,jonsson_topology_2003}.
Separately, the idea of collapsing simplicial complexes along intervals in the face poset has been considered before~\cite{wegner_d-collapsing_1975,kozlov_collapsibility_2000}.

\section{Preliminaries}\label{sec: background}

In this section we will briefly lay out the relevant background material for the rest of the paper.
In \Crefrange{sec: offset and Cech filtrations}{sec: chromatic DelCech and chromatic DelRips filtrations} we recall the definition and properties of the offset, \v{C}ech, alpha, and chromatic alpha filtrations, and the Delaunay and chromatic Delaunay triangulations.
We also define the chromatic Delaunay--\v{C}ech and chromatic Delaunay--Rips filtrations.
In \Cref{sec: geometric simplicial complexes} we will define some general notions relating to geometric realizations of simplicial complexes.
In \Cref{sec: discrete Morse theory} we recall results from a generalization of discrete Morse theory allowing for intervals larger than pairs.
Finally, in \Cref{sec: convex optimisation} will state the Karush-Kuhn-Tucker conditions from the theory of convex optimization, which we will use to study the filtration function of the various filtrations we work with.

\subsection{Notation}\label{sec: notation}

$X$ will always denote a finite point cloud in $\RR^d$.
All point clouds and simplicial complexes are assumed to be finite unless otherwise stated.
If $s$ is a positive integer then $[s]$ will denote the set $\{0, \ldots, s-1\}$.
For a point cloud $X \subset \RR^d$ we define a \textit{colouring} of $X$ to be a surjective function $\mu : X \to \{0, \ldots, s\}$, and a \textit{chromatic set} is a point cloud with a colouring.
There is an induced partition of $X$ into disjoint non-empty subsets $X_m = \mu^{-1}(m)$ indexed by the colours, and by abuse of notation we will write $\mu = (X_0, \ldots, X_s)$.
We can equivalently define a colouring of $X$ to be an indexed partition of $X$ into disjoint non-empty subsets.
Now let $X \subset Y \subset \RR^d$, $s \leq s'$, and let $\mu = (X_0, \ldots, X_s)$ and $\nu = (Y_0, \ldots, Y_{s'})$ be colourings of $X$ and $Y$ respectively.
We say that $\nu$ is a \textit{refinement} of $\mu$, and we write $\nu \preceq \mu$, if for each $i \in \{0, \ldots, s\}$ there exists $J \subset \{0, \ldots, s'\}$ such that $X_i = \bigcup_{j \in J} Y_j$.

If $K$ is an (abstract or geometric) simplicial complex and $\sigma$ is an $n$-simplex in $K$ we write $\sigma^{(n)} \in K$.
If $\tau$ is a face (resp. strict face) of $\sigma$ we write $\tau \leq \sigma$ (resp. $\tau < \sigma$).
A \textit{filtration} $\{K_t\}_{t \in [0, \infty]}$ is a family of topological spaces or simplicial complexes, such that $K_s$ is a subspace (resp. subcomplex) of $K_t$ whenever $0 \leq s \leq t \leq \infty$.
The \textit{underlying space} (resp. \textit{underlying simplicial complex}) of the filtration is $K_{\infty}$, and we say that \textit{$K_{\bullet}$ is a filtration of $K_{\infty}$}.
When $K_{\bullet}$ is a filtered simplicial complex, the \textit{filtration function} of $K_{\bullet}$ is the function $K_{\infty} \to \RR$ defined by $\sigma \mapsto \inf \{t \in \RR \mid \sigma \in K_t \}$, and the \textit{filtration value} of a simplex $\sigma \in K_{\infty}$ is the value of this function at $\sigma$.
If $L_{\bullet}$ is a filtration with $L_t \subset K_t$ for each $t \in [0, \infty]$, we say that $L_{\bullet}$ is a \textit{subfiltration} of $K_{\bullet}$.

Let $\cat{Top}$ denote the category of topological spaces and continuous maps, and let $\cat{SimpComp}$ be the category of simplicial complexes and simplicial maps.
Then a filtration is a functor $[0, \infty] \to \cat{Top}$ or $[0, \infty] \to \cat{SimpComp}$.
If $\Ccal$ and $\Dcal$ are categories then we use $[\Ccal, \Dcal]$ to denote the category of functors from $\Ccal$ to $\Dcal$.
If $c$ is an object of $\Ccal$, then $\Ccal \downarrow c$ will denote the \textit{slice category}; that is, the category whose objects are morphisms $d \to c$ in $\Ccal$ and whose morphisms are commuting triangles:
\begin{diagram*}
	d \ar[rr] \ar[rd] & & e \ar[ld]\\ & c &
\end{diagram*}

\subsection{Offset and \v{C}ech Filtrations}\label{sec: offset and Cech filtrations}
For $x \in X$ and $r \geq 0$ let $D_r(x) = \{y \in \RR^d \mid |x - y| \leq r\}$ denote the closed $d$-dimensional ball of radius $r$ centred on $x$.
The \textit{offset filtration} $\Ocal_{\bullet}(X)$ is a filtration of $\RR^d$ defined by $\Ocal_r(X) = \cup_{x \in X} D_r(x)$.
The \textit{\v{C}ech filtration} $\Cech_{\bullet}(X)$ is a filtration of the complete simplicial complex on the vertex set $X$, and $\Cech_r(X)$ is defined as the nerve of the cover $\{D_r(x)\}_{x \in X}$ of $\Offset_r(X)$:
\begin{align}
	\Cech_r(X) & \defeq \left\{\sigma \subset X \mid \bigcap_{x \in \sigma} D_r(x) \neq \emptyset\right\}
	\label{eq: definition of Cech filtration}                                                              \\
	           & =  \{\sigma \subset X \mid \text{$\sigma$ is contained in a closed ball of radius $r$}\}.
	\label{eq: alternative definition of Cech filtration}
\end{align}

Let $\cat{Point}_d$ be the category of finite point sets in $\RR^d$ and their inclusions.
If $X \subset Y \subset \RR^d$ and $0 \leq s \leq t \leq \infty$ then $\Offset_s(X) \subset \Offset_t(Y)$, so the offset filtration defines a functor $\cat{Point}_d \times [0, \infty] \to \cat{Top}$.
Similarly, an inclusion of point sets induces an inclusion of the respective \v{C}ech filtrations, so the \v{C}ech filtration defines a functor from $\cat{Point}_d \times [0, \infty]$ to the category of simplicial complexes $\cat{SimpComp}$.
We let $|\cdot| : \cat{SimpComp} \to \cat{Top}$ denote the geometric realization functor.
Then there is a functor $\cat{Point}_d \times [0, \infty] \to \cat{Top}$ defined by $(X, t) \mapsto |\Cech_t(X)|$.

We say that two functors $F, G : [0, \infty] \to \cat{Top}$ have the same \textit{persistent homotopy type} if there is a natural transformation $F \Rightarrow G$ whose components are homotopy equivalences.
By a functorial version of the Nerve lemma (Theorem B in~\cite{bauer_unified_2023}), there is a natural transformation from the geometric realization of the \v{C}ech filtration to the offset filtration that is a pointwise homotopy-equivalence.
More explicitly there are homotopy equivalences $|\Cech_t(X)| \to \Offset_t(X)$ for each $(X, t) \in \cat{Point}_d \times [0, \infty]$, and if $0 \leq s \leq t \leq \infty$ and $X \subset Y$ then these maps fit into the following commutative diagram.
\begin{diagram}[column sep=0.5ex, row sep=2ex]\label{dgm: functoriality of the Cech filtration}
	& {|\Cech_s(Y)|} && {|\Cech_t(Y)|} \\
	{|\Cech_s(X)|} && {|\Cech_t(X)|} \\
	& {\Offset_s(Y)} && {\Offset_t(Y)} \\
	{\Offset_s(X)} && {\Offset_t(X)}
	\arrow[hook, from=1-2, to=1-4]
	\arrow[hook, from=2-1, to=1-2]
	\arrow[hook, from=2-3, to=1-4]
	\arrow[hook, from=4-1, to=3-2]
	\arrow[hook, from=4-1, to=4-3]
	\arrow[hook, from=3-2, to=3-4]
	\arrow[hook, from=4-3, to=3-4]
	\arrow["\simeq", swap, from=2-1, to=4-1]
	\arrow["\simeq"{pos=0.3}, from=1-2, to=3-2]
	\arrow[hook, from=2-1, to=2-3, crossing over]
	\arrow["\simeq"{pos=0.3}, from=2-3, to=4-3, crossing over]
	\arrow["\simeq", from=1-4, to=3-4]
\end{diagram}
In particular the \v{C}ech filtration and the offset filtration have the same persistent homotopy type (and hence persistent homology) for any fixed $X \in \cat{Point}_d$, and any homotopy-invariant of the inclusion $\Offset_{\bullet}(X) \hookrightarrow \Offset_{\bullet}(Y)$ (such as the induced map on persistent homology) is isomorphic to the corresponding invariant of the inclusion $|\Cech_{\bullet}(X)| \hookrightarrow |\Cech_{\bullet}(Y)|$.
These facts justify the foundational role of the \v{C}ech filtration in TDA.

\subsection{Alpha Filtration}\label{sec: alpha filtration}

For $x \in X$ the \textit{Voronoi domain} of $x$, and the \textit{Voronoi ball} of $x$ with radius $r \geq 0$ are defined as
\begin{align}
	\dom(x, X) & \defeq \{ y \in \RR^d \mid \norm{y-x} \leq \norm{y - x'} \text{ for all } x' \in X\},
	\label{eq: definition of Voronoi domains}                                                          \\
	D'_r(x, X) & \defeq D_r(x) \cap \dom(x, X).
	\label{eq: definition of Voronoi balls}
\end{align}
The \textit{alpha filtration} $\Alpha_{\bullet}(X)$ is the nerve of the collection $\{D_{\bullet}'(x, X)\}_{x \in X}$:
\begin{equation}
	\Alpha_r(X) \defeq \left\{\sigma \subset X \mid \bigcap_{x \in \sigma} D'_r(x, X) \neq \emptyset\right\}.
	\label{eq: definition of alpha filtration}
\end{equation}
By construction the underlying simplicial complex of the alpha filtration has vertex set $X$.

We say that a sphere in $\RR^d$ is \textit{empty} of $E \subset X$ if it does not enclose any points of $E$.
Then the alpha filtration can be equivalently defined by
\begin{equation}
	\Alpha_r(X) = \{ \sigma \subset X \mid \text{$\sigma$ has a circumsphere of radius $r$ empty of $X$}\}.
	\label{eq: alternative definition of alpha filtration}
\end{equation}
For fixed $X$ the alpha filtration defines a functor $[0, \infty] \to \cat{SimpComp}$.
It is straightforward to check that the union of Voronoi balls equals the union of closed balls: $\cup_{x \in X} D'_r(x, X) = \cup_{x \in X} D_r(x)$.
Then the functorial nerve lemma (Theorem B in~\cite{bauer_unified_2023}) implies that the alpha filtration has the same persistent homotopy type as the offset filtration.

The Voronoi domains comprise a polyhedral decomposition of $\RR^d$ called the \textit{Voronoi tessellation} associated to $X$.
The geometric dual to the Voronoi tessellation is called the \textit{Delaunay mosaic} of $X$ and denoted by $\Del(X)$, and is a polyhedral decomposition of the convex hull of $X$.
If the Delaunay mosaic is simplicial then we call it the \textit{Delaunay triangulation} of $X$, and in this case it is the unique geometric realization of $\Alpha_{\infty}(X)$ whose vertices coincide with $X$.
Using \Cref{eq: alternative definition of alpha filtration}, the Delaunay mosaic is guaranteed to be simplicial if $X$ satisfies the following general position condition.
\begin{property}{GP1}\label{prop: GP1}
	We say $X \subset \RR^d$ satisfies \Cref{prop: GP1} if for all $0 \leq k < d$, no $k+3$ points of $X$ lie on a common $k$-sphere.
\end{property}
The condition \Cref{prop: GP1} is mild; for example, it is satisfied almost surely when $X$ is the realization of a stationary Poisson point process (see Lemma 2.1 in~\cite{edelsbrunner_expected_2017}).

\subsection{Chromatic Alpha Filtration}\label{sec: chromatic alpha filtration}

Now we describe chromatic alpha filtrations, which generalize alpha filtrations to chromatic sets.
We follow the terminology of~\cite{Montesano2025chromatic} in this section.

Let $\mu = (X_0, \ldots, X_s)$ be a colouring of $X\subset \RR^d$.
For each $m \in \{0, \ldots, s\}$ we have a collection of open sets $\{D'_r(x, X_m)\}_{x \in X_m}$, that is, the collection of Voronoi balls of radius $r$ with respect to the set of $m$-coloured points.
The chromatic alpha filtration is defined as the nerve of the union of this collection $\cup_{m=0}^s \{D'_r(x, X_m)\}_{x \in X_m}$ (see \Cref{fig: chromatic voronoi tesselation blue,fig: chromatic voronoi tesselation orange,fig: chromatic voronoi tesselation overlay}):
\begin{equation}
	\Alpha_r(X,\mu) \defeq \left\{\sigma \subset X \mid \bigcap_{x \in \sigma}D'_r(x, X_{\mu(x)}) \neq \emptyset\right\}.
	\label{eq: definition of chromatic alpha filtration}
\end{equation}
\begin{figure}
	\centering
	\begin{subfigure}[t]{0.24\linewidth}
		\centering
		\resizebox{\linewidth}{!}{\input{figures/voronoi_overlay/monochromatic.pgf}}
		\caption{Voronoi tessellation and Voronoi balls for $X$.}
		\label{fig: monochromatic voronoi tessellation}
	\end{subfigure}
	\hfill
	\begin{subfigure}[t]{0.24\linewidth}
		\centering
		\resizebox{\linewidth}{!}{\input{figures/voronoi_overlay/bichromatic_blue_only.pgf}}
		\caption{Voronoi tessellation and Voronoi balls for $X_0$.}
		\label{fig: chromatic voronoi tesselation blue}
	\end{subfigure}
	\hfill
	\begin{subfigure}[t]{0.24\linewidth}
		\centering
		\resizebox{\linewidth}{!}{\input{figures/voronoi_overlay/bichromatic_orange_only.pgf}}
		\caption{Voronoi tessellation and balls for $X_1$.}
		\label{fig: chromatic voronoi tesselation orange}
	\end{subfigure}
	\hfill
	\begin{subfigure}[t]{0.24\linewidth}
		\centering
		\resizebox{\linewidth}{!}{\input{figures/voronoi_overlay/bichromatic.pgf}}
		\caption{Overlay of Voronoi tessellations and Voronoi balls for $X_0$ and $X_1$.}
		\label{fig: chromatic voronoi tesselation overlay}
	\end{subfigure}
	\caption{A bi-chromatic set $X = X_0 \cup X_1$ in $\RR^2$, and the associated Voronoi tessellations and collections of Voronoi balls.}
\end{figure}
By construction, the underlying simplicial complex of the chromatic alpha filtration has vertex set $X$.
The filtration is invariant to a permutation of the labels $\{0, \ldots, s\}$ and only depends on the set $\{X_0, \ldots, X_s\}$.
As in the case of the alpha filtration, the functorial nerve lemma shows that the chromatic alpha filtration has the same persistent homotopy type as the offset filtration.

The alpha filtration and \v{C}ech filtrations are extremal cases of the chromatic alpha filtration.
When $\mu$ gives each point the same colour (that is, $X$ is monochromatic) then $X_{\mu(x)} = X$ for all $x \in X$, and in this case the chromatic alpha filtration reduces to the alpha filtration.
When $\mu$ gives each point a distinct colour (the \textit{maximal colouring}) then $\dom(x, X_{\mu(x)}) = \dom(x, \{x\}) = \RR^d$, and hence the Voronoi balls are just Euclidean balls: $D_r'(x, X_{\mu(x)}) = D_r(x)$.
In this case the chromatic alpha filtration is isomorphic to the \v{C}ech filtration.

The two most important properties of the chromatic alpha filtration are its sparsity and its functoriality.
For example, if $X$ has $n$ points and the colouring is chosen uniformly randomly, then the expected number of simplices of the chromatic alpha filtration is $O\left(n^{\ceiling*{\frac{d}{2}}}\right)$; we refer the reader to~\cite{biswas2022size} for more details.
We focus here on the functoriality.

From the definition we see that the chromatic alpha filtration is functorial with respect to inclusions of points in the following sense.
Suppose $I \subset \{0, \ldots, s\}$ is some subset of colours, $X_I \defeq \mu^{-1}(I)$ is the set of $I$-coloured points, and $\mu_I$ is the restriction of $\mu$ to $X_I$.
Then for each $r \geq 0$, $\Alpha_r(X_I,\mu_I)$  is the full subcomplex of $\Alpha_r(X,\mu )$ spanned by the vertices in $X_I$.

Slightly less obvious is the fact that the chromatic alpha filtration is functorial with respect to refinement of class labels.
That is, if $\mu = (X_0, \ldots, X_s)$ and $\nu = (X_0', \ldots, X_{s'}')$ are colourings of $X$ such that $\nu \preceq \mu$, then $\Alpha_{\bullet}(X,\mu)$ is a sub-filtration of $\Alpha_{\bullet}(X,\nu)$.
To see this, observe that for any $x \in X$, we have $X_{\mu(x)} \supset X'_{\nu(x)}$ and hence $\dom(x, X_{\mu(x)}) \subset \dom(x, X'_{\nu(x)})$.
Then if $\sigma \in \Alpha_r(X,\mu)$ we have by definition
\[
	\emptyset \neq \bigcap_{x \in \sigma} D'_r(x, X_{\mu(x)}) \subset \bigcap_{x \in \sigma} D'_r(x, X'_{\nu(x)})
\]
and hence $\sigma \in \Alpha_r(X,\nu)$.
An important consequence of this fact is that $\Alpha_r(X,\mu)$ is a strong deformation retract of $\Alpha_r(X,\nu)$ for every $r \geq 0$.
This is a consequence of Corollary 0.20 in~\cite{hatcher_topology_2002} applied to the inclusion $\Alpha_r(X,\mu) \hookrightarrow \Alpha_r(X,\nu)$, which is a homotopy equivalence by the nerve lemma.

\begin{remark}\label{rem: functoriality of chromatic alpha filtration}
	We can summarize the functoriality of the chromatic alpha filtration in terms of a category of chromatic sets.
	Let $\cat{ChrSet}_d$ be the category defined as follows:
	\begin{itemize}
		\item Objects are pairs $(X, \mu)$ where $X \subset \RR^d$ is a finite point set and $\mu$ is a colouring of $X$.
		\item There is a morphism $(X, \mu) \to (Y, \nu)$ if $X \subset Y$ and $\nu \preceq \mu$.
	\end{itemize}
	Then the chromatic alpha filtration is a functor
	\begin{equation}
		\begin{array}{rcl}
			\Alpha: \cat{ChrSet}_d \times [0, \infty] & \to     & \cat{SimpComp}, \\
			(X, \mu, r)                               & \mapsto & \Alpha_r(X,\mu)
		\end{array}
	\end{equation}
	that takes morphisms in the domain to inclusions of subcomplexes.
	More precisely, the inclusion $\Alpha_{s}(X,\mu) \hookrightarrow \Alpha_t(Y,\nu)$ is the simplicial map induced by the vertex map that is the inclusion of $X$ into $Y$.
	Let $\Fcal$ be the forgetful functor $\cat{ChrSet}_d \to \cat{Point}_d$ that forgets the colouring.
	Then the persistent homotopy equivalence between the chromatic alpha filtration and the offset filtration can be restated as follows: there is a natural transformation $\Alpha \Rightarrow \Offset\circ(\Fcal \times \id_{[0,\infty]})$ that is a pointwise homotopy equivalence, yielding commutative diagrams of the form:
	\begin{diagram}[column sep=0.5ex, row sep=2ex]\label{dgm: functoriality of chromatic alpha filtration}
		& {|\Alpha_s(Y,\nu)|} && {|\Alpha_t(Y,\nu)|} \\
		{|\Alpha_s(X,\mu)|} && {|\Alpha_t(X,\mu)|} \\
		& {\Offset_s(Y)} && {\Offset_t(Y)} \\
		{\Offset_s(X)} && {\Offset_t(X)}
		\arrow[hook, from=1-2, to=1-4]
		\arrow[hook, from=2-1, to=2-3]
		\arrow[hook, from=2-1, to=1-2]
		\arrow[hook, from=2-3, to=1-4]
		\arrow[hook, from=4-1, to=3-2]
		\arrow[hook, from=4-1, to=4-3]
		\arrow[hook, from=3-2, to=3-4]
		\arrow[hook, from=4-3, to=3-4]
		\arrow["\simeq", swap, from=2-1, to=4-1]
		\arrow["\simeq"{pos=0.3}, from=1-2, to=3-2]
		\arrow["\simeq"{pos=0.3}, from=2-3, to=4-3]
		\arrow["\simeq", from=1-4, to=3-4]
	\end{diagram}
\end{remark}
Similar to the alpha filtration, there is an equivalent characterization of the chromatic alpha filtration in terms of spheres.
Suppose $\sigma \in \Alpha_r(X,\mu)$.
By definition there exists a point $z \in \bigcap_{x \in \sigma} D'_r(x, X_{\mu(x)})$.
In particular for each $m \in \mu(\sigma)$ we have $\displaystyle z \in \bigcap_{x \in \sigma \cap X_m} D'_r(x, X_m)$.
This implies that for each $m \in \mu(\sigma)$, there is a sphere $S_m$ of radius at most $r$ and centred at $z$ such that $S_m$ is empty of points in $X_m$ and passes through all the points of $\sigma$ of colour $m$.
This motivates the following definitions.
For any subset of colours $\gamma \subset \{0, \ldots, s\}$ we define a $\gamma$-\textit{stack} to be a collection of $|\gamma|$ concentric $(d-1)$-spheres $S = (S_m)_{m \in \gamma}$.
The radius of the stack $S$ is the radius of the largest sphere in $S$, denoted $\Rad(S)$, and the \textit{centre} of the stack $S$ is the common centre of all the spheres in $S$.
We say that a $\gamma$-stack $S = (S_m)_{m \in \gamma}$ \textit{passes through $\sigma$} if each $S_m$ is a circumsphere of the points of $\sigma$ of colour $m$ and each point of $\sigma$ lies on a sphere in $S$.
We say that $S$ is an \textit{empty stack} if each $S_m$ is empty of points of $X$ of colour $m$ (see \Cref{fig: empty stack}).
Hence, we have
\begin{equation}
	\Alpha_r(X,\mu) = \{\sigma \subset X \mid \text{there exists an empty stack of radius $\leq r$ passing through $\sigma$}\}.
	\label{eq: alternative definition of chromatic alpha filtration}
\end{equation}
If $\sigma \in \Alpha_{\infty}(X,\mu)$ then we let $S(\sigma,\mu)$ denote the empty stack of minimum radius that passes through $\sigma$.
Notice that if $X$ is monochromatic then $S(\sigma,\mu)$ is the empty circumsphere of minimum radius for $\sigma$, and when $\mu$ is the maximal colouring then the largest sphere of $S(\sigma,\mu)$ is simply the minimum bounding ball of $\sigma$.

\subsection{Chromatic Delaunay Triangulation and General Position Conditions}\label{sec: chromatic Delaunay triangulation}

We now describe a particular geometric realization of the underlying simplicial complex of the chromatic alpha filtration.
As before, let $\mu = (X_0, \ldots, X_s)$ be a colouring of $X\subset \RR^d$.
Let $\{e_0, \ldots, e_s\} \subset \RR^s$ be the vertices of the $s$-simplex in $\RR^s$ formed by the zero vector and the standard basis vectors.
We define the \textit{chromatic lift} of $X$ with respect to $\mu$ as the subset of $\RR^{d+s}$ given by
\begin{equation}
	X^{\mu} = \bigcup_{m=0}^s X_m \times \{e_m\},
	\label{eq: definition of chromatic lift}
\end{equation}
i.e., points of distinct colours are lifted to distinct parallel $d$-dimensional affine subspaces of $\RR^{d+s}$.
For $x \in X$ we let $x^{\mu}$ denote its lift in $X^{\mu}$.

Lemma 3.6 in~\cite{Montesano2025chromatic} shows that there is a bijective correspondence between $[s+1]$-stacks in $\RR^d$ and $(d+s-1)$-spheres in $\RR^{d+s}$ that intersect each of the $d$-planes $\RR^d \times e_m$ for $m \in \{0, \ldots, s\}$.
In one direction, a $(d+s-1)$-sphere $S$ in $\RR^{d+s}$ corresponds to the stack comprised of the $(d-1)$-spheres obtained by intersecting $S$ with each of the $d$-planes $\RR^d \times \{e_m\}$ for each $m \in \{0, \ldots, s\}$ (see \Cref{fig: correspondence between stacks and spheres}).
\begin{figure}
	\centering
	\begin{subfigure}[t]{0.33\textwidth}
		\centering
		\begin{tikzpicture}
	\coordinate (a) at (-1,0);
	\coordinate (c) at (1,0);
	\coordinate (b) at (0,2);
	\coordinate (d) at (0,-2);
	\coordinate (centre) at (0, 0);
	\coordinate (unlabelled_orange_1) at (1, 1.5);
	\coordinate (unlabelled_orange_2) at (-1.5, -0.7);
	\coordinate (unlabelled_blue_1) at (-2, 1.8);
	\node[OLine] at (centre) [circle through={(a)}]{};
	\node[BLine] at (centre) [circle through={(b)}]{};
	\node[OPoint] at (a) {} node[Label, left  = 3pt of a] {$a$};
	\node[OPoint] at (c) {} node[Label, left  = 3pt of c] {$c$};
	\node[BPoint] at (b) {} node[Label, below = 3pt of b] {$b$};
	\node[BPoint] at (d) {} node[Label, below = 3pt of d] {$d$};
	\node[Point]  at (centre) {} node[Label, below = 3pt of centre] {$o$};
	\node[OPoint] at (unlabelled_orange_1) {};
	\node[OPoint] at (unlabelled_orange_2) {};
	\node[BPoint] at (unlabelled_blue_1) {};
\end{tikzpicture}
		\caption{An empty stack passing through the chromatic set $\{a, b, c, d\}$.}
		\label{fig: empty stack}
	\end{subfigure}
	\hfill
	\begin{subfigure}[t]{0.60\textwidth}
		\centering
		\begin{tikzpicture}[
		scale=0.9,
		x={(0:1cm)},
		y={(45:2mm)},
		z={(90:1cm)}
	]
	\coordinate (a) at (-1,0,3);
	\coordinate (c) at (1,0,3);
	\coordinate (b) at (0,2, 0);
	\coordinate (d) at (0,-2, 0);
	\coordinate (unlabelled_orange_1) at (1, 1.5, 3);
	\coordinate (unlabelled_orange_2) at (-1.5, -0.7, 3);
	\coordinate (unlabelled_blue_1) at (-2, 1.8, 0);
	\coordinate (orect1) at (-4, -4, 3);
	\coordinate (orect2) at (4, -4, 3);
	\coordinate (orect3) at (4, 4, 3);
	\coordinate (orect4) at (-4, 4, 3);
	\coordinate (brect1) at (-4, -4, 0);
	\coordinate (brect2) at (4, -4, 0);
	\coordinate (brect3) at (4, 4, 0);
	\coordinate (brect4) at (-4, 4, 0);
	\coordinate (orectlabel) at (-4, 0, 3);
	\coordinate (brectlabel) at (-4, 0, 0);

	\fill[OShape] (orect1) -- (orect2) -- (orect3) -- (orect4) -- cycle;
	\node[left=3pt of orectlabel] {$\RR^d \times e_1$};

	\fill[BShape] (brect1) -- (brect2) -- (brect3) -- (brect4) -- cycle;
	\node[left=3pt of brectlabel] {$\RR^d \times e_0$};

	\begin{scope}[canvas is xy plane at z=3]
		\draw[OLine] (1, 0) arc (0:180:1) {};
	\end{scope}

	\begin{scope}[canvas is xy plane at z=0]
		\draw[BLine] (2, 0) arc (0:180:2) {};

		\draw[OLine, dashed] (0, 0) circle[radius=1] {};
	\end{scope}

	\begin{scope}[canvas is xz plane at y=0]
		\draw (0, 1) circle [radius=2.236] {};
	\end{scope}

	\begin{scope}[canvas is xy plane at z=0]
		\draw[BLine] (2, 0) arc (0:-180:2) {};
	\end{scope}

	\draw[white, ultra thick] (a) -- (c) -- (b) -- (a);
	\draw[GLine] (a) -- (c) -- (b) -- (a);
	\draw[white, ultra thick] (b) -- (d);
	\draw[GLine] (b) -- (d);
	\draw[white, ultra thick] (a) -- (d);
	\draw[GLine] (a) -- (d);
	\draw[white, ultra thick] (c) -- (d);
	\draw[GLine] (c) -- (d);

	\begin{scope}[canvas is xy plane at z=3]
		\draw[OLine] (1, 0) arc (0:-180:1) {};
	\end{scope}

	\node[OPoint] at (unlabelled_orange_1) {};
	\node[OPoint] at (unlabelled_orange_2) {};
	\node[OPoint] at (a) {} node[Label, above = 3pt of a] {$a^{\mu}$};
	\node[OPoint] at (c) {} node[Label, below left = 8pt of c] {$c^{\mu}$};
	\node[BPoint] at (b) {} node[Label, above right = 2.5pt and 2.5pt of b] {$b^{\mu}$};
	\node[BPoint] at (d) {} node[Label, below = 3pt of d] {$d^{\mu}$};
	\node[BPoint] at (unlabelled_blue_1) {};
\end{tikzpicture}
		\caption{An empty circumsphere passing through the chromatic lift of $\{a, b, c, d\}$.}
		\label{fig: empty circumsphere}
	\end{subfigure}
	\caption{
		Correspondence between stacks and spheres.
		For $X=\{a, b, c, d\}$ with colour classes $\{a, c\}$ and $\{b, d\}$, the abstract 3-simplex $abcd$ belongs to $\Alpha_{\infty}(X,\mu)$, and has geometric realization as the tetrahedron $(abcd)^{\mu}$ in $\Del(X,\mu)$.
	}
	\label{fig: correspondence between stacks and spheres}
\end{figure}
Under this correspondence, an $[s+1]$-stack passing through $\sigma\subset X$ corresponds to a $(d+s-1)$-sphere passing through $\sigma^{\mu} \subset X^{\mu}$, and empty stacks correspond to empty spheres.
In light of \Cref{eq: alternative definition of alpha filtration,eq: alternative definition of chromatic alpha filtration}, the correspondence implies that there is an isomorphism $\Alpha_{\infty}(X,\mu) \cong \Alpha_{\infty}(X^{\mu})$.
Therefore, the \textit{chromatic Delaunay mosaic} $\Del(X,\mu)$ is defined to be the Delaunay mosaic $\Del(X^{\mu})$.
If $\Del(X,\mu)$ is simplicial then it is the unique geometric realization of $\Alpha_{\infty}(X,\mu)$ with vertex set $X^{\mu}$, and it is called the \textit{chromatic Delaunay triangulation} of $X$.
Unlike $\Alpha_{\infty}(X,\mu)$, the geometric complex $\Del(X,\mu)$ is not invariant to permuting the class labels of $\mu$.
However, such a permutation induces a linear isometry of the ambient space $\RR^{d+s}$.
If $\mu'$ is the colouring of $X$ after changing $\mu$ by a permutation of class labels, then there is an isometric isomorphism of geometric simplicial complexes $\Del(X,\mu') \cong \Del(X,\mu)$, which is the restriction of the ambient linear isometry.

We will need a condition on $X$ to ensure that $\Del(X,\mu)$ is simplicial for all colourings $\mu$ of $X$, namely that $X^{\mu}$ satisfies \Cref{prop: GP1} for all $\mu$.
By the correspondence between stacks and spheres, this requirement can be formulated in terms of stacks.
\begin{property}{GP2}\label{prop: GP2}
	We say $X \subset \RR^d$ satisfies \Cref{prop: GP2} if for any colouring $\mu: X \to \{0, \ldots, s\}$, any $[s+1]$-stack passes through at most $(d+s+1)$-points.
\end{property}
Notice that \Cref{prop: GP2} is a generalization of \Cref{prop: GP1}, in that $X$ satisfies \Cref{prop: GP1} if it satisfies \Cref{prop: GP2} for the monochromatic colouring.
For our proofs we will also need a condition to ensure that the chromatic Delaunay mosaic has non-empty interior in $\RR^{d+s}$ if $X$ has sufficiently many points.
\begin{example}
	If $X$ consists of 3 collinear points on the plane each having the same colour, then $\Del(X, \mu)$ is a 1-dimensional simplicial complex in $\RR^2$.
\end{example}%
\begin{example}\label{eg: trapezium}
	Let $a = (0, 0)$, $b = (0, 1)$, $c = (1, 0)$, and $d = (1, 2)$.
	Then $X=\{a,b,c,d\}$ comprises the four corners of a non-rectangular trapezium in $\RR^2$ with $ab$ parallel to $cd$.
	Define the colouring $\mu = (\{a,b\},\{c,d\})$; then one can check that $\Del(X,\mu)$ is a 2-dimensional simplicial complex in $\RR^3$.
\end{example}
The problem in the first example above is that $X$ is not in general linear position, while the problem in the second example is that there are parallel linear subspaces (the lines $ab$ and $cd$) generated by points in $X$.
To avoid situations like these, we will place restrictions on the linear subspaces generated by subsets of $X$.
The \textit{lineality space} of $P \subset X$ is the linear subspace $\Lin(P)$ of $\RR^d$ (passing through the origin) that is parallel to the affine hull of $P$.
It is equivalently the set of linear combinations $\sum_{p \in P} \lambda_p p$ of elements in $P$ with coefficients that sum to zero: $\sum_{p \in P} \lambda_p = 0$.
\begin{property}{GP3}\label{prop: GP3}
	We say that $X \subset \RR^d$ satisfies \Cref{prop: GP3} if for all $k \geq 0$, all subsets $P \subset X$ with $|P| \leq d + k + 1$, and any partition of $P$ into $k+1$ non-empty subsets $(P_0, \ldots, P_k)$, we have
	\begin{equation}
		\dim\left(\sum_{i=0}^k \Lin(P_i) \right) = \sum_{i=0}^k \dim(\Lin(P_i)) = |P| - k - 1.
	\end{equation}
\end{property}
\begin{remark}
	For any $P \subset X$ with $|P| \leq d + k + 1$ and a partition of $P$ into $k+1$ non-empty subsets $(P_0, \ldots, P_k)$, basic linear algebra shows that
	\[
		\dim\left(\sum_{i = 0}^k \Lin(P_i)\right) \leq \sum_{i=0}^k \dim(\Lin(P_i)) \leq |P| - k - 1.
	\]
	Thus, \Cref{prop: GP3} is the assertion that the inequalities in the above expression are equalities.
	The second inequality is an equality if and only if each $P_i$ is an affinely independent set, and taken over arbitrary $P$ and $k=0$, this ensures that $X$ is in general linear position.
	The first inequality is an equality if and only if $\sum_{i=0}^k \Lin(P_i)$ is a direct sum, which addresses the issue of parallel linear subspaces.%
\end{remark}
\begin{example}
	A set of points in $\RR^2$ satisfies \Cref{prop: GP3} if and only if no three points are collinear and no two lines formed by pairs of points are parallel.
\end{example}

\begin{lemma}\label{thm: dimension of chromatic lift}
	Let $X \subset \RR^d$ and let $\mu:X \to \{0, \ldots, s\}$ be any colouring of $X$.
	Suppose $X$ satisfies \Cref{prop: GP3}.
	Then the affine hull of $X^{\mu}$ in $\RR^{d+s}$ has dimension $k = \min(|X|-1, d+s)$.
\end{lemma}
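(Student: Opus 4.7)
The plan is to translate the affine hull to a linear subspace, decompose that subspace into a ``horizontal'' component lying in $\RR^d \times \{0\}$ and a ``vertical'' component accounting for the colour-class offsets $e_0, \ldots, e_s$, and then apply \Cref{prop: GP3} in each of two regimes. Fix base points $v_m \in X_m$ for each $m = 0, \ldots, s$ (possible since $\mu$ is surjective), and let $V = \mathrm{span}\{x^\mu - v_0^\mu : x \in X\}$, so that $\dim(\mathrm{aff}(X^\mu)) = \dim(V)$. For $x \in X_m$, write the generator as
\[
x^\mu - v_0^\mu = (x - v_0,\, e_m - e_0) = (x - v_m,\, 0) + (v_m - v_0,\, e_m - e_0).
\]
This exhibits $V$ as spanned by two families: horizontal vectors $(x - v_m, 0)$ for $x \in X_m$, which collectively span $W \times \{0\}$ where $W = \sum_{m=0}^{s} \Lin(X_m)$, and the $s$ diagonal vectors $(v_m - v_0,\, e_m - e_0)$ for $m = 1, \ldots, s$. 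Since the horizontal vectors vanish in the last $s$ coordinates while the diagonal vectors project to the linearly independent family $\{e_m - e_0\}_{m=1}^s$, the sum is direct, giving
\[
\dim(V) = \dim(W) + s.
\]

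It then remains to compute $\dim(W)$, and here I split into two cases matching the two branches of the minimum. If $|X| \leq d+s+1$, then \Cref{prop: GP3} applies directly to $P = X$ with the partition $(X_0, \ldots, X_s)$ of size $s+1$, yielding $\dim(W) = |X| - s - 1$, hence $\dim(V) = |X| - 1$, which equals $\min(|X|-1, d+s)$ in this regime. If instead $|X| \geq d+s+1$, I would extract a subset $X' \subset X$ of size exactly $d+s+1$ meeting every colour class (which exists because $\mu$ is surjective and $|X| \geq d+s+1 \geq s+1$) and apply \Cref{prop: GP3} to $X'$ with its induced partition. This forces $\dim\bigl(\sum_m \Lin(X' \cap X_m)\bigr) = d$, so $\sum_m \Lin(X' \cap X_m) = \RR^d$, and by nesting $W = \RR^d$, giving $\dim(V) = d + s$.

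The only conceptually substantive step is the dimension formula $\dim(V) = \dim(W) + s$, which relies on the affine independence of the height vectors $e_0, \ldots, e_s$ to ensure the horizontal and vertical contributions do not interact; this is precisely what the chromatic lift was designed to guarantee. Everything else amounts to bookkeeping plus a careful application of \Cref{prop: GP3} in the two complementary regimes, and the overlap case $|X| = d+s+1$ is handled by either branch and yields the same value $d+s$, confirming consistency.
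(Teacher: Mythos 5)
Your proof is correct, and it takes a genuinely different route from the paper's. The paper exhibits a specific subset $P^\mu$ of $k+1$ points of $X^\mu$ containing one of each colour, and shows it is affinely independent by an annihilation argument: given affine coefficients $\lambda_i$ summing to zero with $\sum \lambda_i x_i^\mu = 0$, looking at the last $s$ coordinates forces the colour-wise coefficient sums to vanish, and then \Cref{prop: GP3} (the direct-sum condition, plus affine independence of each colour class) kills all the $\lambda_i$. This gives the lower bound $\dim \geq k$; the matching upper bound is automatic since the affine hull of $|X|$ points in $\RR^{d+s}$ has dimension at most $\min(|X|-1, d+s)$. You instead translate to a linear space, decompose it as a direct sum of a horizontal part $W \times \{0\}$ with $W = \sum_m \Lin(X_m)$ and an $s$-dimensional diagonal part, getting the exact identity $\dim V = \dim W + s$, and then compute $\dim W$ from \Cref{prop: GP3} in the two regimes of $|X|$ versus $d+s+1$. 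What your decomposition buys is that it isolates the role of \Cref{prop: GP3} to a single dimension count on $W$ and makes the structural reason for the formula transparent (the $+s$ is the height of the lift, the rest is the span of the colour classes); what the paper's argument buys is that it produces an explicit affinely independent generating set, which is slightly more concrete and also sidesteps the (routine but still needed) verification that the horizontal and diagonal families do span $V$. Both correctly handle the boundary case $|X| = d+s+1$, which lands in both branches and gives the same answer.
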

\begin{proof}
	We will find set of $k+1$ points in $X^{\mu}$ that are affinely independent.
	Choose a subset $P^{\mu} = \{x_0^{\mu}, \ldots, x_{k}^{\mu}\}$ of points in $X^{\mu}$ that contains a point of every colour, i.e., $P \cap X_i \neq \emptyset$ for each $i \in \{0, \ldots, s\}$.
	Suppose there exist coefficients $\{\lambda_i\}_{i=0}^{k}$ such that $\sum_{i=0}^{k} \lambda_i = 0$ and $\sum_{i = 0}^{k} \lambda_i x_i^{\mu} = 0$.
	We will show that each $\lambda_i = 0$, thereby showing affine independence of $P^{\mu}$.

	Now notice that for each $x \in P_i$, by construction we have $x^{\mu} = (x^T, 0, \ldots, 0, 1, 0, \ldots, 0)^T$ where the unit entry appears in the $(d+i)$\textsuperscript{th} position.
	Therefore, we must have $\sum_{\mu(x_j) = m} \lambda_j = 0$ for each $i\in I$.
	Let $y_i = \sum_{\mu(x_j) = i} \lambda_j x_j$; then $y_i \in \Lin(P_i)$.
	Now $y_i$ is the projection of $\sum_{\mu(x_j) = i} \lambda_j x_j^{\mu}$ to $\RR^d$ from $\RR^{d+s}$, and hence
	\begin{equation*}
		0 = \sum_{x_i \in P} \lambda_i x_i^{\mu} \implies \sum_{i \in I} y_i = 0.
	\end{equation*}
	Since $X$ satisfies \Cref{prop: GP3}, we have that $y_i = 0$ for each $i \in I$.
	Moreover, \Cref{prop: GP3} also implies affine independence of each $P_j$, whence it follows that $\lambda_i = 0$ for all $i$.
	This shows that $P^{\mu}$ is an affinely independent set as desired.
\end{proof}

We say that $X$ is in \textit{general position} if it satisfies \Cref{prop: GP2} and \Cref{prop: GP3}.
Our terminology is justified by the following statement which we prove in \Cref{sec: genericity of general position assumptions}: for any $d, n \geq 1$, the space of $d$-dimensional point clouds with $n$ points that are not in general position is contained in a semi-algebraic subset of $\RR^{nd}$ of dimension at most $nd-1$.
We note that in Section 4.1 of~\cite{Montesano2025chromatic}, the authors define a notion of `chromatic genericity' that is very similar to our definition for general position.
However, our notion of general position is strictly stronger than chromatic genericity; see \Cref{sec: general position vs chromatic genericity} for details.

The previous lemma immediately implies the following.
\begin{corollary}
	Suppose $X \subset \RR^d$ is in general position and $\mu : X \to \{0, \ldots, s\}$ is a colouring of $X$.
	Then the chromatic Delaunay mosaic $\Del(X,\mu)$ is a simplicial complex in which every maximal simplex has dimension $\min(|X| - 1, d + s)$.
\end{corollary}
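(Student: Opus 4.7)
The plan is to combine \Cref{thm: dimension of chromatic lift} with standard facts about Delaunay triangulations, carried out intrinsically inside the affine hull of the lift. Write $Y = X^{\mu}$ and let $A \subset \RR^{d+s}$ denote the affine hull of $Y$; by \Cref{thm: dimension of chromatic lift}, $\dim A = k = \min(|X|-1, d+s)$. The chromatic Delaunay mosaic $\Del^{\mu}(X) = \Del(Y)$ is by definition a polyhedral decomposition of the convex hull $\Conv(Y) \subset A$, so it suffices to verify (i) that it is simplicial, and (ii) that it is pure of dimension $k$.

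For (i), I would argue that \Cref{prop: GP2} rules out any $(k-1)$-sphere in $A$ meeting $k+2$ or more points of $Y$. Indeed, a $(k-1)$-sphere in $A$ extends to a $(d+s-1)$-sphere $S$ in $\RR^{d+s}$; since $\mu$ is surjective onto $[s+1]$, any sphere that contains a potentially problematic configuration of points of $Y$ must meet all $s+1$ of the parallel $d$-planes $\RR^d \times \{e_m\}$, and such a sphere corresponds under the bijection of Lemma 3.6 of~\cite{Montesano2024chromatic} to an $[s+1]$-stack in $\RR^d$. The number of points of $Y$ on $S$ then equals the number of points of $X$ through which the stack passes, which \Cref{prop: GP2} bounds by $d+s+1 \leq k+1$. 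This certifies that every top-dimensional cell of $\Del(Y)$ is a $k$-simplex.

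For (ii), purity is a general consequence of the fact that $\Conv(Y)$ is a full-dimensional convex body in $A$: because $Y$ affinely spans $A$, the convex hull has non-empty interior in $A$, hence is itself $k$-dimensional, and any simplicial decomposition thereof has every simplex contained in some $k$-simplex. Combining (i) and (ii) gives that $\Del^{\mu}(X)$ is a pure $k$-dimensional simplicial complex.

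The main technical obstacle is step (i): one must make sure the bijection with stacks applies to any sphere in $A$ that could witness a failure of simpliciality, i.e., that the chosen extension to $\RR^{d+s}$ actually crosses every required $d$-plane. This is where the surjectivity of $\mu$ and the precise statement of \Cref{prop: GP2} (quantifying over \emph{all} colourings, in particular the induced colouring on any subset of $X$) do the work.
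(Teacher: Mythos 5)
Your overall strategy is the same as the paper's intended argument: the dimension lemma (via GP3) gives the dimension of the affine hull, GP2 supplies simpliciality via the stack/sphere correspondence, and purity follows from convexity of the hull; the paper treats the corollary as immediate from the dimension lemma precisely because GP2 was engineered to ensure simpliciality. The gap is in your step (i): the claim that a $(d+s-1)$-sphere carrying a problematic configuration ``must meet all $s+1$ of the parallel $d$-planes'' does not follow from surjectivity of $\mu$. Surjectivity guarantees that each colour is realised somewhere in $X$, not that each colour is realised on the sphere; a $(d+s-1)$-sphere can pass through $\geq d+s+2$ lifted points while meeting only a proper subset $\gamma\subsetneq[s+1]$ of the planes (each plane it meets can contribute up to $d+1$ points, so not every colour is needed to reach $d+s+2$). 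When this happens the bijection with $[s+1]$-stacks simply does not apply, so \Cref{prop: GP2} with the given $\mu$ is not directly usable. Your closing remedy, ``the induced colouring on any subset of $X$'', is also imprecise: GP2 is a statement about colourings of $X$ itself, not of its subsets.

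The correct fix is to pass to a different colouring of $X$. Suppose $S$ is a $(d+s-1)$-sphere through $\ell\geq d+s+2$ points of $X^{\mu}$ of colours $\gamma\subsetneq[s+1]$ with $|\gamma|=t$. Intersecting $S$ with each plane $\RR^d\times\{e_m\}$ for $m\in\gamma$ yields a concentric $\gamma$-stack in $\RR^d$ through the corresponding $\ell$ points of $X$. Define $\mu':X\to[t+1]$ agreeing with $\mu$ on $\mu^{-1}(\gamma)$ (after relabelling $\gamma$ as $[t]$) and sending $X\setminus\mu^{-1}(\gamma)$ to the new class $t$; this is a surjective colouring of $X$ because $\gamma\subsetneq[s+1]$. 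Adjoin to the $\gamma$-stack a dummy $(d-1)$-sphere of arbitrary radius at the same centre, indexed by $t$, to obtain a $[t+1]$-stack through the same $\ell$ points. Applying \Cref{prop: GP2} with $\mu'$ now gives $\ell\leq d+t+1\leq d+s+1$, contradicting $\ell\geq d+s+2$. This is the precise mechanism by which GP2's quantification over \emph{all} colourings of $X$ is essential to the corollary.
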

\begin{remark}\label{rem: lifts and subsets of general position}
	If $X$ is in general position and $\mu$ is any colouring of $X$ then $X^{\mu}$ is in general position.
	If $E$ is any subset of $X$ then $E$ is in general position.
\end{remark}
\begin{remark}\label{rem: functoriality of chromatic Delaunay triangulations}
	Chromatic Delaunay triangulations retain the functoriality of chromatic alpha filtrations, in that whenever $X$ and $Y$ are in general position and $(X, \mu) \to (Y, \nu)$ is a morphism in $\cat{ChrSet}_d$ (the category defined in \Cref{rem: functoriality of chromatic alpha filtration}), then there is an embedding (not necessarily isometric) of $\Del(X,\mu)$ as a subcomplex of $\Del(Y,\nu)$.
	This is because $\Del(X,\mu) \cong \Alpha_{\infty}(X,\mu)$ and $\Del(Y,\nu) \cong \Alpha_{\infty}(Y,\nu)$, and by \Cref{rem: functoriality of chromatic alpha filtration} we know that there is an inclusion $\Alpha_{\infty}(X,\mu) \subset \Alpha_{\infty}(Y,\nu)$ induced by the inclusion of $X$ in $Y$.
	Correspondingly, the embedding of $\Del(X,\mu)$ into $\Del(Y,\nu)$ is induced by the vertex map that sends $x^{\mu} \in X^{\mu}$ to $x^{\nu} \in Y^{\nu}$.
\end{remark}
\begin{figure}[h]
	\centering
	\begin{subfigure}[t]{0.32\linewidth}
		\centering
		\resizebox{\linewidth}{!}{\input{figures/membrane_example/monochromatic_voronoi.pgf}}
		\caption{}
	\end{subfigure}
	\hfill
	\begin{subfigure}[t]{0.32\linewidth}
		\centering
		\resizebox{\linewidth}{!}{\begin{tikzpicture}[
		scale=2.5,
	]
	\coordinate (v0) at (-1.00, -0.05);
	\coordinate (v1) at ( 0.80, -0.08);
	\coordinate (v2) at (-0.45,  0.08);
	\coordinate (v3) at ( 0.00,  0.40);
	\coordinate (v4) at ( 0.00, -0.40);
	\coordinate (v5) at ( 0.45, -0.10);
	\draw[GShape, GLine] (v0) -- (v4) -- (v2) -- cycle;
	\draw[GShape, GLine] (v1) -- (v5) -- (v3) -- cycle;
	\draw[GShape, GLine] (v1) -- (v5) -- (v4) -- cycle;
	\draw[GShape, GLine] (v0) -- (v3) -- (v2) -- cycle;
	\draw[GShape, GLine] (v2) -- (v3) -- (v4) -- cycle;
	\draw[GShape, GLine] (v5) -- (v3) -- (v4) -- cycle;

	\node[GPoint] at (v0) {} node[Label, left        = 3pt of v0] {$v_0$};
	\node[GPoint] at (v1) {} node[Label, right       = 3pt of v1] {$v_1$};
	\node[GPoint] at (v2) {} node[Label, right       = 6pt of v2] {$v_2$};
	\node[GPoint] at (v3) {} node[Label, above       = 3pt of v3] {$v_3$};
	\node[GPoint] at (v4) {} node[Label, below       = 3pt of v4] {$v_4$};
	\node[GPoint] at (v5) {} node[Label, left        = 6pt of v5] {$v_5$};
\end{tikzpicture}}
		\caption{}
	\end{subfigure}
	\hfill
	\begin{subfigure}[t]{0.32\linewidth}
		\centering
		\resizebox{\linewidth}{!}{\input{figures/membrane_example/bichromatic_voronoi.pgf}}
		\caption{}
	\end{subfigure}
	\vspace{\baselineskip}
	\begin{subfigure}[t]{0.31\textwidth}
		\centering
		\begin{tikzpicture}[
				scale=2.5,
				x={(40:8mm)},
				y={(160:8mm)},
				z={(90:8mm)},
			]
			\draw[->] (0,0,0) -- (1,0,0);
			\draw[->] (0,0,0) -- (0,1,0);
			\draw[->] (0,0,0) -- (0,0,1);
			\node[Label, above right=3pt of {(1,0,0)}] {$x$};
			\node[Label, above left=3pt of {(0,1,0)}] {$y$};
			\node[Label, above =3pt of {(0,0,1)}] {$z$};
		\end{tikzpicture}
	\end{subfigure}
	\hfill
	\begin{subfigure}[t]{0.31\textwidth}
		\centering
		\begin{tikzpicture}[
		scale=2.5,
		x={(40:8mm)},
		y={(160:12mm)},
		z={(90:1cm)},
	]

	\coordinate (v0) at (-1.00, -0.05, 1);
	\coordinate (v1) at ( 0.80, -0.08, 1);
	\coordinate (v2) at (-0.45,  0.08, 1);
	\coordinate (v0') at (-1.00, -0.05, 0);
	\coordinate (v1') at ( 0.80, -0.08, 0);
	\coordinate (v2') at (-0.45,  0.08, 0);
	\coordinate (v3) at ( 0.00,  0.40, 0);
	\coordinate (v4) at ( 0.00, -0.40, 0);
	\coordinate (v5) at ( 0.45, -0.10, 0);

	\draw[GLine, dashed] (v4) -- (v1') -- (v3);
	\draw[GLine, dashed] (v1') -- (v5);
	\draw[GLine, dashed] (v3) -- (v2') -- (v4);
	\draw[GLine, dashed] (v3) -- (v0') -- (v4);
	\draw[GLine, dashed] (v2') -- (v0');

	\draw[GLine, GShape] (v1) -- (v5) -- (v3) -- cycle;
	\draw[GLine, GShape] (v3) -- (v5) -- (v4) -- cycle;
	\draw[GLine, GShape] (v1) -- (v5) -- (v4) -- cycle;
	\draw[GLine, GShape] (v0) -- (v2) -- (v3) -- cycle;
	\draw[GLine, GShape] (v2) -- (v3) -- (v4) -- cycle;
	\draw[GLine, GShape] (v0) -- (v2) -- (v4) -- cycle;

	\node[OPoint] at (v0)  {} node[Label, left       = 3pt of v0] {$v_0^{\mu}$};
	\node[OPoint] at (v1)  {} node[Label, right      = 3pt of v1] {$v_1^{\mu}$};
	\node[OPoint] at (v2)  {} node[Label, above      = 3pt of v2] {$v_2^{\mu}$};
	\node[OPoint] at (v0') {} node[Label, left       = 3pt of v0'] {$v_0$};
	\node[OPoint] at (v1') {} node[Label, right      = 3pt of v1'] {$v_1$};
	\node[OPoint] at (v2') {} node[Label, above right= 3pt of v2'] {$v_2$};
	\node[BPoint] at (v3)  {} node[Label, left       = 3pt of v3] {$v_3^{\mu}$};
	\node[BPoint] at (v4)  {} node[Label, right      = 3pt of v4] {$v_4^{\mu}$};
	\node[BPoint] at (v5)  {} node[Label, above left = 3pt of v5] {$v_5^{\mu}$};

\end{tikzpicture}
		\caption{}
	\end{subfigure}
	\hfill
	\begin{subfigure}[t]{0.31\textwidth}
		\centering
		\begin{tikzpicture}[
		scale=2.5,
		x={(40:8mm)},
		y={(160:12mm)},
		z={(90:1cm)},
	]

	\coordinate (v0)  at (-1.00, -0.05, 1);
	\coordinate (v1)  at ( 0.80, -0.08, 1);
	\coordinate (v2)  at (-0.45,  0.08, 1);
	\coordinate (v3)  at ( 0.00,  0.40, 0);
	\coordinate (v4)  at ( 0.00, -0.40, 0);
	\coordinate (v5)  at ( 0.45, -0.10, 0);

	\draw[BShape, BLine] (v3) -- (v4) -- (v5) -- cycle;
	\draw[GLine] (v0) -- (v3);
	\draw[GLine] (v1) -- (v3);
	\draw[GLine] (v1) -- (v4);
	\draw[GLine] (v1) -- (v5);
	\draw[GLine] (v2) -- (v3);
	\draw[white, ultra thick] (v2) -- (v5);
	\draw[GLine] (v2) -- (v5);
	\draw[white, ultra thick] (v2) -- (v4);
	\draw[GLine] (v2) -- (v4);
	\draw[white, ultra thick] (v0) -- (v4);
	\draw[GLine] (v0) -- (v4);
	\draw[OShape, OLine] (v0) -- (v1) -- (v2) -- cycle;

	\node[OPoint] at (v0)  {} node[Label, left  = 3pt of v0] {$v_0^{\mu}$};
	\node[OPoint] at (v1)  {} node[Label, right = 3pt of v1] {$v_1^{\mu}$};
	\node[OPoint] at (v2)  {} node[Label, above = 3pt of v2] {$v_2^{\mu}$};
	\node[BPoint] at (v3)  {} node[Label, left  = 3pt of v3] {$v_3^{\mu}$};
	\node[BPoint] at (v4)  {} node[Label, right = 3pt of v4] {$v_4^{\mu}$};
	\node[BPoint] at (v5)  {} node[Label, above = 5pt of v5, fill = white] {$v_5^{\mu}$};

\end{tikzpicture}
		\caption{}
	\end{subfigure}
	\caption{
		Chromatic Delaunay triangulation of a bi-chromatic set $X \subset \RR^2$.
		(a) Voronoi tessellation of $X$.
		(b) Delaunay triangulation $\Del(X)$ of $X$.
		(c) Overlay of Voronoi tessellations of blue and orange points in $X$.
		(d) Embedded image $\iota(\Del(X))$ of the Delaunay triangulation (shaded gray) in the chromatic Delaunay triangulation $\Del(X,\mu)$.
		The 1-simplices of $\Del(X)$ are shown with dashed lines, and $\iota(\Del(X))$ is the graph of a piecewise-linear function on $\Del(X)$.
		(e) $\Del(X,\mu)$ with bicoloured 2- and 3-simplices left unshaded for clarity.
		For figures (d) and (e) the coordinate axes are shown for reference at the bottom left.
	}
	\label{fig: membrane example}
\end{figure}
\begin{remark}\label{rem: description of membrane}
	Suppose $\mu: X \to \{0,1\}$ is a colouring of $X$.
	By \Cref{rem: functoriality of chromatic Delaunay triangulations} there is an embedding of simplicial complexes $\iota: \Del(X) \hookrightarrow \Del(X,\mu)$ induced by the vertex map $x \mapsto x^{\mu}$.
	Let $h:\RR^{d+1} \to \RR$ be the projection to the last coordinate.
	It is straightforward to check that $\iota(\Del(X))$ is the graph of the piecewise-linear function $h\circ \iota: \Del(X) \to \RR$.
	See \Cref{fig: membrane example} for an example.

	More generally suppose $\mu$ and $\nu$ are colourings of $X$ such that $\mu$ is obtained from $\nu$ by merging the last two classes of $\nu$.
	That is, $\nu = (X_0, \ldots, X_{s+1})$ and $\mu = (X_0, \ldots, X_{s-1}, X_{s} \cup X_{s+1})$.
	Then by \Cref{rem: functoriality of chromatic Delaunay triangulations}, there is an embedding $\iota: \Del(X,\mu) \hookrightarrow \Del(X,\nu)$ induced by the vertex map $x^{\mu} \mapsto x^{\nu}$.
	Let $\zeta: X^{\mu} \to \{0,1\}$ be the colouring that sends $x^{\mu}$ to $0$ if $\mu(x) = \nu(x)$, and $1$ otherwise.
	Then $X^{\nu}$ is the chromatic lift $(X^{\mu})^{\zeta}$.
	It follows from our discussion of the bicoloured case that $\iota(\Del(X,\mu))$ is the graph of the function $h \circ \iota : \Del(X,\mu) \to \RR$ where $h : \RR^{d+s+1} \to \RR$ is the projection to the last coordinate.
\end{remark}

\subsection{Chromatic Delaunay--\v{C}ech and Chromatic Delaunay--Rips Filtrations}\label{sec: chromatic DelCech and chromatic DelRips filtrations}

Given a chromatic set $(X, \mu)$, we define the \textit{chromatic Delaunay--\v{C}ech} filtration as follows:
\begin{equation}\label{eq: definition of chromatic DelCech filtration}
	\DelCech_t(X,\mu) \defeq \Cech_t(X) \cap \Alpha_{\infty}(X,\mu)
\end{equation}
In other words, the chromatic Delaunay--\v{C}ech filtration has the same underlying simplicial complex as the chromatic alpha filtration, and the simplices are assigned filtration values from the \v{C}ech filtration.
Using the definitions in \Cref{eq: alternative definition of Cech filtration,eq: alternative definition of chromatic alpha filtration} and noting that the outermost sphere of an empty stack passing through some $\sigma \subset X$ defines a bounding ball of $\sigma$, we see that the chromatic alpha filtration is a sub-filtration of the chromatic Delaunay--\v{C}ech filtration.
More precisely, if $\cat{ChrSet}_d$ is the category defined in \Cref{rem: functoriality of chromatic alpha filtration}, then $\DelCech$ is a functor from $\cat{ChrSet}_d \times [0, \infty]$ to $\cat{SimpComp}$, and if $(X, \mu) \to (Y, \nu)$ is a morphism in $\cat{ChrSet}_d$ and $0 \leq s \leq t \leq \infty$ then the following diagram commutes.

\begin{diagram}[column sep=0.5ex, row sep=1ex]\label{dgm: functoriality of chromatic DelCech filtration}
	& \Alpha_s(Y,\nu) && \Alpha_t(Y,\nu)\\
	\Alpha_{s}(X,\mu) && \Alpha_t(X,\mu)\\
	& \DelCech_{s}(Y,\nu) && \DelCech_t(Y,\nu)\\
	\DelCech_{s}(X,\mu) && \DelCech_t(X,\mu)\\
	& \Cech_s(Y) && \Cech_t(Y)\\
	\Cech_s(X) && \Cech_t(X)
	\ar[from=1-2, to=1-4, hook]
	\ar[from=1-2, to=3-2, hook]
	\ar[from=1-4, to=3-4, hook]
	\ar[from=2-1, to=1-2, hook]
	\ar[from=2-1, to=2-3, hook, crossing over]
	\ar[from=2-1, to=4-1, hook]
	\ar[from=2-3, to=1-4, hook]
	\ar[from=3-2, to=3-4, hook]
	\ar[from=2-3, to=4-3, hook, crossing over]
	\ar[from=3-2, to=5-2, hook]
	\ar[from=3-4, to=5-4, hook]
	\ar[from=4-1, to=3-2, hook]
	\ar[from=4-1, to=4-3, hook, crossing over]
	\ar[from=4-1, to=6-1, hook]
	\ar[from=4-3, to=3-4, hook]
	\ar[from=5-2, to=5-4, hook]
	\ar[from=4-3, to=6-3, hook, crossing over]
	\ar[from=6-1, to=5-2, hook]
	\ar[from=6-1, to=6-3, hook]
	\ar[from=6-3, to=5-4, hook]
\end{diagram}

We define the \textit{chromatic Delaunay--Rips} filtration similarly.
Recall that the \textit{Vietoris--Rips} filtration on $X$, denoted by $\VR_{\bullet}(X)$, is defined by
\begin{equation}
	\VR_r(X) \defeq \{\sigma \subset X \mid \forall x, y \in \sigma:\ \norm{x - y} \leq 2r \}.
\end{equation}
Then the chromatic Delaunay--Rips filtration is given by
\begin{equation}\label{eq: definition of chromatic DelRips filtration}
	\DelVR_t(X,\mu) \defeq \VR_t(X) \cap \Alpha_{\infty}(X,\mu).
\end{equation}
The Vietoris--Rips filtration is related to the \v{C}ech filtration by a series of nested inclusions, and a similar property holds for their chromatic counterparts as defined here.
We will explore this aspect in more detail in \Cref{sec: consequences for practical computations}.

\subsection{Generalities on Geometric Simplicial Complexes}\label{sec: geometric simplicial complexes}

In this section we define some notions related to geometric realizations of simplicial complexes.
A \textit{geometric $k$-simplex} $\sigma^{(k)}$ in $\RR^{d+1}$ is a linear embedding of the standard $k$-simplex in $\RR^{d+1}$.
We say that a vector $z \in \RR^{d+1}\setminus\{0\}$ is \textit{transverse} to a geometric simplex $\sigma^{(k)}$ if $z$ is not parallel to the affine hull of any of its $m$-dimensional faces for $m \leq \min(k, d)$.
Note that if $z$ is transverse to $\sigma^{(k)}$ then $z$ is transverse to every face of $\sigma^{(k)}$, but the converse is not necessarily true unless $k = d+1$.
Now suppose $K$ is a geometric simplicial complex in $\RR^{d+1}$.
We say that a non-zero vector $z \in \RR^{d+1}$ is \textit{transverse to $K$} if it is transverse to $\sigma$ whenever $\sigma$ is a maximal simplex in $K$; in other words, if $z$ is not parallel to the affine hull of (the embedding of) any $k$-simplex in $K$ where $k \leq d$.

Let $\tau^{(d)} < \sigma^{(d+1)}$ be geometric simplices in $\RR^{d+1}$, let $\Aff(\tau)$ be the affine hull of $\tau$ (i.e., the minimal affine subspace of $\RR^{d+1}$ that contains $\tau$), and let $v$ be the vertex of $\sigma$ opposite to $\tau$.
We say that a non-zero vector $w \in \RR^{d+1}$ is an \textit{outward-pointing normal vector} to $\tau$ with respect to $\sigma$ if $w \perp \Aff(\tau)$ and $v = x - \lambda w$ for some $x \in \Aff(\tau)$ and $\lambda > 0$.

Suppose we are given $z \in \RR^{d+1}\setminus \{0\}$ and geometric simplices $\tau^{(d)} < \sigma^{(d+1)}$ in $\RR^{d+1}$.
We say that $\tau$ is an \textit{upper (resp. lower) face of $\sigma$ with respect to $z$} if some (and hence any) outward-pointing normal vector $w$ to the face $\tau$ satisfies $\braket{w, z} > 0$ (resp. $\braket{w, z} < 0$).
If $k < d$ then we say that $\eta^{(k)} < \sigma^{(d+1)}$ is an \textit{upper (resp. lower) face of $\sigma$ with respect to $z$} if it is the intersection of $d$-dimensional upper (resp. lower) faces of $\sigma$.
When $z$ is implicit we will just use the terms ``upper face'' and ``lower face'' respectively.
We will use $\sigma^*$ and $\sigma_*$ to denote the minimal upper and lower faces of $\sigma$, respectively.
See \Cref{fig: example of upper face and outward normal} for an illustration of the terms defined here.
\begin{figure}
	\begin{subfigure}[t]{0.48\textwidth}
		\begin{equation*}
			\begin{tikzpicture}[scale=0.4]
	\coordinate (a) at (4, 5);
	\coordinate (b) at (0, 1);
	\coordinate (c) at (8, -2);
	\coordinate (x) at ($(a)!(b)!(c)$);
	\coordinate (wo) at ($(a)!0.5!(c)$);
	\coordinate (w) at ($(wo)!0.5!90:(c)$);
	\coordinate (z) at (11, 1);
	\draw [GShape, GLine] (b) -- (a) -- (c) -- cycle;
	\draw [->] (wo) -- (w);
	\draw [dashed] (b) -- (x);
	\draw [->] (z) -- ++(0, 3) node[Label, midway, right=3pt] {$z$};
	\node [Label, above = 3pt of a] {$a$};
	\node [Label, left  = 3pt of b] {$b$};
	\node [Label, right = 3pt of c] {$c$};
	\node [Label, right = 3pt of x] {$x$};
	\node [Label, right = 3pt of w] {$w$};
\end{tikzpicture}
		\end{equation*}
		\caption{
			The vector $w$ is an outward normal to $ac$ with respect to $abc$ because $w \perp ac$, and $b = x - \lambda w$ for some $\lambda > 0$.
			Since $\braket{w, z} > 0$, $ac$ is an upper face for $abc$ with respect to $z$.
		}
	\end{subfigure}
	\hfill
	\begin{subfigure}[t]{0.48\textwidth}
		\begin{equation*}
			\begin{tikzpicture}[scale=0.4]
	\coordinate (a) at (4, 5);
	\coordinate (b) at (0, 1);
	\coordinate (c) at (8, -2);
	\coordinate (x) at ($(a)!(b)!(c)$);
	\coordinate (z) at (11, 1);
	\path[name path=xvert] (x)--++(-90:5);
	\path[name path=bc] (b)--(c);
	\path[name intersections={of=xvert and bc,by=y}];
	\draw[GLine, GShape] (b)--(a)--(c)--cycle;
	\draw[dashed] (x) -- (y);
	\draw[->] (z) -- ++(0, 3) node[Label, midway, right=3pt] {$z$};
	\node[Label, above = 3pt of a] {$a$};
	\node[Label, left  = 3pt of b] {$b$};
	\node[Label, right = 3pt of c] {$c$};
	\node[Label, right = 3pt of x] {$x$};
	\node[Label, below = 3pt of y] {$y$};
\end{tikzpicture}
		\end{equation*}
		\caption{
			Illustration of \Cref{thm: characterisation of upper and lower faces}.
			Here $ac$ is an upper face because there exists $\epsilon > 0$ such that $\lambda_x=(-\epsilon , 0)$.
			Indeed, $y=x-\epsilon z$ and for any $\lambda \in (-\epsilon, 0)$ we have $(x+\lambda z) \in\topint(abc)$.
		}
	\end{subfigure}
	\caption{
		A geometric simplex $\sigma = abc$ in $\RR^2$.
		The upper faces of $\sigma$ are $ab, ac$ and $a$, and the only lower face is $bc$.
		The minimal upper face is $\sigma^* = a$ and the minimal lower face is $\sigma_* = bc$.
	}
	\label{fig: example of upper face and outward normal}
\end{figure}
\begin{remark}
	If $z \in \RR^{d+1}\setminus \{0\}$ is transverse to $\sigma^{(d+1)}$ then every $d$-dimensional face of $\sigma$ is either an upper or lower face of $\sigma$ with respect to $z$.
	However, for $k < d$ there are $k$-dimensional ``side'' faces of $\sigma$ which are neither upper nor lower.
	These are precisely the faces that each lie in the intersection of both some upper and lower $d$-dimensional faces simultaneously.
\end{remark}

Now let $\sigma^{(d+1)}$ be a geometric simplex in $\RR^{d+1}$ and let $z \in \RR^{d+1} \setminus \{0\}$ be transverse to $\sigma$.
For any $x \in \RR^{d+1}$, if $x + \lambda_1 z$ and $x + \lambda_2 z$ are both in $\sigma$ with $\lambda_1 \leq \lambda_2$, then $x + \lambda z$ is in $\sigma$ for every $\lambda \in [\lambda_1, \lambda_2]$ by convexity of $\sigma$.
It follows that the following set is an open interval:
\begin{equation}
	\lambda_x(\sigma) \defeq \{\lambda \in \RR \mid x + \lambda z \in \topint(\sigma)\}.
\end{equation}
It is not hard to see that if $x$ lies on an upper (resp. lower) face of $\sigma$ (with respect to $z$), then these intervals must be of the form $(-\epsilon, 0)$ (resp. $(0, \epsilon)$). We state this observation as a lemma:

\begin{lemma}\label{thm: characterisation of upper and lower faces}
	Let $\sigma^{(d+1)}$ be a geometric simplex in $\RR^{d+1}$, let $\tau < \sigma$, and let $z \in \RR^{d+1}\setminus \{0\}$ be transverse to $\sigma$.
	Let $\relint(\tau)$ denote the relative interior of $\tau$.
	Then the following are equivalent:
	\begin{enumerate}
		\item $\tau$ is an upper (resp. lower) face of $\sigma$.
		\item There exists $x \in \relint(\tau)$ and $\epsilon > 0$ such that $\lambda_x (\sigma)= (-\epsilon, 0)$ (resp. $(0, \epsilon)$).
		\item For every $x \in \relint(\tau)$ there exists $\epsilon > 0$ such that $\lambda_x (\sigma) = (-\epsilon, 0)$ (resp. $(0, \epsilon)$).
	\end{enumerate}
\end{lemma}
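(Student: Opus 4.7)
The plan is to analyze membership of $x + \lambda z$ in $\topint(\sigma)$ for small $|\lambda|$ and $x \in \relint(\tau)$, using the description of $\sigma$ as an intersection of closed half-spaces bounded by its $d$-dimensional faces. The local structure of $\sigma$ near $\relint(\tau)$ is determined by which half-space constraints are active at $x$, and transversality of $z$ will make the active constraints strict on either side of $\lambda = 0$.

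Clearly (3) implies (2) since $\relint(\tau)$ is nonempty. For (2) $\Rightarrow$ (3), I would note that the tangent cone of $\sigma$ at a point of $\relint(\tau)$ depends only on $\tau$, not on the particular point chosen: the set of directions $v$ such that $x + tv$ enters $\topint(\sigma)$ for small $t > 0$ is the same for every $x \in \relint(\tau)$. Consequently, the property $\lambda_x(\sigma) = (-\epsilon, 0)$ for some $\epsilon > 0$ at one point $x \in \relint(\tau)$ is equivalent to the same property holding at every $x \in \relint(\tau)$.

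For the equivalence (1) $\Leftrightarrow$ (2), I would write $\sigma = \bigcap_{\eta} H_{\eta}^-$, where $\eta$ runs over all $d$-dimensional faces of $\sigma$ and $H_{\eta}^- = \{y : \braket{y - p_{\eta}, w_{\eta}} \leq 0\}$ is the closed half-space determined by an outward normal $w_{\eta}$ to $\eta$. A codimension-$k$ face $\tau$ is contained in exactly $k$ of the $d$-faces of $\sigma$, say $\eta_1, \ldots, \eta_k$, and equals their common intersection. For $x \in \relint(\tau)$, the constraints corresponding to $\eta_1, \ldots, \eta_k$ are active (equalities), while all other half-space constraints are strict at $x$ and therefore remain strict at $x + \lambda z$ for $|\lambda|$ sufficiently small. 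Hence for such $\lambda$, membership $x + \lambda z \in \topint(\sigma)$ reduces to the system $\lambda \braket{z, w_{\eta_i}} < 0$ for $i = 1, \ldots, k$, in which each $\braket{z, w_{\eta_i}} \neq 0$ by transversality of $z$ to $\sigma$.

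Now $\tau$ being an upper face means, per the definition recalled in the paper, that $\tau$ is an intersection of upper $d$-faces; since the only way to realize $\tau$ as such an intersection is $\eta_1 \cap \cdots \cap \eta_k$, this translates to $\braket{z, w_{\eta_i}} > 0$ for each $i$. The local system then forces $\lambda < 0$, giving $\lambda_x(\sigma) = (-\epsilon, 0)$ for some $\epsilon > 0$ (the right endpoint is $0$ because any $\lambda > 0$ immediately violates an active constraint, and the left endpoint is strictly negative since the non-active constraints remain strict for small $|\lambda|$). Conversely, if $\lambda_x(\sigma) = (-\epsilon, 0)$, then for every $\lambda \in (-\epsilon, 0)$ the inequalities $\lambda \braket{z, w_{\eta_i}} < 0$ must hold, which forces $\braket{z, w_{\eta_i}} > 0$ for every $i$, so every $d$-face containing $\tau$ is upper and therefore $\tau = \eta_1 \cap \cdots \cap \eta_k$ is an upper face of $\sigma$. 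The ``lower'' case is entirely symmetric. I expect the main subtlety to be the bookkeeping when $\codim \tau > 1$: one must verify that intersecting any \emph{proper} subfamily of $\{\eta_1, \ldots, \eta_k\}$ gives a face strictly larger than $\tau$, so that being an intersection of upper $d$-faces genuinely requires \emph{all} $d$-faces containing $\tau$ to be upper.
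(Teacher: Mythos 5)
Your proof is correct, and it supplies a proof that the paper itself omits (the paper states this as an observation --- ``It is not hard to see that\ldots'' --- and records it as a lemma without an argument). Your strategy is the natural one: represent $\sigma$ as the intersection of the $d+2$ closed half-spaces bounded by its facets, observe that at $x\in\relint(\tau)$ exactly the $\codim\tau$ facet constraints containing $\tau$ are active while the others remain strict in a neighbourhood, and then use transversality of $z$ to conclude that each active constraint behaves strictly monotonically in $\lambda$. Two small details you flagged yourself and correctly handled: (a) the tangent-cone argument for (2) $\Rightarrow$ (3), which amounts to the fact that the active constraint set is the same at every point of $\relint(\tau)$; and (b) the observation that for a simplex, $\tau$ can be written as an intersection of facets in only one way (any proper subfamily of the facets containing $\tau$ intersects in a strictly larger face), so ``$\tau$ is an intersection of upper facets'' is equivalent to ``\emph{every} facet containing $\tau$ is upper''. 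Note also that your argument treats the $\codim\tau=1$ case uniformly (a single active constraint), even though the paper's definition splits that case out separately; the two agree.
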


\subsection{Generalized Discrete Morse Theory}\label{sec: discrete Morse theory}

Let $K$ be a simplicial complex.
Let $\Hcal(K)$ be the Hasse diagram of the co-face poset of $K$, that is, the directed graph whose vertices are simplices in $K$ with a directed edge $\sigma \to \tau$ if $\sigma^{(p+1)} > \tau^{(p)}$.
An \textit{interval} in $\Hcal(K)$ is a collection of simplices of $K$ of the form
\begin{equation}
	[\tau, \sigma] = \{ \eta \in K \mid \tau \leq \eta \leq \sigma \}.
\end{equation}
A \textit{generalized discrete vector field} $V$ is a partition of the vertices of $\Hcal(K)$ by intervals.
If $V$ is a generalized discrete vector field, then a function $f: K \to \RR$ is a \textit{generalized discrete Morse function} with \textit{generalized discrete Morse gradient} $V$ if the following are true:
\begin{enumerate}
	\item $f$ is monotonic on $\Hcal(K)$;
	\item For any pair $\tau\leq \sigma$, $f(\sigma) = f(\tau)$ if and only if $\sigma$ and $\tau$ belong to a common interval in $V$.
\end{enumerate}
In this case the singleton intervals in $V$ are called the \textit{critical simplices} of $V$ or $f$, and we denote the set of critical simplices by $\Crit(V)$ or $\Crit(f)$.
Under these definitions, a discrete Morse gradient in the sense of Forman's original discrete Morse theory~\cite{forman_morse_1998} is nothing but a generalized gradient in which every interval has at most two simplices.
For simplicity, we will use the terms `generalized vector fields', `generalized gradients', and `generalized Morse functions', with the implicit understanding that these refer to the discrete setting in this paper.
Generalized gradients are characterized by the following lemma.
\begin{lemma}[Generalised Gradient Acyclicity Lemma]\label{thm: generalized gradient acyclicity lemma}
	Let $V$ be a generalized vector field on a simplicial complex $K$.
	Let $\Hcal(K)/V$ be the quotient graph induced by the equivalence relation on $K$ defined by the intervals in $V$.
	Then $V$ is the generalized gradient of some generalized Morse function $f$ (not necessarily unique) if and only if $\Hcal(K)/V$ is a directed acyclic graph.
\end{lemma}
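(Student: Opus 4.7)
The plan is to prove the two implications separately; the argument is largely bookkeeping once one sets up the right descent.

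For the forward direction, I would start from a Morse function $f$ with gradient $V$ and observe that, because $f$ is constant on each interval, it descends to a well-defined function $\bar f$ on the vertex set of $\Hcal(K)/V$. Any edge of $\Hcal(K)/V$ between distinct intervals comes from a covering face relation $\sigma > \tau$ (with $\dim\sigma = \dim\tau + 1$) whose endpoints lie in different intervals, so property (2) in the definition of a generalized Morse function forces $f(\sigma) > f(\tau)$ strictly, i.e. $\bar f$ strictly decreases along this edge. A directed cycle in the quotient would then produce a strictly decreasing cycle of real numbers, which is absurd.

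For the reverse direction, I would assume $\Hcal(K)/V$ is a DAG and invoke the standard fact that every finite DAG admits an injective topological ordering: there exists $g$ on its vertex set with $g(I_1) > g(I_2)$ whenever there is an edge $I_1 \to I_2$. Set $f(\sigma) \defeq g(I(\sigma))$, where $I(\sigma)$ is the unique interval of $V$ containing $\sigma$. This is constant on intervals by construction, and for any covering relation $\sigma > \tau$ either both simplices lie in the same interval (giving $f(\sigma) = f(\tau)$) or we get an edge $I(\sigma) \to I(\tau)$ in $\Hcal(K)/V$ and hence $f(\sigma) > f(\tau)$, which confirms monotonicity along $\Hcal(K)$.

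The only step requiring a little care is verifying property (2) for non-covering face relations. If $\tau < \sigma$ with $f(\tau) = f(\sigma)$, I would pick a maximal chain of covers $\tau = \eta_0 < \eta_1 < \cdots < \eta_k = \sigma$; weak monotonicity forces $f$ to be constant along the chain, so each consecutive pair must lie in a common interval of $V$, and since the intervals partition $K$ all the $\eta_i$ belong to a single interval, hence so do $\tau$ and $\sigma$. This is the main (albeit mild) technical step; the remainder reduces to topologically sorting a finite DAG, which is entirely standard.
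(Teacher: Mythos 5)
Your proposal is correct and takes essentially the same route as the paper: in both directions the argument is to descend $f$ to $\Hcal(K)/V$ (forward direction) or lift a topological ordering of the DAG back to a function on simplices (reverse direction), the latter being exactly the paper's choice of a linear extension of the reachability poset. Your extra paragraph spelling out why property (2) holds for non-covering face relations — chaining covers and using that intervals partition $K$ — is a legitimate verification that the paper leaves implicit, but it does not constitute a different approach.
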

\begin{proof}
	If $V$ is the generalized gradient of $f$, then $f$ descends to a well-defined function on $\Hcal(K)/V$ since it is constant on intervals in $V$.
	Furthermore, $f$ is strictly increasing across the edges of $\Hcal(K)/V$, so $\Hcal(K)/V$ must be acyclic.
	Conversely, if $\Hcal(K)/V$ is directed acyclic, let $P$ be the poset defined by the reachability graph of $\Hcal(K)/V$.
	Choose a linear extension of $P$, and for $x \in \Hcal(K)/V$ let $f(x)$ be the height of $x$ in this linear extension.
	Then $f$ is a generalized Morse function on $K$ whose generalized gradient is $V$.
\end{proof}
Generalized gradients are primarily useful because they can encode sequences of elementary simplicial collapses.
We have the following proposition~\cite[Theorem 2.2]{bauer_morse_2016}.
\begin{theorem}[Generalised Gradient Collapsing Theorem]\label{thm: generalized gradient collapsing theorem}
	Let $K$ be a simplicial complex with a subcomplex $L$.
	Then there is a simplicial collapse $K\searrow L$ if and only if there is a generalized gradient $V$ (not necessarily unique) on $K$ such that $K \setminus L$ is a union of non-critical intervals of $V$.
\end{theorem}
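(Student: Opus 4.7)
The plan is to establish both directions by explicit construction: from a collapse, read off a generalized gradient whose non-critical intervals are the free pairs; and from a gradient on $K$ whose non-critical intervals cover $K\setminus L$, build a collapse by processing intervals in an order compatible with the acyclicity of $\Hcal(K)/V$.

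For $(\Rightarrow)$, write the collapse $K\searrow L$ as a sequence of elementary collapses removing free pairs $(\tau_1,\sigma_1),\dots,(\tau_n,\sigma_n)$ in order. Declare $V$ to consist of the intervals $\{\tau_i,\sigma_i\}$ together with singleton intervals for each $\eta\in L$, so that $K\setminus L$ is a disjoint union of non-critical intervals by construction. To apply the Acyclicity Lemma I define a potential $\phi:K\to\mathbb{Z}$ by $\phi(\tau_i)=\phi(\sigma_i)=i$ and $\phi(\eta)=n+\dim K-\dim\eta$ for $\eta\in L$. A short case analysis—distinguishing edges among pair-simplices, from pair-simplices to $L$, and within $L$, and using the fact that at step $j$ the simplex $\tau_j$ has $\sigma_j$ as its unique coface and $\sigma_j$ is maximal—shows that every edge $A\to B$ of $\Hcal(K)$ that is not internal to an interval of $V$ satisfies $\phi(A)<\phi(B)$. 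Hence $\phi$ descends to a topological sort of $\Hcal(K)/V$, so the quotient is acyclic and $V$ is a generalized gradient by \Cref{thm: generalized gradient acyclicity lemma}.

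For $(\Leftarrow)$, suppose $V$ is a generalized gradient with $K\setminus L$ a disjoint union of non-critical intervals. By the Acyclicity Lemma, $\Hcal(K)/V$ is acyclic; fix a linear extension of its reachability poset, yielding an ordering $I_1,I_2,\dots$ of the intervals such that whenever $I\to J$ in $\Hcal(K)/V$, $I$ precedes $J$. I process the non-critical intervals in this order and leave critical ones (the simplices of $L$) untouched. The crucial observation is that when it is time to process a non-critical interval $I=[\tau,\sigma]$, no simplex of $I$ has any coface outside $I$ left in the current complex: if $A\to B$ is an edge of $\Hcal(K)$ with $B\in I$ and $A\notin I$, the interval $I'\ni A$ strictly precedes $I$, and $I'$ cannot be critical because $A\in L$ would force its face $B\in L$ by subcomplex-closedness of $L$, contradicting $B\in K\setminus L$; hence $A$ has already been removed.

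The main step is then to check that such an interval $I=[\tau,\sigma]$—with no external cofaces remaining—can itself be removed by a sequence of elementary collapses. Fix any vertex $v\in\sigma\setminus\tau$ and match each $\eta\in[\tau,\sigma]$ with $v\notin\eta$ to $\eta\cup\{v\}\in[\tau,\sigma]$. Process the matched pairs in order of decreasing $\dim(\eta\cup\{v\})$. An induction on this order shows that at each step $\eta\cup\{v\}$ is maximal in the current complex (its strict cofaces inside $[\tau,\sigma]$ all contain $v$ and have strictly larger dimension, so have already been removed), and $\eta$ has $\eta\cup\{v\}$ as its unique remaining coface (the remaining in-interval cofaces $\eta\cup\{w\}$ for $w\in\sigma\setminus(\eta\cup\{v\})$ do not contain $v$, so were matched with $\eta\cup\{v,w\}$ and eliminated at a strictly earlier step). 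Thus each matched pair realises a valid elementary collapse, and iterating over all non-critical intervals in the chosen global order produces the required collapse $K\searrow L$. The main obstacle is precisely this interaction between the local Boolean-style collapse inside each interval and the global ordering provided by acyclicity; the freeness statement above is exactly what allows the two ingredients to combine.
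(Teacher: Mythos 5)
Your proof is correct and in essence matches the paper's. For the forward direction the paper likewise assigns the pairs $\{\tau_i,\sigma_i\}$ as length-two intervals and appeals to an induction via the Gradient Composition Lemma; your potential function $\phi$ is an alternative but equivalent way to certify acyclicity, and either works. For the reverse direction, your vertex-matching refinement of each interval $[\tau,\sigma]$ into pairs $\{\eta\setminus\{v\},\eta\cup\{v\}\}$ is exactly the refinement the paper uses, and the paper then invokes the Cluster Lemma to conclude that the refined vector field is a genuine discrete Morse gradient. Your argument instead verifies freeness directly (processing pairs by decreasing dimension and checking uniqueness of the remaining coface), which makes the proof more self-contained at the cost of redoing the combinatorics underlying the Cluster Lemma by hand. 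One small point worth making explicit: your freeness argument for why all external cofaces of $I$ are gone correctly handles intervals preceding $I$, but you should also note that the critical singletons (simplices of $L$) cannot be cofaces of anything in $K\setminus L$ precisely because $L$ is a subcomplex and hence closed under taking faces, as you observe; stating this up front avoids any circularity with the "already removed" claim.
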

\begin{proof}
	The reverse implication is simply Theorem 2.2 in~\cite{bauer_morse_2016}, and we lay out a proof here for clarity.
	The gradient $V$ can be refined to a set $V'$ of intervals of length two as follows.
	For each interval $[\tau, \sigma]$, choose an arbitrary $x \in \sigma \setminus \tau$ and partition $[\tau, \sigma]$ into pairs $\{\eta \setminus \{x\}, \eta \cup \{x\}\}$ for all $\eta \in [\tau, \sigma]$.
	The set of pairs thus obtained is a genuine discrete Morse gradient (i.e., non-generalised) by the Cluster Lemma~\cite{hersh2005optimizing,jonsson_topology_2003}, and clearly $\Crit(V) = \Crit(V')$.
	It follows from the results in~\cite{forman_morse_1998} that there is a simplicial collapse $K\searrow L$, and the pairs in $V'$ correspond to the elementary collapses in the sequence of elementary collapses that define $K \searrow L$.

	For the forward implication of the theorem, we induct on the number of elementary collapses in the collapsing sequence $K \searrow L$.
	First suppose $K = L \sqcup \{\tau, \sigma\}$.
	Then it is straightforward to check that $V = \{[\tau, \sigma]\} \cup L$ is a generalized gradient on $K$ with $\Crit(V) = L$.
	The result then follows using induction and the Gradient Composition Lemma~\cite[Lemma 5.2]{bauer_morse_2016}.
\end{proof}
In Forman's original theory, the restriction of a discrete Morse function to a subcomplex is also a discrete Morse function~\cite[Lemma 4.1]{forman_morse_1998}.
The following lemma is the analogue of that result for the generalized case, and the proof is a straightforward verification.
\begin{lemma}[Generalised Gradient Restriction Lemma]\label{thm: generalized gradient restriction lemma}
	Let $V$ be a generalized gradient on a simplicial complex $K$ and let $L$ be a subcomplex of $K$ such that $K\setminus L$ is a union of intervals of $V$.
	Then the set of intervals of $V$ contained in $L$ is a generalized gradient on $L$ which we denote as $V_L$, and $\Crit(V_{L}) = \Crit(V) \cap L$.
\end{lemma}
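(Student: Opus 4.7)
The plan is to verify the three claims in order: (i) $V_L$ is a generalized vector field on $L$; (ii) $V_L$ is a generalized gradient; (iii) $\Crit(V_L) = \Crit(V) \cap L$. Of these, (ii) is where all the real content lies, and I will attack it via the Generalised Gradient Acyclicity Lemma.

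For (i), I would first argue that every interval of $V$ is either entirely contained in $L$ or entirely contained in $K\setminus L$. Indeed, $V$ partitions $K$, and by hypothesis $K\setminus L$ is a union of some subcollection $W \subseteq V$ of intervals. If $I \in V$ meets $K\setminus L$, then $I$ meets some $J \in W$, so $I = J \subseteq K \setminus L$ because $V$ is a partition. Hence the intervals in $V\setminus W$ are precisely those contained in $L$, and they partition the simplex set of $L$. Since an interval $[\tau,\sigma]$ of $\Hcal(K)$ contained in $L$ coincides with the interval $[\tau,\sigma]$ of $\Hcal(L)$ (the face relation is inherited from $K$), $V_L \defeq V \setminus W$ is a partition of the vertices of $\Hcal(L)$ by intervals; that is, $V_L$ is a generalized vector field on $L$.

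For (ii), by the Generalised Gradient Acyclicity Lemma (\Cref{thm: generalized gradient acyclicity lemma}) it suffices to show that $\Hcal(L)/V_L$ is a directed acyclic graph. The natural map $\Hcal(L)/V_L \to \Hcal(K)/V$ that sends each class $[I]$ with $I \in V_L$ to its class in $\Hcal(K)/V$ is well-defined and injective on vertices, because $V_L \subseteq V$. Moreover, every directed edge of $\Hcal(L)/V_L$ is induced by a covering relation $\sigma > \tau$ in $\Hcal(L)$, which is also a covering relation in $\Hcal(K)$, so it is sent to a (possibly trivial) edge of $\Hcal(K)/V$. Consequently any directed cycle in $\Hcal(L)/V_L$ would descend to a directed cycle in $\Hcal(K)/V$. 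Since $V$ is a gradient on $K$, $\Hcal(K)/V$ is acyclic by \Cref{thm: generalized gradient acyclicity lemma}, so $\Hcal(L)/V_L$ is acyclic as well, proving that $V_L$ is a gradient on $L$.

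For (iii), the critical simplices of $V_L$ are by definition the singleton intervals in $V_L$. These are precisely the singleton intervals of $V$ that happen to lie in $L$, which is the set $\Crit(V)\cap L$. The main (and only real) obstacle is the acyclicity transfer in step (ii); everything else is essentially bookkeeping with the partition. Once the observation that intervals of $V$ either lie entirely in $L$ or entirely in $K\setminus L$ is in hand, the rest follows cleanly.
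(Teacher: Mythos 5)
Your proof is correct. The paper gives no argument for this lemma beyond the remark that it is a ``straightforward verification,'' so there is no official proof to compare against; your write-up supplies exactly the verification that was omitted. The three ingredients you isolate are the right ones: (i) the partition-set observation that any interval of $V$ meeting $K\setminus L$ must equal one of the intervals composing $K\setminus L$, so intervals of $V$ lie entirely inside $L$ or entirely inside $K\setminus L$; (ii) acyclicity of $\Hcal(L)/V_L$ transferred from $\Hcal(K)/V$ via the vertex-injective graph homomorphism induced by inclusion, combined with the \nameref{thm: generalized gradient acyclicity lemma}; and (iii) the identification of singletons. One could streamline (ii) slightly by noting that a generalized Morse function $f$ on $K$ with gradient $V$ restricts to a function on $L$ that is monotone on $\Hcal(L)$ and whose level sets in the face poset of $L$ are exactly the intervals of $V_L$ (by the full-interval observation from (i)), so the restriction is itself a generalized Morse function with gradient $V_L$ — this bypasses the quotient-graph argument but is the same idea in different clothing. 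Either route is fine, and yours is complete.
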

The following lemma, stated in~\cite{bauer_morse_2016}, allows us to obtain a common refinement of a family of generalized gradients.
\begin{lemma}[Sum Refinement Lemma]\label{thm: sum refinement lemma}
	Suppose $f: K \to \RR$ and $g : L \to \RR$ are generalized Morse functions with generalized gradients $V$ and $W$ respectively. Then $f+g$ is a generalized Morse function on $K \cap L$ whose generalized gradient is given by
	\begin{equation}
		Z = \{I \cap J \mid I \in V, J \in W, I \cap J \neq \emptyset \}.
	\end{equation}
\end{lemma}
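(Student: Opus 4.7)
The plan is to verify directly that $Z$ satisfies the two defining properties of a generalized Morse gradient for $f+g$ on $K \cap L$: (i) $Z$ is a partition of the simplices of $K \cap L$ by intervals of the face poset, and (ii) $f+g$ is monotonic on $\Hcal(K \cap L)$ with level sets matching exactly the elements of $Z$ among face--coface pairs.

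To establish (i), first note that since $V$ partitions $K$ and $W$ partitions $L$, every $\sigma \in K \cap L$ belongs to a unique $I \in V$ and a unique $J \in W$, and hence lies in exactly one element $I \cap J$ of $Z$. The substantive step is to show that each non-empty $I \cap J$ is an interval in the face poset of $K \cap L$. Writing $I = [\alpha_1, \beta_1]$ and $J = [\alpha_2, \beta_2]$ and picking any $\eta \in I \cap J$, I would argue that $\alpha_1 \cup \alpha_2$ (union of vertex sets) is a subset of the vertex set of $\eta$, hence a face of $\eta$ and therefore a simplex in $K \cap L$, while $\beta_1 \cap \beta_2$ (intersection of vertex sets) is a face of both $\beta_1$ and $\beta_2$, hence a simplex lying in both $K$ and $L$. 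Any simplex $\eta'$ with $\alpha_1 \cup \alpha_2 \leq \eta' \leq \beta_1 \cap \beta_2$ satisfies $\alpha_i \leq \eta' \leq \beta_i$ for $i=1,2$, so it lies in both $I$ and $J$; conversely every element of $I \cap J$ sits between these bounds. Therefore $I \cap J = [\alpha_1 \cup \alpha_2,\, \beta_1 \cap \beta_2]$ is an interval.

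For (ii), monotonicity of $f+g$ on $\Hcal(K \cap L)$ is immediate from monotonicity of $f$ on $\Hcal(K)$ and $g$ on $\Hcal(L)$. If $\sigma$ and $\tau$ share a common interval $I \cap J \in Z$, then they share $I \in V$ and $J \in W$, so $f(\sigma) = f(\tau)$ and $g(\sigma) = g(\tau)$, giving $(f+g)(\sigma) = (f+g)(\tau)$. Conversely, given $\tau \leq \sigma$ in $K \cap L$ with $(f+g)(\tau) = (f+g)(\sigma)$, the monotonicity inequalities $f(\tau) \leq f(\sigma)$ and $g(\tau) \leq g(\sigma)$ must both be equalities; the Morse property of $f$ and $g$ separately then places the pair in a common $I \in V$ and a common $J \in W$, and hence in $I \cap J \in Z$.

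There is no real obstacle in the argument. The only point requiring a little care is the closure of the face poset under the pairwise joins and meets appearing in the description of $I \cap J$, but this is handled by performing both operations inside a witness simplex (respectively, inside a common face) of the complex, which guarantees that the resulting vertex sets are themselves faces of $K \cap L$.
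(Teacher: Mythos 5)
Your proof is correct. The paper does not actually give a proof of this lemma---it states it and attributes it to Bauer and Edelsbrunner---so there is nothing in-paper to compare against. Your direct verification is the natural one: the observation that $[\alpha_1,\beta_1]\cap[\alpha_2,\beta_2]=[\alpha_1\cup\alpha_2,\ \beta_1\cap\beta_2]$ whenever the intersection is non-empty (with both bounds landing in $K\cap L$ because simplicial complexes are closed under taking faces), together with the fact that $f(\tau)\le f(\sigma)$ and $g(\tau)\le g(\sigma)$ sum to an equality if and only if each is an equality, which by the Morse property of $f$ and $g$ separately is equivalent to $\sigma,\tau$ lying in a common $I$ and a common $J$.
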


\subsection{Karush-Kuhn-Tucker Conditions}\label{sec: convex optimisation}

In \Cref{sec: KKT for stacks and pairing lemmas} we will show that the chromatic alpha filtration function is a generalized discrete Morse function.
A key observation is that the filtration value of each simplex $\sigma \in \Alpha_{\infty}(X,\mu)$ is the solution to a convex optimization problem with affine constraints.
Such problems have solutions that are characterized by the Karush-Kuhn-Tucker (KKT) conditions~\cite{karush_minima_1939,kuhn_nonlinear_1951}, which are a generalization of the method of Lagrange multipliers to non-smooth convex functions.
To state the KKT conditions we need a non-smooth analogue of the differential of a function.
If $f: \RR^d \to \RR$ is a convex function then the \textit{sub-differential} of $f$~\cite[Chapter 2]{morduchovic_easy_2014} at a point $x \in \RR^d$ is the closed convex set $\partial f(x) \subset \RR^d$ defined by
\begin{equation}
	\partial f(x) \defeq \{ v \in \RR^d \mid f(x + h) \geq f(x) + \braket{v, h} \text{ for all } h \in \RR^d\}.
\end{equation}
If $f$ is a smooth function then its sub-differential coincides with the ordinary gradient, i.e., $\partial f(x) = \{\nabla f(x)\}$~\cite[Proposition 2.36]{morduchovic_easy_2014}.

\begin{theorem}[KKT Conditions]\label{thm: KKT conditions}
	Let $f$ be a convex function on $\RR^d$, let $\Ical, \Jcal$ be finite disjoint sets, let $\Kcal = \Ical \sqcup \Jcal$, and let $\{g_k : \RR^d \to \RR\}_{k \in \Kcal}$ be a set of affine linear functions. Then $x_*$ is a minimum of the following optimization problem:
	\begin{equation*}
		\arraycolsep=3.0pt
		\begin{array}{crcll}
			\textnormal{minimise}   & f(x)                                              \\
			\textnormal{subject to} & g_i(x) & \geq & 0 & \text{ for all } i \in \Ical, \\
			                        & g_j(x) & =    & 0 & \text{ for all } j \in \Jcal,
		\end{array}
	\end{equation*}
	if and only if there exist scalars $\{\lambda_k\}_{k\in \Kcal} \subset \RR$ such that the following conditions hold:
	\begin{center}
		\begin{tabular}{>{\raggedleft\arraybackslash\upshape\bfseries}p{0.3\linewidth} p{0.65\linewidth}}
			Primal feasibility      & $g_i(x_*) \geq 0$ for all $i \in \Ical$ and $g_j(x_*) = 0$ for all $j \in \Jcal$.                                                                                                                                   \\
			Stationarity            & The zero vector belongs to the set $\partial f(x_*) + \sum_{k \in \Kcal} \lambda_k \partial g_k(x_*) \subset \RR^d$, where $\partial f$ is the sub-differential of $f$ and $\partial g_k$ is the gradient of $g_k$. \\
			Dual feasibility        & $\lambda_i \leq 0$ for all $i \in \Ical$.                                                                                                                                                                           \\
			Complementary slackness & $\lambda_i g_i(x_*) = 0$ for all $i \in \Ical$.
		\end{tabular}
	\end{center}
\end{theorem}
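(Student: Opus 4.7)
The plan is to prove each direction separately, starting with the easier sufficiency. Suppose the four KKT conditions hold at $x_*$, and let $x$ be an arbitrary feasible point. Since each $g_k$ is affine, its sub-differential is the singleton $\{\nabla g_k(x_*)\}$, so the stationarity condition provides $v \in \partial f(x_*)$ with $v = -\sum_{k \in \Kcal} \lambda_k \nabla g_k(x_*)$. By the defining inequality of the sub-differential,
\begin{equation*}
f(x) - f(x_*) \;\geq\; \langle v, x - x_* \rangle \;=\; -\sum_{k \in \Kcal} \lambda_k \langle \nabla g_k(x_*), x - x_* \rangle.
\end{equation*}
Affinity also yields $\langle \nabla g_k(x_*), x - x_*\rangle = g_k(x) - g_k(x_*)$, and combining primal feasibility (so $g_j(x_*) = 0$ for $j \in \Jcal$ and $g_j(x) = 0$) with complementary slackness (so $\sum_{i \in \Ical} \lambda_i g_i(x_*) = 0$) collapses the sum to $-\sum_{i \in \Ical} \lambda_i g_i(x)$. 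Feasibility of $x$ gives $g_i(x) \geq 0$, and dual feasibility gives $\lambda_i \leq 0$, so each term $-\lambda_i g_i(x) \geq 0$. Therefore $f(x) \geq f(x_*)$.

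For necessity, I would use the following characterization from convex analysis: if $C$ is a convex set and $f$ is convex, then $x_*$ minimizes $f$ over $C$ if and only if $0 \in \partial f(x_*) + N_C(x_*)$, where $N_C(x_*)$ is the normal cone at $x_*$. Here $C$ is the polyhedron cut out by the affine constraints $\{g_i \geq 0\}_{i \in \Ical}$ and $\{g_j = 0\}_{j \in \Jcal}$. The key step is to establish the explicit description
\begin{equation*}
N_C(x_*) \;=\; \Bigl\{ \sum_{i \in \Ical(x_*)} \mu_i (-\nabla g_i(x_*)) + \sum_{j \in \Jcal} \nu_j \nabla g_j(x_*) \;:\; \mu_i \geq 0,\ \nu_j \in \RR \Bigr\},
\end{equation*}
where $\Ical(x_*) = \{i \in \Ical : g_i(x_*) = 0\}$ is the active index set. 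Once this is in hand, the membership $0 \in \partial f(x_*) + N_C(x_*)$ immediately yields multipliers satisfying stationarity, dual feasibility (take $\lambda_i = -\mu_i \leq 0$), and complementary slackness (setting $\lambda_i = 0$ for inactive $i$); primal feasibility is assumed.

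The main obstacle is the normal-cone identity, which is essentially a finite-dimensional convex duality fact equivalent to Farkas' lemma. I would prove it directly via a separating hyperplane argument: take any $w \in N_C(x_*)$ and observe that $\langle w, x - x_*\rangle \leq 0$ for every $x \in C$. Translating to the cone of feasible directions at $x_*$ gives a system of linear inequalities, and applying Farkas' lemma produces the desired non-negative/unconstrained representation in terms of the active constraint gradients. Crucially, because every $g_k$ is \emph{affine} (not merely convex), no constraint qualification is needed—the tangent cone to $C$ at $x_*$ coincides with the linearized cone automatically, which is exactly the feature the necessity direction relies on.
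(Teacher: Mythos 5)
Your proposal is correct, but it is worth noting that the paper does not prove this statement at all: it is stated as background in the preliminaries and justified only by citation to the classical literature on the Karush--Kuhn--Tucker conditions, so there is no in-paper argument to compare against line by line. Your sufficiency direction is complete and handles the paper's (slightly unusual) sign convention correctly: with stationarity written as $0 \in \partial f(x_*) + \sum_k \lambda_k \nabla g_k(x_*)$ and constraints $g_i \geq 0$, dual feasibility is indeed $\lambda_i \leq 0$, and your chain $f(x)-f(x_*) \geq -\sum_{i\in\Ical}\lambda_i g_i(x) \geq 0$ goes through. Your necessity direction is a sound plan: for affine constraints the tangent cone at $x_*$ equals the linearization cone without any constraint qualification, the polar computation reduces to Farkas' lemma, and the resulting normal-cone description yields the multipliers with the signs you indicate ($\lambda_i = -\mu_i \leq 0$, $\lambda_i = 0$ off the active set). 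The one step you should make explicit is the optimality criterion $0 \in \partial f(x_*) + N_C(x_*)$ itself: it follows from $0 \in \partial(f + \iota_C)(x_*)$ only via the Moreau--Rockafellar sum rule, which needs a qualification in general; here it is available because $f$ is finite-valued and convex on all of $\RR^d$, hence continuous, so the sum rule applies at any point of $C$. With that remark added, your argument is a self-contained and more informative treatment than the paper's citation, at the cost of importing two standard convex-analysis facts (the normal-cone optimality criterion and Farkas) that are roughly of the same depth as the theorem being proved.
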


We will also need the following well-known lemma from the theory of convex sub-differentials~\cite[Theorem 2.93]{morduchovic_easy_2014}.
\begin{lemma}\label{thm: subdifferential of max of convex functions}
	Suppose $\{f_1, \ldots, f_m\}$ is a set of convex functions on $\RR^d$ and $f(x) \defeq \max_{1 \leq k \leq m} f_k(x)$.
	Then $\partial f(x)$ is the convex hull of the union of sub-differentials of the active functions of $f$ at $x$, i.e.,
	\begin{equation}
		\partial f(x) = \Conv\left\{ \bigcup_{f_k(x) = f(x)} \partial f_k(x) \right\}.
	\end{equation}
\end{lemma}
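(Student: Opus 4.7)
The plan is to prove the set equality by showing both inclusions, exploiting the fact that for convex functions the sub-differential is the polar dual of the directional derivative.

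The easy inclusion, $\Conv\bigl\{\bigcup_{f_k(x)=f(x)} \partial f_k(x)\bigr\} \subset \partial f(x)$, follows directly from the definitions. Given $v \in \partial f_k(x)$ for some active index $k$ (i.e., $f_k(x) = f(x)$) and any $h \in \RR^d$, I would use the chain of inequalities
\[
f(x+h) \geq f_k(x+h) \geq f_k(x) + \braket{v, h} = f(x) + \braket{v, h},
\]
which shows $v \in \partial f(x)$. Since $\partial f(x)$ is convex (being a defining intersection of half-spaces indexed by $h$), the full convex hull on the left is contained in $\partial f(x)$.

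The reverse inclusion is the main obstacle, and I would handle it by contradiction using a separation argument. Let $C$ denote the convex hull on the right-hand side. Each $\partial f_k(x)$ is compact (since $f_k$ is finite-valued and convex), the set of active indices is finite, so $C$ is compact and convex. Suppose, for contradiction, that there exists $v \in \partial f(x) \setminus C$. By the Hahn--Banach separation theorem applied to the compact convex set $C$ and the point $v$, there exists $h \in \RR^d$ such that
\[
\braket{v, h} > \sup_{u \in C} \braket{u, h} = \max_{k : f_k(x) = f(x)}\, \sup_{u \in \partial f_k(x)} \braket{u, h}.
\]

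The final step is to identify both sides of this strict inequality with directional derivatives and derive a contradiction with $v \in \partial f(x)$. For a convex function $g$, the support function of $\partial g(x)$ in direction $h$ equals the one-sided directional derivative $g'(x;h) = \lim_{t \downarrow 0}(g(x+th) - g(x))/t$. Applying this to each $f_k$ and to $f$, and using the elementary fact that $f'(x;h) = \max_{k : f_k(x) = f(x)} f_k'(x;h)$ (which follows because for small $t > 0$ the max defining $f(x+th)$ is attained on active indices by continuity), I would obtain
\[
\braket{v, h} > \max_{k : f_k(x) = f(x)} f_k'(x; h) = f'(x; h).
\]
On the other hand, $v \in \partial f(x)$ implies $\braket{v, h} \leq f'(x; h)$, yielding the contradiction. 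The main subtlety to be careful with is the identity $f'(x;h) = \max_{k \in A(x)} f_k'(x;h)$, which requires noting that inactive $f_j$ remain strictly below $f$ on a neighbourhood of $x$ by continuity, so only active indices can contribute to the directional derivative.
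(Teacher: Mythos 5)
Your proof is correct. Note that the paper does not prove this lemma at all: it is quoted as a standard fact with a citation to Theorem 2.93 of Mordukhovich--Nam, so there is no in-paper argument to compare against. Your argument is the standard textbook proof of the max-function subdifferential formula: the easy inclusion from the definition of the subdifferential plus convexity of $\partial f(x)$, and the reverse inclusion by strictly separating a putative $v \in \partial f(x) \setminus C$ from the compact convex set $C$ and identifying the two sides of the separating inequality with directional derivatives via the support-function identity $\sup_{u \in \partial g(x)} \langle u, h \rangle = g'(x;h)$. The ingredients you invoke are all valid here because each $f_k$ is finite and convex on all of $\RR^d$ (hence locally Lipschitz, so each $\partial f_k(x)$ is nonempty, convex and compact, and the support-function identity applies), and your handling of the one genuinely delicate point --- that $f'(x;h) = \max_{k:\,f_k(x)=f(x)} f_k'(x;h)$ because inactive indices stay strictly below $f$ near $x$ by continuity, so for small $t>0$ only active indices realise the max --- is exactly right; with finitely many indices the limit of the maximum equals the maximum of the limits. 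In short, the proposal is a complete and correct proof of a result the paper only cites.
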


\section{Collapsing the Chromatic Delaunay Triangulation}\label{sec: collapse of chromatic triangulation}

Throughout this section, we fix a point cloud $X \subset \RR^d$ in general position.
The main goal of this section is to prove the following theorem:

\begin{theorem}\label{thm: chromatic_collapse}
	Suppose $X \subset \RR^d$ is in general position, and $\mu$ and $\nu$ are colourings of $X$ such that $\nu$ is a refinement of $\mu$.
	Then there is a simplicial collapse $\Alpha_{\infty}(X,\nu) \searrow \Alpha_{\infty}(X,\mu)$.
\end{theorem}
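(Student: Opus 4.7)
The plan is to proceed by induction on the number of classes of $\mu$ that are split in passing to $\nu$. Any refinement $\mu \preceq \nu$ factors as a chain $\mu = \mu_0 \preceq \mu_1 \preceq \cdots \preceq \mu_k = \nu$ in which each $\mu_{i+1}$ refines $\mu_i$ by partitioning a single colour class into exactly two; since a composition of simplicial collapses is itself a simplicial collapse, the theorem reduces to the base case of a single-split refinement. Up to the isometric isomorphism of chromatic Delaunay triangulations induced by relabelling class labels, I may moreover assume that the class being split is the last one, so that \Cref{rem: description of membrane} applies verbatim to this base case.

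That remark supplies a simplicial embedding $\iota: \Alpha_{\infty}^{\mu}(X) \cong \Del^{\mu}(X) \hookrightarrow \Del^{\nu}(X)$ whose image $M \defeq \iota(\Del^{\mu}(X))$ is the graph of a piecewise-linear function over $\Del^{\mu}(X) \subset \RR^{d+s}$ inside $\RR^{d+s+1}$. Writing $z \defeq e_{d+s+1}$ for the ``vertical'' direction and $h(x) \defeq x_{d+s+1}$ for the corresponding height, every line parallel to $z$ meets $M$ in at most one point, so $M$ partitions $\Del^{\nu}(X) \setminus M$ into points lying strictly above $M$ and points lying strictly below. General position of $X$ ensures via \Cref{thm: dimension of chromatic lift} that $\Del^{\nu}(X)$ is a pure $(d+s+1)$-dimensional complex, and a direct verification using \Cref{prop: GP3} confirms that $z$ is transverse to every top simplex, so \Cref{thm: characterisation of upper and lower faces} supplies well-defined minimal upper and minimal lower faces for each.

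The plan from here is to construct a generalized discrete Morse gradient $V$ on $\Del^{\nu}(X)$ whose set of critical simplices is exactly $M$; \Cref{thm: generalized gradient collapsing theorem} will then deliver the collapse $\Del^{\nu}(X) \searrow M$, which is \Cref{thm: chromatic_collapse}. The intervals of $V$ will cover $\Del^{\nu}(X) \setminus M$ by pairing each non-membrane simplex with either a non-membrane coface or a non-membrane subface, with the matching driven by the sweep direction $z$: each top simplex $\sigma$ is matched with one of its non-membrane codimension-one faces, each still-unmatched non-membrane codimension-one simplex is matched with a non-membrane codimension-two face, and so on, in a way that keeps the matching on a single side of $M$.

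The central technical obstacle is verifying that this scheme is globally well-defined --- that every non-membrane simplex lies in exactly one interval --- and that the induced quotient of $\Hcal(\Del^{\nu}(X))$ is acyclic, so that \Cref{thm: generalized gradient acyclicity lemma} certifies $V$ as a gradient rather than merely a vector field. The partition property will require a careful case analysis of how the codimension-one faces of each top simplex meet $M$, exploiting the graph description of $M$ together with general position of $X$ to rule out ambiguous pairings. Acyclicity should follow from the fact that the matching never crosses $M$, combined with the strict monotonicity on each side of $M$ of a suitable aggregate of $h$ (for example $\sigma \mapsto \max_{x \in \sigma} h(x)$), which then serves as the Morse function witnessing $V$ and confirming that $\Crit(V) = M$.
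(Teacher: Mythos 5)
Your reduction to a chain of elementary (single-split) refinements and to the membrane picture of \Cref{rem: description of membrane} matches the paper's setup exactly, and the high-level plan---construct a generalized discrete Morse gradient on $\Del^{\nu}(X)$ with critical set $M = \iota(\Del^{\mu}(X))$, then invoke the collapsing theorem---is also the paper's strategy. But the technical core of your sketch has two genuine problems.

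First, your candidate Morse function $\sigma \mapsto \max_{x \in \sigma} h(x)$ cannot certify acyclicity: in the chromatic lift $X^{\nu} = (X^{\mu})^{\zeta}$, every vertex has $(d+s+1)^{\text{st}}$ coordinate equal to $0$ or $1$, so $\max_{x \in \sigma} h(x) \in \{0,1\}$ for all $\sigma$. A function taking only two values on all of $\Del^{\nu}(X)$ cannot be strictly monotone along the (many) edges of the quotient Hasse diagram that remain after collapsing the intervals of $V$, so it gives no control at all. The paper instead tracks the height $h(C(\sigma))$ of the \emph{circumcentre} of each top simplex $\sigma$; the key technical input is that for two adjacent top simplices $\sigma_1, \sigma_2$ on the same side of $M$ whose shared face $\tau$ is an upper face of $\sigma_2$ and hence a lower face of $\sigma_1$, one has $h(C(\sigma_1)) > h(C(\sigma_2))$. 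This is a Delaunay fact, not a general polytopal one (the proof uses that the circumsphere of $\sigma_1$ is empty of $\sigma_2$'s opposite vertex), and it is exactly the quantity that yields a strictly decreasing sequence along any directed path between intervals, hence acyclicity. Any correct argument here must produce a real-valued quantity with strictly more resolution than the vertex heights, and the circumcentre height is the natural choice.

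Second, your description of the matching --- pair each top simplex with a single non-membrane codimension-one face, then pair still-unmatched codimension-one simplices with codimension-two faces, and so on --- is a Forman-style pairing whose well-definedness and acyclicity you leave entirely to ``careful case analysis.'' The paper sidesteps this by using genuine intervals: each top simplex $\sigma$ above (resp.\ below) $M$ is grouped with \emph{all} of its upper (resp.\ lower) faces into the single interval $[\sigma^*,\sigma]$ (resp.\ $[\sigma_*,\sigma]$), and the partition property then reduces to a single clean lemma: every non-membrane $k$-simplex with $k \le d+s$ lying above $M$ is an upper face of a \emph{unique} top simplex above $M$ (proved by descending a short vertical segment into the adjacent top simplex and using transversality). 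Your cascading-pairing scheme is not obviously producing a partition, and even if it does, one would then need the Cluster Lemma or an explicit construction to identify which pairings are compatible with a global acyclic ordering --- work that the interval formulation renders unnecessary. So the gap is concrete: you need to replace $\max h$ over vertices with the circumcentre height, and you would do well to replace the cascading pairing with the interval $[\sigma^*,\sigma]$ so that both the partition and the acyclicity arguments close.
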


We will construct a generalized discrete Morse gradient that induces the collapse in the theorem.
We begin with a series of observations to reformulate the problem.
Since $X$ is in general position, we have isomorphisms $\Alpha_{\infty}(X,\mu) \cong \Del(X,\mu)$ and $\Alpha_{\infty}(X,\nu) \cong \Del(X,\nu)$ as abstract simplicial complexes.
By \Cref{rem: functoriality of chromatic Delaunay triangulations} there is an embedding $\iota : \Del(X,\mu) \hookrightarrow \Del(X,\nu)$.
Therefore, it suffices to exhibit a simplicial collapse $\Del(X,\nu) \searrow \iota(\Del(X,\mu))$.

Next, we may assume that $\nu$ is an \textit{elementary refinement} of $\mu$, by which we mean that $\nu$ is obtained from $\mu$ by splitting up some class.
More precisely if $\mu = (X_0, \ldots, X_s)$ and $\nu = (X'_0, \ldots, X'_{s'})$ then we may assume that $\nu \preceq \mu$ and $s' = s + 1$.
This is because an arbitrary refinement $\nu \preceq \mu$ can be written as a sequence of elementary refinements $\nu = \mu_k \preceq \ldots \preceq \mu_0 = \mu$, whereby there is a sequence of embeddings $\Del(X,\mu) = \Del(X,\mu_0) \hookrightarrow \ldots \hookrightarrow \Del(X,\mu_k) = \Del(X,\nu)$.
Then the result for elementary refinements applied to this sequence yields the general result.

Since the chromatic alpha filtration is invariant to a permutation of class labels, we may also assume that $\nu$ is obtained from $\mu$ by splitting up specifically the last class.
In other words, $X_i = X_i'$ for $i < s$ and $X_s = X'_s \cup X'_{s+1}$.
Then we are in the situation of \Cref{rem: description of membrane}, wherein the chromatic lift $X^{\nu} = (X^{\mu})^{\zeta}$ for some colouring $\zeta:X \to \{0, 1\}$.
Using the definition of the chromatic Delaunay triangulation gives
\begin{equation}
	\Del(X,\nu) = \Del(X^{\nu}) = \Del((X^{\mu})^{\zeta}) = \Del(X^{\mu},\zeta) \quad \text{and} \quad \Del(X,\mu) = \Del(X^{\mu}).
\end{equation}
Therefore, we must show that $\Del(X^{\mu},\zeta) \searrow \iota(\Del(X^{\mu}))$, for which it suffices to show that $\Del(X,\mu) \searrow \iota(\Del(X))$ whenever $\mu$ is a bi-colouring of $X$.
Put together, these arguments show that \Cref{thm: chromatic_collapse} is a corollary of the following special case:

\begin{theorem}\label{thm: bichrom_triangulation_collapse}
	Suppose $X \subset \RR^d$ is in general position and $\mu : X \to \{0,1\}$ is a bi-colouring of $X$.
	Then there is a simplicial collapse $\Del(X,\mu) \searrow \iota(\Del(X))$, where $\iota : \Del(X) \hookrightarrow \Del(X,\mu)$ is the embedding described in \Cref{rem: functoriality of chromatic Delaunay triangulations}.
\end{theorem}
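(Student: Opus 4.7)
The plan is to construct an explicit generalised discrete Morse gradient $V$ on $\Del^\mu(X)$ whose critical simplices coincide with $\iota(\Del(X))$; the desired collapse will then follow from the Generalised Gradient Collapsing Theorem (\Cref{thm: generalized gradient collapsing theorem}).

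The setup exploits the graph-of-function description of the membrane. Since $\mu$ is bi-chromatic, the lifted points $X^\mu \subset \RR^{d+1}$ lie in two parallel hyperplanes at heights $0$ and $1$, and by \Cref{rem: description of membrane} the subcomplex $\iota(\Del(X))$ is precisely the graph of the piecewise-linear function $h \circ \iota : \Del(X) \to [0,1]$, a $d$-dimensional ``membrane'' sitting inside the $(d+1)$-dimensional complex $\Del^\mu(X)$. A basic observation is that the membrane is closed under taking faces while its complement is closed under taking cofaces: writing $\pi : \RR^{d+1} \to \RR^d$ for the projection, $\eta \in \iota(\Del(X))$ iff $\pi(\eta) \in \Del(X)$, and supersets of non-Delaunay simplices are non-Delaunay. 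Consequently, for each top-dimensional simplex $\sigma \in \Del^\mu(X)$, the set $C(\sigma) = \{\eta \le \sigma : \eta \notin \iota(\Del(X))\}$ is an upward-closed filter in the face poset of $\sigma$ containing $\sigma$ itself.

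The central structural step is to prove that $C(\sigma)$ is in fact a single interval $[\rho(\sigma), \sigma]$, where $\rho(\sigma)$ is the intersection of all codim-$1$ non-membrane faces of $\sigma$, and that the resulting intervals are mutually disjoint across top simplices $\sigma$. Granting this, I set
\[
V = \bigl\{[\rho(\sigma), \sigma] : \sigma \in \Del^\mu(X)^{(d+1)}\bigr\} \cup \bigl\{\{\eta\} : \eta \in \iota(\Del(X))\bigr\},
\]
which partitions $\Del^\mu(X)$ with singletons on the membrane and non-trivial intervals elsewhere, so that $\Crit(V) = \iota(\Del(X))$ by construction. Acyclicity of the induced quotient Hasse diagram (\Cref{thm: generalized gradient acyclicity lemma}) should then follow from a height-sweep argument with $z = e_{d+1}$: each non-trivial interval is contained in a single top simplex, and the height function induces a compatible strictly monotone descent on the remaining edges of the quotient graph.

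The main obstacle will be the structural lemma just described. A priori, $C(\sigma)$ could have multiple minimal faces if the intersection of two non-membrane codim-$1$ faces happens to lie in the membrane, and disjointness could fail if a non-membrane face were shared between the caps of two adjacent top simplices. Ruling these pathologies out will require a geometric analysis that combines the empty-stack characterisation of $\Del^\mu(X)$ from \Cref{sec: chromatic alpha filtration} with the graph-of-function property of $\iota(\Del(X))$ and the general position hypotheses (\Cref{prop: GP2} and \Cref{prop: GP3}); my expectation is that the empty-circumsphere condition that defines $\Del(X^\mu)$ propagates consistently between adjacent cells, so that the filter $C(\sigma)$ collapses to a principal filter and each non-membrane face has a canonically distinguished top simplex claiming it. Once the structural lemma and disjointness are established, \Cref{thm: generalized gradient collapsing theorem} immediately yields the simplicial collapse $\Del^\mu(X) \searrow \iota(\Del(X))$.
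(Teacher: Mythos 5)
Your overall strategy matches the paper's---build a generalized gradient whose intervals have one $(d{+}1)$-simplex each and whose critical set is the membrane, then apply \Cref{thm: generalized gradient collapsing theorem}---but the specific intervals you propose do not work, and this is not a gap that the hoped-for geometric propagation argument can fill. The set $C(\sigma)$ of \emph{all} non-membrane faces of a top simplex $\sigma$ is in general not an interval, and the sets $C(\sigma)$ are not pairwise disjoint. Already the $1$-dimensional example of \Cref{fig: 1d example for vertical collapse} breaks your proposal: take $O_2 = (-2,4)$, $B_2 = (0,0)$, $B_3 = (3,0)$, $O_3 = (6,4)$ (lifts of points with $x$-coordinates $-2, 0, 3, 6$), so the membrane edges are $O_2B_2$, $B_2B_3$, $B_3O_3$. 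For $\sigma_2 = O_2B_3O_3$, the non-membrane faces are $O_2B_3$, $O_2O_3$, and $\sigma_2$ itself. This set has two minimal elements, so it is not a principal filter; moreover $\rho(\sigma_2) = O_2B_3 \cap O_2O_3 = O_2$ is a membrane vertex, so $[\rho(\sigma_2), \sigma_2]$ strictly contains $C(\sigma_2)$ and includes the membrane face $B_3O_3$. Disjointness also fails: the edge $O_2B_3$ lies in both $C(O_2B_2B_3)$ and $C(O_2B_3O_3)$.

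The missing idea is to split the non-membrane faces of a top simplex according to the direction $e_{d+1}$: rather than all non-membrane faces of $\sigma$, the paper takes only the \emph{upper} faces of $\sigma$ if $\sigma$ lies above the membrane, giving the interval $[\sigma^*, \sigma]$ with $\sigma^*$ the minimal upper face (and the lower faces $[\sigma_*, \sigma]$ if $\sigma$ lies below). In the example above, $\sigma_2$ lies above, its unique upper codimension-one face is $O_2O_3$, so its interval is $\{O_2O_3, \sigma_2\}$; the edge $O_2B_3$ is a lower face of $\sigma_2$ and is instead the upper face of $O_2B_2B_3$ (also above the membrane), hence claimed by that simplex's interval $\{O_2B_3, O_2B_2B_3\}$. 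The key technical facts needed are exactly the ones your ``expectation'' gestures at but does not deliver: that a non-membrane face lying above the membrane is the upper face of precisely one $(d{+}1)$-simplex above the membrane (\Cref{thm: faces above membrane are upper for something}, proved by the convexity/transversality argument using \Cref{thm: chromatic delaunay has no vertical faces} and \Cref{thm: characterisation of upper and lower faces}), and the height-of-circumcentre comparisons of \Cref{thm: height of circumcentres 1,thm: height of circumcentres 2,thm: height of circumcentres of simplices above the membrane}, which make the height sweep rigorous. Without the upper/lower-face distinction, the interval structure and the disjointness you need simply do not hold.
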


The rest of the section is devoted to proving \Cref{thm: bichrom_triangulation_collapse}.
We will assume that $\mu$ is a fixed bi-colouring of $X$.

\subsection{Construction of Generalized Vector Field Inducing the Collapse}\label{sec: construction of vertical gradient}
By virtue of the \fullref{thm: generalized gradient collapsing theorem}, to prove \Cref{thm: bichrom_triangulation_collapse} it suffices to construct a generalized discrete Morse gradient on $\Del(X,\mu)$ whose critical set is $\Del(X)$.
If $|X| \leq d$ then there is nothing to show since $\Del(X,\mu) \cong \Delta^{|X|-1} \cong \Del(X)$ by \Cref{thm: dimension of chromatic lift}.
Therefore, we assume without loss of generality that $|X| \geq d+1$.
Then the intuition behind our approach is given in \Cref{fig: 1d example for vertical collapse}.

\begin{figure}
	\centering
	\begin{tikzpicture}[
		scale=0.5
	]
	\coordinate (B1) at (-4, 0);
	\coordinate (B2) at (0, 0);
	\coordinate (B3) at (3, 0);
	\coordinate (O1) at (-6, 4);
	\coordinate (O2) at (-2, 4);
	\coordinate (O3) at (6, 4);
	\coordinate (O4) at (9, 4);

	\fill[GShape] (O1.center) -- (B1.center) -- (O2.center) -- cycle;
	\fill[GShape] (B1.center) -- (O2.center) -- (B2.center) -- cycle;
	\fill[GShape] (O2.center) -- (B2.center) -- (B3.center) -- cycle;
	\fill[GShape] (B3.center) -- (O3.center) -- (O2.center) -- cycle;
	\fill[GShape] (B3.center) -- (O3.center) -- (O4.center) -- cycle;
	
	\draw[OLine] (O1) -- (O2) -- (O3) -- (O4);
	\draw[BLine] (B1) -- (B2) -- (B3);
	\draw[GLine] (O1) -- (B1) -- (O2) -- (B2);
	\draw[GLine] (O2) -- (B3) -- (O3);
	\draw[GLine] (B3) -- (O4);

	\draw[dashed,thick] (O1.center) -- (B1.center) -- (O2.center) -- (B2.center) -- (B3.center) -- (O3.center) -- (O4.center);

	\node[BPoint] at (B1) {};
	\node[BPoint] at (B2) {};
	\node[BPoint] at (B3) {};
	\node[OPoint] at (O1) {};
	\node[OPoint] at (O2) {};
	\node[OPoint] at (O3) {};
	\node[OPoint] at (O4) {};

	\coordinate (M1) at ($(O2)!(B2)!(B3)$);
	\coordinate (M2) at ($(B3)!(O3)!(O4)$);
	\draw[->] ($(O1)!0.5!(O2)$) -- ++(0, -1);
	\draw[->] ($(O2)!0.5!(O3)$) -- ++(0, -1);
	\draw[->] ($(B1)!0.5!(B2)$) -- ++(0, 1);
	\draw[->] (M1) -- ($(M1)!0.5!(B2)$);
	\draw[->] (M2) -- ($(M2)!0.5!(O3)$);
\end{tikzpicture}
	\caption{
		Chromatic Delaunay triangulation $\Del(X,\mu) \subset \RR^2$ for a bi-chromatic set in $\RR$.
		The dashed line is the embedded image $\iota(\Del(X))$ of the Delaunay triangulation, and the arrows indicate the discrete Morse pairings inducing the collapse $\Del(X,\mu) \searrow \iota(\Del(X))$.
	}
	\label{fig: 1d example for vertical collapse}
\end{figure}

In that figure, the 2-simplices can be partitioned into whether they lie above or below $\iota(\Del(X))$, which is merely a curve in $\RR^2$.
If a 2-simplex lies above $\iota(\Del(X))$, we pair it with its uppermost face, and if it lies below $\iota(\Del(X))$ we pair it with its lowermost face.
The set of pairings constitutes a discrete Morse gradient inducing the desired collapse.
For $d \geq 2$, the uppermost or lowermost face of a $(d+1)$-simplex $\sigma$ might not be $d$-dimensional, and instead of pairings we will consider arbitrary intervals in the face poset of $\Del(X,\mu)$.
For example, in \Cref{fig: membrane example}, the intervals we will consider are $[v_0v_1, v_0v_1v_2v_4]$, $[v_0v_3v_4, v_0v_2v_3v_4]$, $[v_1v_2, v_1v_2v_3v_5]$, $[v_1v_2v_4, v_1v_2v_4v_5]$, $[v_2v_5, v_2v_3v_4v_5]$.

By \Cref{rem: description of membrane}, the embedded image $\iota(\Del(X))$ is the graph of some function $h: \Del(X) \to \RR$.
Let $\Ucal$ and $\Lcal$ be the epigraph and hypograph of $h$ respectively, i.e., points above and below the graph of $h$:
\begin{align}
	\Ucal & \defeq \{ (x, y) \in \Conv(X) \times \RR \mid y \geq h(x) \} \subset \RR^{d+1}, \\
	\Lcal & \defeq \{ (x, y) \in \Conv(X) \times \RR \mid y \leq h(x) \} \subset \RR^{d+1}.
\end{align}
We say that $\sigma \in \Del(X,\mu)$ \textit{lies above} (resp. \textit{below}) $\iota(\Del(X))$ in $\Del(X,\mu)$ if $\sigma$ intersects the interior of $\Ucal$ (resp. $\Lcal$).
Since $\iota(\Del(X))$ is $d$-dimensional, each $(d+1)$-simplex of $\Del(X,\mu)$ intersects it in a face of dimension at most $d$, from which it follows that every $(d+1)$-simplex of $\Del(X,\mu)$ is either above or below $\iota(\Del(X))$ but not both.
We wish to have a notion of upper and lower faces for each $(d+1)$-simplex of $\Del(X,\mu)$, for which we need the following lemma.

\begin{lemma}\label{thm: chromatic delaunay has no vertical faces}
	The $(d+1)$\textsuperscript{th} basis vector $e_{d+1} \in \RR^{d+1}$ is transverse to $\Del^\mu(X)$.
\end{lemma}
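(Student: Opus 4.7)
The plan is to reformulate the transversality of $e_{d+1}$ as an affine independence statement about subsets of $X$, and then to apply the general position hypothesis \Cref{prop: GP3} directly.

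First I will introduce the projection $\pi \colon \RR^{d+1} \to \RR^d$ that forgets the last coordinate, so that $\ker \pi = \mathrm{span}(e_{d+1})$ and $\pi(x^\mu) = x$ for every $x \in X$. Given a geometric $k$-simplex $\sigma \in \Del^\mu(X)$ with vertex set $\{v_0^\mu, \ldots, v_k^\mu\}$ (where $v_0, \ldots, v_k \in X$ are necessarily distinct since $x \mapsto x^\mu$ is injective), I will establish the key equivalence: $e_{d+1}$ is parallel to $\Aff(\sigma)$ if and only if the projected vertices $\{v_0, \ldots, v_k\}$ are affinely dependent in $\RR^d$. This is a direct linear-algebra computation---the direction space of $\Aff(\sigma)$ is $k$-dimensional, and $e_{d+1}$ lies in it precisely when $\pi$ restricted to that direction space drops rank, which in turn happens exactly when the projected vertices span an affine subspace of dimension strictly less than $k$.

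Having made this reduction, the conclusion follows by applying \Cref{prop: GP3} with the trivial single-part partition (that is, taking the index ``$k$'' appearing in the statement of GP3 to be $0$) to the set $P = \{v_0, \ldots, v_k\} \subset X$. Whenever the simplex dimension satisfies $k \leq d$, we have $|P| = k+1 \leq d+1$, so \Cref{prop: GP3} yields $\dim \Lin(P) = k$, i.e.\ the $v_i$ are affinely independent. By the equivalence from the previous step, $e_{d+1}$ is not parallel to $\Aff(\sigma)$. Since this argument applies uniformly to every $k$-simplex of $\Del^\mu(X)$ with $k \leq d$, transversality of $e_{d+1}$ to $\Del^\mu(X)$ follows immediately from the definition given in \Cref{sec: geometric simplicial complexes}.

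I do not foresee any meaningful obstacle: the lemma is essentially a bookkeeping consequence of the general position axioms, as \Cref{prop: GP3} was formulated precisely to rule out the sorts of affine degeneracies that would otherwise permit a low-dimensional face of $\Del^\mu(X)$ to be parallel to the ``vertical'' direction $e_{d+1}$.
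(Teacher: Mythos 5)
Your proof is correct and uses the same essential mechanism as the paper's: project to $\RR^d$, observe that a non-transverse face would yield affinely dependent projected vertices, and invoke \Cref{prop: GP3} (via the single-part partition, which is exactly affine independence of small subsets of $X$). The paper argues by contradiction and telescopes directly to the $d$-dimensional case, whereas you make the biconditional explicit and argue directly for each $k \leq d$, but these are cosmetic differences in presentation rather than a genuinely different route.
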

\begin{proof}
	Suppose $\tau$ is a simplex in $\Del(X,\mu)$ that is not transverse to $e_{d+1}$.
	Without loss of generality we can assume that $\tau$ is $d$-dimensional.
	Let $\pi: \RR^{d+1} \to \RR^d$ be the projection to the first $d$ coordinates.
	Then $\pi(\Aff(\tau))$ is a $(d-1)$-dimensional subspace of $\RR^d$, which implies that the vertices of $\tau$ are an affinely dependent subset of $X$.
	This contradicts the fact that $X$ is in general position.
\end{proof}

By virtue of \Cref{thm: chromatic delaunay has no vertical faces}, for any $\sigma^{(d+1)} \in \Del(X,\mu)$ there is a well-defined notion of upper and lower faces of $\sigma$ with respect to $e_{d+1}$ (see \Cref{sec: geometric simplicial complexes} for our definition of upper and lower faces).
Now we can define our generalized discrete Morse gradient.
Recall that $\sigma^*$ and $\sigma_*$ are the minimal upper and lower faces of $\sigma$ respectively.
We define $\Sigma'$ and $\Sigma$ to be the following collection of intervals:
\begin{align}
	\Sigma' & = \{[\sigma^*, \sigma] \mid \sigma^{(d+1)} \in \Del(X,\mu), \sigma \subset \Ucal\} \cup \{[\sigma_*, \sigma] \mid \sigma^{(d+1)} \in \Del(X,\mu), \sigma \subset \Lcal\}, \\
	\Sigma  & = \Sigma' \cup \iota(\Del(X)). \label{eq: definition of vertical gradient}
\end{align}
In other words, whenever $\sigma^{(d+1)} \in \Del(X,\mu)$ lies above (resp. below) $\iota(\Del(X))$ then there is a corresponding interval in $\Sigma$ comprising all the upper (resp. lower) faces of $\sigma$.

\begin{proposition}\label{thm: description of bichromatic gradient}
	$\Sigma$ is a generalized discrete Morse vector field on $\Del(X,\mu)$ whose set of singleton intervals is $\iota(\Del(X))$.
\end{proposition}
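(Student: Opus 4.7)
The proposition asserts two things: $\Sigma$ is a partition of the simplices of $\Del^\mu(X)$ into face-poset intervals, and its singletons are exactly $\iota(\Del(X))$. Each $[\sigma^*, \sigma]$ and $[\sigma_*, \sigma]$ is well-defined as an interval thanks to \Cref{thm: chromatic delaunay has no vertical faces}, and each $(d+1)$-simplex $\sigma$ is unambiguously the top of its own interval. The plan thus reduces to showing that every simplex $\tau$ of dimension at most $d$ lies in exactly one element of $\Sigma$, with $\tau$ a singleton precisely when $\tau \in \iota(\Del(X))$.

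The proof will rest on a geometric reformulation of interval membership. For $\tau < \sigma^{(d+1)}$ and $p \in \relint(\tau)$, let $C_\sigma^\tau \subset \RR^{d+1}$ be the open cone of tangent directions $v$ such that $p + \epsilon v \in \topint(\sigma)$ for small $\epsilon > 0$. I will prove that $\tau \in [\sigma^*, \sigma]$ iff $-e_{d+1} \in C_\sigma^\tau$, and symmetrically $\tau \in [\sigma_*, \sigma]$ iff $+e_{d+1} \in C_\sigma^\tau$. The combinatorial condition $\sigma^* \leq \tau$ is equivalent to every vertex $w \in V(\sigma) \setminus V(\tau)$ being a vertex of $\sigma_*$ (i.e.\ $\sigma \setminus \{w\}$ upper); this in turn translates into a conjunction of half-space inclusions on the normal space of $\tau$ at $p$ that jointly say $-e_{d+1}$ is a positive combination of the cone generators $\{w - p\}$ modulo $\Aff(\tau)$. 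Because $\pm e_{d+1}$ is transverse to $\Del^\mu(X)$ by \Cref{thm: chromatic delaunay has no vertical faces}, the cones $\{C_{\sigma'}^\tau\}$ over $(d+1)$-simplices $\sigma' \supset \tau$ have pairwise disjoint interiors and cover the normal space of $\tau$, so each of $\pm e_{d+1}$ lies in the interior of a unique such cone.

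With the reformulation in hand, the partition claim follows by case analysis on the position of $\tau$. If $\tau \notin \iota(\Del(X))$, then since $\iota(\Del(X))$ is a subcomplex and $\relint(\tau)$ is connected, $\relint(\tau)$ lies entirely in $\topint(\Ucal)$ or $\topint(\Lcal)$; in the former case the downward ray $p - te_{d+1}$ remains in $\topint(\Ucal)$ for small $t$ and enters a unique $(d+1)$-simplex $\sigma$, which is necessarily above $\iota(\Del(X))$, so $\tau \in [\sigma^*, \sigma]$ by the reformulation and in no other interval of $\Sigma$ by cone uniqueness. The below case is symmetric via $+e_{d+1}$. If instead $\tau \in \iota(\Del(X))$, then writing $\iota(\Del(X))$ as the graph of a piecewise linear function $f$ on $\Del(X)$ (\Cref{rem: description of membrane}) and expanding $p + \epsilon v \in \topint(\Ucal)$ to first order at $p$ (using $p_{d+1} = f(\pi(p))$) forces any $v \in C_\sigma^\tau$ with $\sigma$ above to satisfy $v_{d+1} > D_{\pi(v)} f(\pi(p))$; the substitution $v = -e_{d+1}$ yields $-1 > 0$, a contradiction, so $-e_{d+1} \notin C_\sigma^\tau$. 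The symmetric inequality excludes $+e_{d+1}$ from the cone when $\sigma$ is below. Hence $\tau$ appears in no non-singleton interval and only as its own singleton, confirming the singleton characterisation.

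The main obstacle will be justifying the cone-theoretic reformulation $\tau \in [\sigma^*, \sigma] \iff -e_{d+1} \in C_\sigma^\tau$. This requires identifying $C_\sigma^\tau$, modulo $\Aff(\tau)$, with the simplicial cone generated by the link-direction vectors $\{w - p : w \in V(\sigma) \setminus V(\tau)\}$, and then verifying that the combinatorial statement ``$w$ is a vertex of $\sigma_*$'' corresponds to $-e_{d+1}$ and $w - p$ lying on the same side of the facet hyperplane of $C_\sigma^\tau$ opposite $w - p$. The verification is geometrically intuitive but demands a careful linear-algebraic computation in the normal space of $\tau$ at $p$; once it is in place, the rest of the proof proceeds by the case analysis sketched above.
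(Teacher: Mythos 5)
Your cone reformulation of interval membership is sound---it is exactly the paper's \Cref{thm: characterisation of upper and lower faces} combined with the elementary observation that $[\sigma^*,\sigma]$ consists of $\sigma$ together with its upper faces, so your ``main obstacle'' does not actually need to be reproved from scratch---and your handling of the membrane simplices and of uniqueness (distinct $(d+1)$-simplices have disjoint interiors, hence disjoint cones) is fine. The genuine gap is in the existence step. Your claim that the cones $\{C_{\sigma'}^{\tau}\}$ over all $(d+1)$-cofaces of $\tau$ \emph{cover} the normal space of $\tau$ is false: $\Del^{\mu}(X)$ triangulates the convex polytope $\Conv(X^{\mu})$, so when $\tau$ lies on the boundary of this polytope the union of those cones is only the tangent cone of $\Conv(X^{\mu})$ at a point of $\relint(\tau)$, which is a proper cone (for a $d$-face on the top boundary in \Cref{fig: 1d example for vertical collapse}, for instance, $+e_{d+1}$ lies in no cone at all). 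Transversality alone therefore does not give you that $-e_{d+1}$ lies in some $C_{\sigma}^{\tau}$ when $\tau$ lies above the membrane; equivalently, in your case analysis the assertion that the downward ray $p-te_{d+1}$ ``enters a unique $(d+1)$-simplex'' is precisely the nontrivial point, and it is left unjustified---a priori the ray could exit the complex when $\tau$ is a boundary face.

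The missing argument is the convexity step that the paper isolates as \Cref{thm: faces above membrane are upper for something}: since $p$ lies above the membrane, the point $y=\iota(\pi(p))$ lies on the membrane with the same horizontal coordinates, so the vertical segment $[p,y]$ joins two points of $\Conv(X^{\mu})$ and hence stays inside the complex; by \Cref{thm: chromatic delaunay has no vertical faces} it cannot remain in the $d$-skeleton on any subsegment, so immediately below $p$ it lies in $\topint(\sigma)$ for a unique $(d+1)$-simplex $\sigma$, which moreover lies above the membrane. Inserting this (and its mirror image below the membrane) closes the existence half of your partition claim. One further small repair: for $\tau$ with $\relint(\tau)\subset\topint(\Ucal)$ you exclude membership in an interval $[\sigma'_{*},\sigma']$ with $\sigma'$ below by appealing to ``cone uniqueness'', but those intervals are characterised by $+e_{d+1}$, not $-e_{d+1}$, so disjointness of cones says nothing here; the correct and immediate reason is that $\tau\le\sigma'\subset\Lcal$ would force $\relint(\tau)\subset\Lcal$, contradicting $\relint(\tau)\subset\topint(\Ucal)$.
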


Before proving \Cref{thm: description of bichromatic gradient} we need a technical lemma.

\begin{lemma}\label{thm: faces above membrane are upper for something}
	If $\tau^{(k)} \in \Del(X,\mu)$ lies above (resp. below) $\iota(\Del(X))$ with $k \leq d$ then there is a unique $\sigma^{(d+1)} \in \Del(X,\mu)$ such that $\tau$ is an upper (resp. lower) face for $\sigma$ with respect to $e_{d+1}$ and $\sigma$ lies above (resp. below) $\iota(\Del(X))$.
\end{lemma}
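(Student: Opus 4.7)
The plan is to prove the ``above'' case; the ``below'' case then follows by a symmetric argument (swap $e_{d+1}$ with $-e_{d+1}$ and $\Ucal$ with $\Lcal$). By \Cref{rem: description of membrane}, the membrane $\iota(\Del(X))$ is the graph of a PL function $f : \Conv(X) \to \RR$ over $\Del(X)$; write $\pi : \RR^{d+1} \to \RR^d$ for the projection onto the first $d$ coordinates. Since $\tau$ lies above the membrane, the open convex set $\{(p, y) \in \RR^{d+1} \mid y > f(p)\}$ meets $\tau$, so by convexity of both sets we can pick a point $x \in \relint(\tau)$ with $x_{d+1} > f(\pi(x))$.

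First I would show that the downward translate $x - \epsilon e_{d+1}$ remains inside $\Conv(X^{\mu}) = |\Del^{\mu}(X)|$ and strictly above the membrane for all small $\epsilon > 0$. Staying above the membrane is immediate: for $\epsilon < x_{d+1} - f(\pi(x))$ we still have $x_{d+1} - \epsilon > f(\pi(x))$. For containment in the polytope, the key observation is that the membrane is sandwiched between the lower and upper envelopes of $\Conv(X^{\mu})$ relative to $e_{d+1}$. Letting $g^- : \Conv(X) \to \RR$ denote the convex function whose graph is the lower boundary of $\Conv(X^{\mu})$ with respect to $e_{d+1}$, the inclusion $\iota(\Del(X)) \subset \Del^{\mu}(X) = \Conv(X^{\mu})$ forces $g^-(p) \leq f(p)$ for every $p \in \Conv(X)$. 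Hence $x_{d+1} > g^-(\pi(x))$, so $x - \epsilon e_{d+1}$ remains strictly between the two envelopes, and therefore inside the polytope, for $\epsilon$ small enough.

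Next I would use the transversality of $e_{d+1}$ to $\Del^{\mu}(X)$ from \Cref{thm: chromatic delaunay has no vertical faces}. By \Cref{thm: dimension of chromatic lift}, $\Del^{\mu}(X)$ is pure of dimension $d+1$ in the non-trivial range $|X| \geq d+2$ (when $|X| \leq d+1$ one checks that $\iota(\Del(X)) = \Del^{\mu}(X)$, so no simplex lies above the membrane and the lemma is vacuous). Transversality implies that as $\epsilon$ decreases from some small positive value to $0$, the translate $x - \epsilon e_{d+1}$ crosses no lower-dimensional face of $\Del^{\mu}(X)$, so it lies in the relative interior of a unique top-dimensional simplex $\sigma^{(d+1)}$. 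Letting $\epsilon \to 0^+$ yields $x \in \bar{\sigma}$, and since $x \in \relint(\tau)$ this gives $\tau \leq \sigma$. By the equivalent characterisations in \Cref{thm: characterisation of upper and lower faces}, the condition $x - \epsilon e_{d+1} \in \topint(\sigma)$ for small $\epsilon > 0$ is precisely that $\tau$ is an upper face of $\sigma$ with respect to $e_{d+1}$; moreover $\sigma$ contains the point $x - \epsilon e_{d+1}$, which is strictly above the membrane, so $\sigma$ lies above $\iota(\Del(X))$.

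For uniqueness, suppose $\sigma, \sigma'$ are two $(d+1)$-simplices of $\Del^{\mu}(X)$ with $\tau$ as an upper face. Applying the ``for every $x$'' clause of \Cref{thm: characterisation of upper and lower faces} to the same $x \in \relint(\tau)$ chosen above, we obtain a common $\epsilon > 0$ for which $x - \epsilon e_{d+1}$ lies in both $\topint(\sigma)$ and $\topint(\sigma')$; since the relative interiors of distinct top-dimensional simplices of $\Del^{\mu}(X)$ are disjoint, $\sigma = \sigma'$. The main delicacy is the envelope inequality $g^- \leq f$ used in the second paragraph: without it, $x$ could in principle sit on the lower boundary of $\Conv(X^{\mu})$ and the downward ray would exit the complex immediately, so this sandwich bound is what actually keeps the argument inside $\Del^{\mu}(X)$.
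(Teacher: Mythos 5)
Your argument is essentially the paper's: choose a point $x \in \relint(\tau)$ strictly above the membrane, move it slightly along $-e_{d+1}$ while remaining inside $|\Del^{\mu}(X)|$ (the paper does this by joining $x$ to the membrane point $\iota(\pi(x))$ and invoking convexity of the polytope, rather than your lower/upper-envelope sandwich, but it is the same idea), then use transversality from \Cref{thm: chromatic delaunay has no vertical faces} together with \Cref{thm: characterisation of upper and lower faces}; your explicit uniqueness paragraph simply spells out what the paper leaves implicit and is correct. The one misstep is the justification for picking $x$: the strict epigraph $\{(p,y) \mid y > f(p)\}$ is \emph{not} convex, since $f$ is piecewise-linear but in general not convex; the needed point nevertheless exists because $\topint(\Ucal)$ is open and any open set meeting the closed simplex $\tau$ must meet $\relint(\tau)$, which is dense in $\tau$. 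With that justification repaired, the proof is correct in substance and follows the paper's route.
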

\begin{proof}
	We only consider the case where $\tau$ lies above $\iota(\Del(X))$; the proof for the other case is similar.
	Let $\pi : \RR^{d+1} \to \RR^d$ be the projection to the first $d$ coordinates.
	Consider any point $x \in \tau \cap \topint{\Ucal}$, and let $y = \iota(\pi(x))$.
	Note $x \neq y$.
	The line segment $[x, y]$ lies in $\Del(X,\mu)$ by convexity of the latter.
	By \Cref{thm: chromatic delaunay has no vertical faces} we know that $e_{d+1}$ is transverse to $\Del(X,\mu)$, so no non-degenerate subsegment of $[x, y]$ can lie in a $d$-simplex.
	In particular the subsegment $[x, x-\lambda e_{d+1}]$ must lie in some $\sigma^{(d+1)}$ for $\lambda > 0$ small enough.
	Then by \Cref{thm: characterisation of upper and lower faces} $\tau$ is an upper face for $\sigma$.
	Moreover, $\sigma$ lies above $\iota(\Del(X))$ because it contains the point $x$ which is in $\topint{\Ucal}$.
\end{proof}

\begin{proof}[Proof of \Cref{thm: description of bichromatic gradient}]
	We check that (1) the intervals in $\Sigma'$ are contained in $\Del(X,\mu) \setminus \iota(\Del(X))$, (2) the intervals are disjoint, and (3) their union is all of $\Del(X,\mu) \setminus \iota(\Del(X))$.

	\begin{enumerate}
		\item If $[\sigma^*, \sigma]$ is an interval in $\Sigma'$, then by construction $\sigma$ lies above $\iota(\Del(X))$.
		      Any simplex $\tau$ in $[\sigma^*, \sigma]$ is an upper face of $\sigma$, and by \Cref{thm: characterisation of upper and lower faces} the relative interior of any upper face of $\sigma$ lies strictly in the interior of $\Ucal$.
		      Therefore, any $\tau \in [\sigma^*, \sigma]$ is not contained in $\iota(\Del(X))$.
		      A similar argument holds for when $\sigma$ lies below $\iota(\Del(X))$.

		\item Suppose $\sigma_1^{(d+1)}$ and $\sigma_2^{(d+1)}$ are distinct simplices in $\Del(X,\mu)$.
		      We must show that the corresponding intervals in $\Sigma'$ are disjoint.
		      First suppose $\sigma_1$ and $\sigma_2$ lie above and below $\iota(\Del(X))$ respectively, so that $[\sigma_1^*, \sigma_1]$ and $[(\sigma_2)_*, \sigma_2]$ are the corresponding intervals in $\Sigma$.
		      Then $\sigma_1 \cap \sigma_2$ is either empty, in which case the intervals are disjoint and there is nothing left to show, or it must be a simplex in $\iota(\Del(X))$.
		      In the latter case, $\sigma_1 \cap \sigma_2$ cannot belong to the intervals corresponding to $\sigma_1$ and $\sigma_2$ due to part (1).

		      Otherwise, $\sigma_1$ and $\sigma_2$ both lie above $\iota(\Del(X))$ (without loss of generality).
		      If $\tau \in [\sigma_1^*, \sigma_1] \cap [\sigma_2^*, \sigma_2]$ then $\tau$ is an upper face for both $\sigma_1$ and $\sigma_2$.
		      But this is impossible due to \Cref{thm: characterisation of upper and lower faces} and the fact that $\sigma_1$ and $\sigma_2$ have disjoint interiors.

		\item Let $\tau$ be any simplex in $\Del(X,\mu)\setminus\iota(\Del(X))$ and without loss of generality assume that $\tau$ lies above $\iota(\Del(X))$.
		      We will show that $\tau$ is contained in some interval of $\Sigma'$.
		      If $\dim(\tau) = d+1$ then $\tau \in [\tau^*, \tau] \in \Sigma'$ and we are done.
		      Therefore, suppose $\dim(\tau) \leq d$.
		      Then \Cref{thm: faces above membrane are upper for something} implies that there exists some $\sigma^{(d+1)}$ that lies above $\iota(\Del(X))$ such that $\tau$ is an upper face for $\sigma$.
		      Hence, $\tau \in [\sigma^*, \sigma] \in \Sigma'$.\qedhere
	\end{enumerate}
\end{proof}

\subsection{Proof that the Generalized Vector Field is a Generalized Gradient}\label{sec: construction of vertical collapse Morse function}

The next step is to check that $\Sigma$ is a gradient.
Using the notation from \Cref{sec: discrete Morse theory} and the statement of the \fullref{thm: generalized gradient acyclicity lemma}, it suffices to check that $\Hcal(\Del(X,\mu))/\Sigma$ is a directed acyclic graph, where $\Hcal(\Del(X,\mu))$ is the Hasse diagram of the co-face poset of $\Del(X,\mu)$.
We will do this by constructing a strictly monotonic real-valued function on $\Hcal(\Del(X,\mu))/\Sigma$.
Indeed, any such function can be lifted to a generalized discrete Morse function on $\Del(X,\mu)$ whose gradient is $\Sigma$.
Intuitively, our function will describe the ``distance'' of each simplex from $\iota(\Del(X))$ in terms of a height function.
For the rest of this section, let $h:\RR^{d+1} \to \RR$ denote the height function along the $(d+1)$\textsuperscript{st} coordinate.

A vertex in $\Hcal(\Del(X,\mu))/\Sigma$ can either correspond to a simplex $\tau$ in $\iota(\Del(X))$, or to an interval of the form $[\sigma^*, \sigma]$ or $[\sigma_*, \sigma]$ in $\Sigma'$.
We represent the latter kind of vertex by $[\sigma]$, where $\sigma$ is the $(d+1)$-dimensional representative of the interval.
We will also write $C(\sigma)$ for the unique circumcentre of $\sigma$.
Define $f: \Hcal(\Del(X,\mu))/\Sigma \to \RR$ as follows
\begin{equation}
	f(x) = \begin{cases}
		\dim(\tau)                                    & x = \tau \in \iota(\Del(X)), \\
		\left(h(C(\sigma)) - \frac{1}{2}\right)^2 + d & x = [\sigma].
	\end{cases}
	\label{eq: definition of vertical collapse Morse function}
\end{equation}
In the rest of this section we will prove that $f$ is strictly monotonic.
We first make some observations about $\Hcal(\Del(X,\mu))/\Sigma$.
There are no edges in $\Hcal(\Del(X,\mu))/\Sigma$ that point from vertices of $\Hcal(\Del(X))$ into vertices of the form $[\sigma]$ for $\sigma^{(d+1)} \in \Del(X,\mu)$.
This is because $\Del(X)$ is a subcomplex of $\Del(X,\mu)$.
There are also no edges of the form $[\sigma] \to [\sigma']$, if $\sigma$ and $\sigma'$ lie above and below $\iota(\Del(X))$ respectively (or vice-versa).
This is because such an edge, by definition of the vector field $\Sigma$, would imply the existence of a shared face $\tau$ that is an upper face of $\sigma$ and a lower face of $\sigma'$.
By construction of $\Ucal$ and $\Lcal$ this would imply that $\sigma \subset \Lcal$ and $\sigma' \subset \Ucal$, a contradiction.
Therefore, we have the following possibilities for the types of edges in $\Hcal(\Del(X,\mu))/\Sigma$:

\begin{enumerate}
	\item $\eta \to \tau$ where $\eta, \tau \in\iota(\Del(X))$ and $\eta > \tau$.
	\item $[\sigma] \to \tau$ where $\sigma^{(d+1)} \in \Del(X,\mu)$ and $\tau \in \iota(\Del(X))$, with $\sigma > \tau$.
	\item $[\sigma] \to [\sigma']$ where $\sigma, \sigma'$ are $(d+1)$-simplices in $\Del(X,\mu)$ and both lie above or below $\iota(\Del(X))$.
\end{enumerate}

\begin{remark}\label{rem: monotonicity of f across type-1 edges}
	Clearly $f(\eta) > f(\tau)$ whenever $\eta \to \tau$ is an edge of type (1) in $\Hcal(\Del(X,\mu))/\Sigma$.
\end{remark}
Next we consider edges of type (2) in $\Hcal(\Del(X,\mu))$.
If $[\sigma] \to \tau$ is such an edge, then $f([\sigma]) > f(\tau)$ holds automatically if $\dim(\tau) < d$.

\begin{lemma}\label{thm: membrane simplices have a circumsphere equidistant from the two planes}
	Let $\tau \in \Del(X,\mu)$.
	Then $\tau \in \iota(\Del(X))$ if and only if there is an empty circumsphere for $\tau$ in $\RR^{d+1}$ whose centre $C$ satisfies $h(C) = \frac{1}{2}$.
\end{lemma}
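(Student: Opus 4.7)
The plan is to establish both implications via an explicit correspondence between empty circumspheres of the underlying set $\sigma = \pi(\tau) \subset X$ in $\RR^d$ and empty circumspheres of $\tau$ in $\RR^{d+1}$ centred on the hyperplane $h = 1/2$, where $\pi : \RR^{d+1} \to \RR^d$ is the projection to the first $d$ coordinates. The key observation is that because $\mu$ takes values in $\{0,1\}$, every vertex of $X^\mu$ lies at height $0$ or $1$, so from any centre $C = (c, 1/2)$ the contribution of the last coordinate to the squared distance to a lifted point is exactly $1/4$, independently of colour.

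For the forward direction, suppose $\tau = \iota(\sigma)$ with $\sigma \in \Del(X) = \Alpha_\infty(X)$. I would apply the spheres characterisation from \Cref{eq: alternative definition of alpha filtration} to obtain an empty circumsphere of $\sigma$ in $\RR^d$ with some centre $c$ and radius $r$, and then lift to the sphere in $\RR^{d+1}$ centred at $C = (c, 1/2)$ with radius $R = \sqrt{r^2 + 1/4}$. The Pythagorean identity $\|C - v^\mu\|^2 = \|c - v\|^2 + (\tfrac{1}{2} - \mu(v))^2 = \|c-v\|^2 + 1/4$ then shows simultaneously that $C$ is equidistant from every lifted vertex of $\sigma$ and that the lifted sphere inherits emptiness from the original: for any $w \in X$, $\|C - w^\mu\|^2 = \|c - w\|^2 + 1/4 \geq r^2 + 1/4 = R^2$.

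For the reverse direction, given an empty circumsphere of $\tau$ in $\RR^{d+1}$ centred at $C = (c, 1/2)$ of radius $R$, I would apply the same identity in reverse. Since the vertices of $\tau$ sit at heights $0$ or $1$, they are all at distance $r = \sqrt{R^2 - 1/4}$ from $c$ in $\RR^d$ (in particular $R \geq 1/2$, so $r$ is real). The same identity applied to every $w \in X$ shows that the sphere in $\RR^d$ of centre $c$ and radius $r$ is empty, whence $\sigma = \pi(\tau)$ has an empty circumsphere in $\RR^d$, so $\sigma \in \Alpha_\infty(X) = \Del(X)$ and $\tau = \iota(\sigma) \in \iota(\Del(X))$.

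The one subtlety to dispatch is that $\pi$ is injective on the vertex set of $\tau$, so that $\sigma$ really has the same cardinality as $\tau$ and $\iota(\sigma) = \tau$; this is immediate because each $x \in X$ has a unique colour $\mu(x)$ and so only one of $(x,0)$, $(x,1)$ can belong to $X^\mu$. Beyond this bookkeeping, everything reduces to the Pythagorean identity above, so I do not expect any real obstacle—the entire proof is driven by the height-$1/2$ constraint on the centre, which is precisely what decouples the last coordinate from the other $d$.
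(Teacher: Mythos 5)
Your proof is correct and uses essentially the same mechanism as the paper's: the Pythagorean relation between the height of the centre and the distances in the two parallel planes, which the paper phrases via the stack--sphere correspondence (with the identity $r_1^2 - r_0^2 = 2h(C) - 1$) and you phrase as a direct computation with the lifted points using $(\tfrac{1}{2}-\mu(v))^2 = \tfrac{1}{4}$. The bookkeeping points you flag (injectivity of the projection on vertices, $R \geq \tfrac{1}{2}$) are handled correctly, so no gap remains.
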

\begin{proof}
	Let $V(\tau)\subset X$ be the set of points whose chromatic lift is the vertex set of $\tau$, i.e., $V(\tau)^{\mu} = \tau$.
	By \Cref{eq: alternative definition of chromatic alpha filtration} there exists an empty $[2]$-stack $S$ passing through $V(\tau)$ in $\RR^d$.
	By the correspondence between stacks and spheres, any $[2]$-stack $S$ passing through $V(\tau)$ lifts to an empty $d$-sphere $S^{\mu}$ passing through $\tau$ in $\RR^{d+1}$.
	Let $r_0(S)$ and $r_1(S)$ be the radii of the $(d-1)$-spheres in $S$ and let $C$ be the centre of $S^{\mu}$.
	Applying Pythagoras' theorem, we get $r_1(S)^2 - r_0(S)^2 = 2h(C) - 1$ (see \Cref{fig: Pythagoras theorem for stacks}).
	\Cref{eq: alternative definition of alpha filtration} implies that $\tau \in \iota(\Del(X))$ if and only if there is an empty stack $S$ passing through $V(\tau)$ with $r_0(S) = r_1(S)$, and by the previous remark this is equivalent to $h(C) = 1/2$.
\end{proof}

\begin{figure}
	\centering
	\begin{tikzpicture}[scale=0.7]
	\coordinate (P) at (0,2.5);
	\coordinate (Q) at (0,-1);
	\coordinate (Pr) at ($(P)+({sqrt(9-2.5^2)},0)$);
	\coordinate (Qr) at ($(Q)+({sqrt(8)},0)$);
	\coordinate (Pm) at ($(P)-(4, 0)$);
	\coordinate (Pp) at ($(P)+(4,0)$);
	\coordinate (P1) at ($(Pm)-(0.7,0.7)$);
	\coordinate (P2) at ($(Pm)+(0.7,0.7)$);
	\coordinate (P3) at ($(Pp)+(0.7,0.7)$);
	\coordinate (P4) at ($(Pp)-(0.7,0.7)$);
	\coordinate (Qm) at ($(Q)-(4, 0)$);
	\coordinate (Qp) at ($(Q)+(4,0)$);
	\coordinate (Q1) at ($(Qm)-(0.7,0.7)$);
	\coordinate (Q2) at ($(Qm)+(0.7,0.7)$);
	\coordinate (Q3) at ($(Qp)+(0.7,0.7)$);
	\coordinate (Q4) at ($(Qp)-(0.7,0.7)$);

	\draw (0,0) circle (3);
	\draw[OLine] (P) circle[x radius=0.98*sqrt(9-2.5^2), y radius=0.2];
	\draw[BLine] (Q) circle[x radius=0.99*sqrt(8), y radius=0.5];
	\fill[OShape] (P1)--(P2)--(P3)--(P4)--cycle;
	\fill[BShape] (Q1)--(Q2)--(Q3)--(Q4)--cycle;
	\draw[dashed] (P)--(Pr)   node[Label, above      = 3pt, midway] {$r_1$};
	\draw[dashed] (Q)--(Qr)   node[Label, below      = 3pt, midway] {$r_0$};
	\draw[dashed] (0,0)--(Q)  node[Label, left       = 3pt, midway] {$h$};
	\draw[dashed] (0,0)--(P)  node[Label, left       = 3pt, midway] {$|1-h|$};
	\draw[dashed] (0,0)--(Pr) node[Label, right      = 3pt, midway] {$R$};
	\draw[dashed] (0,0)--(Qr) node[Label, above      = 3pt, midway] {$R$};
	\node[Point] at (0,0) {}  node[Label, above left = 3pt of {(0,0)}] {$C$};
\end{tikzpicture}
	\caption{
		Visual proof of \Cref{thm: membrane simplices have a circumsphere equidistant from the two planes}.
		If $r_0=r_1$ then $h^2 = |1-h|^2 \implies h=\frac{1}{2}$.
	}
	\label{fig: Pythagoras theorem for stacks}
\end{figure}

\begin{lemma}\label{thm: height of circumcentres 1}
	Suppose $\sigma^{(d+1)} \subset \RR^{d+1}$ is a geometric simplex, $z \in \RR^{d+1}$ is transverse to $\sigma$, and $\tau^{(d)} < \sigma$.
	Let $C$ be the circumcentre of $\sigma$, let $C'$ be the centre of any circumsphere of $\tau$ that is empty of $\sigma$, and let $h_z: \RR^{d+1} \to \RR$ denote the height function along $z$.
	If $\tau$ is a lower (resp. upper) face of $\sigma$ with respect to $z$, then $h_z(C) \geq h_z(C')$ (resp. $h_z(C) \leq h_z(C')$), with equality if and only if $C = C'$.
\end{lemma}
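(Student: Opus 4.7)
The plan is to parametrize all circumspheres of $\tau$ by a one-dimensional family and read off the height of the centre directly. Let $\Aff(\tau)$ denote the $d$-dimensional affine hull of $\tau$, let $c_0$ be the circumcentre of $\tau$ computed inside $\Aff(\tau)$, and let $r$ be the corresponding circumradius. Any point of $\RR^{d+1}$ equidistant from the vertices of $\tau$ lies on the line $\ell$ through $c_0$ orthogonal to $\Aff(\tau)$, so both $C$ and $C'$ lie on $\ell$. Pick the unit normal $n$ to $\Aff(\tau)$ pointing into the half-space that contains the vertex $v$ of $\sigma$ opposite $\tau$, and write $p(t) = c_0 + t n$. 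The sphere centred at $p(t)$ through the vertices of $\tau$ then has squared radius $r^2 + t^2$.

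Next I would determine, as a function of $t$, when $v$ lies outside this sphere. Expanding yields
\[
\|v-p(t)\|^2 - (r^2 + t^2) = \|v - c_0\|^2 - r^2 - 2t\,\langle v - c_0, n\rangle,
\]
an affine function of $t$ with strictly negative slope (since $\langle v-c_0,n\rangle > 0$ by the choice of $n$). Let $t_\sigma$ be its unique zero; then the sphere at $p(t_\sigma)$ passes through $v$ as well as through the vertices of $\tau$, so it is the circumsphere of $\sigma$ and $C = p(t_\sigma)$. A circumsphere of $\tau$ empty of $\sigma$ is therefore precisely one whose centre $p(t')$ satisfies $t' < t_\sigma$.

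To finish, note that $h_z(p(t)) = h_z(c_0) + t\,\langle n, z\rangle$, so the height function is strictly monotone along $\ell$ in the direction of $\mathrm{sgn}\langle n, z\rangle$. If $w$ is an outward-pointing normal to $\tau$ with respect to $\sigma$, then $w$ is a positive multiple of $-n$, so $\tau$ is a lower face of $\sigma$ with respect to $z$ (meaning $\langle w, z\rangle < 0$) exactly when $\langle n, z\rangle > 0$, and dually for the upper-face case. Combining this sign with $t' < t_\sigma$ yields $h_z(C') < h_z(C)$ when $\tau$ is a lower face, and the reverse inequality when $\tau$ is an upper face.

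The only delicate point is making precise what ``empty of $\sigma$'' means for a circumsphere through the vertices of $\tau$: it must be read as requiring $v$ to lie strictly outside the sphere, not merely not strictly inside, since otherwise the circumsphere of $\sigma$ itself (for which $v$ lies \emph{on} the sphere) would count as an admissible $C'$ with $C' = C$, and strictness would fail. Under this interpretation, which is consistent with the convention that a sphere is ``empty of $E$'' when it contains no point of $E \setminus (\text{points on the sphere})$, the argument above is essentially a bookkeeping exercise in signs; I anticipate this sign-tracking step to be the main place a careful writeup needs attention.
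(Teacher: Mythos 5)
Your proof is correct and follows essentially the same route as the paper's: both arguments confine all admissible centres to the line through the circumcentre of $\tau$ orthogonal to $\Aff(\tau)$, use the emptiness condition to pin down the sign of the displacement of $C'$ relative to $C$ along that line, and then convert this to a height comparison via the inner product of $z$ with the outward normal defining lower/upper faces. The boundary subtlety you flag (whether $C'=C$ is admissible under ``empty'') is present in the paper's own proof as well, where strictness is ultimately guaranteed in the applications by the general position hypothesis forcing $C'\neq C$.
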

\begin{proof}
	$C'$ must lie on the set $\ell$ of points equidistant from the vertices of $\tau$, and since $\tau$ is a $d$-simplex $\ell$ is a line orthogonal to $\Aff(\tau)$.
	The circumsphere of $\sigma$ is an empty circumsphere for $\tau$, so $C$ lies on $\ell$.
	Let $w$ be the unit outward normal to $\tau$ with respect to $\sigma$, so that $w$ is parallel to $\ell$.
	Then we can write $C' = C + tw$ for some $t \in \RR$.

	First we claim that $t \geq 0$.
	Let $v_0$ be the vertex of $\sigma$ opposite to $\tau$, and let $v_1$ be any vertex of $\tau$.
	We have
	\begin{align*}
		|C' - v_0|^2 - |C' - v_1|^2 & = \braket{2C' - v_0 - v_1, v_1 - v_0}                 \\
		                            & = \braket{2C + 2tw - v_0 - v_1, v_1 - v_0}            \\
		                            & = |C - v_0|^2 - |C - v_1|^2 + 2t\braket{w, v_1 - v_0} \\
		                            & = 2t\braket{w, v_1 - v_0},
	\end{align*}
	where the fourth equality is because $C$ is a circumsphere for $\sigma$.
	By the definition of outward normal, there exists $x \in \Aff(\tau)$ and $\lambda > 0$ such that $v_0 = x - \lambda w$.
	Then
	\begin{equation*}
		\braket{w, v_1 - v_0} = \braket{w, v_1 - x + \lambda w} = \lambda
	\end{equation*}
	since $v_1 - x \in \Lin(\tau)$ and $w \perp \Lin(\tau)$.
	Thus, $|C' - v_0|^2 - |C' - v_1|^2 = 2t\lambda$.
	This shows that if $t < 0$ then the circumsphere through $\tau$ centred at $C'$ contains $v_0$ and cannot be empty.

	Now we have
	\begin{equation*}
		h_z(C) - h_z(C') = \braket{z, C - (C + tw)} = -t\braket{z, w}.
	\end{equation*}
	Without loss of generality assume that $\tau$ is a lower face of $\sigma$.
	Then by definition $\braket{z, w} < 0$, and the quantity above is non-negative with equality if and only if $t = 0$, i.e., $C = C'$.
\end{proof}

\begin{lemma}\label{thm: height of circumcentres 2}
	Let $Y \subset \RR^{d+1}$ be in general position, let $z$ be transverse to $\Del(Y)$, and let $\sigma_1^{(d+1)}, \sigma_2^{(d+1)} \in \Del(Y)$ intersect in an upper $d$-dimensional face of $\sigma_2$ with respect to $z$.
	Let $C_i$ be the circumcentre of $\sigma_i$, and let $h_z : \RR^{d+1} \to \RR$ denote the height function along the $z$ direction.
	Then $h_z(C_1) > h_z(C_2)$.
\end{lemma}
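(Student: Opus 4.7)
The plan is to deduce this lemma as an almost immediate consequence of \Cref{thm: height of circumcentres 1} applied to $\sigma_2$, once we identify $C_1$ as the centre of a circumsphere of the shared face that is empty of $\sigma_2$.

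More precisely, let $\tau = \sigma_1 \cap \sigma_2$, which by hypothesis is an upper $d$-face of $\sigma_2$ with respect to $z$. Let $v$ denote the vertex of $\sigma_2$ opposite to $\tau$. Because $\Del(Y)$ is a Delaunay triangulation of $Y$, the circumsphere of $\sigma_1$ centred at $C_1$ is empty of $Y$; in particular it does not contain $v$. Moreover, $C_1$ is equidistant from all vertices of $\sigma_1$ and hence from all vertices of $\tau < \sigma_1$, so the circumsphere of $\sigma_1$ is also a circumsphere of $\tau$. Thus $C_1$ is the centre of a circumsphere of $\tau$ that is empty of (the vertex of) $\sigma_2$ opposite $\tau$, which is precisely the hypothesis under which \Cref{thm: height of circumcentres 1} applies.

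Applying that lemma with $\sigma = \sigma_2$, $C = C_2$ (its circumcentre), and $C' = C_1$, and using that $\tau$ is an \emph{upper} face of $\sigma_2$, we obtain $h_z(C_2) < h_z(C_1)$, which is the desired strict inequality. The only small subtlety to mention is why such $C_1$ exists and is distinct from $C_2$: general position of $Y$ (via \Cref{prop: GP1} for $Y \subset \RR^{d+1}$) ensures that $\sigma_1$ and $\sigma_2$ have unique circumcentres and are not cospherical, so the two empty circumspheres through $\tau$ have distinct centres on the line through $\tau$ perpendicular to $\Aff(\tau)$. No further work is needed; there is no real obstacle, as the preceding lemma has already done all the geometric heavy lifting.
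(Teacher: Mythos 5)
Your proof is correct and uses the same key ingredient as the paper, namely \Cref{thm: height of circumcentres 1} applied to the shared face $\tau$ together with the Delaunay emptiness condition; the only difference is that you apply that lemma to $\sigma_2$ (where $\tau$ is an upper face by hypothesis) while the paper applies it to $\sigma_1$ after first deducing that $\tau$ must be a lower face of $\sigma_1$. Your version is, if anything, a touch more direct, since it bypasses that extra step.
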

\begin{proof}
	Since $z$ is transverse to $\sigma_1$, the face $\tau = \sigma_1 \cap \sigma_2$ must be either a lower or upper face of $\sigma_1$.
	But we are given that $\tau$ is an upper face of $\sigma_2$, and thus by \Cref{thm: characterisation of upper and lower faces} it cannot be an upper face of $\sigma_1$.
	Thus, $\tau$ is a lower face of $\sigma_1$.
	The circumcentre $C_2$ is the centre of a circumsphere for $\tau$ that is empty of $\sigma_1$ because of the Delaunay condition.
	Moreover, since $Y$ is in general position, we must have $C_2 \neq C_1$.
	Then applying \Cref{thm: height of circumcentres 1} to $\sigma_1$ and $\tau$ gives $h(C_1) > h(C_2)$ as desired.
\end{proof}

\begin{figure}
	\centering
	\begin{tikzpicture}[scale=0.5]
	\coordinate (A) at (-2, -2);
	\coordinate (B) at (2, -2);
	\coordinate (C) at (0, 4);
	\coordinate (D) at (4, 4);
	\coordinate (Z) at (7, 0);
	\draw[GShape, GLine] (A) -- (B) -- (C) -- cycle;
	\draw[GShape, GLine] (D) -- (B) -- (C) -- cycle;
	\node[circle through 3 points={B}{C}{D},draw=black] (P) {};
	\node[circle through 3 points={A}{C}{B},draw=black] (O) {};
	\node[Point] at (O.center) {} node[Label, above = 3pt of O.center] {$O$};
	\node[Point] at (P.center) {} node[Label, above = 3pt of P.center] {$P$};
	\draw[densely dotted, black] (O.center) -- (P.center);

	\draw[->] (Z) -- ++(0,3) node[Label, midway, right=3pt] {$z$}; 
	\node[Point] at (A) {} node[Label, below left  = 3pt of A] {$a$};
	\node[Point] at (B) {} node[Label, below right = 3pt of B] {$b$};
	\node[Point] at (C) {} node[Label, above       = 3pt of C] {$c$};
	\node[Point] at (D) {} node[Label, above right = 3pt of D] {$d$};
\end{tikzpicture}
	\caption{
		Illustration of \Cref{thm: height of circumcentres 2}.
		Here $C_1 = P$ and $C_2 = O$ are the circumcentres of $\sigma_1 = bcd$ and $\sigma_2 = abc$ respectively, and $bc$ is an upper face for $abc$.
	}
	\label{fig: non-Delaunay counterexample}
\end{figure}

\begin{lemma}\label{thm: height of circumcentres of simplices above the membrane}
	Suppose $\sigma^{(d+1)} \in \Del(X,\mu)$ and $C$ is the circumcentre of $\sigma$.
	Then $\sigma$ lies above (resp. below) $\iota(\Del(X))$ if and only if $h(C) > \frac{1}{2}$ (resp. $h(C) < \frac{1}{2}$).
\end{lemma}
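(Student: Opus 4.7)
The plan is to reduce the equivalence to a sign question via Pythagoras, then propagate the sign using the combinatorial structure of $\Del^{\mu}(X)$. First, I would apply the correspondence between $d$-spheres in $\RR^{d+1}$ and $[2]$-stacks in $\RR^d$ (see \Cref{fig: correspondence between stacks and spheres}) to the empty circumsphere $S$ of $\sigma$. If $C$ is its centre and $R$ its radius, then $S$ slices on the hyperplanes $\{h = 0\}$ and $\{h = 1\}$ to two concentric $(d-1)$-spheres $S_0, S_1$ of radii $r_0, r_1$ in $\RR^d$, and Pythagoras gives $r_0^2 - r_1^2 = 1 - 2h(C)$, so $h(C) \gtrless 1/2$ is equivalent to $r_1 \gtrless r_0$. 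Moreover, \Cref{prop: GP2} applied with $s = 0$ rules out $r_0 = r_1$, which would force the $d+2$ projected vertices of $\sigma$ to be co-spherical in $\RR^d$; hence $h(C) \neq 1/2$.

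Next I would establish a flip lemma: for any two top-dimensional simplices $\sigma, \sigma' \in \Del^\mu(X)$ sharing a common $d$-face $\tau$, the face $\tau$ lies in $\iota(\Del(X))$ if and only if $h(C_\sigma)$ and $h(C_{\sigma'})$ are separated by $1/2$. The argument considers the pencil of spheres through $\tau$, whose centres form a line $L_\tau$ normal to $\Aff(\tau)$; by general position this line is not horizontal and therefore meets $\{h = 1/2\}$ in a unique point. For any $y \in X^\mu$ and any vertex $v$ of $\tau$, the quantity $|y - p|^2 - |v - p|^2$ is affine in $p \in L_\tau$, so if $y$ lies outside both Delaunay balls $B_{C_\sigma}$ and $B_{C_{\sigma'}}$ it remains outside every ball centred on the segment $[C_\sigma, C_{\sigma'}]$. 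A parallel analysis of the two vertices opposite $\tau$ in $\sigma$ and $\sigma'$ then shows that the empty-circumsphere pencil of $\tau$ along $L_\tau$ is precisely $[C_\sigma, C_{\sigma'}]$. Invoking \Cref{thm: membrane simplices have a circumsphere equidistant from the two planes} identifies $\tau \in \iota(\Del(X))$ with $L_\tau \cap \{h = 1/2\}$ lying in this segment, which is exactly the claim.

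Finally, connectedness of the dual graph of $\Del^\mu(X)$ gives the conclusion. The above-versus-below classification of a top-dimensional simplex and the sign of $h(C) - 1/2$ both flip across precisely those dual-graph edges whose corresponding $d$-face lies in $\iota(\Del(X))$, so along any path in the dual graph the two classifications change in parallel. Hence the two induced partitions of top-dimensional simplices coincide up to a global swap. To pin down the correct orientation I would inspect one explicit case, for example by starting at a colour-$1$ vertex $v^\mu$ of $\sigma$ and stepping a short distance along the edge of $\sigma$ toward a colour-$0$ vertex: a direct comparison of $h$ with $g \circ \pi$ at the resulting point matches the sign of $h(C_\sigma) - 1/2$.

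The main obstacle is the flip lemma. The affine-positivity observation cleanly handles points of $X^\mu$ lying outside both Delaunay balls, but a careful case analysis is needed for the two vertices opposite $\tau$ to confirm that they, too, remain outside the intermediate spheres. Mild additional care is required for boundary cases in which $\tau$ lies on $\partial |\Del^\mu(X)|$, or in which $\Del^\mu(X)$ consists of a single top-dimensional simplex; these are handled directly rather than by the pencil argument.
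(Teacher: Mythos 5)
Your route---Pythagoras to turn the question into a sign condition, a flip lemma across shared $d$-faces, and dual-graph connectivity---is genuinely different from the paper's proof (which anchors at simplices having a $d$-face in $\iota(\Del(X))$ via \Cref{thm: membrane simplices have a circumsphere equidistant from the two planes} and \Cref{thm: height of circumcentres 1}, and then propagates along a path of upper-face flips with strictly decreasing circumcentre heights, \Cref{thm: height of circumcentres 2}). Most of it is sound: for an interior $d$-face $\tau$ shared by $\sigma,\sigma'$, the centres of circumspheres of $\tau$ empty of $X^{\mu}$ do form exactly the segment $[C_{\sigma},C_{\sigma'}]$ on $L_{\tau}$, and since $h$ is strictly monotone along $L_{\tau}$ (transversality, \Cref{thm: chromatic delaunay has no vertical faces}) and $h(C)\neq\tfrac{1}{2}$ for top simplices by general position, your flip lemma follows from \Cref{thm: membrane simplices have a circumsphere equidistant from the two planes}. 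Together with connectivity of the dual graph this shows the two partitions of the $(d+1)$-simplices (above/below versus $h(C)\gtrless\tfrac{1}{2}$) agree up to a single global swap.

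The gap is the anchoring step, which is the only place the actual direction of the correspondence gets decided, and as written it does not work. Comparing $h$ with $g\circ\pi$ at a point of $\sigma$ near a colour-$1$ vertex only tells you on which side of $\iota(\Del(X))$ that point lies, i.e.\ whether $\sigma$ is above or below; you give no argument that this sign equals the sign of $h(C_{\sigma})-\tfrac{1}{2}$, and that equality is precisely what the lemma asserts, so the proposed check is circular. Without a valid anchor you have proved only the ``up to a global swap'' statement. The repair is to anchor at a top simplex $\sigma$ that has a $d$-face $\tau$ in $\iota(\Del(X))$ (such $\sigma$ exists because the membrane is a nonempty $d$-dimensional subcomplex of the pure $(d+1)$-dimensional $\Del^{\mu}(X)$): if $\sigma$ lies above, then $\tau$ must be a lower face of $\sigma$, since upper faces of above-simplices have relative interiors strictly above the membrane (cf.\ the proof of \Cref{thm: description of bichromatic gradient}); a one-sided version of your own pencil argument---equivalently \Cref{thm: height of circumcentres 1} applied to the height-$\tfrac{1}{2}$ empty circumsphere of $\tau$ provided by \Cref{thm: membrane simplices have a circumsphere equidistant from the two planes}---then gives $h(C_{\sigma})>\tfrac{1}{2}$, pinning the orientation. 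This anchor is exactly the first case of the paper's proof; the difference that remains is that the paper propagates it monotonically rather than by your parity argument.
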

\begin{proof}
	It suffices to show that if $\sigma$ lies above $\iota(\Del(X))$ then $h(C) > \frac{1}{2}$ and if $\sigma$ lies below $\iota(\Del(X))$ then $h(C) < \frac{1}{2}$.
	The argument for both cases is similar, so we prove only the former case.
	Hence, suppose $\sigma$ lies above $\iota(\Del(X))$.

	First suppose $\sigma$ has a face $\tau^{(d)}$ that lies in $\iota(\Del(X))$.
	Since $\tau$ is in $\iota(\Del(X))$, \Cref{thm: membrane simplices have a circumsphere equidistant from the two planes} shows that there is an empty circumsphere for $\tau$ with centre $C'$ such that $h(C') = \frac{1}{2}$.
	The proof of \Cref{thm: description of bichromatic gradient} shows that any upper face of $\sigma$ lies in $\Del(X,\mu) \setminus \iota(\Del(X))$, so $\tau$ is necessarily a lower face of $\sigma$.
	Then $h(C) \geq h(C') = \frac{1}{2}$ by \Cref{thm: height of circumcentres 1}, with equality if and only if $C = C'$.
	Now $\sigma$, being $(d+1)$-dimensional, is not in $\iota(\Del(X))$.
	By \Cref{thm: membrane simplices have a circumsphere equidistant from the two planes}, we infer that $C \neq C'$ and therefore the inequality above is in fact a strict inequality: $h(C) > \frac{1}{2}$.

	Now for general $\sigma$: we construct a finite sequence of $(d+1)$-dimensional simplices $\sigma = \sigma_0, \sigma_1, \ldots, \sigma_k$ such that $\sigma_i \cap \sigma_{i+1}$ is a $d$-dimensional upper face of $\sigma_{i+1}$ for each $i < k$, and $\sigma_k$ has a lower $d$-dimensional face in $\iota(\Del(X))$.
	Suppose $\sigma_i$ has been chosen.
	If $\sigma_i$ has a lower $d$-dimensional face on $\iota(\Del(X))$ set $k = i$.
	Otherwise, let $\tau_i$ be any lower $d$-dimensional face of $\sigma_i$, so $\tau_i$ must lie above $\iota(\Del(X))$.
	By \Cref{thm: faces above membrane are upper for something} there exists a unique $\sigma_{i+1}$ that lies above $\iota(\Del(X))$ for which $\tau$ is an upper face.
	This sequence is non-repeating because, by \Cref{thm: height of circumcentres 2}, we have $h(C(\sigma_0)) > \ldots > h(C(\sigma_k))$.
	The sequence terminates because it is non-repeating and because there are a finite number of simplices in $\Del(X,\mu)$.
	Having constructed our sequence, we note that $h(C(\sigma_k)) > \frac{1}{2}$ by our previous argument, and the result follows since $h(C(\sigma_0)) > \ldots > h(C(\sigma_k))$.
\end{proof}

From \Cref{thm: height of circumcentres of simplices above the membrane} and the definition of the function $f$ in \Cref{eq: definition of vertical collapse Morse function} we immediately get the following.
\begin{lemma}\label{thm: monotonicity of f across type-2 edges}
	If $f$ is the function defined in \Cref{eq: definition of vertical collapse Morse function} then $f([\sigma]) > f(\tau)$ whenever $[\sigma] \to \tau$ is an edge of type (2) in $\Hcal(\Del(X,\mu))/\Sigma$.
\end{lemma}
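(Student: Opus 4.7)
The plan is to reduce the claim directly to \Cref{thm: height of circumcentres of simplices above the membrane}. First I would unpack what a type-(2) edge $[\sigma] \to \tau$ entails: by definition $\sigma$ is a $(d+1)$-simplex of $\Del^{\mu}(X)$, $\tau \in \iota(\Del(X))$ is a proper face of $\sigma$, and $\sigma$ itself does not lie on $\iota(\Del(X))$ (since the two intervals in $\Sigma$ corresponding to $\sigma$ and its $\iota(\Del(X))$-faces are disjoint, and more fundamentally because $\iota(\Del(X))$ is the graph of a function on $\Del(X)$ and cannot contain a $(d+1)$-simplex). Since $\iota(\Del(X))$ is $d$-dimensional, $\dim(\tau) \leq d$, so $f(\tau) = \dim(\tau) \leq d$.

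Next I would estimate $f([\sigma])$. Directly from \Cref{eq: definition of vertical collapse Morse function},
\[
f([\sigma]) = \left(h(C(\sigma)) - \tfrac{1}{2}\right)^2 + d \geq d,
\]
with equality if and only if $h(C(\sigma)) = \tfrac{1}{2}$. So the lemma reduces to verifying the strict inequality $h(C(\sigma)) \neq \tfrac{1}{2}$.

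The main (and only non-trivial) step is this strict inequality, and it is exactly what \Cref{thm: height of circumcentres of simplices above the membrane} provides: since $\sigma$ is a $(d+1)$-simplex of $\Del^{\mu}(X)$, it lies either above or below $\iota(\Del(X))$, and correspondingly $h(C(\sigma)) > \tfrac{1}{2}$ or $h(C(\sigma)) < \tfrac{1}{2}$. In both cases $(h(C(\sigma)) - \tfrac{1}{2})^2 > 0$, whence
\[
f([\sigma]) > d \geq \dim(\tau) = f(\tau),
\]
completing the argument. I do not anticipate any genuine obstacle here; the lemma is essentially a bookkeeping corollary of the geometric content already encapsulated in \Cref{thm: height of circumcentres of simplices above the membrane}, and the only subtlety is noticing that the $d \geq \dim(\tau)$ slack in $f(\tau)$ is handled by the strictness of the circumcentre height comparison.
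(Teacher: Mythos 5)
Your proof is correct and follows essentially the same route as the paper: the paper simply declares that the lemma follows immediately from \Cref{thm: height of circumcentres of simplices above the membrane} together with the definition of $f$, and the case analysis you spell out ($\dim(\tau) < d$ trivially, $\dim(\tau) = d$ requiring the strict inequality $h(C(\sigma)) \neq \tfrac{1}{2}$ supplied by that lemma) is precisely the implicit bookkeeping the paper is gesturing at.
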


Finally, we consider edges of type (3) in $\Hcal(\Del(X,\mu))$.

\begin{lemma}\label{thm: sufficient condition for edge in Hasse diagram}
	Suppose $\sigma, \sigma'$ are $(d+1)$-simplices in $\Del(X,\mu)$ that lie above (resp. below) $\iota(\Del(X))$.
	\begin{enumerate}
		\item If there is an edge $[\sigma] \to [\sigma']$ in $\Hcal(\Del(X,\mu))/\Sigma$, then $\sigma \cap \sigma'$ is an upper (resp. lower) face of $\sigma'$.
		\item If $\sigma \cap \sigma'$ is an upper (resp. lower) $d$-dimensional face of $\sigma'$ then there is an edge $[\sigma] \to [\sigma']$ in $\Hcal(\Del(X,\mu))/\Sigma$.
	\end{enumerate}
\end{lemma}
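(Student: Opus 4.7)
The plan is to unpack the definition of the quotient graph and then to lean on a structural characterization of upper/lower faces of a $(d+1)$-simplex. By symmetry it suffices to treat the case where both $\sigma$ and $\sigma'$ lie above $\iota(\Del(X))$, so the associated intervals in $\Sigma$ are $[\sigma^*, \sigma]$ and $[(\sigma')^*, \sigma']$. Recall that an edge $[\sigma] \to [\sigma']$ in $\Hcal(\Del^{\mu}(X))/\Sigma$ exists if and only if there is a covering relation $\eta \gtrdot \eta'$ in the face poset with $\eta \in [\sigma^*, \sigma]$ and $\eta' \in [(\sigma')^*, \sigma']$.

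Before handling either direction, I would record the following combinatorial characterization, which will do most of the work: if $\rho^{(d+1)}$ is a simplex transverse to $e_{d+1}$, and $B \subseteq V(\rho)$ denotes the set of vertices $v$ for which $V(\rho) \setminus \{v\}$ is an upper $d$-face of $\rho$, then the upper faces of $\rho$ are exactly the faces of the form $V(\rho) \setminus I$ for nonempty $I \subseteq B$. Equivalently, the upper faces are the strict faces of $\rho$ that contain $\rho^*$. In particular, upper faces form an order filter inside the poset of strict faces of $\rho$: any strict face of $\rho$ that contains an upper face is itself an upper face. This follows directly from the definition, which builds $k$-dimensional upper faces for $k < d$ as intersections of upper $d$-faces, and identifies $\rho^*$ with $V(\rho)\setminus B$.

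For part (1), given an edge $[\sigma] \to [\sigma']$ (so $\sigma \neq \sigma'$), pick witnesses $\eta \gtrdot \eta'$ with $\eta \in [\sigma^*, \sigma]$ and $\eta' \in [(\sigma')^*, \sigma']$. Then $\eta' \subseteq \eta \subseteq \sigma$ and $\eta' \subseteq \sigma'$, so $\eta' \subseteq \sigma \cap \sigma'$. The case $\eta' = \sigma'$ is excluded: it would force $\sigma' \subseteq \sigma$ and hence $\sigma = \sigma'$, contradicting the existence of an edge. Hence $\eta'$ is a strict upper face of $\sigma'$, so $(\sigma')^* \subseteq \eta' \subseteq \sigma \cap \sigma' \subsetneq \sigma'$, and by the filter property the intersection $\sigma \cap \sigma'$ is itself an upper face of $\sigma'$.

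For part (2), set $\tau = \sigma \cap \sigma'$. The hypothesis says $\tau$ is an upper $d$-face of $\sigma'$, so $\tau \in [(\sigma')^*, \sigma']$. Since $\tau \subset \sigma$ is $d$-dimensional and $\sigma$ is $(d+1)$-dimensional, $\sigma \gtrdot \tau$ is a covering relation in the face poset, and clearly $\sigma \in [\sigma^*, \sigma]$. This produces the desired edge $[\sigma] \to [\sigma']$. The only mild obstacle is the order-filter fact in part (1); once it is in place, both implications are immediate, which is why I would state it as a standalone sub-lemma before beginning either implication.
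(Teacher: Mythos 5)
Your proof is correct and follows essentially the same route as the paper's. The paper's proof of part (1) picks a covering pair $\tau \in [\sigma^*, \sigma]$, $\tau' \in [(\sigma')^*, \sigma']$ with $\tau \geq \tau'$, observes $\tau' \leq \sigma \cap \sigma'$, and then concludes $\sigma \cap \sigma' \in [(\sigma')^*, \sigma']$, which is exactly your argument once you note that $[(\sigma')^*, \sigma']$ is, by definition, the set of faces between $(\sigma')^*$ and $\sigma'$ — your "upper faces form an order filter" sub-lemma is just an explicit restatement of this interval structure. Part (2) is identical. The only difference is presentational: you spell out the combinatorial characterization of upper faces as $V(\sigma) \setminus I$ for $\emptyset \neq I \subseteq B$, which the paper leaves implicit.
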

\begin{proof}
	We consider the case where $\sigma$ and $\sigma'$ lie above $\iota(\Del(X))$; the proof for the other case is similar.
	\begin{enumerate}
		\item Suppose there is an edge $[\sigma] \to [\sigma']$ in $\Hcal(\Del(X,\mu))/\Sigma$.
		      Then there are upper faces $\tau \in [\sigma^*, \sigma]$ and $\tau'\in [(\sigma')^*, \sigma']$ such that there is an edge $\tau \to \tau'$ in $\Hcal(\Del(X,\mu))$, i.e., $\tau \geq \tau'$.
		      Then $\tau'$ must be a face of $\sigma \cap \sigma'$, and the latter must therefore also belong to the interval $[(\sigma')^*, \sigma']$.
		\item Suppose $\sigma \cap \sigma'$ is an upper $d$-dimensional face of $\sigma'$.
		      Then the edge $\sigma \to \sigma \cap \sigma'$ in $\Hcal(\Del(X,\mu))$ descends to an edge $[\sigma] \to [\sigma']$ in $\Hcal(\Del(X,\mu))/\Sigma$.\qedhere
	\end{enumerate}
\end{proof}

\begin{lemma}\label{thm: paths in quotient graph lift to codimension 1 paths}
	Suppose $\sigma, \sigma'$ are $(d+1)$-simplices in $\Del(X,\mu)$ such that $\sigma$ and $\sigma'$ both lie above (resp. below) $\iota(\Del(X))$ and $\sigma \cap \sigma'$ is an upper (resp. lower) face of $\sigma'$.
	Then there is a path of $(d+1)$-simplices $[\sigma] = [\sigma_1] \to [\sigma_2] \to \ldots \to [\sigma_k] = [\sigma']$ in $\Hcal(\Del(X,\mu))/\Sigma$ such that $\sigma_i \cap \sigma_{i+1}$ is $d$-dimensional for each $1 \leq i < k-1$.
\end{lemma}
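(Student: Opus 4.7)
The plan is to prove the lemma by constructing a piecewise-linear path in $|\Del^{\mu}(X)|$ that realises the desired sequence of $(d+1)$-simplices. Assume both $\sigma$ and $\sigma'$ lie above $\iota(\Del(X))$ and set $\tau = \sigma \cap \sigma'$; the case where both are below is entirely symmetric. If $\dim \tau = d$, the one-step path $[\sigma]\to[\sigma']$ already satisfies the conclusion by \Cref{thm: sufficient condition for edge in Hasse diagram}(2), so assume $\dim \tau \leq d-1$. I will build a piecewise-linear curve $\gamma: [0,1] \to |\Del^{\mu}(X)|$ from a point $p \in \topint(\sigma)$ to a point $p' \in \topint(\sigma')$ whose tangent at each transversal crossing of a $d$-dimensional face is sufficiently close to $-e_{d+1}$. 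By \Cref{thm: characterisation of upper and lower faces}, any such crossing from a $(d+1)$-simplex $\sigma_i$ into another $\sigma_{i+1}$ is necessarily through a $d$-face that is an upper face of $\sigma_{i+1}$, and \Cref{thm: sufficient condition for edge in Hasse diagram}(2) then produces the required edges $[\sigma_i] \to [\sigma_{i+1}]$ in $\Hcal(\Del^{\mu}(X))/\Sigma$.

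The key structural fact underlying the construction is that $\tau$ lies strictly above the membrane, and hence every $(d+1)$-simplex of $\Del^{\mu}(X)$ containing $\tau$ must lie entirely above $\iota(\Del(X))$ — a top-dimensional simplex cannot be split by the codimension-$1$ subcomplex $\iota(\Del(X))$, so its interior lies entirely above or entirely below, and the containment of the strictly-above face $\tau$ forces the former. Thus the star of $\tau$ sits inside the union of closed $(d+1)$-simplices above the membrane, and both $\sigma$ and $\sigma'$ are top-dimensional simplices of this star. Moreover, by \Cref{thm: faces above membrane are upper for something} applied to $\tau$ above the membrane, $\sigma'$ is the \emph{unique} simplex of the star in which $\tau$ is an upper face, so $\tau$ must be either a lower or a side face of $\sigma$. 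For the construction, pick a generic $x \in \relint(\tau)$ and set $p' := x - \delta e_{d+1}$ for small $\delta > 0$, placing $p' \in \topint(\sigma')$. The dual graph of $(d+1)$-simplices of the star of $\tau$, whose edges join pairs sharing a $d$-face through $\tau$, is connected (it is the dual graph of a triangulated disk or sphere arising from the link of $\tau$), so a combinatorial path from $\sigma$ to a simplex $\sigma^{*}$ adjacent to $\sigma'$ across an upper $d$-face $\eta$ of $\sigma'$ exists. I realise this combinatorial path by $\gamma$, traversing each intermediate simplex via a segment whose tangent is nearly $-e_{d+1}$, and conclude with a short vertical drop from the $\sigma^{*}$ side of $\eta$ into $\topint(\sigma')$ to land at $p'$.

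The principal obstacle is carrying out the realisation step while keeping $\gamma$ inside the simplices of the star (hence above the membrane) and in general position relative to $\Del^{\mu}(X)$. Specifically, inside each traversed simplex $\sigma_i$ of the star one must choose a waypoint near $\relint(\tau)$ and an exit waypoint in the relative interior of the prescribed shared $d$-face, such that consecutive waypoints are joined by a segment with negative $e_{d+1}$-component that avoids all strata of codimension $\geq 2$. The general-position hypotheses (\Cref{prop: GP2,prop: GP3}) together with the transversality of $e_{d+1}$ to $\Del^{\mu}(X)$ from \Cref{thm: chromatic delaunay has no vertical faces} make this a standard genericity argument, yielding a path $\gamma$ whose simplex sequence realises the desired $[\sigma_1] \to \cdots \to [\sigma_k]$ in $\Hcal(\Del^{\mu}(X))/\Sigma$.
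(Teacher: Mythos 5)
There is a genuine gap. You invoke connectivity of the dual graph of the star of $\tau = \sigma \cap \sigma'$ to produce a combinatorial path from $\sigma$ to a neighbour $\sigma^*$ of $\sigma'$, and then claim to realise this path by a piecewise-linear curve whose segments all have tangent near $-e_{d+1}$. But an arbitrary combinatorial path obtained from connectivity has no reason to be monotone in the $e_{d+1}$-direction: if $\sigma_i \to \sigma_{i+1}$ crosses a $d$-face that is a lower face of $\sigma_{i+1}$ (equivalently, an upper face of $\sigma_i$), then by \Cref{thm: characterisation of upper and lower faces} the crossing segment must have strictly positive $e_{d+1}$-component, which contradicts your requirement that the tangent be near $-e_{d+1}$. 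Proving the existence of a combinatorial path that \emph{is} monotone-descending — i.e., each shared $d$-face is a lower face of the earlier simplex and an upper face of the later one — is precisely the content that must be established; connectivity of the dual graph is too weak to give it.

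The paper instead constructs the path inductively: starting at $\sigma$, pick any lower $d$-face of the current simplex that lies in the star of $\sigma\cap\sigma'$, and step into the unique simplex for which that face is upper (\Cref{thm: faces above membrane are upper for something}); the step stays in the star, is recorded as an edge via \Cref{thm: sufficient condition for edge in Hasse diagram}, the circumcentre heights strictly decrease by \Cref{thm: height of circumcentres 2} so the sequence is injective hence terminates, and the uniqueness of $\sigma'$ as the simplex of the star in which $\sigma\cap\sigma'$ is an upper face (a fact you correctly identify but do not use for the induction) forces termination at $\sigma'$. You have all the ingredients but combine them in the wrong order: the descent must be constructed step-by-step rather than extracted from a pre-existing combinatorial path, and termination at $\sigma'$ follows from a uniqueness argument, not a geometric realisation. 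As written, the ``standard genericity argument'' you invoke does not resolve the tension between the prescribed combinatorial path and the near-vertical-downward realisation.
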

\begin{proof}
	We consider the case when $\sigma, \sigma'$ lie above $\iota(\Del(X))$.
	Consider the star $V \defeq \st(\sigma\cap\sigma')$ in $\Hcal(\Del(X,\mu))/\Sigma$.
	We claim:
	\begin{equation}\locallabel{claim 1}
		\text{$\sigma'$ is the unique $(d+1)$-simplex in $V$ that has no lower face in $V$}.\tag{$*$}
	\end{equation}
	Let $\mu$ be any $(d+1)$-simplex in $V$ with no lower face in $V$.
	This is equivalent to the property that every $d$-dimensional face of $\mu$ that contains $\sigma \cap \sigma'$ is an upper face of $\mu$, which is equivalent to the property that $\sigma \cap \sigma'$ is an upper face of $\mu$.
	This is equivalent to $\mu = \sigma'$ because \Cref{thm: characterisation of upper and lower faces} implies that there is a unique $\mu$ for which $\sigma \cap \sigma'$ is an upper face.

	We now construct the required path inductively.
	Begin by setting $\sigma_1 = \sigma$ and assume by induction that $\sigma_i \in V$.
	If $\sigma_i = \sigma'$ then we are done; otherwise by our previous remarks we can choose a lower $d$-dimensional face $\tau$ of $\sigma_i$ that is in $V$.
	Note that $\tau $ cannot be in $\iota(\Del(X))$.
	This is because $\tau > \sigma \cap \sigma'$, and $\sigma \cap \sigma'$ cannot be in $\iota(\Del(X))$ due to \Cref{thm: description of bichromatic gradient} and the fact that $\sigma \cap \sigma'$ is an upper face of $\sigma'$ which lies above $\iota(\Del(X))$.
	By \Cref{thm: faces above membrane are upper for something} there exists a unique $(d+1)$-dimensional simplex $\sigma_{i+1}$ in $\Hcal(\Del(X,\mu))/\Sigma$ for which $\tau$ is an upper face.
	Moreover, $\sigma_{i+1} \in V$ because $\sigma_{i+1} > \tau > \sigma \cap \sigma'$.
	Since $\tau$ is an upper face of $\sigma_{i+1}$ and $\tau = \sigma_i \cap \sigma_{i+1}$, there is an edge $[\sigma_i] \to [\sigma_{i+1}]$ in $\Hcal(\Del(X,\mu))/\Sigma$ by \Cref{thm: sufficient condition for edge in Hasse diagram}.
	The sequence $(\sigma_i)_{i \geq 1}$ is non-repeating because the sequences of heights of the circumcentres $h(C(\sigma_i))$ is strictly decreasing by \Cref{thm: height of circumcentres 2}.
	The sequence must terminate in a finite number of steps since it is non-repeating and $V$ is finite.
	Furthermore, it must terminate at $\sigma'$ by \localref{claim 1}.
\end{proof}

\begin{lemma}\label{thm: monotonicity of f across type 3 edges}
	The function $f$ defined in \Cref{eq: definition of vertical collapse Morse function} satisfies $f([\sigma]) > f([\sigma'])$ whenever $[\sigma] \to [\sigma']$ is an edge of type (3) in $\Hcal(\Del(X,\mu))$.
\end{lemma}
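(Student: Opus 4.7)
The plan is to reduce the question to monotonicity of the sequence $h(C(\sigma_i))$ along the codimension-one path constructed in \Cref{thm: paths in quotient graph lift to codimension 1 paths}, and then exploit that $(t-\tfrac12)^2$ is strictly monotone on each of the half-lines $(-\infty,\tfrac12)$ and $(\tfrac12,\infty)$.

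First, given an edge $[\sigma]\to[\sigma']$ of type (3) in $\Hcal(\Del^\mu(X))/\Sigma$, \Cref{thm: sufficient condition for edge in Hasse diagram}(1) ensures that $\sigma\cap\sigma'$ is an upper (resp.\ lower) face of $\sigma'$, according to whether $\sigma$ and $\sigma'$ both lie above or both lie below $\iota(\Del(X))$. Apply \Cref{thm: paths in quotient graph lift to codimension 1 paths} to obtain a finite path $[\sigma]=[\sigma_1]\to\dots\to[\sigma_k]=[\sigma']$ in $\Hcal(\Del^\mu(X))/\Sigma$ in which each intersection $\sigma_i\cap\sigma_{i+1}$ is $d$-dimensional; note that all $\sigma_i$ still lie on the same side of $\iota(\Del(X))$, since the only type-(3) edges produced along the path stay within the given side (the proof of \Cref{thm: paths in quotient graph lift to codimension 1 paths} constructs each $\sigma_{i+1}$ on the same side as $\sigma_i$).

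Next, I would establish strict monotonicity of $h(C(\sigma_i))$ along the path. In the ``above'' case each $\sigma_i\cap\sigma_{i+1}$ is an upper face of $\sigma_{i+1}$, so \Cref{thm: height of circumcentres 2} applied to $\sigma_i,\sigma_{i+1}$ (with $z=e_{d+1}$) yields $h(C(\sigma_i))>h(C(\sigma_{i+1}))$. In the ``below'' case each $\sigma_i\cap\sigma_{i+1}$ is a lower face of $\sigma_{i+1}$ with respect to $e_{d+1}$, equivalently an upper face with respect to $-e_{d+1}$, so the same lemma applied with $z=-e_{d+1}$ gives $h(C(\sigma_i))<h(C(\sigma_{i+1}))$. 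Telescoping gives $h(C(\sigma))>h(C(\sigma'))$ in the ``above'' case and $h(C(\sigma))<h(C(\sigma'))$ in the ``below'' case.

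Finally, by \Cref{thm: height of circumcentres of simplices above the membrane} all circumcentres of simplices lying above $\iota(\Del(X))$ satisfy $h(C(\,\cdot\,))>\tfrac12$, and those lying below satisfy $h(C(\,\cdot\,))<\tfrac12$. Since $t\mapsto(t-\tfrac12)^2$ is strictly increasing on $(\tfrac12,\infty)$ and strictly decreasing on $(-\infty,\tfrac12)$, the monotonicity of $h(C(\sigma_i))$ in each case translates to strict decrease of $(h(C(\sigma_i))-\tfrac12)^2$, and therefore of $f([\sigma_i])=(h(C(\sigma_i))-\tfrac12)^2+d$. Combining gives $f([\sigma])>f([\sigma'])$, as required. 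I do not anticipate a genuine obstacle here: the substantive geometric work has already been done in \Cref{thm: height of circumcentres 2}, \Cref{thm: height of circumcentres of simplices above the membrane}, and \Cref{thm: paths in quotient graph lift to codimension 1 paths}; the only mild care needed is the sign-flip argument for the ``below'' case and the observation that squaring $(\cdot-\tfrac12)$ preserves the strict inequality in each half-line.
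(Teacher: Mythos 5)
Your proof is correct and follows essentially the same route as the paper: lift the edge to a codimension-one path via the path lemma, apply the circumcentre-height comparison lemma to get strict monotonicity of $h(C(\sigma_i))$, and use the fact that the heights stay on one side of $\tfrac12$ (from the lemma on heights of circumcentres above/below the membrane) so that $(h-\tfrac12)^2+d$ is strictly decreasing. The only cosmetic difference is that the paper dispatches the ``below'' case by a without-loss-of-generality symmetry while you spell out the sign-flip with $z=-e_{d+1}$, and you make explicit the half-line monotonicity step that the paper leaves implicit.
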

\begin{proof}
	Let $[\sigma] \to [\sigma']$ be an edge of type (3) in $\Hcal(\Del(X,\mu))$ and without loss of generality assume $\sigma$ and $\sigma'$ lie above $\iota(\Del(X))$.
	By \Cref{thm: paths in quotient graph lift to codimension 1 paths} there is a path $[\sigma] = [\sigma_0] \to [\sigma_1] \to \ldots \to [\sigma_k] = [\sigma']$ where each $\sigma_i \cap \sigma_{i+1}$ is a $d$-dimensional upper face for $\sigma_{i+1}$.
	By \Cref{thm: height of circumcentres 2} the sequence $h(C(\sigma_i))$ is strictly decreasing, and hence $(h(C(\sigma_i)) - \frac{1}{2})^2 + d$ is also strictly decreasing.
	It follows that $f([\sigma]) > f([\sigma'])$.
\end{proof}

\begin{proposition}\label{thm: acyclicity function is Morse}
	The function $f$ defined in \Cref{eq: definition of vertical collapse Morse function} lifts to a generalized discrete Morse function on $\Del(X,\mu)$, whose generalized discrete Morse gradient is $\Sigma$ as defined in \Cref{eq: definition of vertical gradient}.
\end{proposition}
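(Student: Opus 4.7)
The plan is to exploit the classification of edges in $\Hcal(\Del^\mu(X))/\Sigma$ already given in the excerpt: every edge falls into one of three types, namely (1) edges between two simplices of $\iota(\Del(X))$, (2) edges from some $[\sigma]$ with $\sigma^{(d+1)} \in \Del^\mu(X)\setminus \iota(\Del(X))$ to some $\tau \in \iota(\Del(X))$, and (3) edges of the form $[\sigma] \to [\sigma']$ with $\sigma, \sigma'$ on the same side of $\iota(\Del(X))$. I would simply assemble \Cref{rem: monotonicity of f across type-1 edges}, \Cref{thm: monotonicity of f across type-2 edges}, and \Cref{thm: monotonicity of f across type 3 edges} into the single statement that $f$ is \emph{strictly} decreasing along every directed edge of $\Hcal(\Del^\mu(X))/\Sigma$. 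This immediately rules out directed cycles, so $\Hcal(\Del^\mu(X))/\Sigma$ is a directed acyclic graph.

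Next, I would invoke the \fullref{thm: generalized gradient acyclicity lemma} to conclude that $\Sigma$ is a generalized gradient on $\Del^\mu(X)$. To make the lift of $f$ explicit, define $\tilde{f}:\Del^\mu(X) \to \RR$ by $\tilde{f}(\eta) = f([\eta])$, where $[\eta]$ is the vertex of $\Hcal(\Del^\mu(X))/\Sigma$ containing $\eta$. By construction $\tilde{f}$ is constant on each interval of $\Sigma$, so it remains to verify that whenever $\eta > \tau$ in $\Del^\mu(X)$ lie in distinct intervals of $\Sigma$ we have $\tilde{f}(\eta) > \tilde{f}(\tau)$. This is immediate, however, because the edge $\eta \to \tau$ in $\Hcal(\Del^\mu(X))$ descends to an edge $[\eta] \to [\tau]$ in the quotient, which falls into one of the three types above, along which $f$ is strictly decreasing. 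Together with the fact that $\tilde{f}$ is monotonic and constant on the intervals of $\Sigma$, this gives both conditions in the definition of a generalized discrete Morse function.

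There is essentially no obstacle here: the heavy lifting has already been done in \Cref{thm: monotonicity of f across type-2 edges,thm: monotonicity of f across type 3 edges} (whose proofs rely on \Cref{thm: height of circumcentres of simplices above the membrane,thm: paths in quotient graph lift to codimension 1 paths}), so the proposition reduces to a bookkeeping argument combining the three monotonicity statements with the acyclicity lemma. The only subtle point worth spelling out is that the equivalence classes of $\Sigma$ include singletons $\{\tau\}$ for $\tau \in \iota(\Del(X))$, so type (1) edges in the quotient are indistinguishable from their lifts in $\Hcal(\Del^\mu(X))$ and the monotonicity check for those edges is trivial from the definition $f(\tau) = \dim(\tau)$.
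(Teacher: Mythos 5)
Your proof is correct and follows essentially the same route as the paper's: combine the three monotonicity statements to get strict monotonicity of $f$ on $\Hcal(\Del^\mu(X))/\Sigma$, invoke the Acyclicity Lemma to conclude $\Sigma$ is a gradient, and observe that the lift of $f$ is constant exactly on intervals of $\Sigma$ and strictly increasing across them. The only difference is that you spell out the verification of the lift slightly more explicitly than the paper does.
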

\begin{proof}
	\Cref{rem: monotonicity of f across type-1 edges,thm: monotonicity of f across type-2 edges,thm: monotonicity of f across type 3 edges} imply that the function $f$ defined in \Cref{eq: definition of vertical collapse Morse function} is strictly monotonic on $\Hcal(\Del(X,\mu))/\Sigma$.
	This means that $\Hcal(\Del(X,\mu))/\Sigma$ is a directed acyclic graph, and \Cref{thm: generalized gradient acyclicity lemma} implies that $\Sigma$ is a generalized discrete Morse gradient.
	Then it is clear that the lift of $f$ to $\Hcal(\Del(X,\mu))$ is still monotonic and its level sets are exactly $\Sigma$, which shows that the lift is a generalized discrete Morse function whose gradient is $\Sigma$.
\end{proof}
Now since $\Crit(\Sigma) = \Del(X)$ (\Cref{thm: description of bichromatic gradient}), \Cref{thm: bichrom_triangulation_collapse} follows by applying the \fullref{thm: generalized gradient collapsing theorem} to the Morse gradient $\Sigma$.
Finally, \Cref{thm: chromatic_collapse} follows from the remarks at the beginning of \Cref{sec: construction of vertical gradient}.

\section{Morse Theoretic Structure of the Chromatic Alpha Filtration Function}\label{sec: KKT for stacks and pairing lemmas}

The main goal of this section is to study the \v{C}ech and the chromatic alpha filtration functions.
In \Cref{sec: convex optimization for stacks} we show that these functions are generalized discrete Morse functions.
This fact was already proved in~\cite{attali_vietorisrips_2013} for the \v{C}ech filtration and in~\cite{Montesano2025chromatic} for the chromatic alpha filtration, as noted in the introduction.
However, we will need to study the relationship between the generalized gradients of these filtration functions (\Cref{sec: pairing lemmas}), for which it is useful to specifically formulate a proof by adapting the methods in~\cite{bauer_morse_2016}, from the case of spheres to the case of stacks.
In that paper, the authors define a family of ``selective Delaunay filtrations'' that interpolate between the alpha and \v{C}ech filtrations, and prove that the corresponding filtration functions all share similar properties.
We will generalize their construction to interpolate between the chromatic alpha and \v{C}ech filtrations.
While in~\cite{bauer_morse_2016} the intermediate filtrations are interesting objects in their own right, we use the intermediate filtrations only as a tool for the proofs of this section.
Throughout the section $X \subset \RR^d$ will denote a fixed point cloud in general position and $\mu: X \to [s+1]$ will denote a fixed colouring of $X$.

\subsection{Showing that the Filtration Function is Morse}\label{sec: convex optimization for stacks}

Recall that for $\gamma \subset [s+1]$ an empty $\gamma$-stack in $\RR^d$ comprises a set of concentric $(d-1)$-spheres $S = (S_m)_{m \in \gamma}$ such that $S_m$ is empty of $X_m$.
We now consider a relaxation of the emptiness condition.
For a $\gamma$-stack $S$ (not necessarily empty) we define:
\begin{align}
	\Incl(S) & \defeq \bigcup_{j \in \gamma} \{x \in X_j \mid \text {$x$ lies on or inside $S_j$}\},                 \\
	\Excl(S) & \defeq \bigcup_{j \in \gamma} \{x \in X_j \mid \text {$x$ lies on or outside $S_j$}\},                \\
	\On(S)   & \defeq \bigcup_{j \in \gamma} \{x \in X_j \mid \text {$x$ lies on $S_j$} \} = \Incl(S) \cap \Excl(S).
\end{align}
For $\sigma, E \subset X$, we say $S$ \textit{excludes $E$} if $E \cap \mu^{-1}(\gamma) \subset \Excl(S)$, and we say that $S$ \textit{includes} $\sigma$ if $\sigma \subset \Incl(S)$.
Denoting $\gamma = \mu(\sigma)$, we will let $S(\sigma,\mu; E)$ denote the $\gamma$-stack of minimum radius that includes $\sigma$ and excludes $E$, if such a stack exists.
Note that if $S(\sigma,\mu; E)$ exists, then it is unique because it is the minimizer of a strictly convex function (see the proof of \Cref{thm: KKT for stacks}).
With this notation, the empty stack of minimum radius passing through $\sigma$ is $S(\sigma,\mu; X)$.

By \Cref{eq: alternative definition of chromatic alpha filtration}, a simplex $\sigma$ belongs to the chromatic alpha complex $\Alpha_r(X,\mu)$ if and only if $S(\sigma,\mu; X)$ exists and $\Rad(S(\sigma,\mu; X)) \leq r$.
By analogy, for each $E \subset X$ we can define a filtered simplicial complex on the vertex set $X$ as follows:
\begin{equation}
	\Alpha_r(X,\mu; E) \defeq \{\sigma \subset X \mid S(\sigma,\mu; E) \text{ exists and } \Rad(S(\sigma,\mu; E)) \leq r\}.
	\label{eq: definition of selective chromatic Delaunay filtration}
\end{equation}

\begin{remark}\label{rem: special case of selective Delaunay filtrations}
	From \Cref{eq: alternative definition of Cech filtration,eq: alternative definition of alpha filtration} we observe that for any colouring $\mu$ we have $\Alpha_{\bullet}(X,\mu; \emptyset) = \Cech_{\bullet}(X)$ and $\Alpha_{\bullet}(X,\mu; X) = \Alpha_{\bullet}(X,\mu)$.
	In the monochromatic case, $\Alpha_{\bullet}(X,\mu; E)$ is the ``selective Delaunay filtration'' defined in~\cite{bauer_morse_2016}.
\end{remark}

Next we use the \fullref{thm: KKT conditions} to characterize the stacks $S(\sigma,\mu; E)$.
For $\gamma \subset [s+1]$ and a stack $S = (S_m)_{m \in \gamma}$, we let $\Out(S)$ be the set of ``outer'' colours of $S$, i.e.,
\begin{equation}
	\Out(S) \defeq \{m \in \gamma \mid \Rad(S_m) = \Rad(S)\}.
\end{equation}
\begin{proposition}[KKT Conditions for Stacks]\label{thm: KKT for stacks}
	Let $E, \sigma \subset X$ with $\mu(\sigma) = \gamma \subset [s+1]$, and let $S = (S_m)_{m \in \gamma}$ be a stack that includes $\sigma$ and excludes $E$.
	Suppose $S$ is centred on $y \in \RR^d$ and its spheres have radii $(r_m)_{m \in \gamma}$.
	Let $\gamma' = \Out(S)$.
	Then $S = S(\sigma,\mu; E)$ if and only if there exists a set of real numbers $\{\lambda_v\}_{v \in \On(S)}$ such that
	\begin{enumerate}
		\item $\sum_{v \in \On(S)} \lambda_v v = y$.
		\item $\{v \in \On(S) \mid \lambda_v > 0\} \subset \sigma$.
		\item $\{v \in \On(S) \mid \lambda_v < 0\} \subset E$.
		\item $\sum_{v \in \On(S_m)} \lambda_v = 0$ for all $m \notin \gamma'$.
		\item $\sum_{v \in \On(S_m)} \lambda_v \geq 0$ for all $m \in \gamma'$.
		\item $\sum_{m \in \gamma'} \sum_{v \in \On(S_m)} \lambda_v = 1$.
	\end{enumerate}
	In this case, the coefficients $\lambda_v$ are unique.
\end{proposition}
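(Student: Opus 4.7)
The plan is to recognise computing $S^\mu(\sigma, E)$ as a convex minimisation problem with affine constraints, and then to apply \Cref{thm: KKT conditions} directly. To linearise the constraints, I will reparametrise a $\gamma$-stack centred at $y$ with radii $(r_m)_{m \in \gamma}$ via the variables $(y, \rho')$ where $\rho_m' = \|y\|^2 - r_m^2$. In these coordinates the inclusion condition $\|y - v\|^2 \leq r_{\mu(v)}^2$ for $v \in \sigma$ becomes the affine inequality $c_v(y, \rho') \defeq -2\langle y, v\rangle + \|v\|^2 + \rho_{\mu(v)}' \leq 0$, and the exclusion condition becomes the reverse inequality. The squared radius rewrites as $\Rad(S)^2 = \|y\|^2 + \max_{m \in \gamma}(-\rho_m')$, which is a convex (but not smooth) function of $(y, \rho')$. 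Hence minimising $\Rad(S)$ over stacks that include $\sigma$ and exclude $E$ is a convex program with affine constraints, for which KKT is both necessary and sufficient for optimality.

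Next I will translate KKT into the combinatorial data of the proposition. Setting up a Lagrange multiplier for each constraint and absorbing signs, I will assign to each $v$ a single real number $\lambda_v$ that is nonnegative on $\sigma \setminus E$, nonpositive on $E \setminus \sigma$, and unrestricted on $\sigma \cap E$. Complementary slackness forces $\lambda_v = 0$ unless $c_v(y, \rho') = 0$, i.e., unless $v \in \On(S)$, which naturally produces the indexing set of the proposition together with conditions (2) and (3). Stationarity in $y$ gives $2y - 2\sum_v \lambda_v v = 0$, which is condition (1). For stationarity in the $\rho'$ variables, the subdifferential of $\max_m(-\rho_m')$ is, by \Cref{thm: subdifferential of max of convex functions}, the convex hull of $\{-e_m : m \in \Out(S)\}$. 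Writing a general element as $-\sum_{m \in \gamma'} \alpha_m e_m$ with $\alpha_m \geq 0$ summing to $1$, and matching coefficients of each $e_m$, yields precisely conditions (4), (5), and (6).

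The most delicate step is uniqueness of the multipliers, which I will obtain from general position. By \Cref{prop: GP2} applied to the subcolouring $\mu|_{\mu^{-1}(\gamma)}$, the set $\On(S)$ has at most $d + |\gamma|$ elements, so \Cref{prop: GP3} applies to its partition $(\On(S_m))_{m \in \gamma}$. Consider the linear map $L: \RR^{\On(S)} \to \RR^d \oplus \RR^\gamma$ sending $(\lambda_v)_v$ to $\bigl(\sum_v \lambda_v v,\, (\sum_{v \in \On(S_m)} \lambda_v)_{m \in \gamma}\bigr)$; if $L(\lambda) = 0$, then $y_m \defeq \sum_{v \in \On(S_m)} \lambda_v v \in \Lin(\On(S_m))$ (since its coefficients sum to zero) and $\sum_m y_m = 0$, so directness of the sum $\sum_m \Lin(\On(S_m))$ guaranteed by \Cref{prop: GP3} forces each $y_m$ to vanish, after which affine independence of $\On(S_m)$ forces $\lambda_v = 0$ for $v \in \On(S_m)$. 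Thus $L$ is injective and the multipliers are unique. The key subtlety throughout is the non-smoothness of the objective in the $\rho'$ direction, which the subdifferential calculation from \Cref{thm: subdifferential of max of convex functions} handles cleanly; the rest is sign bookkeeping.
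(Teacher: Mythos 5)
Your reduction to a convex program with affine constraints, the reparametrisation $\rho'_m = \|y\|^2 - r_m^2$, and the extraction of conditions (1)--(6) from stationarity and complementary slackness via \Cref{thm: subdifferential of max of convex functions} all match the paper's approach and are correct. However, the uniqueness argument has a gap.

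You prove that the map $L: \RR^{\On(S)} \to \RR^d \oplus \RR^\gamma$, $\lambda \mapsto \bigl(\sum_v \lambda_v v,\ (\sum_{v \in \On(S_m)} \lambda_v)_{m \in \gamma}\bigr)$, is injective, and then conclude the multipliers are unique. But injectivity of $L$ is the wrong statement when $|\gamma'| > 1$. If $\lambda$ and $\lambda'$ both satisfy conditions (1)--(6), their difference $\delta = \lambda - \lambda'$ satisfies $\sum_v \delta_v v = 0$, $\sum_{v\in\On(S_m)}\delta_v = 0$ for $m \notin \gamma'$, and $\sum_{m \in \gamma'}\sum_{v\in\On(S_m)}\delta_v = 0$; but the \emph{individual} sums $\sum_{v\in\On(S_m)}\delta_v$ for $m \in \gamma'$ can be nonzero, since the convex-combination coefficients coming from $\partial\bigl(\max_m(-\rho'_m)\bigr)$ are not a priori unique. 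Therefore $L(\delta)$ need not vanish, and injectivity of $L$ tells you nothing. What is actually needed is injectivity of the coarser map $L'$ obtained by composing $L$ with the projection that sums the $\gamma'$ coordinates into a single slot --- equivalently, affine independence of the chromatic lift of $\On(S)$ with respect to the colouring obtained from $\mu$ by merging all of $\gamma'$ into one class. Your use of \Cref{prop: GP2} with the unmerged colouring gives the weaker bound $|\On(S)| \leq d + |\gamma|$, which suffices for $L$ but not for $L'$; you need the sharper bound $|\On(S)| \leq d + |\gamma \setminus \gamma'| + 1$, which follows from applying \Cref{prop: GP2} to $\On(S)$ (a subset of $X$, hence in general position) with the merged colouring, noting that the spheres in $\gamma'$ all have radius $\Rad(S)$ so the merged collection is still a stack. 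That bound, combined with \Cref{prop: GP3} or \Cref{thm: dimension of chromatic lift} applied to the merged partition, gives affine independence of the merged chromatic lift, and hence uniqueness. This is precisely the merging step the paper performs, and it cannot be skipped.
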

\begin{proof}
	To simplify notation assume that $\gamma = [\ell+1] = \{0,\ldots,\ell\}$, where $\ell\leq s$.
	First we will prove the forward direction, so suppose $S = S(\sigma,\mu; E)$.
	For $v \in \mu^{-1}(\gamma)$ define
	\[
		g_v: \RR^{d}\times\RR^{ \ell + 1} &\to \RR, & f : \RR^{d} \times \RR^{\ell+1} &\to \RR,\\
		(z, w_0, \ldots, w_{\ell}) &\mapsto \norm{v - z}^2 - w_{\mu(v)}^2, & (z, w_0, \ldots, w_{\ell}) &\mapsto \max_{0 \leq i \leq \ell} w_i^2.
	\]
	Then $S$, which is defined by its centre $y$ and radii $(r_m)_{m \in \gamma}$, is given by the solution to the following optimization problem:
	\begin{equation}\label{eqn: empty stacks optimization problem}
		\begin{array}{rrcll}
			\textnormal{minimise}   & f(z, w_0, \ldots, w_{\ell})                                                                     \\
			\textnormal{subject to} & g_v(z, w_0, \ldots, w_{\ell}) & \geq & 0 & \textnormal{for all } v \in E \cap \mu^{-1}(\gamma), \\
			                        & g_v(z, w_0, \ldots, w_{\ell}) & \leq & 0 & \textnormal{for all } v \in \sigma.                  \\
		\end{array}
	\end{equation}
	We can reformulate this optimization problem using affine constraints in order to apply the KKT conditions.
	Notice that
	\begin{equation*}
		\norm{v - z}^2 - w_{\mu(v)}^2 = \norm{v}^2 + \norm{z}^2 - 2\braket{z, v} - w_{\mu(v)}^2.
	\end{equation*}
	Define the change of variables $T: \RR^{d} \times \RR^{\ell + 1} \to \RR^{d} \times \RR^{\ell + 1}$ by
	\begin{equation*}
		T(z, w_0, \ldots, w_{\ell}) = (z, \norm{z}^2 - w_0^2, \ldots, \norm{z}^2 - w_{\ell}^2).
	\end{equation*}
	Under this change of variable, we have $f = \tilde fT$ where $\tilde f: \RR^{d} \times \RR^{\ell + 1} \to \RR$ is defined by
	\begin{equation*}
		\tilde f(z, u_0, \ldots, u_{\ell}) = \max_{0 \leq i \leq \ell} \norm{z}^2 - u_i.
	\end{equation*}
	We also have $g_v = \tilde g_v T$ where $\tilde g_v : \RR^{d} \times \RR^{\ell + 1} \to \RR$ is defined by
	\begin{equation*}
		\tilde g_v(z, u_0, \ldots, u_{\ell}) =  \norm{v}^2 + u_{\mu(v)} -2\braket{z, v}.
	\end{equation*}
	Now let $E_\gamma \defeq E \cap \mu^{-1}(\gamma)$ and consider the following optimization problem
	\begin{equation}\label{eqn: empty stacks optimization problem new form}
		\begin{array}{rrcl}
			\textnormal{minimise}   & \tilde f(z, u_0, \ldots, u_{\ell})                                                                      \\
			\textnormal{subject to} & \tilde g_v(z, u_0, \ldots, u_{\ell}) & \geq 0 & \textnormal{for all } v \in E_{\gamma}\setminus \sigma, \\
			                        & \tilde g_v(z, u_0, \ldots, u_{\ell}) & = 0    & \textnormal{for all } v \in \sigma\cap E_{\gamma},      \\
			                        & \tilde g_v(z, u_0, \ldots, u_{\ell}) & \leq 0 & \textnormal{for all } v \in \sigma\setminus E_{\gamma}. \\
		\end{array}
	\end{equation}
	Every solution $(z, w_0, \ldots, w_{\ell})$ to (\ref{eqn: empty stacks optimization problem}) yields a solution $T(z, w_0, \ldots, w_{\ell}) = (z, u_0, \ldots, u_{\ell})$ to (\ref{eqn: empty stacks optimization problem new form}).
	Conversely, every solution $(z, u_0, \ldots, u_{\ell})$ to (\ref{eqn: empty stacks optimization problem new form}) yields a set of solutions $T^{-1}(z, u_0, \ldots, u_{\ell})$ to (\ref{eqn: empty stacks optimization problem}), of which there is a unique solution that satisfies $w_i \geq 0$ for all $0 \leq i \leq \ell$.
	By assumption $S = S(\sigma,\mu; E)$ so $(y, r_0, \ldots, r_{\ell})$ is a solution to (\ref{eqn: empty stacks optimization problem}).
	Therefore, $T(y, r_0, \ldots, r_{\ell})\eqdef (y, t_0, \ldots, t_{\ell})$ is a solution to (\ref{eqn: empty stacks optimization problem new form}).
	Applying the \fullref{thm: KKT conditions}, there must exist coefficients $\lambda_v$ for $v \in E_{\gamma} \cup \sigma$ such that
	\begin{enumerate}
		\item (Primal feasibility) The constraints in (\ref{eqn: empty stacks optimization problem new form}) hold.
		\item (Stationarity) $0 \in \sum_{v \in E_{\gamma} \cup \sigma} \lambda_v \nabla \tilde g_v(y, t_0, \ldots, t_{\ell}) + \partial \tilde f(y, t_0, \ldots, t_{\ell})$, where $\partial\tilde f$ is the sub-differential of $\tilde f$.
		\item (Dual feasibility) $\lambda_v \leq 0$ for all $v \in E_{\gamma} \setminus \sigma$ and $\lambda_v \geq 0$ for all $v \in \sigma \setminus E_{\gamma}$.
		\item (Complementary slackness) $\lambda_v \tilde g_v(y, t_0, \dots, t_l) = 0$ for all $v \in E_{\gamma} \cup \sigma$.
	\end{enumerate}

	The complementary slackness condition states that $\lambda_v = 0$ if the constraint for $v$ is not satisfied strictly; that is, $v \notin \On(S)$.
	Therefore, we may as well index the coefficients $\lambda_v$ by the index set $\On(S)$, setting $\lambda_v = 0$ for $v \in (E_{\gamma} \cup \sigma) \setminus \On(S)$.
	Furthermore, note that $E \cap \On(S) \subset E_{\gamma}$, since by definition $S$ contains spheres indexed by colours in $\gamma$.
	Therefore, the dual feasibility condition above implies conditions (2) and (3) of the proposition.

	Next we come to the stationarity condition. We have
	\begin{equation*}
		\partial \tilde g_v(y, t_0, \ldots, t_{\ell}) = (-2v, 0, \ldots, 0, 1, 0, \ldots, 0)^T \in \RR^{d} \times \RR^{\ell + 1}
	\end{equation*}
	where $1$ is in the $(d + \mu(v) + 1)$\textsuperscript{th} position. For ease of notation, we identify $\RR^d$ with the first $d$ coordinates of $\RR^{d+ \ell + 1}$, so we can write
	\[
		\partial \tilde g_v(y, t_0,\ldots, t_{\ell}) = -2v + e_{d + \mu(v) + 1},
	\]
	where $e_j$ is the $j$\textsuperscript{th} standard basis vector.
	On the other hand, to compute the sub-differential of $\tilde f$ we use \Cref{thm: subdifferential of max of convex functions} to get
	\begin{equation*}
		\partial \tilde f(y, t_0, \ldots, t_{\ell}) = \Conv\{(2y - e_{d+i+1})^T \mid i \in \gamma'\}.
	\end{equation*}
	The stationarity condition can then be restated as follows: there exist coefficients $\mu_i \geq 0$ for $i \in \gamma'$ such that $\sum_{i \in \gamma'} \mu_i = 1$ and
	\begin{equation}\label{eqn: stationarity of KKT}
		\sum_{v \in \On(S)} \lambda_v (2v - e_{d+ \mu(v) + 1}) = \sum_{i \in \gamma'}\mu_i (2y - e_{d + i +1}).
	\end{equation}
	By considering the first $d$ coordinates in the above equation, we get condition (1) of the proposition.
	By considering the $(d + i + 1)$\textsuperscript{th} coordinate entries for each $i \notin \gamma'$, we see that $\sum_{v \in \On(S_i)} \lambda_v = 0$, which is condition (4) of the proposition.
	By considering the $(d + i + 1)$\textsuperscript{th} coordinate entries for each $i \in \gamma'$ we see that $\sum_{v \in \On(S_i)} \lambda_v = \mu_i \geq 0$, which is condition (5) of the proposition.
	Finally, we see $\sum_{i \in \gamma'} \sum_{v \in \On(S_i)} \lambda_v = \sum_{i \in \gamma'} \mu_i = 1$, which proves
	condition (6) of the proposition.

	Next we consider the backwards direction of the proposition.
	Therefore, suppose we are given coefficients $\{\lambda_v\}_{v \in \On(S)}$ that satisfy conditions (1)--(6) of the proposition.
	Set $\lambda_v = 0$ for $v \in (E_{\gamma} \cup \sigma) \setminus \On(S)$.
	We will check that the set of coefficients thus obtained satisfies the KKT conditions stated above for the optimization problem in (\ref{eqn: empty stacks optimization problem new form}).
	First note that by assumption $\sigma \subset \Incl(S)$ and $E \subset \Excl(S)$, so the primary feasibility condition is satisfied.
	By setting $\lambda_v = 0$ for $v \in (E_{\gamma} \cup \sigma) \setminus \On(S)$ we force the complementary slackness condition to hold.
	Conditions (2) and (3) of the proposition imply that the dual feasibility condition holds.
	For $i \in \gamma'$ let $\mu_i = \sum_{v \in \On(S_i)} \lambda_v$.
	Then conditions (5) and (6) of the proposition imply that each $\mu_i$ is non-negative and $\sum_{i \in \gamma'} \mu_i = 1$.
	In conjunction with conditions (1) and (4) of the proposition, we see that the following equation holds:
	\begin{equation}\label{eqn: stationarity of KKT rewritten}
		\sum_{v \in \On(S)} \lambda_v (v + e_{d + \mu(v) + 1}) = y + \sum_{i \in \gamma'} \mu_i e_{d + i + 1} = \sum_{i \in \gamma'} \mu_i(y + e_{d+i+1}).
	\end{equation}
	After applying an appropriate invertible linear transformation to both sides of this equation, we are left with \Cref{eqn: stationarity of KKT} which shows that the stationarity condition holds.
	The upshot is that $S$ defines a solution to the optimization problem in (\ref{eqn: empty stacks optimization problem new form}), and by our previous arguments it also defines a solution to the original problem in (\ref{eqn: empty stacks optimization problem}).

	It remains to show uniqueness of the coefficients $\lambda_v$ that satisfy the conditions of our proposition.
	Without loss of generality assume that $\gamma' = \{k, \ldots, \ell\}$ for some $k \geq 0$.
	Consider the linear map $L : \RR^{d} \times \RR^{\ell + 1} \to \RR^{d} \times \RR^{k + 1}$ defined on basis vectors by
	\[
		L(e_j) = \begin{cases}
			e_j           & j \leq d+k+1, \\
			e_{d + k + 1} & \text{else}.
		\end{cases}
	\]
	This linear map is identity on the first $d+k$ coordinates and sums up the remaining coordinates into the $(d+k+1)$\textsuperscript{th} coordinate.
	Applying $L$ to \Cref{eqn: stationarity of KKT rewritten}, we get
	\begin{equation}\label{eqn: stationarity in KKT after reduction}
		\sum_{i = 0}^{k-1} \sum_{v \in \On(S_i)} \lambda_v(v + e_{d + \mu(v) + 1}) + \sum_{i = k}^{\ell} \sum_{v \in \On(S_i)} \lambda_v(v + e_{d+k+ 1}) = y + e_{d+k + 1}.
	\end{equation}
	Consider the colouring $\nu$ of $\sigma$ obtained from $\mu$ by merging the colours $\{k, \ldots, \ell\}$.
	The vectors appearing on the LHS of \Cref{eqn: stationarity in KKT after reduction} are the points in the chromatic lift of $\On(S)$ with respect to $\nu$, up to rescaling and translation.
	Since any subset of $X$ is also in general position (\Cref{rem: lifts and subsets of general position}), $\On(S)$ is in general position.
	Then by \Cref{prop: GP2} we must have that $|\On(S)| \leq d+k+1$, and by \Cref{thm: dimension of chromatic lift} $\On(S)$ is affinely independent.
	Since the sum of all coefficients appearing on the LHS of \Cref{eqn: stationarity in KKT after reduction} is one, we see that the RHS is in the affine span of an affine independent set, and therefore the corresponding affine coefficients must be unique.
\end{proof}
Although the conditions of \Cref{thm: KKT for stacks} are technical, they have a more straightforward combinatorial formulation.
Suppose $\sigma, E \subset X$ are such that $S = S(\sigma,\mu; E)$ exists.
Then, denoting the coefficients from \Cref{thm: KKT for stacks} by $\{\lambda_v\}_{v \in \On(S)}$, define
\begin{align}
	\Front(S) & \defeq \{v \in \On(S) \mid \lambda_v > 0 \}, \\
	\Back(S)  & \defeq \{v \in \On(S) \mid \lambda_v < 0 \}.
\end{align}
These definitions are borrowed from~\cite{bauer_morse_2016}.

\begin{proposition}[Combinatorial KKT Conditions for Stacks]\label{thm: combinatorial KKT conditions for stacks}
	Let $E, \sigma \subset X$ and let $S=(S_m)_{m \in\gamma}$ be a stack in $\RR^d$ such that $\sigma \subset \Incl(S)$ and $E \subset \Excl(S)$.
	Then $S = S(\sigma,\mu; E)$ if and only if $\mu(\sigma) = \gamma$ and the following are true:
	\begin{enumerate}
		\item $S$ is the stack (not necessarily empty) of minimum radius passing through $\On(S)$.
		\item $\Front(S) \subset \sigma$.
		\item $\Back(S) \subset E$.
	\end{enumerate}
\end{proposition}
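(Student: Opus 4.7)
The plan is to deduce the combinatorial characterization directly from the analytic Karush--Kuhn--Tucker conditions of \Cref{thm: KKT for stacks}. The key observation is that the sets $\Front(S)$ and $\Back(S)$ are, by definition, the supports of the positive and negative coefficients $\{\lambda_v\}_{v \in \On(S)}$ produced by that proposition. Hence conditions (2) and (3) of the combinatorial statement are literal reformulations of the sign conditions (2) and (3) of the analytic one, while condition (1) encodes the fact that the KKT conditions depend on $\sigma$ and $E$ only through those sign constraints.

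For the forward direction, I would assume $S = S^{\mu}(\sigma, E)$. Since $S^{\mu}(\sigma, E)$ is by definition a $\mu(\sigma)$-stack, the equality $\mu(\sigma) = \gamma$ is automatic. Applying \Cref{thm: KKT for stacks} yields coefficients $\{\lambda_v\}_{v \in \On(S)}$ from which $\Front(S) \subset \sigma$ and $\Back(S) \subset E$ follow directly, giving (2) and (3). For (1), I would observe that the same coefficients satisfy the six KKT conditions with $(\sigma, E)$ replaced by $(\On(S), \On(S))$: conditions (1), (4), (5), (6) are identical, and the sign conditions become $\{v : \lambda_v > 0\} \subset \On(S)$ and $\{v : \lambda_v < 0\} \subset \On(S)$, which hold trivially. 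Invoking the backward direction of \Cref{thm: KKT for stacks} then gives $S = S^{\mu}(\On(S), \On(S))$, i.e.\ $S$ is the stack of minimum radius passing through $\On(S)$.

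For the backward direction, I would assume $\mu(\sigma) = \gamma$ together with (1)--(3). Applying the forward direction of \Cref{thm: KKT for stacks} to the problem defining $S^{\mu}(\On(S), \On(S)) = S$ produces coefficients $\{\lambda_v\}_{v \in \On(S)}$ satisfying the six analytic conditions for that pair. Conditions (1), (4), (5), (6) transfer verbatim to the problem with data $(\sigma, E)$, and the hypotheses $\Front(S) \subset \sigma$ and $\Back(S) \subset E$ are precisely the sign conditions (2) and (3) required for $(\sigma, E)$. Since by assumption $\sigma \subset \Incl(S)$ and $E \subset \Excl(S)$, the backward direction of \Cref{thm: KKT for stacks} applies and yields $S = S^{\mu}(\sigma, E)$.

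The only subtlety, and the step I would spell out most carefully, is the consistency of the two occurrences of the coefficients $\lambda_v$: the definitions of $\Front(S)$ and $\Back(S)$ implicitly refer to the coefficients attached to a specific optimization problem, and one needs the uniqueness clause of \Cref{thm: KKT for stacks} to ensure that the $\lambda_v$ obtained via the $(\On(S), \On(S))$-problem coincide with those obtained via the $(\sigma, E)$-problem whenever both are defined. Once uniqueness is invoked, the argument reduces to a clean translation between sign constraints on coefficients and set-containment conditions on $\Front(S)$ and $\Back(S)$, and no further geometric input is required.
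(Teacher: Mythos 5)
Your proof is correct and follows essentially the same route as the paper's: deduce conditions (1)--(3) directly from the analytic KKT conditions of \Cref{thm: KKT for stacks}, observing that conditions (1) and (4)--(6) there depend only on $\On(S)$ and not on $(\sigma, E)$, and that conditions (2)--(3) are literal restatements of $\Front(S) \subset \sigma$ and $\Back(S) \subset E$. The paper states this more tersely in three sentences; you spell out the appeal to the uniqueness clause (needed to identify the coefficients arising from the $(\On(S), \On(S))$-problem with those from the $(\sigma, E)$-problem), which is a genuine detail the paper leaves implicit but does rely on.
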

\begin{proof}
	The stack of minimum radius passing through $\On(S)$ is nothing but $S(\On(S),\mu; \On(S))$.
	If $S = S(\On(S),\mu; \On(S))$ or $S=S(\sigma,\mu; E)$, then $\Front(S)$ and $\Back(S)$ are well-defined, and the coefficients thereby obtained from \Cref{thm: KKT for stacks} satisfy conditions (1) and (4)--(6) of that proposition, which do not depend on $E$ and $\sigma$.
	Then conditions (2) and (3) of \Cref{thm: KKT for stacks} are equivalent to $\Front(S) \subset \sigma$ and $\Back(S) \subset E$.
\end{proof}

We now examine the level sets of the filtration function of $\Alpha_{\bullet}(X,\mu; E)$, which we denote by $\Rad_{\mu, E}$.
Suppose $E \subset X$, $\gamma, \zeta \subset [s+1]$ and $S=(S_m)_{m \in \gamma}$ and $T= (T_n)_{n \in \zeta}$ are stacks that are empty of $E$.
We say that $T$ \textit{extends} $S$ and write $S \preceq T$ if $\gamma \subset \zeta$, $\Rad(T) = \Rad(S)$, and $S_m = T_m$ for each $m \in \gamma$.
In other words, $S \preceq T$ if and only if $T$ is obtained from $S$ by adding spheres, indexed by colours other than those of $S$, without changing the outer radius of the stack.
This defines a partial ordering on stacks that are empty of $E$, and we define the $S^*$ to be the maximal stack with respect to this partial ordering that extends $S$ and is empty of $E$.
We claim that the level sets of $\Rad_{\mu, E}$ have the form $[\Front(S), \Incl(S^*)]$ as $S$ ranges over the stacks $S(\sigma,\mu; E)$ for $\sigma \in \Alpha_{\infty}(X,\mu; E)$.

\begin{lemma}\label{thm: selective chromatic Delaunay filtration is Morse lemma 1}
	Let $S = S(\sigma,\mu; E)$ for some $\sigma, E \subset X$.
	If $\tau \in [\Front(S), \Incl(S^*)]$ then
	\begin{enumerate}
		\item $\Rad_{\mu, E}(\tau) = \Rad_{\mu, E}(\sigma)$.
		\item $S(\tau,\mu; E) \preceq S^*$.
		\item $\Front(S(\tau,\mu; E)) = \Front(S)$ and $\Incl(S(\tau,\mu; E)^*) = \Incl(S^*)$.
	\end{enumerate}
\end{lemma}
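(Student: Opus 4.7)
The plan is to exhibit $S^{\mu}(\tau, E)$ explicitly as $T \defeq (S^*_m)_{m \in \mu(\tau)}$, the restriction of $S^*$ to the colour set of $\tau$, and to lift the KKT multipliers of $S$ to multipliers for $T$. First I would check that $T$ is a feasible candidate of the right outer radius: $T$ includes $\tau$ because $\tau \subseteq \Incl(S^*)$, and excludes $E \cap \mu^{-1}(\mu(\tau))$ because $S^*$ excludes $E$. To see $\Rad(T) = \Rad(S)$, KKT condition (6) for $S$ produces some $m_0 \in \Out(S)$ with a vertex $v_0 \in \On(S_{m_0})$ having $\lambda_{v_0} > 0$; then $v_0 \in \Front(S) \subseteq \tau$, so $m_0 \in \mu(\tau)$, whence $T_{m_0} = S^*_{m_0} = S_{m_0}$ already has radius $\Rad(S)$.

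The crucial step, which I would prove next, is the vanishing observation: for every $m \in \gamma \setminus \mu(\tau)$ and every $v \in \On(S_m)$, $\lambda_v = 0$. Since $\Front(S) \subseteq \tau$ implies $\mu(\Front(S)) \subseteq \mu(\tau)$, such an $m$ lies outside $\mu(\Front(S))$, so $\lambda_v \leq 0$ on $\On(S_m)$. Then KKT condition (4) (if $m \notin \Out(S)$) or (5) (if $m \in \Out(S)$) forces $\sum_{v \in \On(S_m)} \lambda_v \geq 0$; combined with $\lambda_v \leq 0$, the sum vanishes and each $\lambda_v = 0$.

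Using this, I would define $\lambda'_v \defeq \lambda_v$ for $v \in \On(S) \cap \mu^{-1}(\mu(\tau))$ and $\lambda'_v \defeq 0$ for $v \in \On(T) \setminus \On(S)$, and then verify the KKT conditions of \Cref{thm: KKT for stacks} for $T$ against $(\tau, E)$. Condition (1), $\sum_{v \in \On(T)} \lambda'_v v = y$, reduces via the vanishing observation to the corresponding equation for $S$. Conditions (2) and (3) follow from $\Front(S) \subseteq \tau$ and $\Back(S) \subseteq E$. Conditions (4)–(6) follow column-by-column: for $m \in \mu(\tau) \cap \gamma$, the column sums for $T$ and $S$ agree; for $m \in \mu(\tau) \setminus \gamma$ the column sum for $T$ is $0$ (no contributions from $\On(S)$), which is consistent because such colours contribute $0$ to the $\Out(T)$ side of (6) as well; and for $m \in \gamma \setminus \mu(\tau)$ the vanishing observation ensures the column has dropped out without affecting the total. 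By uniqueness in \Cref{thm: KKT for stacks}, $T = S^{\mu}(\tau, E)$, which immediately gives parts (1) and (2): $\Rad^{\mu}_E(\tau) = \Rad(T) = \Rad(S) = \Rad^{\mu}_E(\sigma)$, and $S^{\mu}(\tau, E) = T \preceq S^*$ by construction.

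Finally, for part (3), $\Front(S^{\mu}(\tau, E)) = \{v : \lambda'_v > 0\} = \Front(S) \cap \mu^{-1}(\mu(\tau)) = \Front(S)$, using $\mu(\Front(S)) \subseteq \mu(\tau)$. For $\Incl(T^*) = \Incl(S^*)$, both $T^*$ and $S^*$ are the maximal $[s+1]$-extensions of $T$ and $S$ (respectively) that are empty of $E$ and share outer radius $\Rad(S)$; they coincide on $\mu(\tau)$ by definition of $T$, on $[s+1] \setminus (\gamma \cup \mu(\tau))$ by the same maximal-empty-sphere prescription, and on $\gamma \setminus \mu(\tau)$ by the vanishing observation, which pins $S_m$ to the maximal sphere empty of $E \cap X_m$ for such $m$. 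The main obstacle will be the vanishing observation together with this final identification $T^* = S^*$; once those are in hand, the rest is bookkeeping on the KKT equations.
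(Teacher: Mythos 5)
Your proposal follows essentially the same route as the paper: restrict $S^*$ to the colours of $\tau$, prove a vanishing observation for the KKT multipliers, and transfer those multipliers to the restricted stack via \Cref{thm: KKT for stacks}. The packaging differs slightly — you verify the raw KKT conditions for $(\tau, E)$ directly, whereas the paper first shows the restricted stack $S'$ is the minimum-radius stack passing through $\On(S')$ and then invokes the \fullref{thm: combinatorial KKT conditions for stacks}; both are valid. Your vanishing observation (for $m \in \gamma \setminus \mu(\tau)$) is a weaker statement than the paper's (for $m \in \gamma \setminus \mu(\Front(S))$), but it is sufficient for your purposes, and your radius argument, your bookkeeping on conditions (1)–(6), and your identification of $\Front(T)$ are all correct.

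There is, however, a concrete gap in the last step, where you claim the vanishing observation ``pins $S_m$ to the maximal sphere empty of $E \cap X_m$'' for $m \in \gamma \setminus \mu(\tau)$. The vanishing observation tells you only that $\On(S_m)$ is disjoint from $\Front(S)$ and $\Back(S)$; it says nothing about whether $S_m$ is the largest sphere consistent with the radius bound, and in general nothing in the definition of $S^{\mu}(\sigma, E)$ forces an inner sphere to be maximal. The clean way to finish is the one implicitly available to you from part (2): you already have $T \preceq S^*$, and also $S \preceq S^*$ with $\Rad(T) = \Rad(S)$. Then $S^*$ is an extension of $T$ empty of $E$ of the same radius, so $S^* \preceq T^*$ by maximality of $T^*$; and since $S \preceq S^* \preceq T^*$, the stack $T^*$ is an extension of $S$ empty of $E$ of the same radius, so $T^* \preceq S^*$ by maximality of $S^*$. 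Hence $T^* = S^*$ outright, which gives $\Incl(T^*) = \Incl(S^*)$ without any colour-by-colour comparison. (The paper's own proof also does not spell out the $\Incl$ equality, but your explicit attempt to do so via the per-colour claim is where the error lies.)
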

\begin{proof}
	Let $\gamma = \mu(\sigma)$, let $S = (S_m)_{m \in \gamma}$, and define $J \defeq \{j \in \gamma \mid \Front(S) \cap \On(S_j) \neq \emptyset\}$.
	Let $S_J$ be the stack whose set of spheres is $\{S_{j} \mid j \in J\}$.
	Then $S_J$ passes through $\Front(S)$ and excludes $E$.
	Note that $J$ contains at least one of the colours in $\Out(S)$ because of conditions (5) and (6) of \Cref{thm: KKT for stacks}, so $\Rad(S_J) = \Rad(S)$ and $S_J \preceq S$.
	Let $S'$ be the stack obtained by restricting $S^*$ to the spheres that include vertices of $\tau$, so $S' \preceq S^*$.
	Since $\tau \in [\Front(S), \Incl(S^*)]$, by definition of $J$ we must have $S_J \preceq S' \preceq S^*$.
	In particular $\Rad(S_J) = \Rad(S') = \Rad(S^*)$.
	We will show that $S' = S(\tau,\mu; E)$, $\Front(S') = \Front(S)$, and $\Back(S') = \Back(S)$.

	Let $\{\lambda_v \mid v \in \On(S)\}$ be the set of coefficients obtained from \Cref{thm: KKT for stacks} for $\sigma$ and $S$.
	We claim that $\lambda_v = 0$ for each $v \in \On(S) \setminus \On(S_J)$.
	To see this, let $j \in \gamma$ and let $S_j$ pass through some point in $\Back(S)$.
	Then conditions (4) and (5) of \Cref{thm: KKT for stacks} imply that $S_j$ must also pass through some point of $\Front(S)$.
	Now define coefficients $\{\lambda'_v \mid v \in \On(S')\}$ by setting $\lambda'_v = \lambda_v$ if $v \in \On(S_J)$, and $0$ otherwise.
	The fact that $\lambda_v=0$ for $v\not\in \On(S_J)$ implies that the coefficients $\{\lambda_v'\}_{v \in \On(S')}$ satisfy the \fullref{thm: KKT for stacks} for $S'$ and $\On(S')$, which shows that $S'$ is the stack of minimum radius passing through $\On(S')$.
	By construction of $\{\lambda_v' \mid v \in \On(S')\}$, we have $\Front(S') = \Front(S)$ and $\Back(S') = \Back(S)$.
	Then $S' = S(\tau,\mu; E)$ by the \fullref{thm: combinatorial KKT conditions for stacks}.
\end{proof}

\begin{lemma}\label{thm: selective chromatic Delaunay filtration is Morse lemma 2}
	Let $\tau \leq \sigma$ be simplices in $\Alpha_{\infty}(X,\mu; E)$.
	If $\Rad_{\mu,E}(\tau) = \Rad_{\mu,E}(\sigma)$ then $\tau$ contains $\Front(S(\sigma,\mu; E))$.
\end{lemma}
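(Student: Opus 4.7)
The plan is to deduce the containment $\Front(S^\mu(\sigma, E)) \subset \tau$ by comparing the Karush--Kuhn--Tucker data attached to $\sigma$ with the data attached to $\tau$, and invoking the uniqueness of KKT coefficients asserted in \Cref{thm: KKT for stacks}.

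Write $S = S^\mu(\sigma, E)$ and $T = S^\mu(\tau, E)$. First I would observe that since $\tau \subset \sigma$, the stack $S$ already includes $\tau$ (on the subset of its spheres indexed by $\mu(\tau) \subset \mu(\sigma)$), and excludes $E \cap \mu^{-1}(\mu(\tau))$. Let $R$ denote the $\mu(\tau)$-stack obtained from $S$ by discarding the spheres whose colours lie in $\mu(\sigma) \setminus \mu(\tau)$. A quick case analysis shows that $\Out(S) \cap \mu(\tau) \neq \emptyset$: if not, then $\Rad(R) < \Rad(S) = \Rad_E^\mu(\tau) = \Rad(T)$, contradicting minimality of $T$. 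Hence $\Rad(R) = \Rad(S) = \Rad(T)$, and the uniqueness of the minimiser forces $R = T$. In particular $S$ and $T$ share the same centre $y$, and $\On(T) = \On(S) \cap \mu^{-1}(\mu(\tau))$, with $\Out(T) = \Out(S) \cap \mu(\tau)$.

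Next I would let $\{\mu_v\}_{v \in \On(T)}$ be the KKT coefficients produced by \Cref{thm: KKT for stacks} for the triple $(T, \tau, E)$, and extend them by zero to all of $\On(S)$:
\begin{equation*}
\bar\mu_v = \begin{cases} \mu_v & v \in \On(T), \\ 0 & v \in \On(S) \setminus \On(T).\end{cases}
\end{equation*}
Then I would verify, condition by condition, that $\{\bar\mu_v\}_{v \in \On(S)}$ satisfies the six KKT conditions of \Cref{thm: KKT for stacks} for the triple $(S, \sigma, E)$. Conditions (1)--(3) transfer verbatim from $T$ since the support of $\bar\mu$ equals the support of $\mu$, and $\tau \subset \sigma$. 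Conditions (4)--(6) split along whether a colour $m$ lies in $\mu(\tau)$ or not: for $m \in \mu(\tau)$ the sums reduce to those for $T$, while for $m \notin \mu(\tau)$ all contributing $\bar\mu_v$ vanish. Applying the uniqueness clause of \Cref{thm: KKT for stacks} to $(S, \sigma, E)$, the coefficients $\{\lambda_v\}_{v \in \On(S)}$ produced for $S$ must agree with $\{\bar\mu_v\}_{v \in \On(S)}$. In particular $\Front(S) = \{v : \lambda_v > 0\} = \{v : \mu_v > 0\} \subset \tau$, which is exactly what we want.

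The main obstacle is the first step, namely ensuring that the $\mu(\tau)$-restriction of $S$ really coincides with $T$; everything downstream is a bookkeeping check against the list of KKT conditions. The rest of the argument is then a clean application of uniqueness, and it avoids any new optimisation-theoretic work beyond what \Cref{thm: KKT for stacks} already provides.
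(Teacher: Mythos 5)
Your proof is correct and follows essentially the same strategy as the paper's: identify $S^{\mu}(\tau,E)$ with the restriction of $S^{\mu}(\sigma,E)$ to the colours of $\tau$, extend the KKT coefficients of the restricted stack by zero to $\On(S)$, verify they satisfy the conditions of \Cref{thm: KKT for stacks} for $(S,\sigma,E)$, and invoke the uniqueness clause. The only cosmetic difference is in establishing the restriction equals $S^{\mu}(\tau,E)$: you introduce a case split on whether $\Out(S)\cap\mu(\tau)=\emptyset$, while the paper gets there more directly by sandwiching $\Rad(T)\leq \Rad^{\mu}_E(\sigma)=\Rad^{\mu}_E(\tau)\leq\Rad(T)$, but the two arguments are equivalent.
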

\begin{proof}
	Let $T$ be the stack obtained from $S(\sigma,\mu; E)$ by restricting to the spheres that include vertices of $\tau$.
	Then $\Rad(T) \leq \Rad_{\mu,E}(\sigma) = \Rad_{\mu,E}(\tau) \leq \Rad(T)$, so $\Rad(T) = \Rad_{\mu,E}(\tau)$.
	Then $T = S(\tau,\mu; E)$ by uniqueness of the latter.

	Let $\{\lambda_v\}_{v \in \On(S')}$ and $\{\lambda'_v\}_{v \in \On(S')}$ be the coefficients that appear in \Cref{thm: KKT for stacks} for the minimizing stacks $S = S^{\mu}(\sigma, E)$ and $S'=S^{\mu}(\tau, E)$ respectively.
	Setting $\lambda'_v = 0$ for all $v \in \On(S) \setminus \On(S')$, we see that the set of coefficients $\{\lambda'_v\}_{v \in \On(S)}$ satisfies the requirements of \Cref{thm: KKT for stacks} for $\sigma$ and $S$, so $\lambda_v = \lambda'_v$ for all $v \in \On(S)$ by the uniqueness assertion in that proposition.
	Then condition (2) in the \fullref{thm: combinatorial KKT conditions for stacks} implies that $\Front(S) \subset \tau$.
\end{proof}

We arrive at the main result for this section, which is a generalization of Theorem 4.6 in~\cite{Montesano2025chromatic}.
\begin{theorem}[Selective Chromatic Delaunay Radius is Morse]\label{thm: selective chromatic Delaunay filtration function is Morse}
	Suppose $X \subset \RR^d$ is in general position, $\mu$ is a colouring of $X$, and $E \subset X$.
	Then the filtration function $\Rad_{\mu,E}: \Alpha_{\infty}(X,\mu; E) \to [0, \infty)$ is a generalized discrete Morse function, with gradient given by
	\begin{equation}
		\Sigma = \{[\Front(S), \Incl(S^*)] \mid \sigma \in \Alpha_{\infty}(X,\mu; E), S = S(\sigma,\mu; E)\}.
	\end{equation}
	Moreover, $\sigma \in \Alpha_{\infty}(X,\mu; E)$ is a critical simplex of $\Rad_{\mu,E}$ if and only if $\sigma \in \Del(X)$ and the centre of the empty circumsphere of minimum radius for $\sigma$ in $\RR^d$ is contained in the relative interior of $|\sigma|$ in $\RR^d$.
\end{theorem}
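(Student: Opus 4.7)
The plan is to verify the defining conditions of a generalised discrete Morse function directly from \Cref{sec: discrete Morse theory}: $\Sigma$ must partition $\Alpha_\infty^\mu(X,E)$ into intervals, $\Rad_E^\mu$ must be monotonic on the face poset, and two comparable simplices must have equal filtration value exactly when they lie in a common interval of $\Sigma$. Monotonicity is immediate: if $\tau \le \sigma$, then $S^\mu(\sigma,E)$ includes $\tau$ and excludes $E$, so by minimality of $S^\mu(\tau,E)$ we have $\Rad_E^\mu(\tau) \le \Rad_E^\mu(\sigma)$.

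Next I would show that the sets listed in $\Sigma$ partition $\Alpha_\infty^\mu(X,E)$. Every $\tau \in \Alpha_\infty^\mu(X,E)$ lies in the interval it indexes: writing $T = S^\mu(\tau,E)$, condition~(2) of \fullref{thm: KKT for stacks} gives $\Front(T) \subset \tau$, while the chain $\tau \subset \Incl(T) \subset \Incl(T^*)$ follows from the definitions. For disjointness, suppose $\tau$ also lies in $[\Front(S),\Incl(S^*)]$ associated with some $\sigma$, where $S = S^\mu(\sigma,E)$. Then parts~(2) and~(3) of \Cref{thm: selective chromatic Delaunay filtration is Morse lemma 1} give $\Front(T)=\Front(S)$ and $\Incl(T^*)=\Incl(S^*)$, so the two intervals coincide. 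Part~(1) of the same lemma says that $\Rad_E^\mu$ is constant on each interval.

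It remains to rule out coincidences of $\Rad_E^\mu$ across distinct intervals. If $\tau \le \sigma$ and $\Rad_E^\mu(\tau) = \Rad_E^\mu(\sigma)$, then \Cref{thm: selective chromatic Delaunay filtration is Morse lemma 2} yields $\Front(S^\mu(\sigma,E)) \subset \tau$, placing $\tau$ in the interval containing $\sigma$. For the critical simplices, $\sigma$ is critical iff $\Front(S) = \sigma = \Incl(S^*)$ with $S=S^\mu(\sigma,E)$. The equality $\Incl(S^*)=\sigma$ forces $S^*$ to contain no point of $X$ in its interior and no point of $X\setminus\sigma$ on any of its spheres, so $S^*$ is an empty $[s+1]$-stack passing through $\sigma$ in the sense of \Cref{sec: chromatic alpha filtration}, in particular placing $\sigma$ in the chromatic Delaunay triangulation. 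The equality $\Front(S)=\sigma$ means every vertex of $\sigma$ has a strictly positive KKT coefficient $\lambda_v$; condition~(1) of \fullref{thm: KKT for stacks} then writes the centre of $S$ as the strict convex combination $\sum_{v\in\sigma} \lambda_v v$, which lies in the relative interior of $|\sigma|$. Conversely, if the centre of the empty minimum-radius stack for such a $\sigma$ lies in the relative interior, affine independence of $\sigma$ (from \Cref{prop: GP3}) and the uniqueness clause of \fullref{thm: KKT for stacks} force all $\lambda_v$ positive, giving $\Front(S) = \sigma$.

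The main obstacle is already discharged in the two preparatory lemmas; the present theorem is largely a bookkeeping assembly of those results. The one subtle point worth verifying carefully is the equivalence $\Incl(S^*)=\sigma \iff S$ extends to an empty stack passing through $\sigma$: this uses maximality of $S^*$ under $\preceq$ among extensions that are empty of $E$, together with the observation that any admissible further enlargement or addition of spheres could only grow $\Incl$, so $\Incl(S^*)=\sigma$ is exactly the statement that no additional point of $X$ can ever be captured.
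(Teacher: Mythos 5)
Your treatment of the first half of the theorem—that $\Rad_E^{\mu}$ is a generalized discrete Morse function with gradient $\Sigma$—is correct and matches the paper's route: it is an assembly of \Cref{thm: selective chromatic Delaunay filtration is Morse lemma 1} and \Cref{thm: selective chromatic Delaunay filtration is Morse lemma 2}, exactly as you describe.

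However, there is a genuine gap in your criticality characterization. From $\Incl(S^*)=\sigma$ you correctly deduce that $S^* = S$ is an empty stack passing through $\sigma$, but then conclude only that $\sigma$ lies in the \emph{chromatic} Delaunay triangulation $\Del^{\mu}(X)\cong\Alpha_{\infty}^{\mu}(X)$. The theorem asserts the strictly stronger claim $\sigma\in\Del(X)$—membership in the \emph{monochromatic} Delaunay triangulation, which is in general a proper subcomplex. The missing step is to show that \emph{all spheres of $S$ have the same radius}, so that the stack degenerates to a single sphere. This follows by combining $\Front(S)=\sigma$ (which forces the per-colour sums $\sum_{v\in\On(S_m)}\lambda_v$ to be strictly positive for every $m\in\mu(\sigma)$) with condition~(4) of the \fullref{thm: KKT for stacks} (which says these sums vanish for $m\notin\Out(S)$). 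Together they force $\mu(\On(S))=\Out(S)$, i.e.\ every sphere of $S$ is an outer sphere, hence all radii coincide and $S$ is an ordinary empty circumsphere for $\sigma$. Without this step your argument does not place $\sigma$ in $\Del(X)$, and the remainder of your criticality argument—while sound as far as it goes—characterizes criticality of a different (larger) class of simplices than the theorem claims.
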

\begin{proof}
	The first part of the theorem is a direct corollary of \Cref{thm: selective chromatic Delaunay filtration is Morse lemma 1,thm: selective chromatic Delaunay filtration is Morse lemma 2}.
	For the second part, let $\sigma$ be a critical simplex of $\Rad_{\mu,E}$ and let $S = S(\sigma,\mu; E)$.
	Then by the definition of a critical simplex we have
	\begin{equation*}
		\Front(S) = \Incl(S^*) = \sigma,\ \Back(S) = \emptyset
	\end{equation*}
	From this we infer that $\Incl(S^*)\setminus \On(S)$ is empty, i.e., $S = S^*$ is an empty stack.
	We also conclude that $\sum_{v \in \On(S_m)} \lambda_v > 0$ for all $m \in \mu(\sigma)$.
	Using condition (4) of the \fullref{thm: KKT for stacks}, this implies $\mu(\On(S)) = \Out(S)$, i.e., all spheres of $S$ have the same radius.
	This means that $S$ is an empty sphere passing through the points of $\sigma$, i.e., $\sigma \in \Del(X)$.
	Moreover, the conditions (1), (5), and (6) of the \fullref{thm: KKT for stacks} along with the fact that $\Front(S) = \sigma$ together imply that the centre of $S$ lies in the relative interior of the convex hull of the vertices of $\sigma$.
	It only remains to check that $S$ is the minimum empty circumsphere passing through $\sigma$, and this follows from the fact that every empty circumsphere for $\sigma$ is also a stack that includes $\sigma$ and excludes $E$.
\end{proof}

From the above theorem we recover the fact the filtration function of the \v{C}ech and chromatic alpha filtrations are generalized discrete Morse functions.

\begin{corollary}
	Suppose $X \subset \RR^d$ is in general position and $\mu$ is a colouring of $X$.
	Then the filtration functions of $\Cech_{\bullet}(X)$ and $\Alpha_{\bullet}(X,\mu)$ are generalized discrete Morse functions.
\end{corollary}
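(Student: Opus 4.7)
The plan is to obtain both statements as immediate specializations of the preceding Selective Chromatic Delaunay Radius theorem, by choosing the exclusion set $E$ to be one of the two extreme subsets of $X$ (the empty set, and all of $X$). The theorem asserts that $\Rad_E^{\mu}$ is a generalized discrete Morse function on $\Alpha^{\mu}_{\infty}(X, E)$ for every $E \subset X$, and the only work is to match up each of the two filtrations of interest with one such choice.

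First I would handle the chromatic alpha filtration. Setting $E = X$ in the theorem, the selective chromatic Delaunay filtration $\Alpha^{\mu}_{\bullet}(X, X)$ coincides with $\Alpha^{\mu}_{\bullet}(X)$, as noted immediately after the definition of $\Alpha^{\mu}_{\bullet}(X, E)$: the minimizing stack $S^{\mu}(\sigma, X)$ is precisely the empty stack of minimum radius passing through $\sigma$, which is the characterisation of $\Alpha^{\mu}_{\bullet}(X)$ given in \Cref{eq: alternative definition of chromatic alpha filtration}. Hence the theorem yields that the filtration function of $\Alpha^{\mu}_{\bullet}(X)$ is a generalized discrete Morse function, with the explicit gradient already described.

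Second, I would take $E = \emptyset$. In this case the exclusion condition is vacuous, so $S^{\mu}(\sigma, \emptyset)$ is simply the stack of minimum radius containing $\sigma$, whose outer radius is the radius of the smallest enclosing ball of $\sigma$. This yields the identification $\Alpha^{\mu}_{\bullet}(X, \emptyset) = \Cech_{\bullet}(X)$ already recorded in the text, and the theorem then gives that the \v{C}ech filtration function is a generalized discrete Morse function. Since $\Cech_{\bullet}(X)$ is independent of the colouring $\mu$, the conclusion is a statement about $\Cech_{\bullet}(X)$ alone, although the gradient it inherits does depend on $\mu$.

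There is no substantive obstacle here: the corollary is a direct instantiation of the theorem, and the only step requiring care is invoking the two identifications $\Alpha^{\mu}_{\bullet}(X, \emptyset) = \Cech_{\bullet}(X)$ and $\Alpha^{\mu}_{\bullet}(X, X) = \Alpha^{\mu}_{\bullet}(X)$, both of which have already been noted as straightforward consequences of \Cref{eq: alternative definition of Cech filtration} and \Cref{eq: alternative definition of chromatic alpha filtration}.
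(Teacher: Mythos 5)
Your proposal is correct and follows exactly the paper's own argument: the corollary is obtained by specializing the Selective Chromatic Delaunay Radius theorem to $E = \emptyset$ and $E = X$, using the identifications $\Alpha^{\mu}_{\bullet}(X, \emptyset) = \Cech_{\bullet}(X)$ and $\Alpha^{\mu}_{\bullet}(X, X) = \Alpha^{\mu}_{\bullet}(X)$. Nothing to add.
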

\begin{proof}
	This follows from \Cref{thm: selective chromatic Delaunay filtration function is Morse} and the fact that $\Cech_{\bullet}(X) = \Alpha_{\bullet}(X,\mu; \emptyset)$ and $\Alpha_{\bullet}(X,\mu) = \Alpha_{\bullet}(X,\mu; X)$ (see \Cref{rem: special case of selective Delaunay filtrations}).
\end{proof}

\subsection{Pairing Lemmas}\label{sec: pairing lemmas}

The goal of this subsection is to prove three key technical lemmas, which are needed for the proofs of \Cref{thm: theorem A,thm: theorem B}, that relate the Morse gradients of the chromatic alpha and \v{C}ech filtrations.
These lemmas are direct generalizations of Lemmas 5.4, 5.5, and 5.7 in~\cite{bauer_morse_2016}, and the proofs are very similar.
As in that paper, we will write $\sigma + x$ for $\sigma \cup \{x\}$ and $\sigma - x$ for $\sigma \setminus \{x\}$, and we note that at least one of $\sigma \pm x$ is equal to $\sigma$.
We continue to let $X \subset \RR^d$ be a fixed point cloud in general position and $\mu$ be a fixed colouring of $X$.

\begin{lemma}[Same Stacks Lemma]\label{thm: same stacks lemma}
	Let $E \subset X$, $\sigma \in \Alpha_{\infty}(X,\mu; E)$, and $S = S(\sigma,\mu; E)$.
	For any $x \in X$:
	\begin{enumerate}
		\item $S(\sigma - x,\mu; E) \preceq S \preceq S(\sigma + x,\mu; E)$ if and only if $x \in \Incl(S^*) \setminus \Front(S)$.
		      Additionally, $S = S(\sigma + x,\mu; E)$ if and only if $\mu(\sigma+x) = \mu(\sigma)$ and $S = S(\sigma - x,\mu; E)$ if and only if $\mu(\sigma - x) = \mu(\sigma)$.
		\item $S = S(\sigma,\mu; E \pm x)$ if and only if $x \in \Excl(S)\setminus \Back(S)$.
	\end{enumerate}
\end{lemma}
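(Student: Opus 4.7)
The overall strategy is to reduce both parts of the lemma to the combinatorial KKT characterization just established. That characterization says $S=S^{\mu}(\sigma,E)$ iff: $S$ has color set $\mu(\sigma)$, $\sigma\subset\Incl(S)$, $E\subset\Excl(S)$, $S$ minimizes radius among stacks through $\On(S)$, $\Front(S)\subset\sigma$, and $\Back(S)\subset E$. The plan is to perturb the data $(\sigma,E)$ by a single vertex $x$ and identify which of these conditions continue to hold for $S$ itself or for a minimal augmentation $\widetilde S$ of $S$. Since the six conditions decouple cleanly into ``conditions on $\sigma$'' and ``conditions on $E$'', the two parts of the lemma can be treated independently.

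Part (2) is the easier one. Only $E\subset\Excl(S)$ and $\Back(S)\subset E$ depend on $E$, and they translate respectively to $x\in\Excl(S)$ (when adding $x$ to $E$) and $x\notin\Back(S)$ (when removing $x$ from $E$). Exactly one of $E+x$, $E-x$ equals $E$ (depending on whether $x\in E$), so only one requirement is ever nontrivial; together they yield $x\in\Excl(S)\setminus\Back(S)$, and uniqueness of the minimizer from the KKT proposition closes the argument.

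For part (1), I will split on whether $\mu(x)\in\mu(\sigma)$. If $\mu(x)\in\mu(\sigma)$ then $\mu(\sigma\pm x)=\mu(\sigma)$ and both $\preceq$ relations collapse to equalities, simultaneously establishing the ``Additionally'' clause. The conditions on $\sigma$ in the KKT are $\sigma\subset\Incl(S)$ and $\Front(S)\subset\sigma$; preservation under $\sigma\mapsto\sigma\pm x$ reduces exactly to $x\in\Incl(S)$ and $x\notin\Front(S)$. Since $S_{\mu(x)}=S^*_{\mu(x)}$ whenever $\mu(x)\in\mu(\sigma)$, the condition $x\in\Incl(S)$ is equivalent to $x\in\Incl(S^*)$, giving the claimed equivalence. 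In the alternate case $\mu(x)\notin\mu(\sigma)$ we have $x\notin\sigma$, so $\sigma-x=\sigma$ trivialises the left $\preceq$; the right one must instead produce a proper extension of $S$. The candidate $\widetilde S$ will augment $S$ by the sphere $S^*_{\mu(x)}$ from the maximal empty extension. I will verify the combinatorial KKT for $\widetilde S$ and $(\sigma+x,E)$ by re-using the Lagrange coefficients of $S$ and assigning zero to newly included vertices; minimality of $\widetilde S$ through $\On(\widetilde S)$ will follow from that of $S$ through $\On(S)$ by restricting any hypothetical smaller-radius stack to the colors of $\mu(\sigma)$. For the converse, uniqueness of the minimizer together with the maximality of $S^*$ (which gives $S^{\mu}(\sigma+x,E)\preceq S^*$) forces $x\in\Incl(S^*)$, and $x\notin\Front(S)$ follows analogously from the $\preceq$ relation on the other side.

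The main technical obstacle will lie in the case $\mu(x)\notin\mu(\sigma)$ of part (1), where verifying KKT conditions (4)-(6) for $\widetilde S$ requires a sub-case split on whether $\Rad(S^*_{\mu(x)})<\Rad(S)$ or $\Rad(S^*_{\mu(x)})=\Rad(S)$. In the strict inequality, $\mu(x)\notin\Out(\widetilde S)$, so the zero coefficients on the newly included vertices satisfy condition (4); in the equality case, $\mu(x)$ joins $\Out(\widetilde S)$ and the zero sum continues to satisfy (5) without altering (6). A secondary subtlety is the reverse direction: one must identify the added sphere in forming $S^{\mu}(\sigma+x,E)$ with $S^*_{\mu(x)}$, which will use uniqueness of the minimizer together with the observation that any empty sphere of color $\mu(x)$ containing $x$ with radius at most $\Rad(S)$ is dominated by $S^*_{\mu(x)}$.
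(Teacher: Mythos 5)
Your plan for part (2) is sound and matches the paper's terse appeal to the Combinatorial KKT. Part (1), however, has a genuine gap in the case split.

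You assert that if $\mu(x)\in\mu(\sigma)$ then $\mu(\sigma\pm x)=\mu(\sigma)$, so that both $\preceq$ relations collapse to equalities. This is false for $\sigma - x$: if $x\in\sigma$ and $x$ is the \emph{only} point of $\sigma$ with colour $\mu(x)$, then $\mu(\sigma-x)=\mu(\sigma)\setminus\{\mu(x)\}\subsetneq\mu(\sigma)$, so $S^{\mu}(\sigma-x,E)$ has strictly fewer spheres than $S$ and cannot equal $S$. Your case split (``$\mu(x)\in\mu(\sigma)$'' vs.\ ``$\mu(x)\notin\mu(\sigma)$'') is asymmetric: you carefully treat the sub-case where $\sigma+x$ \emph{gains} a colour (by augmenting $S$ with $S^{*}_{\mu(x)}$ and assigning zero coefficients to new on-sphere vertices), but you never treat the dual sub-case where $\sigma - x$ \emph{loses} a colour. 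That sub-case needs a proof that the restriction $S'$ of $S$ to colours $\mu(\sigma-x)$ still satisfies the KKT for $(\sigma-x,E)$; the key step there is that $x\notin\Front(S)$ forces, via conditions (4)--(6) of the KKT, \emph{all} coefficients $\lambda_v$ for $v\in\On(S_{\mu(x)})$ to vanish (not just $\lambda_x$), so removing that sphere leaves the stationarity and normalisation equations intact. You cannot skip this; with the current write-up the ``Additionally'' clause for $\sigma - x$ is asserted to hold with an equality that is in fact impossible when $\mu(\sigma-x)\neq\mu(\sigma)$.

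For comparison, the paper avoids this case analysis entirely. It first records (from the Combinatorial KKT) that $\Front(S)\subset\sigma\subset\Incl(S)$, observes that $x\in\Incl(S^*)\setminus\Front(S)$ is equivalent to $\sigma\pm x\in[\Front(S),\Incl(S^*)]$, and then invokes \Cref{thm: selective chromatic Delaunay filtration is Morse lemma 1} to conclude $S^{\mu}(\sigma\pm x,E)\preceq S^*$. Because $\mu(\sigma-x)\subset\mu(\sigma)\subset\mu(\sigma+x)$ is a chain and all three stacks are restrictions of $S^*$, they are automatically $\preceq$-comparable, which gives the chain in one stroke; the ``Additionally'' clause then falls out of antisymmetry of $\preceq$ on stacks with equal colour sets. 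Leveraging that interval lemma is what buys the uniformity you lose by attempting the case split directly from the KKT. If you want to retain your bottom-up route, you must add (and prove) the missing sub-case, including the vanishing of the coefficients on $\On(S_{\mu(x)})$.
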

\begin{proof}

	\begin{enumerate}
		\item The \fullref{thm: combinatorial KKT conditions for stacks} show that $\Front(S) \subset \sigma \subset \Incl(S)$.
		      Then $x \in \Incl(S^*)\setminus\Front(S)$ is equivalent to $\sigma \pm x \in [\Front(S), \Incl(S^*)]$, and in this case \Cref{thm: selective chromatic Delaunay filtration is Morse lemma 1} implies that $S(\sigma \pm x,\mu; E) \preceq S^*$.
		      The first part of the result then follows by noting that $S, S(\sigma-x,\mu; E)$, and $S(\sigma + x,\mu; E)$ must all be comparable with respect to the partial ordering $\preceq$.
		      If in addition $\mu(\sigma - x) = \mu(\sigma + x)$ then the second part of the claim follows directly from the \fullref{thm: combinatorial KKT conditions for stacks}.
		\item The claim follows directly from the \fullref{thm: combinatorial KKT conditions for stacks}.\qedhere
	\end{enumerate}
\end{proof}

\begin{lemma}[First Simplex Pairing Lemma]\label{thm: first pairing lemma}
	Let $E \subset F \subset X$ and $\sigma \in \Alpha_{\infty}(X,\mu; F)$ such that $\Rad_{\mu,F}(\sigma) > \Rad_{\mu,E}(\sigma)$.
	Then there exists a point $x \in F \setminus E$ such that
	\begin{enumerate}
		\item $S(\sigma - x,\mu; E) \preceq S(\sigma,\mu; E) = S(\sigma + x,\mu; E)$.
		\item $S(\sigma - x,\mu; F) \preceq S(\sigma,\mu; F) = S(\sigma + x,\mu; F)$.
	\end{enumerate}
\end{lemma}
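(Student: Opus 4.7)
The plan is to produce an explicit witness $x \in F \setminus E$ and then verify both conclusions by invoking the \fullref{thm: same stacks lemma} twice, once for the excluded set $E$ and once for $F$. Write $S_E \defeq S^{\mu}(\sigma, E)$ and $S_F \defeq S^{\mu}(\sigma, F)$. The candidate is any $x \in \Back(S_F) \cap (F \setminus E)$ that additionally lies strictly inside $S_E$, i.e., $x \in \Incl(S_E) \setminus \On(S_E)$.

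To secure the existence of such an $x$, I would argue by contradiction: suppose every $v \in \Back(S_F) \setminus E$ belongs to $\Excl(S_E)$. By the \fullref{thm: combinatorial KKT conditions for stacks}, $\Back(S_E) \subseteq E$, so $v \notin \Back(S_E)$; part~(2) of the \fullref{thm: same stacks lemma} then yields $S_E = S^{\mu}(\sigma, E \cup \{v\})$. Iterating this step along the vertices of $\Back(S_F) \setminus E$ (at each stage the current optimum remains $S_E$ and $\Back(S_E)$ is unchanged by the uniqueness assertion of \Cref{thm: KKT for stacks}) gives $S_E = S^{\mu}(\sigma, E \cup \Back(S_F))$. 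On the other hand, the KKT certificate for $S_F$ depends on the excluded set only through condition~(3) of \Cref{thm: KKT for stacks}, which requires all vertices with $\lambda_v < 0$ to lie in it; for $S_F$ this set is exactly $\Back(S_F) \subseteq E \cup \Back(S_F)$, so the same certificate witnesses $S_F = S^{\mu}(\sigma, E \cup \Back(S_F))$. Uniqueness forces $S_E = S_F$, contradicting $\Rad^{\mu}_F(\sigma) > \Rad^{\mu}_E(\sigma)$.

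For the chosen $x$, note that $x \in \On(S_F)$ forces $\mu(x)$ to be one of the colours of the spheres of $S_F$, which are indexed by $\mu(\sigma)$; hence $\mu(\sigma + x) = \mu(\sigma)$. Part~(2) then follows from part~(1) of the \fullref{thm: same stacks lemma} applied to $S_F$, using $x \in \Back(S_F) \subseteq \On(S_F) \subseteq \Incl(S_F^*)$ and $x \notin \Front(S_F)$ (as $\Front$ and $\Back$ are disjoint); the chain $S^{\mu}(\sigma - x, F) \preceq S_F \preceq S^{\mu}(\sigma + x, F)$ closes with equality on the right by $\mu(\sigma + x) = \mu(\sigma)$. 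For part~(1), strict interiority gives $x \in \Incl(S_E) \subseteq \Incl(S_E^*)$ and $x \notin \On(S_E) \supseteq \Front(S_E)$, so the same lemma applied to $S_E$ produces $S^{\mu}(\sigma - x, E) \preceq S_E \preceq S^{\mu}(\sigma + x, E)$ with the second relation again an equality.

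I anticipate the main obstacle to be the contradiction argument in the second paragraph: while it is easy to separately produce an element of $\Back(S_F) \cap (F \setminus E)$ and a vertex of $F \setminus E$ strictly inside $S_E$, combining these two properties into a single $x$ requires the twin observations that iterating part~(2) of the \fullref{thm: same stacks lemma} along $\Back(S_F) \setminus E$ does not move the $S_E$ optimum, and that the KKT certificate for $S_F$ already witnesses $S_F$ as the optimum for the enlarged exclusion set $E \cup \Back(S_F)$. Once $x$ is secured, the two applications of the lemma close out the proof routinely.
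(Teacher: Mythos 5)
Your proposal is correct and follows the same strategy as the paper: the witness $x$ is a point of $\Back(S_F)$ not excluded by $S_E$, and both conclusions follow from two applications of the \fullref{thm: same stacks lemma}, using $\mu(x) \in \mu(\sigma)$ to upgrade the $\preceq$ on the right to equality. The only difference is how you establish existence of $x$. You argue by contradiction, iterating part~(2) of the \fullref{thm: same stacks lemma} to deduce $S_E = S^{\mu}(\sigma, E \cup \Back(S_F))$, while the \fullref{thm: combinatorial KKT conditions for stacks} identifies this stack with $S_F$, contradicting $\Rad(S_E) < \Rad(S_F)$. The paper instead observes directly that $T = S^{\mu}(\sigma, \Back(T))$ by the \fullref{thm: combinatorial KKT conditions for stacks}, so $S$ cannot exclude all of $\Back(T)$ lest it undercut the minimality of $T$; this produces $x \in \Back(T) \setminus \Excl(S)$ in one step. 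Your iteration is sound (the key point that $\Front$ and $\Back$ are intrinsic to the stack via the uniqueness assertion in \Cref{thm: KKT for stacks} is correctly identified, and it handles the degenerate case $\Back(S_F) \subset E$ vacuously), just somewhat longer than the paper's direct minimality argument.
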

\begin{proof}
	Let $S = S(\sigma,\mu; E)$, $T = S(\sigma,\mu; F)$, and $\gamma = \mu(\sigma)$.
	By the \fullref{thm: combinatorial KKT conditions for stacks}, we have $T = S(\sigma,\mu; \Back(T))$.
	By assumption $\Rad(S) < \Rad(T)$, and since $T$ is the stack of minimum radius that includes $\sigma$ and excludes $\Back(T)$, $S$ cannot exclude all of $\Back(T)$.
	Therefore, there exists some point $x \in \Back(T) \setminus \Excl(S)$.
	Now $\Excl(S) = E \cap \mu^{-1}(\gamma)$ and $\Back(T) \subset F \cap \mu^{-1}(\gamma)$, from which it follows that $x \in F \setminus E$.

	Now $x \notin \Excl(S)$ and $\mu(x) \in \gamma$ together imply that $x \in \Incl(S) \setminus \On(S) \subset \Incl(S) \setminus \Front(S)$.
	Then applying the \fullref{thm: same stacks lemma}, we get that $S(\sigma -x,\mu; E) \preceq S = S(\sigma + x,\mu; E)$ (in particular these stacks exist).
	Next, $x \in \Back(T)$ means that $x \in \Incl(T) \setminus \Front(T)$, so the \fullref{thm: same stacks lemma} implies that $S(\sigma - x,\mu; F) \preceq S(\sigma,\mu; F) = S(\sigma + x,\mu; F)$ as desired.
\end{proof}

\begin{lemma}[Second Simplex Pairing Lemma]\label{thm: second pairing lemma}
	Let $E \subset F \subset X$ and let $\sigma \in \Alpha_{\infty}(X,\mu; E) \setminus \Alpha_{\infty}(X,\mu; F)$.
	Then there exists a point $x\in F\setminus E$ such that
	\begin{enumerate}
		\item $S(\sigma - x,\mu; E) \preceq S(\sigma,\mu; E) = S(\sigma + x,\mu; E)$.
		\item Both $\sigma+x$ and $\sigma-x$ are not in $\Alpha_{\infty}(X,\mu; F)$.
	\end{enumerate}
\end{lemma}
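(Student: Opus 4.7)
The plan is to follow the template of the \fullref{thm: first pairing lemma} and identify a witness $x \in F \setminus E$ starting from $S := S^{\mu}(\sigma, E)$ itself, since the stack $S^{\mu}(\sigma, F)$ appearing in that lemma is now unavailable. Set $\gamma := \mu(\sigma)$. Because $\sigma \notin \Alpha^{\mu}_{\infty}(X, F)$, no $\gamma$-stack that includes $\sigma$ can exclude $F$; in particular $S$ itself fails to exclude $F$, so the set $A$ of points $y \in (F \setminus E) \cap \mu^{-1}(\gamma)$ lying strictly inside $S_{\mu(y)}$ is non-empty. Any $x \in A$ satisfies $x \in \Incl(S) \setminus \On(S) \subset \Incl(S^*) \setminus \Front(S)$, so the \fullref{thm: same stacks lemma}(1) yields $S^{\mu}(\sigma - x, E) \preceq S \preceq S^{\mu}(\sigma + x, E)$, with the upper $\preceq$ an equality since $\mu(x) \in \gamma$ forces $\mu(\sigma + x) = \mu(\sigma)$. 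Thus condition (1) is automatic for every $x \in A$, and the work reduces to choosing $x$ so that condition (2) also holds.

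Condition (2) for $\sigma + x$ is free: $\sigma + x \supseteq \sigma$ and $\Alpha^{\mu}_{\infty}(X, F)$ is closed under faces, so $\sigma + x \in \Alpha^{\mu}_{\infty}(X, F)$ would force $\sigma \in \Alpha^{\mu}_{\infty}(X, F)$, contradicting the hypothesis. For $\sigma - x$ I split on whether $A$ contains a witness outside $\sigma$. If $A \not\subset \sigma$, pick $x \in A \setminus \sigma$; then $\sigma - x = \sigma$ and the claim holds immediately. The non-trivial case is $A \subset \sigma$: every witness strictly inside $S$ is itself a vertex of $\sigma$. Here I choose $x \in A$ carefully (as the earliest crosser in the interpolation below) and derive a contradiction from assuming $\sigma - x \in \Alpha^{\mu}_{\infty}(X, F)$.

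Assuming $T := S^{\mu}(\sigma - x, F)$ exists, I extend $T$ to a $\gamma$-stack $\tilde T$ if $\mu(\sigma - x) \subsetneq \gamma$, by adjoining a sphere of colour $\mu(x)$ small enough to strictly exclude all of $F \cap X_{\mu(x)}$. I then work in the linearised coordinates $(c, u_0, \ldots, u_s)$ with $u_i = r_i^2 - \|c\|^2$ from the proof of the \fullref{thm: KKT for stacks}, in which every inclusion and exclusion constraint becomes an affine half-space. Form the affine path $P_t := (1-t)\tilde T + tS$ for $t \in [0,1]$. For each $v \in A$, the linear function $f_v(t) := \|v\|^2 - 2\langle v, c_t\rangle - u_{t,\mu(v)}$ is strictly positive at $t=0$ and strictly negative at $t=1$, so it vanishes at a unique $t_v \in (0,1)$. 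Picking $x$ to minimise $t_v$ over $v \in A$ and setting $t^* := t_x$, the stack $P_{t^*}$ includes $\sigma$ (each $v \in \sigma - x$ by linearity of the inclusion constraints, and $x$ because $f_x(t^*) = 0$) and excludes $F$ (points of $F \cap \mu^{-1}(\gamma)$ outside $A$ are excluded throughout by linearity; points of $A$ are excluded for $t \leq t_v$, and $t^* \leq t_v$ by the choice of $x$). The inclusion constraints force every sphere of $P_{t^*}$ to have non-negative squared radius, so $P_{t^*}$ is a genuine $\gamma$-stack, whence $\sigma \in \Alpha^{\mu}_{\infty}(X, F)$, contradicting the hypothesis.

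The main obstacle is the circularity between the choice of $x$ and the construction of $\tilde T$: the parameters $t_v$ depend on $\tilde T$, which in turn depends on which $x$ one is minimising. I expect the resolution is to iterate---start with an arbitrary $x_0 \in A$, build $\tilde T_{x_0}$, and either succeed at $t^* = t_{x_0}$ or replace $x_0$ by the $v \in A$ realising the smaller $t_v$ and rebuild $\tilde T$---with termination guaranteed by the finiteness of $A$. Making this iteration rigorous, and verifying that in the subcase $\mu(\sigma - x) \subsetneq \gamma$ the added sphere of colour $\mu(x)$ can always be chosen to satisfy the required strict exclusion and realness conditions along the whole path, are the delicate bookkeeping steps; the rest is a mechanical combination of the \fullref{thm: same stacks lemma} with the linearised structure of the KKT constraints used in the proof of \Cref{thm: selective chromatic Delaunay filtration function is Morse}.
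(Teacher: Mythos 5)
Your proposal diverges substantially from the paper's route, and the divergence introduces a gap you yourself flag but do not close. The paper does not pick $x$ from the set $A$ of ``strictly included'' $F$-points directly; instead it builds a chain of exclusion sets $F_{\sigma} \subset A \subset A + x \subset F$ (where $F_{\sigma} = F \cap \Excl(S)$) and chooses $x$ as the \emph{tipping point}: the first element of $F \setminus F_{\sigma}$ whose addition to the exclusion set destroys $\sigma$, i.e.\ $\sigma \in \Alpha^{\mu}_{\infty}(X, A)\setminus \Alpha^{\mu}_{\infty}(X, A+x)$. This combinatorial extremal choice then interacts cleanly with the \fullref{thm: same stacks lemma}: from $\sigma + x \notin \Alpha^{\mu}_{\infty}(X, A+x)$ one deduces $x \notin \Incl(T^*)$ for $T = S^{\mu}(\sigma-x, A+x)$, whence $T = S^{\mu}(\sigma - x, A)$ and $\Incl(T^*) \supset \Incl(S^{\mu}(\sigma, A))$, contradicting $x \notin \Excl(S^{\mu}(\sigma, A))$. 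No interpolation, no linearised coordinates.

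Your interpolation argument in the case $A \subset \sigma$ has a genuine gap, namely the circularity you identify at the end. The contradiction requires the crossing time $t_x$ of the chosen point $x$ to equal $\min_{v \in A} t_v$, but the whole family $\{t_v\}$ is computed relative to $\tilde T_x$, which depends on $x$. Your proposed iteration (replace $x$ by the argmin $v$ and rebuild $\tilde T$) has no termination argument: finiteness of $A$ does not prevent cycling, since the argmin is recomputed with a different $\tilde T$ at each step. There is also a smaller wrinkle: some $v \in A$ can lie \emph{on} $T_{\mu(v)}$ rather than strictly outside (exclusion is non-strict), in which case $t_v = 0$ and $P_{t^*} = \tilde T$ at the minimizing time, which does not include $\sigma$. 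Without an extremal principle that is stable under the choice of base stack, the interpolation does not land. The paper's tipping-point device is precisely what supplies such a principle for free, which is why it sidesteps the issue entirely. If you want to salvage a geometric argument, the cleaner move is to replay your interpolation not against $S^{\mu}(\sigma-x, F)$ but against $S^{\mu}(\sigma-x, A+x)$ for the paper's tipping-point $x$, at which point you will find the contradiction already available via the \fullref{thm: same stacks lemma} without interpolating at all.
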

\begin{proof}
	Write $S = S(\sigma,\mu; E)$, and $F_{\sigma} = F \cap \Excl(S)$; then the \fullref{thm: combinatorial KKT conditions for stacks} imply that $S = S(\sigma,\mu; F_{\sigma})$ and hence $\sigma \in \Alpha_{\infty}(X,\mu; F_\sigma)$.
	Let $A \subset X$ and $x \in X$ satisfy $F_{\sigma} \subset A \subset A + x \subset F$ such that $\sigma \in \Alpha_{\infty}(X,\mu; A)\setminus \Alpha_{\infty}(X,\mu; A+x)$.
	Writing $\gamma = \mu(\sigma)$, if $\mu(x) \notin \gamma$ then $S(\sigma,\mu; A)$ automatically excludes $A + x$ and $\sigma \in \Alpha_{\infty}(X,\mu; A+x)$, so it must be that $\mu(x) \in \gamma$.
	Together with the fact that $x \in F \setminus \Excl(S)$, this means that $x \in F\setminus E$.
	It also means that $x \in \Incl(S) \setminus \On(S) \subset \Incl(S) \setminus \Front(S)$.
	Applying the \fullref{thm: same stacks lemma} yields part (1) of the claim.
	In fact, the same argument also gives that $x \notin \Excl(S(\sigma,\mu; A))$ and $S(\sigma-x,\mu; A) \preceq S(\sigma,\mu; A) = S(\sigma + x,\mu; A)$.

	Now $\sigma \notin \Alpha_{\infty}(X,\mu; A+x)$, so certainly $\sigma + x \notin \Alpha_{\infty}(X,\mu; A + x)$, and since $\Alpha_{\infty}(X,\mu; F) \subset \Alpha_{\infty}(X,\mu; A+x)$ we see that $\sigma + x \notin \Alpha_{\infty}(X,\mu; F)$ as claimed in (2).
	It remains to show that $\sigma -x \notin \Alpha_{\infty}(X,\mu; F)$, so suppose for contradiction that $\sigma -x \in \Alpha_{\infty}(X,\mu; F)$.
	Then $\sigma - x \in \Alpha_{\infty}(X,\mu; A+x)$.
	Write $T = S(\sigma - x,\mu; A+x)$.
	Since $\sigma +x \notin \Alpha_{\infty}(X,\mu; A+x)$, we must have that $x\notin \Incl(T^*)$.
	Together with the fact that $\mu(x) \in \gamma$, this implies that $x \in \Excl(T) \setminus \On(T) \subset \Excl(T)\setminus\Back(T)$, so applying part (2) of the \fullref{thm: same stacks lemma} gives $T = S(\sigma - x,\mu; A)$.
	We have already shown that $S(\sigma - x,\mu; A) \preceq S(\sigma,\mu; A)$.
	The upshot is that $x \notin \Incl(T^*) = \Incl(S(\sigma-x,\mu; A)^*) \supset \Incl(S(\sigma,\mu; A))$.
	This, along with the fact that $\mu(x) \in \gamma$, contradicts the fact above that $x \notin \Excl(S(\sigma,\mu; A))$.
\end{proof}

\section{Proof of Main Results}\label{sec: proof of main theorems}

Throughout this section we fix a point cloud $X \subset \RR^d$ in general position and the parameter $r \in [0, \infty]$.

\subsection{Proof of \texorpdfstring{\Cref{thm: theorem A}}{Theorem \ref{thm: theorem A}}}

We recall the statement of \Cref{thm: theorem A} for convenience.

\setcounter{maintheorem}{0}
\begin{maintheorem}
	Let $X \subset \RR^d$ be a finite set of points in general position, and let $\mu, \nu$ be colourings of $X$ such that $\nu$ refines $\mu$.
	For any $r \geq 0$ there is a simplicial collapse $\DelCech_r(X,\nu) \searrow \DelCech_r(X,\mu)$.
\end{maintheorem}

Suppose $\mu: X \to \{0, \ldots, s\}, \nu: X \to \{0, \ldots, s'\}$ are colourings of $X$ such that $\nu \preceq \mu$.
We wish to show that there is a simplicial collapse $\DelCech_r(X,\nu)\searrow \DelCech_r(X,\mu)$.
Since $X$ is in general position, we can identify $\DelCech_r(X,\nu)$ with its realization as a subcomplex of the chromatic Delaunay triangulation $\Del(X,\nu)$.
Using the same arguments as in the beginning of \Cref{sec: construction of vertical gradient}, we may assume that $\nu$ is obtained from $\mu$ by splitting up the last class, i.e., $s' = s+1$ and $\mu^{-1}(s) = \nu^{-1}(\{s, s+1\})$.
Then by \Cref{rem: description of membrane} there is a colouring $\zeta: X^{\mu} \to \{0,1\}$ such that $X^{\nu} = (X^{\mu})^{\zeta}$, and there is an embedding of chromatic Delaunay triangulations $\iota: \Del(X,\mu) \to \Del(X,\nu)$ such that $\iota(\Del(X,\mu))$ is the graph of some function $g: \Del(X,\mu) \subset \RR^{d+s} \to \RR$.
In this section, we identify $\DelCech_r(X,\mu)$ with its realization as a subcomplex of $\iota(\Del(X,\mu))$, and we will say that $\sigma \in \Del(X,\nu)$ lies above (resp. below) $\iota(\Del(X,\mu))$ if intersects the interior of the epigraph (resp. hypograph) of $g$ (see \Cref{sec: construction of vertical gradient} for the precise definition).

Recall from \Cref{sec: construction of vertical collapse Morse function} that there is a generalized discrete Morse function
\begin{equation*}
	f : \Del(X,\nu) \cong \DelCech_{\infty}(X,\nu) \to [0, \infty)
\end{equation*}
defined in \Cref{eq: definition of vertical collapse Morse function},
whose critical set is $\iota(\Del(X,\mu)) \cong \DelCech_{\infty}(X,\mu)$.
Let $V^f$ be the generalized discrete Morse gradient associated to $f$,
let $W^{\Cech}$ be the generalized discrete Morse gradient on the complete simplicial complex $\Delta^{|X| - 1}$ associated to the \v{C}ech filtration function $\Rad_{\Cech}$.
Let $W^{\Cech}_r$ be the restriction of this gradient to the subcomplex $\Cech_r(X)$; then $W^{\Cech}_r$ is also a generalized discrete Morse gradient by the \fullref{thm: generalized gradient restriction lemma}.
By the \fullref{thm: sum refinement lemma}, $f + \Rad_{\Cech}$ is a generalized discrete Morse function on $\DelCech_r(X,\nu)$ with gradient
\begin{equation}
	\Lambda \defeq \{I \cap J \mid I \in W^{\Cech}_r, J \in V^f, I \cap J \neq \emptyset\}.
	\label{eq: definition of gradient for DelCech collapse}
\end{equation}
We will show that this generalized gradient induces the simplicial collapse $\DelCech_r(X,\nu) \searrow \DelCech_r(X,\mu)$.
We begin with a technical lemma.

\begin{lemma}\label{thm: characterisation of upper faces in chromatic Delaunay triangulation}
	Let $\sigma$ be a subset of $X$ such that its chromatic lift $\sigma^{\nu}$ spans a $(d+s+1)$-simplex above (resp. below) $\iota(\Del(X,\mu))$ in $\Del(X,\nu)$.
	Let $\tau$ be any proper subset of $\sigma$, and let $\tau^{\nu}$ and $\tau^{\mu}$ be the chromatic lift of $\tau$ with respect to $\nu$ and $\mu$ respectively.
	Then $\tau^{\nu}$ is an upper (resp. lower) face of $\sigma^{\nu}$ if and only if $\tau^{\mu} \notin \Del(\sigma,\mu)$, where $\Del(\sigma,\mu)$ is the chromatic Delaunay triangulation of the subset $\sigma$ with respect to the colouring $\mu$ restricted to $\sigma$.
\end{lemma}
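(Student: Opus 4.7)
My plan is to reduce to the bichromatic picture from Section~\ref{sec: collapse of chromatic triangulation} and then exhibit a paraboloid lifting that ties $\sigma^{\nu}$'s upper/lower faces to the ordinary Delaunay condition on $\sigma^{\mu}\subset\RR^{d+s}$. Write $\zeta\colon X^{\mu}\to[2]$ for the bichromatic colouring induced by $\nu$ (so $X^{\nu}=(X^{\mu})^{\zeta}$ and $\iota(\Del^{\mu}(X))$ is the graph of a function $g\colon\Del^{\mu}(X)\subset\RR^{d+s}\to\RR$), and write $h$ for the projection $\RR^{d+s+1}\to\RR$ to the last coordinate. Since $\sigma^{\nu}$ spans a $(d+s+1)$-simplex it admits a unique circumsphere $S^{\nu}$, say with centre $(c^{*},h^{*})$ and radius $R$. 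For each vertex $v^{\nu}=(v^{\mu},\zeta(v^{\mu}))$ we have
\[
|v^{\mu}-c^{*}|^{2}+(\zeta(v^{\mu})-h^{*})^{2}=R^{2},
\]
and using $\zeta(v^{\mu})^{2}=\zeta(v^{\mu})\in\{0,1\}$ this rearranges to $\zeta(v^{\mu})=(|v^{\mu}-c^{*}|^{2}-A)/B$ where $A=R^{2}-(h^{*})^{2}$ and $B=2h^{*}-1$. Hence all $d+s+2$ vertices of $\sigma^{\nu}$ lie on the graph of the paraboloid $\psi(x)=(|x-c^{*}|^{2}-A)/B$.

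The central step is to invoke the classical paraboloid lifting correspondence. By \Cref{thm: height of circumcentres of simplices above the membrane}, $\sigma^{\nu}$ lies above (resp. below) $\iota(\Del^{\mu}(X))$ iff $h^{*}>1/2$ (resp. $<1/2$), iff $B>0$ (resp. $<0$), iff $\psi$ is strictly convex (resp. strictly concave). In the convex case, a translation by $c^{*}$ and a positive rescaling of the last coordinate transforms $\psi$ to the standard paraboloid $x\mapsto|x|^{2}$ without changing the regular triangulation of $\sigma^{\mu}$ induced by the lift. The classical equivalence for the standard paraboloid then gives, for each $v\in\sigma$,
\[
(\sigma\setminus\{v\})^{\mu}\in\Del^{\mu}(\sigma)=\Del(\sigma^{\mu})\iff v^{\mu}\text{ lies outside the circumsphere of }(\sigma\setminus\{v\})^{\mu}
\]
\[
\iff v^{\nu}\text{ lies strictly above }\Aff((\sigma\setminus\{v\})^{\nu})\iff (\sigma\setminus\{v\})^{\nu}\text{ is a lower face of }\sigma^{\nu}.
\]
In the concave case the same chain holds with "above" and "lower" replaced by "below" and "upper" respectively.

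I would conclude by translating this bijection between top-dimensional faces into the statement for arbitrary proper $\tau\subsetneq\sigma$. Since $\sigma^{\nu}$ is a simplex, the codimension-$1$ faces containing $\tau^{\nu}$ are exactly $\{(\sigma\setminus\{v\})^{\nu}:v\notin\tau\}$ and their intersection is $\tau^{\nu}$ itself, so $\tau^{\nu}$ is an upper face of $\sigma^{\nu}$ iff every such codimension-$1$ face is upper, iff none is lower. By the bijection this is equivalent to: no top-dimensional simplex of $\Del^{\mu}(\sigma)$ contains $\tau^{\mu}$. Since $\Del^{\mu}(\sigma)$ is pure of dimension $d+s$ by \Cref{thm: dimension of chromatic lift}, this is equivalent to $\tau^{\mu}\notin\Del^{\mu}(\sigma)$; the below/lower case is symmetric. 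The main technical obstacle I anticipate is justifying the paraboloid-lifting correspondence in the precise form needed here, namely for a single lifted simplex whose vertices happen to lie on a (translated, rescaled) paraboloid rather than for the standard Delaunay lift of a whole point set; this should follow from a direct computation showing that the power of $v_{k}^{\mu}$ with respect to the circumsphere of $(\sigma\setminus\{v_{k}\})^{\mu}$ has the same sign as $\psi(v_{k}^{\mu})$ minus the affine interpolant of $\psi$ at the remaining vertices, with the sign governed by $B$.
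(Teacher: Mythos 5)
Your proof is correct, but at the decisive step it takes a genuinely different route from the paper's. The paper also reduces to the bichromatic lift and uses \Cref{thm: height of circumcentres of simplices above the membrane}, but in both directions: it observes that $\sigma^{\nu}$ is the unique maximal simplex of $\Del^{\nu}(\sigma)$, deduces that $\sigma^{\nu}$ lies above (resp.\ below) the \emph{small} membrane $\iota(\Del^{\mu}(\sigma))$, and then invokes the collapse $\Del^{\nu}(\sigma)\searrow\iota(\Del^{\mu}(\sigma))$ of \Cref{thm: bichrom_triangulation_collapse}: since the gradient of \Cref{thm: description of bichromatic gradient} for the point cloud $\sigma$ has a single interval $[(\sigma^{\nu})^{*},\sigma^{\nu}]$, the upper faces of $\sigma^{\nu}$ are exactly the simplices of $\Del^{\nu}(\sigma)$ outside $\iota(\Del^{\mu}(\sigma))$. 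You bypass that Morse/collapse machinery entirely: the identity $\zeta(v^{\mu})^{2}=\zeta(v^{\mu})$ places the lifted vertices on the graph of the quadric $\psi$, whose convexity or concavity is decided by the sign of $2h^{*}-1$ via the same height lemma, so that lower (resp.\ upper) facets of $\sigma^{\nu}$ correspond exactly to Delaunay (resp.\ non-Delaunay) facets of $\sigma^{\mu}$, and the general case follows from purity of $\Del^{\mu}(\sigma)$ together with the fact that, for a simplex, a face is upper precisely when every facet containing it is upper. The paper's route buys self-containedness (it reuses results already proved); yours is shorter and exposes the classical lifting picture, at the cost of importing the standard lifting correspondence --- which your anticipated power-of-the-point computation does close, since the signed vertical distance from $v^{\nu}$ to $\Aff\bigl((\sigma\setminus\{v\})^{\nu}\bigr)$ equals $B^{-1}\bigl(\lvert v^{\mu}-z\rvert^{2}-\rho^{2}\bigr)$ with $z,\rho$ the circumcentre and circumradius of $(\sigma\setminus\{v\})^{\mu}$. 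Two small points to make explicit: every facet of $\sigma^{\nu}$ is upper or lower because $e_{d+s+1}$ is transverse (\Cref{thm: chromatic delaunay has no vertical faces}, valid as $X^{\mu}$ is in general position), and $v^{\mu}$ cannot lie \emph{on} the circumsphere of $(\sigma\setminus\{v\})^{\mu}$ by \Cref{prop: GP2}, so ``strictly outside'' and ``Delaunay'' indeed coincide; also, purity in dimension $d+s$ uses that $\sigma$ carries all colours of $\mu$, which follows from \Cref{thm: dimension of chromatic lift} since $\sigma^{\nu}$ is $(d+s+1)$-dimensional.
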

\begin{proof}
	Let $h:\RR^{d+s+1} \to \RR$ be the projection to the last coordinate and let $C$ be the circumcentre of $\sigma^{\nu}$ in $\RR^{d+s+1}$.
	We prove the lemma for the case when $\sigma^{\nu}$ lies above $\iota(\Del(X,\mu))$ in $\Del(X,\nu)$.
	In this case, letting $C(\sigma)$ denote the circumsphere of $\sigma^{\nu}$ in $\RR^{d+s+1}$, we must have $h(C(\sigma^{\nu})) > \frac{1}{2}$ by the forward implication in \Cref{thm: height of circumcentres of simplices above the membrane}.
	Now notice that because $\sigma^{\nu}$ is $(d+s+1)$-dimensional, by definition of the chromatic lift (\Cref{eq: definition of chromatic lift}) we must have that $\nu(\sigma) = \{0, \ldots, s\}$, i.e., $\sigma$ contains at least one point of each colour.
	Then \Cref{thm: dimension of chromatic lift} implies that $\Del(\sigma,\nu)$ is $(d+s+1)$-dimensional, so by a counting argument $\sigma^{\nu}$ belongs to (and is the unique maximal simplex in) $\Del(\sigma,\nu)$.
	The reverse implication in \Cref{thm: height of circumcentres of simplices above the membrane} then shows that $\sigma^{\nu}$ must lie above $\iota(\Del(\sigma,\mu))$ in $\Del(\sigma,\nu)$.

	By \Cref{thm: bichrom_triangulation_collapse} the collapse $\Del(\sigma,\nu) \searrow \iota(\Del(\sigma,\mu))$ removes the simplices in the interval $[(\sigma^{\nu})^*, \sigma^{\nu}]$ because $\sigma^{\nu}$ lies above $\iota(\Del(\sigma,\mu))$.
	Therefore, $\tau^{\nu}$ is an upper face of $\sigma^{\nu}$ if and only if
	\begin{equation*}
		\tau^{\nu}\in [(\sigma^{\nu})^*, \sigma^{\nu}] \iff \tau^{\nu} \notin \iota(\Del(\sigma,\mu)) \iff \tau^{\mu} \notin \Del(\sigma,\mu).\qedhere
	\end{equation*}
\end{proof}

\begin{lemma}\label{thm: intersection lemma}
	Let $\sigma$ be a subset of $X$ such that its chromatic lift $\sigma^{\nu}$ spans a $(d+s+1)$-simplex above (resp. below) $\iota(\Del(X,\mu))$ in $\Del(X,\nu)$, and let $\tau$ be a subset of $\sigma$ such that $\tau = \sigma$ or the chromatic lift $\tau^{\nu}$ is an upper (resp. lower) face of $\sigma^{\nu}$.
	Then there exists a point $x \in \sigma$ such that:
	\begin{enumerate}
		\item $\Rad_{\Cech}(\tau - x) = \Rad_{\Cech}(\tau+x)$.
		\item The chromatic lifts $(\tau + x)^{\nu}$ and $(\tau - x)^{\nu}$ are both upper (resp. lower) faces of $\sigma^{\nu}$.
	\end{enumerate}
\end{lemma}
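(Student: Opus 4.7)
The plan is a three-step argument in which I first peel off two combinatorial reductions, leaving a single geometric claim that is the real content of the lemma.

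I will treat the ``above'' case; the ``below'' case is symmetric. Let $A \subseteq \sigma$ denote the vertex set of the minimal upper face $(\sigma^{\nu})^*$ of $\sigma^{\nu}$, and set $B \defeq \sigma \setminus A$, so $|B| \geq 1$ since $|\sigma| = d+s+2$ while $A$ is at most $(d+s)$-dimensional. By \Cref{thm: characterisation of upper faces in chromatic Delaunay triangulation} applied to every proper face of $\sigma^{\nu}$, combined with the fact that upper faces of a simplex form an interval in its face poset (this interval is precisely $\{\eta^{\nu} : A \subseteq \eta \subseteq \sigma\}$, recovered by applying the vertical gradient construction of \Cref{sec: construction of vertical gradient} to the bichromatic collapse $\Del^{\nu}(\sigma) = \Del^{\zeta}(\sigma^{\mu}) \searrow \iota(\Del^{\mu}(\sigma))$), the hypothesis on $\tau$ is equivalent to $A \subseteq \tau \subseteq \sigma$. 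For any $x \in \sigma$, both $\tau + x$ and $\tau - x$ are contained in $\sigma$, and both contain $A$ iff $x \notin A$. Thus condition (2) of the lemma is equivalent to the combinatorial condition $x \in B$.

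For condition (1), I invoke \Cref{thm: selective chromatic Delaunay filtration function is Morse} with $E = \emptyset$ and the monochromatic colouring, noting that $\Alpha_{\bullet}^{\mathrm{mono}}(X,\emptyset) = \Cech_{\bullet}(X)$ so that $\Rad_{\Cech}$ is the corresponding filtration function. Its generalized gradient interval containing $\tau$ is $[\Front(B_{\tau}), \Incl(B_{\tau})]$, where $B_{\tau}$ is the minimum enclosing ball of $\tau$ in $\RR^d$. By the \fullref{thm: same stacks lemma} (part 1, with $E = \emptyset$), any $x \in \Incl(B_{\tau}) \setminus \Front(B_{\tau})$ satisfies $\Rad_{\Cech}(\tau - x) = \Rad_{\Cech}(\tau) = \Rad_{\Cech}(\tau+x)$, which gives condition (1). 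Taken together, the lemma reduces to producing an element
\[
x \in B \cap \bigl(\Incl(B_{\tau}) \setminus \Front(B_{\tau})\bigr).
\]

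The remaining geometric claim is the main obstacle, and I expect it to be the hardest step. My approach is to combine two KKT-type analyses in the style of \Cref{sec: convex optimization for stacks}: the KKT conditions for $B_{\tau} = S^{\mathrm{mono}}(\tau, \emptyset)$ (which realise $\Front(B_{\tau}) \subseteq \tau$ as the affinely independent support set whose convex hull contains the centre of $B_{\tau}$) and the KKT conditions for the empty $\nu$-stack $T = S^{\nu}(\sigma, X)$ around $\sigma^{\nu} \in \Del^{\nu}(X)$ (whose existence follows from \Cref{eq: alternative definition of chromatic alpha filtration}, and whose projection/shadow in $\RR^d$ is a ball containing all of $\sigma$). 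The key reduction is the extremal case $\tau = A$, in which $B \cap \Front(B_{A}) \subseteq B \cap A = \emptyset$, so the condition becomes $B \cap \Incl(B_A) \neq \emptyset$: I must show that the minimum enclosing ball of the minimal upper face actually contains a vertex of $B$. My plan is to exhibit this by transferring the KKT decomposition of $T$ — in which, by general position, $\On(T) = \sigma$ and the Lagrange coefficients separate $A$ from $B$ according to the direction $e_{d+s+1}$ — into a feasible KKT certificate for the ball $B_A$ that witnesses some $x \in B$ as lying in its closed interior. The general case $A \subsetneq \tau \subseteq \sigma$ will then follow either directly (any $x \in B \setminus \tau$ inside $B_A$ remains inside $B_{\tau}$ since $B_{\tau} \supseteq A$ and $B_A$ is the minimum such ball) or by augmenting the argument with a vertex $x \in B \cap \tau$ that is interior to $B_{\tau}$ because $|\Front(B_{\tau})| \leq d+1$ while the upper-face structure forces $B \cap \tau$ to exceed this bound.
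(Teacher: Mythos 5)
Your two combinatorial reductions are essentially correct: condition (2) is equivalent to $x \in B = \sigma \setminus A$, and condition (1) holds whenever $x$ lies in the non-critical part of the gradient interval of $\tau$ for the \v{C}ech filtration. But the remaining geometric claim—finding $x \in B$ with the right relation to the minimum bounding ball—is exactly the hard part, and your sketch for it does not go through.

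The concrete gap is in your passage from $\tau = A$ to general $\tau$. You assert that ``any $x \in B \setminus \tau$ inside $B_A$ remains inside $B_\tau$ since $B_\tau \supseteq A$ and $B_A$ is the minimum such ball.'' This is false: a ball of larger radius containing $A$ need not contain the minimum bounding ball of $A$ (the two balls can be centred at quite different points), so a vertex inside $B_A$ can lie strictly outside $B_\tau$. Your fallback—that $|B \cap \tau|$ must exceed $|\Front(B_\tau)| \leq d+1$—is not justified either; $|B|$ can be small (as small as $1$), so nothing forces $B \cap \tau$ to be large. Even the base case $B \cap \Incl(B_A) \neq \emptyset$ is asserted only via a KKT-transfer heuristic you admit is unproven.

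The paper takes a shorter and sounder route that you have effectively reinvented the statement of but not recognized: the needed geometric fact is precisely \fullref{thm: second pairing lemma}, applied internally to the chromatic set $(\sigma, \mu|_{\sigma})$ with $E = \emptyset$ and $F = \sigma$. One observes that $\Alpha_{\infty}^{\mu}(\sigma, \emptyset)$ is the full simplex on $\sigma$ while $\Alpha_{\infty}^{\mu}(\sigma, \sigma) = \Del^{\mu}(\sigma)$, and that \Cref{thm: characterisation of upper faces in chromatic Delaunay triangulation} (together with a dimension count when $\tau = \sigma$) places $\tau$ in the difference $\Alpha_{\infty}^{\mu}(\sigma, \emptyset) \setminus \Alpha_{\infty}^{\mu}(\sigma, \sigma)$. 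The Second Pairing Lemma then directly produces $x \in \sigma$ with $S^{\mu}(\tau-x,\emptyset) \preceq S^{\mu}(\tau,\emptyset) = S^{\mu}(\tau+x,\emptyset)$ (giving condition (1), since the outer spheres of these stacks are minimum bounding balls) and with $\tau \pm x \notin \Del^{\mu}(\sigma)$ (giving condition (2) by \Cref{thm: characterisation of upper faces in chromatic Delaunay triangulation} again). Your plan to re-derive this witness from a fresh KKT analysis of the empty $\nu$-stack around $\sigma^{\nu}$ would, if carried out, amount to reproving that lemma from scratch; as sketched it does not constitute a proof.
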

\begin{proof}
	Suppose $\sigma^{\nu}$ lies above $\iota(\Del(X,\mu))$ and $\tau \subset \sigma$.
	If $\tau^{\nu}$ is an upper face of $\sigma^{\nu}$ then $\tau \notin \Del(\sigma,\mu)$ by \Cref{thm: characterisation of upper faces in chromatic Delaunay triangulation}.
	If $\tau = \sigma$ then $\tau^{\mu} \notin \Del(\sigma,\mu)$ because $|\tau| = d + s + 2$ and $\Del(\sigma,\mu)$ is a $(d+s)$-dimensional simplicial complex by \Cref{thm: dimension of chromatic lift}.
	Now recall the definition of the filtrations defined in \Cref{eq: definition of selective chromatic Delaunay filtration} in \Cref{sec: convex optimization for stacks}.
	With that definition in mind, we observe that $\Del(\sigma,\mu) = \Alpha_{\infty}(\sigma,\mu; \sigma)$.
	We also have $\sigma^{\nu} \cong \Delta^{|\sigma|-1} \cong \Alpha_{\infty}(\sigma,\mu; \emptyset)$, where the first isomorphism is due to the same arguments as in \Cref{thm: characterisation of upper faces in chromatic Delaunay triangulation} above.
	Then we can use the \fullref{thm: second pairing lemma} with $E = \emptyset$ and $F = \sigma$ to obtain a point $x \in \sigma$ such that
	\begin{enumerate}
		\item There is an extension of stacks $S(\tau - x,\mu; \emptyset) \preceq S(\tau + x,\mu; \emptyset)$.
		\item $\tau -x $ and $\tau+x$ do not belong to $\Del(\sigma,\mu)$.
	\end{enumerate}
	The first part of the result follows from noting that the stacks $S(\tau-x,\mu; \emptyset)$ and $S(\tau + x,\mu; \emptyset)$ define minimum bounding balls for $\tau-x$ and $\tau+x$ respectively.
	The second part of the result follows from \Cref{thm: characterisation of upper faces in chromatic Delaunay triangulation}.
\end{proof}

\begin{corollary}
	If $\Lambda$ is the gradient defined in \Cref{eq: definition of gradient for DelCech collapse} then $\Crit(\Lambda) = \DelCech_r(X,\mu)$.
\end{corollary}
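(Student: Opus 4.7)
The plan is to prove the set equality $\Crit(\Lambda) = \DelCech_r^{\mu}(X)$ by verifying both inclusions, exploiting the description of $\Lambda$ as intersections of intervals from $W^{\Cech}_r$ and $V^f$. For the forward inclusion I will invoke \Cref{thm: description of bichromatic gradient}, which says that the critical set of $V^f$ is exactly $\iota(\Del^{\mu}(X)) \cong \DelCech_\infty^{\mu}(X)$: if $\sigma \in \DelCech_r^{\mu}(X)$ then the interval of $V^f$ containing $\sigma$ is the singleton $\{\sigma\}$, so whichever $I \in W^{\Cech}_r$ contains $\sigma$, the interval of $\Lambda$ at $\sigma$ is $I \cap \{\sigma\} = \{\sigma\}$, and $\sigma$ is automatically critical in $\Lambda$.

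For the reverse inclusion I will argue the contrapositive. Suppose $\sigma \in \DelCech_r^{\nu}(X) \setminus \DelCech_r^{\mu}(X)$; then $\sigma \in \Cech_r(X)$ but $\sigma \notin \iota(\Del^{\mu}(X))$, so the interval $J \in V^f$ containing $\sigma$ is non-trivial. By the construction of $V^f$ in \Cref{sec: construction of vertical gradient}, $J$ has the form $[\sigma_1^*, \sigma_1]$ or $[(\sigma_1)_*, \sigma_1]$ for some $(d+s+1)$-simplex $\sigma_1$ lying above or below $\iota(\Del^{\mu}(X))$, with $\sigma$ an upper (resp.\ lower) face of $\sigma_1$ (or $\sigma = \sigma_1$). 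The key step is to apply the \fullref{thm: intersection lemma} to the pair $(\sigma_1, \sigma)$, identifying simplices with their underlying vertex sets in $X$; this yields a vertex $x \in \sigma_1$ such that $\Rad_{\Cech}(\sigma - x) = \Rad_{\Cech}(\sigma + x)$ and both $\sigma + x$ and $\sigma - x$ are upper (resp.\ lower) faces of $\sigma_1$, hence both lie in $J$. Exactly one of $\sigma \pm x$ equals $\sigma$, and the other differs by a single vertex, so the two are comparable in the face poset and share a Čech radius with $\sigma$; in particular both lie in $\Cech_r(X)$. The Morse property of $W^{\Cech}_r$ (\Cref{thm: selective chromatic Delaunay filtration function is Morse} applied with $E = \emptyset$) then places these two comparable simplices in a common interval $I \in W^{\Cech}_r$, so $I \cap J$ properly contains $\{\sigma\}$ and $\sigma$ is not critical in $\Lambda$.

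All the geometric content has already been packaged into \Cref{thm: intersection lemma} (which itself rests on the \fullref{thm: second pairing lemma}), so the corollary amounts to bookkeeping: combining $V^f$ with $W^{\Cech}_r$ via the \fullref{thm: sum refinement lemma}. The only minor subtlety to verify is the identification between simplices in $\Del^{\nu}(X)$ and the chromatic lifts $\sigma^\nu$ appearing in the statement of \Cref{thm: intersection lemma}; under the convention fixed at the start of the subsection this identification is transparent, so no real obstacle is expected.
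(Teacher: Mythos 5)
Your proof is correct and follows essentially the same route as the paper's: the forward inclusion from $\Crit(V^f) = \DelCech_\infty^\mu(X)$, and the reverse inclusion by applying the Intersection Lemma to extract a vertex $x$ with $\Rad_{\Cech}(\sigma-x) = \Rad_{\Cech}(\sigma+x)$ and both $\sigma \pm x$ staying in the same $V^f$-interval, forcing a non-singleton interval of $\Lambda$. You spell out a couple of intermediate steps (e.g.\ invoking the Morse property of $W^{\Cech}_r$ to place $\sigma\pm x$ in a common $\Cech$-interval) that the paper leaves implicit, but the argument is the same.
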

\begin{proof}
	By \Cref{thm: description of bichromatic gradient} we have that $\Crit(V^f) = \DelCech_{\infty}(X,\mu)$, so it follows that $\Crit(\Lambda) \supset \DelCech_{r}(X,\mu)$.
	For the reverse inclusion, let $\tau^{\nu}$ be any simplex in $\DelCech_r(X,\nu)\setminus \DelCech_r(X,\mu)$.
	Since both $\DelCech_{\bullet}(X,\nu)$ and $\DelCech_{\bullet}(X,\mu)$ have the same filtration function, $\tau^{\nu}$ must belong to $\DelCech_{\infty}(X,\nu) \setminus \DelCech_{\infty}(X,\mu) = \Del(X,\nu) \setminus \Del(X,\mu)$.
	Without loss of generality let $\tau^{\nu} \in [(\sigma^{\nu})^*, \sigma^{\nu}]$ for some $(d+s+1)$-dimensional $\sigma^{\nu} \in \Del(X,\nu)$.
	Then by \Cref{thm: intersection lemma} we obtain a point $x \in \sigma$ such that $\Rad_{\Cech}(\tau-x) = \Rad_{\Cech}(\tau+x)$, and $(\tau + x)^{\nu}$ and $(\tau-x)^{\nu}$ both lie in the interval $[(\sigma^{\nu})^*, \sigma^{\nu}]$.
	This shows that the interval containing $\tau^\nu$ in $\Lambda$ is not critical; consequently $\Crit(\Lambda) \subset \DelCech_r(X,\mu)$.
\end{proof}

\Cref{thm: theorem A} follows immediately from the above corollary by applying the \fullref{thm: generalized gradient collapsing theorem}.

\begin{remark}\label{rem: counterexample to chromatic alpha collapse}
	Our method of proof for \Cref{thm: theorem A} cannot be modified to work for chromatic alpha filtrations.
	In our proof, we show that there exists a single generalized discrete Morse gradient on $\DelCech_{\infty}(X,\nu)$, whose restriction to $\DelCech_r(X,\nu)$ induces the collapse $\DelCech_r(X,\nu) \searrow \DelCech_r(X,\mu)$ for all values of $r$ simultaneously.
	Recalling the proof of the \fullref{thm: generalized gradient collapsing theorem}, we can choose a sequence of elementary simplicial collapses from $\DelCech_{\infty}(X,\nu)$ to $\DelCech_{\infty}(X,\mu)$ that restricts to an appropriate collapsing sequence $\DelCech_r(X,\nu) \searrow \DelCech_r(X,\mu)$ for each value of $r$.
	This is a strictly stronger property than being able to choose a sequence of elementary collapses for each value of $r$.
	Indeed, in the case of the chromatic alpha filtration, there does not necessarily exist a sequence of elementary collapses from $\Alpha_{\infty}(X,\nu)$ to $\Alpha_{\infty}(X,\mu)$ that restricts to a collapsing sequence from $\Alpha_r(X,\nu)$ to $\Alpha_r(X,\mu)$ for $r < \infty$.
	See \Cref{fig: counterexample for chromatic alpha collapse} for an explicit counter-example.
	Therefore, if there is a simplicial collapse $\Alpha_r(X,\nu) \searrow \Alpha_r(X,\mu)$ for each $r \geq 0$, then the discrete Morse function that induces these collapses will in general depend on $r$.
\end{remark}

\begin{figure}[h]
	\centering
	\begin{subfigure}[t]{0.35\linewidth}
	\centering
	\begin{tikzpicture}[scale=0.6]
		\coordinate (d) at (1, 4);
		\coordinate (b) at (1.5, -1);
		\coordinate (a) at (3, 1);
		\coordinate (c) at (2.25, 3.25);
		\draw (d) to (c);
		\draw [GShape, GLine] (a) -- (b) -- (c) -- cycle;
		\node[Point] at (d) {} node[Label, above       = 5pt of d] {$d$};
		\node[Point] at (a) {} node[Label, right       = 5pt of a] {$a$};
		\node[Point] at (b) {} node[Label, below       = 5pt of b] {$b$};
		\node[Point] at (c) {} node[Label, above right = 5pt of c] {$c$};
	\end{tikzpicture}
	\caption{$t_1 = 6\sqrt{\frac{2}{5}} \approx 3.795$}
\end{subfigure}
\begin{subfigure}[t]{0.35\linewidth}
	\centering
	\begin{tikzpicture}[scale=0.6]
		\coordinate (d) at (1, 4);
		\coordinate (b) at (1.5, -1);
		\coordinate (a) at (3, 1);
		\coordinate (c) at (2.25, 3.25);
		\draw (d) to (c);
		\draw [GShape, GLine] (a) -- (b) -- (c) -- cycle;
		\draw [GShape, GLine] (d) -- (b) -- (c) -- cycle;
		\node[Point] at (d) {} node[Label, above       = 5pt of d] {$d$};
		\node[Point] at (a) {} node[Label, right       = 5pt of a] {$a$};
		\node[Point] at (b) {} node[Label, below       = 5pt of b] {$b$};
		\node[Point] at (c) {} node[Label, above right = 5pt of c] {$c$};
	\end{tikzpicture}
	\caption{$t_2 = \infty$}
\end{subfigure}
\begin{subfigure}[t]{0.25\linewidth}
	\centering
	\begin{tikzpicture}[scale=2]
			\coordinate (x) at (1, 0);
			\coordinate (y) at (0, 1);
			\coordinate (o) at (0, 0);
			\draw[->] (o) -- (x) node[Label, right=4pt of x] {$x$};
			\draw[->] (o) -- (y) node[Label, above=4pt of y] {$y$};
		\end{tikzpicture}
\end{subfigure}

	\vspace{\baselineskip}

	\begin{subfigure}[t]{0.35\linewidth}
	\centering
	\begin{tikzpicture}[
			scale=0.6,
			x={(15:1cm)},
			y={(150:8mm)},
			z={(90:40mm)}
		]
		\coordinate (d) at (1, 4, 1) {};
		\coordinate (b) at (1.5, -1, 0) {};
		\coordinate (a) at (3, 1, 0) {};
		\coordinate (c) at (2.25, 3.25, 0) {};
		\draw [BShape, BLine] (a) -- (b) -- (c) -- cycle;
		\draw [GShape, GLine] (a) -- (d) -- (c) -- cycle;
		\draw [GShape, GLine] (a) -- (b) -- (d) -- cycle;
		\node [OPoint] at (d) {} node[Label, above       = 5pt of d] {$d$};
		\node [BPoint] at (b) {} node[Label, below       = 5pt of b] {$b$};
		\node [BPoint] at (a) {} node[Label, above right = 5pt of a] {$a$};
		\node [BPoint] at (c) {} node[Label, left        = 5pt of c] {$c$};
	\end{tikzpicture}
	\caption{$t_1 = 6\sqrt{\frac{2}{5}} \approx 3.795$}
\end{subfigure}
\begin{subfigure}[t]{0.35\linewidth}
	\centering
	\begin{tikzpicture}[
			scale=0.6,
			x={(15:1cm)},
			y={(150:8mm)},
			z={(90:40mm)}
		]
		\coordinate (d) at (1, 4, 1) {};
		\coordinate (b) at (1.5, -1, 0) {};
		\coordinate (a) at (3, 1, 0) {};
		\coordinate (c) at (2.25, 3.25, 0) {};
		\draw [BShape, BLine] (a) -- (b) -- (c) -- cycle;
		\draw [GShape, GLine] (a) -- (d) -- (c) -- cycle;
		\draw [GShape, GLine] (a) -- (b) -- (d) -- cycle;
		\draw [GShape, GLine] (c) -- (b) -- (d) -- cycle;
		\node [OPoint] at (d) {} node[Label, above       = 5pt of d] {$d$};
		\node [BPoint] at (b) {} node[Label, below       = 5pt of b] {$b$};
		\node [BPoint] at (a) {} node[Label, above right = 5pt of a] {$a$};
		\node [BPoint] at (c) {} node[Label, left        = 5pt of c] {$c$};
	\end{tikzpicture}
	\caption{$t_2 = \infty$}
\end{subfigure}
\begin{subfigure}[t]{0.25\linewidth}
	\centering
	\begin{tikzpicture}[
			scale = 2,
			x={(15:1cm)},
			y={(150:8mm)},
			z={(90:10mm)}
		]
		\coordinate (x) at (1, 0, 0);
		\coordinate (y) at (0, 1, 0);
		\coordinate (z) at (0, 0, 1);
		\coordinate (o) at (0, 0, 0);
		\draw[->] (o) -- (x) node[right] {$x$};
		\draw[->] (o) -- (y) node[left] {$y$};
		\draw[->] (o) -- (z) node[above] {$z$};
	\end{tikzpicture}
\end{subfigure}
	\caption{%
		The alpha filtration ((a)--(b)) and chromatic alpha filtration ((c)--(d)) for $X = \{a, b, c, d\}$ with $a = (3, 1)$, $b=(2, -2)$, $c=(2, 3)$, and $d=(1, 4)$, and the colouring $\mu = (\{a, b, c\}, \{d\})$.
		2D and 3D axes are shown alongside the figures for clarity.
		Note that $\Alpha_{\infty}(X,\mu) \cong \Delta^3$.
		At time $t_1$ there is a unique sequence of elementary collapses from $\Alpha_{t_1}(X,\mu)$ to $\Alpha_{t_1}(X)$, and this sequence starts with the removal of the pair of simplices $\{bd, abd\}$.
		However, there is no sequence of elementary collapses from $\Alpha_{\infty}(X,\mu)$ to $\Alpha_{\infty}(X)$ that includes the removal of that pair, because $bd \in \Alpha_{\infty}(X)$.
	}%
	\label{fig: counterexample for chromatic alpha collapse}
\end{figure}

\subsection{Proof of \texorpdfstring{\Cref{thm: theorem B}}{Theorem \ref{thm: theorem B}}}
We recall the statement of \Cref{thm: theorem B}.
\begin{maintheorem}
	Let $X \subset \RR^d$ be a finite set of points in general position and let $\mu$ be a colouring of $X$.
	Then for any $r \geq 0$ there are simplicial collapses $\Cech_r(X) \searrow \DelCech_r(X,\mu) \searrow \Alpha_r(X,\mu)$.
\end{maintheorem}

Let $\mu$ be fixed.
If we choose $\nu$ to be the maximal colouring of $X$, then $\DelCech_r(X, \nu) \cong \Cech_r(X)$ and by \Cref{thm: theorem A} there is a simplicial collapse $\Cech_r(X) \searrow \DelCech_r(X,\mu)$.
Therefore, to prove \Cref{thm: theorem B} it suffices to show that there is a simplicial collapse $\DelCech_r(X,\mu) \searrow \Alpha_r(X,\mu)$.

Let $W^{\Cech}$ and $W^{\Alpha,\mu}$ be the generalized discrete gradients of the \v{C}ech and chromatic alpha filtration functions $\Rad_{\Cech}$ and $\Rad_{\Alpha,\mu}$ respectively.
By the \fullref{thm: sum refinement lemma} the function $\Rad_{\Cech} + \Rad_{\Alpha,\mu} : \Alpha_{\infty}(X,\mu) \to \RR$ is a generalized discrete Morse function with gradient
\begin{equation}
	\Omega = \{I \cap J \mid I \in W^{\Cech}, J \in W^{\Alpha,\mu}, I \cap J \neq \emptyset\}.
\end{equation}
The set of simplices in $\DelCech_r(X,\mu) \setminus \Alpha_r(X,\mu)$ is partitioned by a subset of intervals $\Omega'$ in $\Omega$, corresponding to intersections $I \cap J$ for intervals $I$ and $J$ in $\Rad_{\Cech}^{-1}(r)$ and $\Rad_{\Alpha,\mu}^{-1}((r, \infty])$ respectively.
For each such simplex $\sigma$, the \fullref{thm: first pairing lemma} applied to $\sigma$ with $E= \emptyset$ and $F=X$ shows that there is a point $x \in X$ such that
\begin{enumerate}
	\item $S(\sigma - x,\mu; \emptyset) \preceq S(\sigma + x,\mu; \emptyset)$.
	      Since each of these stacks is effectively a minimum bounding ball, we have $\Rad_{\Cech}(\sigma-x) = \Rad_{\Cech}(\sigma + x)$.
	\item $S(\sigma -x,\mu; X) \preceq S(\sigma + x,\mu; X)$.
	      In particular these empty stacks exist so $\sigma \pm x \in \Alpha_{\infty}(X,\mu)$, and $\Rad_{\Alpha,\mu}(\sigma -x) = \Rad_{\Alpha,\mu}(\sigma+x)$.
\end{enumerate}
The upshot is that the interval in $\Omega'$ containing $\sigma$ is not a singleton interval.
Then by the \fullref{thm: generalized gradient collapsing theorem} the gradient $\Omega$ induces a collapse $\DelCech_r(X,\mu) \searrow \Alpha_r(X,\mu)$ as desired.
\begin{remark}
	We remark that our proof of the collapse $\DelCech_r(X,\mu) \searrow \Alpha_r(X,\mu)$ follows closely the proof of the main result Theorem 5.9 in~\cite{bauer_morse_2016}.
	However, our proof of the collapse $\Cech_r(X) \searrow \DelCech_r(X,\mu)$ is different, and sidesteps the need for a vertex ordering and a consistent pairing lemma (cf. Lemma 5.8 in~\cite{bauer_morse_2016}).
	Instead, the proof requires choosing a sequence of elementary refinements of $\mu$ that arrive at the maximal colouring.
\end{remark}

\section{Consequences for Practical Computations}\label{sec: consequences for practical computations}

In this section, we highlight the consequences of our main results for computational applications, for both the chromatic Delaunay--\v{C}ech and chromatic Delaunay--Rips filtration.
We also prove that the chromatic Delaunay--Rips filtration is locally stable under perturbations of chromatic sets.
In fact, we prove a more general result, which is that diagrams of chromatic Delaunay--Rips filtrations induced by inclusions of chromatic sets are locally stable to perturbations of the underlying point clouds.
We formulate and prove this statement using the notion of slice categories, and we refer the reader to~\cite{riehl_category_2017} for a reference on the latter.
Finally, we show the computational advantage of the chromatic Delaunay--\v{C}ech and chromatic Delaunay--Rips filtrations through numerical experiments.

\subsection{Chromatic Delaunay--\v{C}ech Filtration}

Given $X$ in general position and $\mu$ a colouring of $X$, \Cref{thm: theorem B} shows that the inclusions
\[
	\mathcal{A}_r(X,\mu) \hookrightarrow \DelCech_r(X,\mu) \hookrightarrow \Cech_r(X)
\]
are homotopy equivalences at every scale $r \geq 0$.
Viewing each filtration as a functor $\cat{ChrSet}_d \times [0, \infty] \to \cat{SimpComp}$, and using the commutativity of \Cref{dgm: functoriality of chromatic DelCech filtration}, these inclusions are components of natural transformations $\Alpha \Rightarrow \DelCech \Rightarrow \Cech$.
After composing with the homology functor, these inclusions are pointwise isomorphisms and hence together constitute a natural isomorphism.

\begin{corollary}
	The \v{C}ech, chromatic Delaunay--\v{C}ech and chromatic alpha filtrations have naturally isomorphic persistent homology.
	More precisely,
	$H_{\ast} \circ \Cech$, $H_{\ast}\circ \DelCech$ and $H_{\ast} \circ \Alpha$ are naturally isomorphic functors $\cat{ChrSet}_d  \times [0, \infty] \to \cat{grVec}$,
	where $H_{\ast}: \cat{SimpComp} \to \cat{grVec}$ is the simplicial homology functor.
\end{corollary}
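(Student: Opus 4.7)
The plan is to deduce the corollary directly from \Cref{thm: theorem B} together with the functoriality diagram \Cref{dgm: functoriality of chromatic DelCech filtration}. First, I would observe that the three filtrations are defined on the same underlying vertex set (indeed, on the same underlying simplicial complex, in the case of $\Alpha^{\mu}$ and $\DelCech^{\mu}$), and the constructions of \Cref{sec: chromatic alpha filtration} and \Cref{sec: chromatic DelCech and chromatic DelRips filtrations} give, at each level $r$, inclusions of simplicial complexes
\[
\Alpha_r^{\mu}(X) \hookrightarrow \DelCech_r^{\mu}(X) \hookrightarrow \Cech_r(X).
\]
By \Cref{dgm: functoriality of chromatic DelCech filtration}, these inclusions are the components at $(X,\mu,r)$ of natural transformations $\eta: \Alpha \Rightarrow \DelCech$ and $\theta: \DelCech \Rightarrow \Cech$ of functors $\cat{ChrSet}_d \times [0, \infty] \to \cat{SimpComp}$.

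Next, I would invoke \Cref{thm: theorem B}: for every $(X,\mu,r)$ with $X$ in general position, there are simplicial collapses $\Cech_r(X) \searrow \DelCech_r^{\mu}(X) \searrow \Alpha_r^{\mu}(X)$, and simplicial collapses are homotopy equivalences. Hence each component of $\eta$ and $\theta$ is a homotopy equivalence at the level of geometric realizations (for points $(X,\mu,r)$ with $X$ in general position; the case $|X|\leq d$ and the degenerate cases are handled by \Cref{thm: dimension of chromatic lift} and the remarks around it). Applying the simplicial homology functor $H_*: \cat{SimpComp} \to \cat{grVec}$, which is homotopy invariant, each component of $H_*\eta$ and $H_*\theta$ becomes an isomorphism in $\cat{grVec}$.

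Finally, I would invoke the standard fact that a natural transformation whose every component is an isomorphism is itself a natural isomorphism, with inverse given componentwise. Applying this to $H_*\eta$ and $H_*\theta$ yields the desired natural isomorphisms $H_* \circ \Alpha \cong H_* \circ \DelCech \cong H_* \circ \Cech$ as functors $\cat{ChrSet}_d \times [0, \infty] \to \cat{grVec}$. There is no genuine obstacle here: all the geometric content is already packaged in \Cref{thm: theorem B} and \Cref{dgm: functoriality of chromatic DelCech filtration}, and the remaining step is purely categorical. The only minor care needed is to confirm that the general position hypothesis on $X$ does not restrict naturality—since the corollary is stated over $\cat{ChrSet}_d$ without that restriction, one should either assume general position throughout or note that the same conclusion holds on the full subcategory of chromatic sets in general position, which is the standing assumption of the paper.
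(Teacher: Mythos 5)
Your proof follows exactly the same route as the paper: use the diagram of functoriality for the chromatic Delaunay--\v{C}ech filtration to see that the inclusions assemble into natural transformations $\Alpha \Rightarrow \DelCech \Rightarrow \Cech$, apply \Cref{thm: theorem B} to conclude each component is a homotopy equivalence, and then compose with $H_*$ so that a natural transformation with pointwise isomorphisms becomes a natural isomorphism. The only addition is your explicit flag about the general position hypothesis on objects of $\cat{ChrSet}_d$, which is a fair observation; the paper implicitly restricts to point clouds in general position (this is the standing hypothesis of \Cref{thm: theorem B}), so the natural isomorphism strictly speaking lives over the full subcategory of chromatic sets in general position.
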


As mentioned in the introduction, the inclusion maps among the subfiltrations of the chromatic alpha filtration encode information about spatial relationships in the data.
If $(X, \mu)$ is a chromatic set and $I \subset \{0, \ldots, s\}$ is a class of colours, then the inclusion map $\Alpha_{\bullet}(\mu^{-1}(I), \mu) \hookrightarrow \Alpha_{\bullet}(X, \mu)$ encodes the relationship between the points of colours in $I$ and the rest of the points.
This type of inclusion has some drawbacks: it effectively only captures pairwise interactions (the colours in $I$ vs the rest) which do not carry information about higher-order spatial relationships, and it is asymmetric by definition.
To circumvent these limitations, Montesano et al.~\cite{Montesano2025chromatic} define the \textit{$k$-chromatic subfiltration}, which is the subfiltration of $\Alpha_{\bullet}(X, \mu)$ of simplices coloured with at most $k$ colours, and they propose studying the inclusion of the $k$-chromatic subfiltration into $\Alpha_{\bullet}(X,\mu)$.
More generally, let $\Delta^{s}$ denote the abstract simplex whose vertex set is the set of colours.
The colouring $\mu : X \to \{0, \ldots, s\}$ induces a simplicial map $\mu: \Alpha_{\infty}(X, \mu) \to \Delta^{s}$ that sends a simplex $\sigma = \{x_0, \ldots, x_k\}$ to $\{\mu(x_0), \ldots, \mu(x_k)\}$.
For any subcomplex $\Gamma$ of $\Delta^s$, the \textit{$\Gamma$-subfiltration} $\Alpha_{\bullet}(X, \mu, \Gamma)$ is the subfiltration of $\Alpha_{\bullet}(X, \mu)$ given by
\begin{equation}\label{eqn: definition of Gamma subfiltration}
	\Alpha_{\bullet}(X, \mu, \Gamma) \defeq \mu^{-1}(\Gamma) \cap \Alpha_{\bullet}(X, \mu).
\end{equation}
Then the $k$-chromatic subfiltration is a special case of the above construction when $\Gamma$ is the $(k-1)$-skeleton of $\Delta^s$.
In fact the chromatic alpha filtration is also a special case of the above construction: given a subset $I \subset [s]$ of colours, which we can identify with a face $\Delta^I$ of $\Delta^{s}$, the chromatic alpha filtration of the subset $\mu^{-1}(I) \subset X$ is given by
\begin{equation}\label{eqn: chromatic alpha filtration of a subset}
	\Alpha_{\bullet}(\mu^{-1}(I), \mu) = \Alpha_{\bullet}(X, \mu, \Delta^I).
\end{equation}

One can analogously define the $\Gamma$-subfiltrations $\DelCech_{\bullet}(X, \mu, \Gamma)$ and $\Cech_{\bullet}(X, \mu, \Gamma)$ of the chromatic Delaunay--\v{C}ech and \v{C}ech filtrations respectively.
Theorem 3.8 in~\cite{Montesano2025chromatic} shows that the inclusion $\Alpha_{\bullet}(X, \mu, \Gamma) \hookrightarrow \Cech_{\bullet}(X, \mu, \Gamma)$ is a homotopy equivalence.
We will now show that this result also holds for the inclusions $\Alpha_{\bullet}(X, \mu, \Gamma) \hookrightarrow \DelCech_{\bullet}(X, \mu, \Gamma)$ and $\DelCech_{\bullet}(X, \mu, \Gamma) \hookrightarrow \Cech_{\bullet}(X, \mu, \Gamma)$, and our proof also provides an alternative proof of the result in \cite{Montesano2025chromatic}.

\begin{theorem}\label{thm: homotopy equivalence of Gamma subfiltrations}
	Let $X \subset \RR^d$ be in general position, let $\mu:X \to \{0, \ldots, s\}$ be a colouring of $X$, and let $\Gamma$ be any subcomplex of $\Delta^{s}$.
	Then for each $t \in [0, \infty]$ the inclusions $\Alpha_t(X, \mu, \Gamma) \hookrightarrow \DelCech_t(X, \mu, \Gamma) \hookrightarrow \Cech_t(X, \mu, \Gamma)$ are homotopy equivalences.
\end{theorem}
\begin{proof}
	The proof for both inclusions is similar, so we will only prove the result for the first inclusion.
	We proceed by induction on $\dim(\Gamma)$.
	If $\Gamma$ is $0$-dimensional, i.e., $\Gamma = \{\{c_0\}, \ldots, \{c_k\}\}$ where each $c_i$ is a colour in $\{0, \ldots, s\}$, then $\Alpha_t(X, \mu, \Gamma)$ and $\DelCech_t(X, \mu, \Gamma)$ are the disjoint union of monochromatic complexes:
	\[
		\Alpha_t(X, \mu, \Gamma) &= \bigsqcup_{i=0}^k \Alpha_t(\mu^{-1}(c_i), \mu) & \DelCech_t(X, \mu, \Gamma) &= \bigsqcup_{i=0}^k \DelCech_t(\mu^{-1}(c_i), \mu).
	\]
	By \Cref{thm: theorem B}, for each $i \in \{0, \ldots, k\}$ the inclusion $\Alpha_t(\mu^{-1}(c_i) , \mu) \hookrightarrow \DelCech_t(\mu^{-1}(c_i), \mu)$ is a homotopy equivalence, and hence the result holds in this case.

	Next suppose $\Gamma$ is $k$-dimensional.
	Let $\Gamma^{(k-1)}$ denote the $(k-1)$-skeleton of $\Gamma$, let $\sigma_0, \ldots, \sigma_r$ be any ordering of the $k$-simplices of $\Gamma$, and for all $0 \leq j \leq r$ let $\Gamma_j = \Gamma^{(k-1)} \cup \sigma_0 \cup \ldots \cup \sigma_j$.
	We have that $\Sigma_j$ is obtained by gluing $\Sigma_{j-1}$ and $\sigma_j$ along the boundary $\partial \sigma_j$, and correspondingly we have a commutative diagram where every arrow is an inclusion, and whose front and back faces are pushout squares:
	\begin{diagram*}[row sep=2ex, column sep=0.5ex]
		{} & {\Alpha_t(X, \mu, \partial \sigma_j)} && {\Alpha_t(X, \mu, \sigma_j)} \\
		{\DelCech_t(X, \mu, \partial \sigma_j)} && {\DelCech_t(X, \mu, \sigma_j)} \\
		& {\Alpha_t(X, \mu, \Gamma_{j-1})} && {\Alpha_t(X, \mu, \Gamma_j)} \\
		{\DelCech_t(X, \mu, \Gamma_{j-1})} && {\DelCech_t(X, \mu, \Gamma_j)}
		\arrow[from=1-2, to=1-4]
		\arrow[from=1-2, to=2-1, "\iota", "\simeq"']
		\arrow[from=1-2, to=3-2]
		\arrow[from=1-4, to=2-3, "\iota'", "\simeq"']
		\arrow[from=1-4, to=3-4]
		\arrow[from=2-1, to=2-3, crossing over]
		\arrow[from=2-1, to=4-1]
		\arrow[from=3-2, to=3-4]
		\arrow[from=2-3, to=4-3, crossing over]
		\arrow[from=3-2, to=4-1, "\iota''", "\simeq"']
		\arrow[from=3-4, to=4-3, "\kappa"]
		\arrow[from=4-1, to=4-3]
	\end{diagram*}
	The map $\iota$ is a homotopy equivalence by the induction hypothesis, and $\iota''$ is a homotopy equivalence by induction on $j$.
	Now $\Alpha_t(X, \mu, \sigma_j)$ and $\DelCech_t(X, \mu, \sigma_j)$ simply restrictions of the corresponding chromatic complexes to points with colour in $\sigma_j$ (see \Cref{eqn: chromatic alpha filtration of a subset}).
	Therefore the map $\iota'$ is a homotopy equivalence by \Cref{thm: theorem B}.
	It follows that the map $\kappa$ is a homotopy equivalence by the Gluing Theorem for Adjunction Spaces \cite[Theorem 7.5.7]{brown2006topology} and this concludes the proof.
\end{proof}

Thus, the chromatic Delaunay--\v{C}ech filtration can be used as a drop-in alternative to the \v{C}ech or chromatic alpha filtrations, for analysing spatial relations in Euclidean point clouds with persistent homology.

\subsection{Chromatic Delaunay--Rips Filtration}

In general, the \v{C}ech, Vietoris--Rips and Delaunay--Rips filtrations do not share the same persistent homotopy type; see~\cite{mishra_stability_2023} for concrete examples.
Therefore, the chromatic Delaunay--Rips filtration (of which the Delaunay--Rips filtration is a special case) does not in general have the ``correct'' persistent homotopy type.
However, the chromatic Delaunay--Rips filtration does approximate the \v{C}ech filtration, in the following sense.
De Silva and Ghrist~\cite{de_silva_coverage_2007} show that if $\delta = \sqrt{\frac{2d}{d+1}}$ then for any $t \in \RR$ there is an inclusion of complexes
\begin{equation}
	\Cech_t(X) \subset \VR_{t}(X) \subset \Cech_{\delta t}(X)
\end{equation}
from which it follows that
\begin{equation}
	\DelCech_t(X,\mu) \subset \DelVR_t(X,\mu) \subset \DelCech_{\delta t}(X,\mu).
\end{equation}

\Cref{thm: theorem B} shows that $\DelCech_t(X,\mu) \simeq \Cech_t(X)$, so after applying the homology functor we get a commutative diagram in which the horizontal maps are induced by inclusions.
\begin{diagram}\label{dgm: nesting of chromatic DelRips}
	\ldots \ar[r] & H_{\ast}(\Cech_t(X)) \ar[r] \ar[d] & H_{\ast}(\Cech_{\delta t}(X)) \ar[d] \ar[r] & \ldots \\
	\ldots \ar[r] & H_{\ast}(\DelVR_{t}(X,\mu)) \ar[ru] \ar[r] & H_{\ast}(\DelVR_{\delta t}(X,\mu)) \ar[r] & \ldots
\end{diagram}%
The top row of the diagram, as a persistence module, is the usual persistent homology of the point cloud $X$.
The bottom row is the persistent homology calculated using the chromatic Delaunay--Rips filtration.
This relationship is analogous to that between the Vietoris--Rips and \v{C}ech filtrations, which is frequently cited to justify the use of the former to approximate the latter.

Another justification for the use of Vietoris--Rips filtrations is that its persistent homology is a stable descriptor; the persistent homology of the Vietoris--Rips filtration is a Lipschitz map with respect to the Gromov--Hausdorff distance on point clouds and the interleaving distance on persistence modules~\cite{chazal2009gromov}.
Unfortunately, the (chromatic) Delaunay--Rips filtration does not enjoy such a result since perturbations of the underlying point cloud can change the Delaunay triangulation (see~\cite{mishra_stability_2023} for an example).
However, we show that the chromatic Delaunay--Rips filtration, as a mapping $(X, \mu) \mapsto \DelVR_{\bullet}(X,\mu)$, is locally stable to perturbations of the underlying point cloud, and that this local stability extends to diagrams of chromatic Delaunay--Rips filtrations induced by diagrams of inclusions of chromatic sets.

To begin, we relate perturbations and Vietoris--Rips filtration values.
Let $X$ and $Y$ be point clouds.
Given any $f : X \to Y$, the \textit{distortion} of $f$ is given by
\begin{equation}
	\dis(f) \defeq \sup_{x, x' \in X} \big| \norm{x - x'} - \norm{ f(x) - f(x') } \big|.
\end{equation}
\begin{lemma}\label{lem:rips_filtration_values_interleave}
	Let $\Rad_{\VR}$ denote the filtration function of the Vietoris--Rips filtration.
	Let $X, Y\subseteq\RR^d$ and let $f: X \to Y$ be any bijection.
	Then for any $\sigma\subseteq X$,
	\begin{equation*}
		| \Rad_{\VR}(\sigma) - \Rad_{\VR}(f(\sigma)) |  \leq \frac{1}{2}\dis(f).
	\end{equation*}
\end{lemma}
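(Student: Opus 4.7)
The plan is to unfold the definition of the Vietoris–Rips filtration value as half the diameter, and then show directly that the diameter of $\sigma$ and the diameter of $f(\sigma)$ differ by at most $\dis(f)$.

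First I would observe that, by definition of $\VR$, we have
\[
\Rad_{\VR}(\sigma) = \tfrac{1}{2}\,\mathrm{diam}(\sigma), \qquad \Rad_{\VR}(f(\sigma)) = \tfrac{1}{2}\,\mathrm{diam}(f(\sigma)),
\]
where $\mathrm{diam}(\sigma) = \max_{x,x'\in\sigma}\|x-x'\|$ (well-defined since $\sigma$ is finite). Thus it suffices to show
\[
\big|\mathrm{diam}(\sigma) - \mathrm{diam}(f(\sigma))\big| \;\leq\; \dis(f).
\]

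For this, let $x_0, x_0' \in \sigma$ attain $\mathrm{diam}(\sigma)$ and let $y_1, y_1' \in \sigma$ be such that $f(y_1), f(y_1')$ attain $\mathrm{diam}(f(\sigma))$ (both maxima exist since $\sigma$ is finite and $f$ is a bijection from $\sigma$ onto $f(\sigma)$). Applying the definition of $\dis(f)$ to the pair $(x_0, x_0')$ gives
\[
\mathrm{diam}(f(\sigma)) \geq \|f(x_0)-f(x_0')\| \geq \|x_0-x_0'\| - \dis(f) = \mathrm{diam}(\sigma) - \dis(f),
\]
while applying it to $(y_1, y_1')$ yields the symmetric inequality
\[
\mathrm{diam}(\sigma) \geq \|y_1-y_1'\| \geq \|f(y_1)-f(y_1')\| - \dis(f) = \mathrm{diam}(f(\sigma)) - \dis(f).
\]
Combining these and dividing by two gives the claim.

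There is no real obstacle here — the statement is essentially a restatement of the fact that the map $\sigma \mapsto \mathrm{diam}(\sigma)$ is $1$-Lipschitz with respect to the distortion pseudometric on set bijections. The only small care needed is to note that $f$ being a bijection ensures $f(\sigma)$ has the same cardinality as $\sigma$ and that maxima over $f(\sigma)$ can be pulled back to maxima over preimages in $\sigma$, which is why the proof treats both directions symmetrically.
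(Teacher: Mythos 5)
Your proof is correct and takes essentially the same approach as the paper, which also unfolds $\Rad_{\VR}(\sigma) = \frac{1}{2}\max_{x,y\in\sigma}\norm{x-y}$ and appeals directly to the definition of $\dis(f)$; your version merely spells out the two-sided argument in more detail than the paper's one-line justification.
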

\begin{proof}
	This follows immediately from the definitions since, for any $x, y \in X$,
	\[
		\big| \norm{x - y} - \norm{f(x) - f(y)} \big|\leq \dis(f)
	\]
	and $\Rad_{\VR}(\sigma) = \frac{1}{2}\max_{x, y \in \sigma} \norm{x - y}$.
\end{proof}

In order to state our stability results, we need a metric on chromatic sets.
Suppose we have $d$-dimensional chromatic sets $(X, \mu)$ and $(Y, \nu)$ and a bijection $f : X \to Y$.
We say that $f$ is a \textit{colour-preserving bijection} if $\nu(f(x)) = \mu(x)$.
We define the \textit{chromatic distance} between $X$ and $Y$ to be
\begin{equation}
	d_{C}(X, Y) \defeq \inf \left\{ \sup_{x \in X} \norm{x - f(x)} \mathrel{} \Big| \mathrel{} \text{$f$ is a colour-preserving bijection from $X$ to $Y$}\right\}.
\end{equation}
If there are no colour-preserving bijections from $X$ to $Y$ we define $d_C(X, Y) = \infty$.
This means that, in particular, $d_C(X, Y) = \infty$ whenever $X$ and $Y$ have a different number of points of a given colour, or if their sets of colours are different.
We use $B_{C}(X, r)$ to denote the open ball of radius $r$ around $(X, \mu)$ in the set of $d$-dimensional chromatic sets.

We also need a metric on diagrams of filtrations, for which we use the interleaving distance.
Here we recall the basic definition, and we refer the reader to~\cite{bubenik_categorification_2014} for more details.
If $\mathcal{C}$ is a category then a \textit{persistent $\mathcal{C}$ object} is a functor $F:[0, \infty] \to \mathcal{C}$.
For each $\epsilon\geq 0$, the \textit{$\epsilon$-shift} of $F$ is the persistent $\Ccal$-object $F[\epsilon]$ defined on objects by $F[\epsilon]_t \defeq F_{t + \epsilon}$, and on morphisms in the obvious way.
The mapping $F \to F[\epsilon]$ defines an endofunctor $\Tcal^{\epsilon}$ on the functor category $[[0, \infty], \Ccal]$.
For each $F$ there is a natural transformation $F \to F[\epsilon]$ whose component at $F_t$ is the map $F_t \xrightarrow{F(t \leq t+ \epsilon)} F_{t + \epsilon}$.
These assemble into a natural transformation from the identity endofunctor on $[[0, \infty], \Ccal]$ to $\Tcal_{\epsilon}$.
Given persistent $\mathcal{C}$ objects, $F, G : [0, \infty] \to \mathcal{C}$, an $\epsilon$-\textit{interleaving} between them is a pair of natural transformations $a : F \Rightarrow G[\epsilon]$ and $b : G \Rightarrow F[\epsilon]$ such that
\begin{equation*}
	\Tcal_{\epsilon}(b) \circ a = \mathcal{T}^F_{2\epsilon}
	\quad\text{ and }\quad
	\Tcal_{\epsilon} \circ b = \mathcal{T}^G_{2\epsilon}.
\end{equation*}
Note that an interleaving restricts to an isomorphism between $F_{\infty}$ and $G_{\infty}$.
This induces an extended pseudo-metric on persistent $\mathcal{C}$ objects~\cite{bubenik_categorification_2014} called the \textit{interleaving distance}, given by
\begin{equation}
	d_I(F, G) \defeq \inf \left\{\epsilon \geq 0 \mid \exists \text{ an }\epsilon\text{-interleaving between }F\text{ and }G \right\} \cup \{ \infty \}.
\end{equation}
\begin{lemma}\label{thm: interleaving bound for filtered simplicial complexes}
	Suppose $K_{\bullet}$ and $L_{\bullet}$ are filtered simplicial complexes on the same underlying simplicial complex $M$, with filtration functions $f$ and $g$.
	Denoting $\norm{f - g}_{\infty} \defeq \max_{\sigma \in M} |f(\sigma) - g(\sigma)|$, we have
	\begin{equation}
		d_I(K_{\bullet}, L_{\bullet}) \leq \norm{f-g}_{\infty}.
	\end{equation}
\end{lemma}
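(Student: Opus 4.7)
The plan is to set $\epsilon = \|f - g\|_\infty$ and construct an explicit $\epsilon$-interleaving between $K_\bullet$ and $L_\bullet$ out of subcomplex inclusions. Since the underlying complex $M$ is shared, all the maps involved will simply be inclusions of subcomplexes of $M$, which will make the interleaving identities hold automatically.

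First I would observe that for every simplex $\sigma \in M$, the hypothesis $|f(\sigma) - g(\sigma)| \leq \epsilon$ yields $g(\sigma) \leq f(\sigma) + \epsilon$ and $f(\sigma) \leq g(\sigma) + \epsilon$. From the definition of the filtrations via their filtration functions, $\sigma \in K_t$ iff $f(\sigma) \leq t$ and $\sigma \in L_t$ iff $g(\sigma) \leq t$. Hence if $\sigma \in K_t$ then $g(\sigma) \leq f(\sigma) + \epsilon \leq t + \epsilon$, so $\sigma \in L_{t+\epsilon}$; symmetrically $L_t \subseteq K_{t+\epsilon}$. This gives, for every $t \in [0,\infty]$, subcomplex inclusions
\begin{equation*}
    a_t : K_t \hookrightarrow L_{t+\epsilon}, \qquad b_t : L_t \hookrightarrow K_{t+\epsilon}.
\end{equation*}

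Next I would verify these assemble into natural transformations $a : K_\bullet \Rightarrow L_\bullet[\epsilon]$ and $b : L_\bullet \Rightarrow K_\bullet[\epsilon]$. Naturality amounts to the commutativity, for $s \leq t$, of a square whose four arrows are all subcomplex inclusions within $M$; such a square automatically commutes. The interleaving identities $\mathcal{T}_\epsilon(b) \circ a = \mathcal{T}^K_{2\epsilon}$ and $\mathcal{T}_\epsilon(a) \circ b = \mathcal{T}^L_{2\epsilon}$ likewise say that two parallel composites of inclusions of subcomplexes of $M$ agree, which is immediate since both composites are the inclusion $K_t \hookrightarrow K_{t+2\epsilon}$ (respectively $L_t \hookrightarrow L_{t+2\epsilon}$).

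Thus $(a,b)$ is an $\epsilon$-interleaving and $d_I(K_\bullet, L_\bullet) \leq \epsilon = \|f-g\|_\infty$. There is no real obstacle here; the only subtlety is handling $\epsilon = \infty$ (in which case the bound is vacuous) and ensuring the construction is well-defined at $t = \infty$, which it is since $K_\infty = L_\infty = M$ and the shifts $[\epsilon]$ act trivially on the endpoint $\infty$.
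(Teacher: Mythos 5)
Your proof is correct and follows essentially the same route as the paper's: set $\epsilon = \|f-g\|_\infty$, deduce the sublevel-set inclusions $K_t \subseteq L_{t+\epsilon}$ and $L_t \subseteq K_{t+\epsilon}$ from the pointwise bound, and note that these inclusions of subcomplexes of the common $M$ automatically assemble into an $\epsilon$-interleaving. Your additional remarks on naturality, the interleaving identities, and the $\epsilon = \infty$ case are sound elaborations that the paper leaves implicit.
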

\begin{proof}
	Setting $\epsilon = \norm{f - g}_{\infty}$, the triangle inequality yields the expressions $K_a = f^{-1}(-\infty, a] \subset g^{-1}(-\infty, a + \epsilon] = L_{a + \epsilon}$ and $L_b = g^{-1}(-\infty, b] \subset f^{-1}(-\infty, b + \epsilon] = K_{b + \epsilon}$.
	These inclusions comprise a pair of natural transformations that define an $\epsilon$-interleaving between $K_{\bullet}$ and $L_{\bullet}$.
\end{proof}

The diagrams of filtrations which we consider for our stability results are given by applying the chromatic Delaunay--Rips construction to a chromatic set $(X, \mu)$, and considering all possible sub-filtrations that correspond to morphisms in $\cat{ChrSet}_d$.
For example, when $\mu :X \to \{0, 1\}$ is a bi-colouring, then we want to show that the following diagram is stable as a persistent object (here $\DelVR$ without reference to $\mu$ denotes the monochromatic Delaunay--Rips filtration).
\begin{equation*}
	\begin{tikzcd}
		& {\DelVR_{\bullet}(X)} \\
		& {\DelVR_{\bullet}(X,\mu)} \\
		{\DelVR_{\bullet}(X_0)} && {\DelVR_{\bullet}(X_1)}
		\arrow[hook, from=1-2, to=2-2]
		\arrow[hook, from=3-1, to=2-2]
		\arrow[hook', from=3-3, to=2-2]
	\end{tikzcd}
\end{equation*}
We formalize this idea using the slice category $\cat{ChrSet}_d \downarrow (X, \mu)$, which is the category of morphisms into $(X, \mu)$ in $\cat{ChrSet}_d$ (see \Cref{sec: notation} for the definition of slice category).
Let $\Fcal: \cat{ChrSet}_d \downarrow (X, \mu) \to \cat{ChrSet}_d$ be the forgetful functor, whose action on objects is given by
\[
	[(Y, \nu) \hookrightarrow (X, \mu)] \mapsto (Y, \nu).
\]
The diagram we are interested in is the image of the composite $\DelVR \circ \Fcal$.
Notice that the ``shape'' of the category $\cat{ChrSet}_d \downarrow (X, \mu)$ (i.e., the combinatorial structure of the inclusions) depends only on the number of colours $s$.
That is, there is an indexing category $\Pcal_{s+1}$ such that for any $(X, \mu)$ where $\mu$ has label set $[s+1]$, there is an isomorphism of categories $\phi_{(X, \mu)} : \Pcal_{s+1} \to \cat{ChrSet}_d\downarrow (X, \mu)$.
Consider the composite
\begin{equation}
	\Pcal_{s+1} \xrightarrow{\phi_{(X, \mu)}} \cat{ChrSet}_d \downarrow (X, \mu) \xhookrightarrow{\Fcal} \cat{ChrSet}_d \xrightarrow{\DelVR} [[0, \infty], \cat{SimpComp}].
\end{equation}
This is equivalent to the data of a functor $[0, \infty] \to [\Pcal_{s+1}, \cat{SimpComp}]$, i.e., a persistent object valued in diagrams of shape $\mathcal{P}_{s+1}$ in $\cat{SimpComp}$, which we denote by $\DelVR \downarrow (X, \mu)$.

\begin{lemma}\label{thm: colour preserving bijection induces interleaving}
	Let $(X, \mu)$ and $(X', \mu')$ be chromatic sets and suppose there is a colour-preserving bijection $f:X \to X'$ that induces an isomorphism $\Del(X,\mu) \to \Del(X',\mu')$ of simplicial complexes.
	Then $f$ induces a $\left(\frac{1}{2}\dis(f)\right)$-interleaving between
	$\DelVR \downarrow (X, \mu)$ and $\DelVR \downarrow (X', \mu')$.
\end{lemma}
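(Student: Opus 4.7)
The plan is to lift the single simplicial isomorphism $\Phi \colon \DelVR^\mu_\infty(X) \to \DelVR^\nu_\infty(Y)$ induced by $f$ to a compatible family of maps between all sub-filtrations in the diagrams $\DelVR \downarrow (X,\mu)$ and $\DelVR \downarrow (Y,\nu)$, and then to use \Cref{lem:rips_filtration_values_interleave} to check that this family respects filtration values up to the claimed shift. Set $\epsilon \defeq \frac{1}{2}\dis(f)$.

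First, I would unpack the structure of $\Pcal_{s+1}$ via $\phi_{(X,\mu)}$ and $\phi_{(Y,\nu)}$. For each $P \in \Pcal_{s+1}$, write $(X_P,\mu_P) = \phi_{(X,\mu)}(P)$ and $(Y_P,\nu_P) = \phi_{(Y,\nu)}(P)$. From the construction in the proof of \Cref{thm: isomorphism of slice categories}, $X_P \subseteq X$ and $Y_P \subseteq Y$ are the points of the same subset $I \subseteq [s+1]$ of colours, so because $f$ is colour-preserving it restricts to a colour-preserving bijection $f_P \colon X_P \to Y_P$ with $\dis(f_P)\leq \dis(f)$. Moreover, by \Cref{rem: functoriality of chromatic alpha filtration}, $\DelVR^{\mu_P}_\infty(X_P)$ is the full subcomplex of $\DelVR^\mu_\infty(X)$ on the vertex set $X_P$, and similarly on the $Y$ side. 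Since $\Phi$ is induced by the vertex map $f$, its restriction $\Phi_P$ is an isomorphism $\DelVR^{\mu_P}_\infty(X_P) \to \DelVR^{\nu_P}_\infty(Y_P)$ of simplicial complexes.

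Next, I would define the two natural transformations. By \Cref{lem:rips_filtration_values_interleave}, for any $\sigma \in \DelVR^{\mu_P}_\infty(X_P)$ the filtration values satisfy $|\Rad_{\VR}(\sigma) - \Rad_{\VR}(f_P(\sigma))| \leq \epsilon$, so $\Phi_P$ restricts to simplicial maps $a_{P,t} \colon \DelVR^{\mu_P}_t(X_P) \to \DelVR^{\nu_P}_{t+\epsilon}(Y_P)$ for every $t \in [0,\infty]$, and symmetrically $\Phi_P^{-1}$ yields $b_{P,t} \colon \DelVR^{\nu_P}_t(Y_P) \to \DelVR^{\mu_P}_{t+\epsilon}(X_P)$. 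Since every $a_{P,t}$ and $a_{P,t'}$ is the restriction of the single map $\Phi$, the family $\{a_{P,t}\}_t$ is natural in $t$, and for any morphism $P \to Q$ in $\Pcal_{s+1}$ the inclusion $X_P \hookrightarrow X_Q$ is respected by $\Phi$ because $\Phi$ is a single globally defined vertex map; hence $\{a_{P,t}\}_{(P,t)}$ assembles into a natural transformation $a \colon (\DelVR \downarrow (X,\mu)) \Rightarrow (\DelVR \downarrow (Y,\nu))[\epsilon]$ when viewed as a functor $[0,\infty]\to[\Pcal_{s+1},\cat{SimpComp}]$. The reverse $b$ is defined analogously.

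Finally, I would verify the interleaving identities $\Tcal_\epsilon(b)\circ a = \Tcal^F_{2\epsilon}$ and $\Tcal_\epsilon(a)\circ b = \Tcal^G_{2\epsilon}$ where $F = \DelVR \downarrow (X,\mu)$ and $G = \DelVR \downarrow (Y,\nu)$. Both compositions are defined component-wise from $\Phi_P^{-1}\circ \Phi_P = \id$ and $\Phi_P \circ \Phi_P^{-1} = \id$, so each composite map is literally the inclusion $\DelVR^{\mu_P}_t(X_P) \hookrightarrow \DelVR^{\mu_P}_{t+2\epsilon}(X_P)$, which is by definition the component of the shift natural transformation $\Tcal^F_{2\epsilon}$ at $(P,t)$, and dually for $G$. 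The principal subtlety, which I expect to be the only place requiring care, is checking naturality in $P$ jointly with naturality in $t$, i.e.\ that restrictions of $\Phi$ to different sub-point-clouds $X_P \subseteq X_Q$ are themselves compatible with inclusions; this is however automatic because every $\Phi_P$ is a restriction of one global simplicial map.
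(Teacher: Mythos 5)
Your proposal follows essentially the same route as the paper's proof: identify each object $P\in\Pcal_{s+1}$ with a sub-point-cloud of a fixed set of colours via $\phi_{(X,\mu)}$ and $\phi_{(Y,\nu)}$, observe that the single simplicial isomorphism induced by $f$ restricts compatibly to all these sub-complexes, and then invoke \Cref{lem:rips_filtration_values_interleave} (together with the observation used in \Cref{thm: interleaving bound for filtered simplicial complexes}) to convert the distortion bound into an $\epsilon$-shift of filtration values. You are slightly more explicit than the paper in verifying the interleaving triangle identities and in noting $\dis(f_P)\leq\dis(f)$, but the structure of the argument is the same. One shared subtlety worth being aware of: both you and the paper assert that $\DelVR^{\mu_P}_\infty(X_P)$ is the \emph{full} subcomplex of $\DelVR^\mu_\infty(X)$ on $X_P$, and cite the functoriality remark for this; that remark only gives a full subcomplex when $\mu_P$ is the \emph{restriction} of $\mu$ to $X_P$, whereas when $\mu_P$ coarsens colours the underlying chromatic Delaunay triangulation can strictly shrink (e.g.\ three collinear points with all-distinct versus monochromatic colouring). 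The intended use (with $\epsilon$ in \Cref{thm: existence of colour preserving bijection} chosen small enough that $f$ induces isomorphisms at every level of the finite diagram) makes the argument go through, but strictly speaking the top-level isomorphism hypothesis alone does not force restriction to all sub-objects; since this is present in the paper's proof as well, it is not a defect unique to your attempt.
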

\begin{proof}
	Let $(Y, \nu) \hookrightarrow (X, \mu)$ be a morphism of chromatic sets in $\cat{ChrSet}_d$, and let $Y' = f(Y)$.
	Since $f$ is a colour-preserving bijection, the isomorphism it induces between $\Del(X,\mu)$ and $\Del(X',\mu')$ restricts to an isomorphism between $\Del(Y, \nu)$ and $\Del(Y', \nu \circ f^{-1})$.
	The filtration values are determined by the Vietoris--Rips filtration function so \Cref{lem:rips_filtration_values_interleave,thm: interleaving bound for filtered simplicial complexes} ensure that $f$ induces a $\left(\frac{1}{2}\dis(f)\right)$-interleaving between the filtrations $\DelVR_{\bullet}(X, \mu)$ and $\DelVR_{\bullet}(X', \mu')$, which restricts to an interleaving between $\DelVR_{\bullet}(Y, \nu)$ and $\DelVR_{\bullet}(Y', \nu \circ f^{-1})$.
	Since both interleavings are given by the same pair of maps, they also define an interleaving between $\DelVR_{\bullet}(Y, \nu) \hookrightarrow \DelVR_{\bullet}(X, \mu)$ and $\DelVR_{\bullet}(Y', \nu \circ f^{-1}) \hookrightarrow \DelVR_{\bullet}(X', \mu')$ as persistent objects in the functor category $[(\bullet \to \bullet), \cat{SimpComp}]$.
	Since $(Y, \nu)$ is arbitrary, the interleaving is in fact an interleaving between $\DelVR \downarrow (X, \mu)$ and $\DelVR \downarrow (X', \mu')$ as persistent objects in $[\Pcal_{s+1}, \cat{SimpComp}]$.
\end{proof}

Previous work has shown that if $d_C(X, Y)$ is small enough for monochromatic point clouds, then the Delaunay triangulations of $X$ and $Y$ are isomorphic~\cite{boissonnat2013stability}.
In the following we show that this extends to the chromatic case.

\begin{lemma}\label{thm: existence of colour preserving bijection}
	Let $(X, \mu)\in \cat{ChrSet}_d$ be in general position.
	Then there is an $\epsilon >0$ such that for all $(X', \mu') \in B_C(X, \epsilon)$, there is a colour-preserving bijection $f: X \to X'$
	such that $f$ induces an isomorphism $\Del(X,\mu) \to \Del(X',\mu')$ and $\dis(f) \leq 2 d_{C}(X, X')$.
\end{lemma}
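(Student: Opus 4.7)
The plan is to reduce the claim to the classical local stability of Euclidean Delaunay triangulations, applied to the chromatic lift $X^\mu \subset \RR^{d+s}$. First I would note that, since $\VR_\infty(X)$ is the complete simplex on the vertex set $X$, we have $\DelVR_\infty^\mu(X) = \Alpha_\infty^\mu(X) \cong \Del^\mu(X) = \Del(X^\mu)$ as abstract simplicial complexes (and likewise for $(Y, \nu)$). A colour-preserving bijection $f : X \to Y$ extends canonically to a vertex bijection $f^\mu : X^\mu \to Y^\nu$ via $(x, e_{\mu(x)}) \mapsto (f(x), e_{\mu(x)}) = (f(x), e_{\nu(f(x))})$, and because $f$ preserves colours, $\norm{x^\mu - f^\mu(x^\mu)} = \norm{x - f(x)}$. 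Thus it suffices to show that if $\sup_{x \in X}\norm{x - f(x)}$ is sufficiently small, then $f^\mu$ induces a simplicial isomorphism $\Del(X^\mu) \to \Del(Y^\nu)$.

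Next I would invoke local stability of Euclidean Delaunay triangulations. By \Cref{rem: lifts and subsets of general position}, $X^\mu$ is in general position in $\RR^{d+s}$, so every in-sphere predicate used to determine membership in $\Del(X^\mu)$ is strictly satisfied: for each candidate simplex $\sigma$, either its empty circumsphere strictly excludes every point of $X^\mu \setminus \sigma$ (in which case $\sigma \in \Del(X^\mu)$), or some point lies strictly inside (in which case $\sigma \notin \Del(X^\mu)$). Each such predicate is a continuous function of the vertex positions, and there are only finitely many candidate simplices. Taking a minimum gap over all these strict inequalities and using continuity of circumsphere centres/radii yields a $\delta > 0$ such that any bijection from $X^\mu$ to a point cloud $Q \subset \RR^{d+s}$ with displacement bounded by $\delta$ preserves every in-sphere sign, and hence induces a simplicial isomorphism $\Del(X^\mu) \to \Del(Q)$. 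This is essentially the classical Delaunay stability result for monochromatic point clouds~\cite{boissonnat2013stability}, applied here to the lifted cloud.

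With $\delta$ in hand, set $\epsilon \defeq \delta$ and let $(Y, \nu) \in B_C(X, \epsilon)$. Since $X$ and $Y$ are finite, there are only finitely many colour-preserving bijections $X \to Y$, so the infimum in the definition of $d_C(X, Y)$ is attained by some $f : X \to Y$ with $\sup_{x \in X}\norm{x - f(x)} = d_C(X, Y) < \delta$. Then $f^\mu$ is a $\delta$-perturbation of $X^\mu$ and induces an isomorphism $\Del(X^\mu) \cong \Del(Y^\nu)$, which under the identifications above is the required isomorphism $\DelVR_\infty^\mu(X) \to \DelVR_\infty^\nu(Y)$. The distortion estimate is an immediate triangle-inequality calculation:
\begin{equation*}
\bigl|\norm{x - x'} - \norm{f(x) - f(x')}\bigr| \leq \norm{x - f(x)} + \norm{x' - f(x')} \leq 2\, d_C(X, Y).
\end{equation*}

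The main technical obstacle is the local Delaunay stability step. While classical, making the radius $\delta$ explicit requires either a direct computation with Cayley--Menger-type expressions for circumspheres followed by a finite minimum-gap argument across all in-sphere predicates, or a compactness/sequence argument by contradiction. Everything else — the reduction through the chromatic lift, the isometric behaviour of the lift on colour-preserving displacements, the attainment of the infimum for finite clouds, and the distortion bound — is essentially bookkeeping.
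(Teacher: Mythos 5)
Your proof is correct and follows essentially the same route as the paper: reduce to the monochromatic lift $X^\mu$, invoke local Delaunay stability from~\cite{boissonnat2013stability}, use finiteness of the colour-preserving bijections to attain the infimum in $d_C$, and finish with the triangle inequality for the distortion bound. The only difference is that you unpack the stability citation into an in-sphere-predicate/minimum-gap argument, whereas the paper simply cites the result; this adds exposition but not a new idea.
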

\begin{proof}
	A consequence of the results in~\cite{boissonnat2013stability} is that
	there exists some $\epsilon > 0$ such that
	if $g:X^\mu \to (X')^{\mu'}$ is a bijection and $\sup_{x^{\mu}\in X^\mu} \norm{x^{\mu} - g(x^{\mu})} < \epsilon$,
	then $g$ induces an isomorphism of simplicial complexes
	$\Del(X, \mu) \cong \Del(X', \mu')$.
	Since $d_{C}(X, X')$ is finite and the set of colour-preserving bijections $X \to X'$ is finite, there is a colour-preserving bijection $f: X \to X'$ such that
	$\norm{x - f(x)} \leq d_{C}(X, X')$ for every $x \in X$.
	Now $f$ also induces a bijection between the lifts $f^{\mu}: X^\mu \to (X')^{\mu'}$ and moreover
	\[
		\sup_{x^{\mu}\in X^\mu} \norm{x^{\mu} - f^{\mu}(x^{\mu})} = \sup_{x \in X} \norm{x - f(x)} \leq d_{C}(X, X').
	\]
	Therefore, so long as $d_{C}(X, X') < \epsilon$, the bijection $f$ induces an isomorphism as required.
	Finally, note that a simple application of the triangle inequality shows $\dis(f) \leq 2 d_C(X, X')$.
\end{proof}

Putting together \Cref{thm: colour preserving bijection induces interleaving,thm: existence of colour preserving bijection}, we conclude that the entire persistent $\mathcal{P}_{s+1}$-shaped diagram in $\cat{SimpComp}$, produced by $\DelVR$, is locally stable.

\begin{proposition}[Chromatic Delaunay--Rips diagrams are locally 1-Lipshitz]\label{cor:local_stability_of_delvr}
	For each $(X, \mu)\in \cat{ChrSet}_d$ in general position, there is an $\epsilon > 0$ such that whenever $(X', \mu') \in B_C(X, \epsilon)$ then we have
	\begin{equation}
		d_I(
		\DelVR \downarrow (X, \mu),\DelVR \downarrow (X', \mu')
		)
		\leq
		d_{C}(X, X').
	\end{equation}
\end{proposition}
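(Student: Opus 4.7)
The plan is to combine the two preceding lemmas in a straightforward way. First I would invoke \Cref{thm: existence of colour preserving bijection} with the given chromatic set $(X,\mu)$ to obtain the $\epsilon > 0$ from that lemma. Then, for any $(Y,\nu) \in B_C(X,\epsilon)$, the same lemma supplies a colour-preserving bijection $f : X \to Y$ which (i) induces an isomorphism $\DelVR^{\mu}_{\infty}(X) \to \DelVR^{\nu}_{\infty}(Y)$ of simplicial complexes, and (ii) satisfies $\dis(f) \leq 2\, d_C(X,Y)$. Feeding this $f$ into \Cref{thm: colour preserving bijection induces interleaving} yields a $\tfrac{1}{2}\dis(f)$-interleaving between $\DelVR \downarrow (X,\mu)$ and $\DelVR \downarrow (Y,\nu)$ viewed as persistent objects in $[\Pcal_{s+1}, \cat{SimpComp}]$. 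Chaining the two estimates gives
\[
  d_I\bigl(\DelVR \downarrow (X,\mu),\ \DelVR \downarrow (Y,\nu)\bigr) \leq \tfrac{1}{2}\dis(f) \leq d_C(X,Y),
\]
which is exactly the claimed bound.

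The only real content, beyond citing the two lemmas, is making sure that this bound survives the passage from bijections to the infimum defining $d_C$. Since $X$ and $Y$ are finite and colours must match, there are only finitely many colour-preserving bijections $X \to Y$, so the infimum in the definition of $d_C(X,Y)$ is attained by some colour-preserving bijection; choosing $f$ to be this attaining bijection makes the estimate $\dis(f) \leq 2 d_C(X,Y)$ sharp, so no auxiliary $\delta > 0$ needs to be absorbed.

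The main obstacle, such as it is, is the purely bookkeeping check that the interleaving supplied by \Cref{thm: colour preserving bijection induces interleaving} is genuinely an interleaving of $\Pcal_{s+1}$-shaped diagrams, rather than merely a pointwise interleaving at each object of $\Pcal_{s+1}$. This is already implicit in the proof of that lemma: every interleaving map is obtained by restricting the single ambient isomorphism $\DelVR^{\mu}_{\infty}(X) \cong \DelVR^{\nu}_{\infty}(Y)$ to an appropriate full subcomplex, and the morphisms of $\Pcal_{s+1}$ act by inclusions of such full subcomplexes; hence all necessary naturality squares commute tautologically. Once this is observed, the proposition reduces to a two-line combination of the two lemmas, so I would present it as a short corollary-style proof.
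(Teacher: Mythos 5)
Your proposal is correct and follows exactly the route the paper intends: the proposition is stated in the paper as a direct consequence of \Cref{thm: existence of colour preserving bijection} and \Cref{thm: colour preserving bijection induces interleaving}, combined precisely as you describe. Your added remark about the infimum in $d_C$ being attained by a finite search is already built into the proof of \Cref{thm: existence of colour preserving bijection}, so your argument needs no auxiliary $\delta$ and matches the paper's reasoning.
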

The main takeaway is as follows: given a chromatic set $(X, \mu)$, $\delta \geq 0$ small enough, and a colour-preserving $\delta$-perturbation $f:(X, \mu) \to (X', \mu')$, there is a $\delta$-interleaving:
\begin{diagram}[column sep=0.8em]
	\DelVR_s(X,\mu) \arrow[rr, hook] \arrow[rd, "f", swap] & & \DelVR_{s+2\delta}(X,\mu) \arrow[rr, hook] \arrow[rd, "f"] & & \Del(X,\mu) \arrow[d, "f", swap, bend right] \\
	& \DelVR_{s+\delta}(X',\mu') \arrow[ru, "f^{-1}", swap] \arrow[rr, hook] & & \DelVR_{s+3\delta}(X',\mu') \arrow[r, hook] & \Del(X', \mu') \arrow[u, bend right, "f^{-1}"']
	\label{eq:delvr_interleaving}
\end{diagram}
Moreover, given any chromatic subset $(Y, \nu) \hookrightarrow (X, \mu)$,
\Cref{cor:local_stability_of_delvr} ensures that the interleaving in \Cref{eq:delvr_interleaving} restricts to an interleaving between $\DelVR_\bullet(Y,\nu)$ and $\DelVR_\bullet(f(Y),\nu\circ f^{-1})$.

This implies that any stable invariant for maps of filtered simplicial complexes can be applied to the chromatic Delaunay--Rips filtration, to obtain a locally stable invariant of chromatic sets.
In particular, thanks to the stability theorem for kernels, images and co-kernels of persistent homology~\cite{cohen-steiner_persistent_2009}, the 6-pack (introduced in~\cite{Montesano2025chromatic}) of the Delaunay--Rips filtration is locally stable.

\subsection{Numerical Experiments}\label{sec: numerical experiments}

\begin{figure}
	\centering
	\resizebox{\textwidth}{!}{\input{figures/benchmarks/all_benches.pgf}}
	\caption{
		Plot of median time taken to compute each filtration (including the underlying triangulation) vs the number of input points.
	}\label{fig:benchmark_log}
\end{figure}

The worst-case computational complexity for each of the chromatic alpha, chromatic Delaunay--\v{C}ech, and chromatic Delaunay--Rips filtrations is linear in the number of simplices of the chromatic Delaunay triangulation, and the constant factor is the worst-case computational complexity to compute the filtration value for an individual simplex.
This constant factor, which depends on the ambient dimension $d$ as well as the number of colours, leads to practical differences for computations with large or high-dimensional datasets where computing the filtration values can be a computational bottleneck.

The algorithm to compute the chromatic alpha filtration value~\cite[Section 3.5 in][]{Montesano2025chromatic} for a simplex consists of two main steps.
The first step is to find the stack of minimum radius passing through the simplex, which can be reduced to the problem of finding a minimum bounding ball for the simplex with the centre of the ball constrained to lie in some affine plane.
The second step is to check if the stack thus obtained is empty, which involves iterating over the points in the co-faces of the simplex.
If the stack is not empty, the simplex is assigned the minimum birth time of its co-faces, which requires the computations to proceed from highest to lowest dimensional simplices.
Computing the \v{C}ech filtration value, on the other hand, involves finding minimum bounding balls for each simplex with no constraints and no additional steps.
The Vietoris--Rips filtration value is faster to compute since it only requires the comparison of edge lengths in the simplex.
In both of our constructions, the filtration values can be computed for low-dimensional simplices without needing the values for higher-dimensional ones.
This is desirable for low-dimensional data which only has persistent homological features of low degree, where only the low-dimensional skeleton of the filtration is required.

To test the computational performance of each filtration, we run numerical experiments using three different sampling schemes:
\begin{description}
	\item[Varying the number of points:] For 10 values of $n$ chosen log-uniformly between 100 and 10000, $n$ points are sampled independently uniformly from $[0, 1]^2\subseteq\RR^2$, and each point is independently assigned a colour from $\{0, 1\}$ uniformly at random.
	\item[Varying the ambient dimension:] For each $d\in \{2, 3, 4, 5, 6\}$, $200$ points are sampled independently uniformly from the unit sphere embedded in $\RR^{d}$, and each point is independently assigned a colour from $\{0, 1\}$ uniformly at random.
	\item[Varying the number of colours:] For each $s \in \{2, 3, 4, 5, 6\}$, $500$ points are sampled independently uniformly from $[0,1]^2\subseteq\RR^2$, and each point is independently assigned a colour from $\{0, \ldots, s-1\}$ uniformly at random.
\end{description}

We record the time taken to compute the chromatic alpha, chromatic Delaunay--\v{C}ech, and chromatic Delaunay--Rips filtrations for these samples.
These times include the time taken to compute the underlying triangulation; we also record the time to compute the triangulation as a baseline.
The process is repeated 5 times for each set of parameters, and we record the median time.
\Cref{fig:benchmark_log} shows the results of our experiments, which were performed on a server with two Intel Xeon Gold 6240M 18-core processors (base frequency 2.6GHz, max 3.9GHz) and 24x128GB DDR4 RAM running at 2933MHz.
The computations were performed using the Python package \texttt{chalc} (v14.0.0) which we developed for our experiments.
Detailed installation instructions, API documentation, and usage examples for \texttt{chalc} are available at \url{https://abhinavnatarajan.github.io/Chalc}, while the source code is publicly available on GitHub at \url{https://github.com/abhinavnatarajan/chalc}.

\begin{remark}
	For the chromatic Delaunay--\v{C}ech and chromatic Delaunay--Rips filtrations, it is possible to compute filtration values in parallel across all simplices of the filtration.
	For the chromatic alpha filtration, we can compute the filtration values in parallel across all simplices of a given dimension, proceeding from high to low dimensions.
	These parallelizations are implemented in \texttt{chalc} using multi-threading, but the resultant speed-up does not scale linearly with the number of allocated CPU cores since our parallel implementations are constrained by memory bandwidth.
	This also makes the gain from parallelization highly hardware-dependent, so the results we show are for the single-threaded implementation only.
\end{remark}

\subsection{Summary}
The practical consequences of our theoretical and experimental results can be summarized as follows:
\begin{itemize}
	\item For low-dimensional data ($d\leq 3$), the chromatic Delaunay--\v{C}ech filtration is slightly faster to compute than the chromatic alpha filtration and has isomorphic persistent homology.
	      Moreover, the two filtrations induce isomorphic maps on persistent homology.
	\item The chromatic Delaunay--Rips filtration is significantly faster than both the chromatic alpha and chromatic Delaunay--\v{C}ech filtrations.
	      The persistent homology of the chromatic Delaunay--Rips filtration is locally stable and approximates the chromatic alpha filtration.
	      For higher-dimensional data or for many colours, the primary constraint for computing the chromatic Delaunay--Rips filtration is computing the underlying chromatic Delaunay triangulation.
	\item The instability of the Delaunay triangulation increases with ambient dimension~\cite{boissonnat2013stability}.
	      Since the chromatic Delaunay triangulation can be viewed as a Delaunay triangulation of points in $\RR^{d+s}$, one should be wary of stability issues with the chromatic Delaunay--Rips filtration when $d+s$ is large.
\end{itemize}

\section{Future Directions}

When $\nu, \mu$ are colourings of the point cloud $X$ such that $\nu$ is a refinement of $\mu$,
our results show that $\Alpha_r(X,\nu)$ and $\Alpha_r(X,\mu)$ have the same simple homotopy type for any $r \geq 0$.
However, whether there exists a collapse $\Alpha_r(X,\nu) \searrow \Alpha_r(X,\mu)$ for each $r \geq 0$ remains an open question.
Even if the answer to the question is affirmative, in general there is no generalized discrete Morse function that induces this collapse for all values of $r$ simultaneously, as explained in \Cref{rem: counterexample to chromatic alpha collapse}.
This motivates the question of defining ``persistent simple homotopy type'', and whether there is a corresponding algebraic notion of a ``persistent Whitehead torsion''.

In practical applications, it is sometimes useful to assign weights to the data points.
There is a notion of weighted Voronoi diagrams and weighted alpha filtrations for weighted datasets; the extension to the chromatic case has not yet been explored.
Another consideration is stability; it is desirable that the local neighbourhood of stability of the chromatic Delaunay triangulation is as large as possible.
One possibility is to extend the idea of ``approximate Delaunay triangulations'', defined in~\cite{de_silva_weak_2008}, to the chromatic setting.
It is also open whether the stability result in \Cref{cor:local_stability_of_delvr} extends to a more general metric on chromatic sets, such as the constrained Gromov--Hausdorff distance for chromatic sets defined in~\cite{draganov_gromov-hausdorff_2025}.

As mentioned in \cref{sec: numerical experiments}, the filtration values for our constructions can be computed for low-dimensional simplices without needing the values for higher-dimensional ones, which is relevant for low-dimensional data where only the low-dimensional skeleton of the filtration is required.
Taking full advantage of this fact requires being able to compute the low-dimensional skeleton of the chromatic Delaunay triangulation; a possible approach is to use the methods recently developed in~\cite{carlsson_computing_2024}.
Leveraging this approach is a subject of future work.

\AtNextBibliography{\small}
\setlength\bibitemsep{1pt}

\section{Acknowledgements}
The authors would like to thank Prof. Heather Harrington, Prof. Ulrike Tillmann, and David Beers for many helpful discussions, and Prof. Vidit Nanda and Luis Scoccola for their feedback on an earlier version of the manuscript.
The authors are grateful to Ond\v{r}ej Draganov for discusssions in relation to the results in \cref{sec: consequences for practical computations}.
A. Natarajan is supported by the Clarendon Fund at the University of Oxford.
A. Natarajan and T. Chaplin are members of the Centre for Topological Data Analysis at the University of Oxford, which is funded by the EPSRC grant `New Approaches to Data Science: Application Driven Topological Data Analysis' \href{https://gow.epsrc.ukri.org/NGBOViewGrant.aspx?GrantRef=EP/R018472/1}{\texttt{EP/R018472/1}}.
For the purpose of Open Access, the authors have applied a CC BY public copyright licence to any Author Accepted Manuscript (AAM) version arising from this submission.
M.J. Jimenez is funded by the Spanish grant Ministerio de Ciencia e Innovacion TED2021-129438B-I00.
The authors are also grateful to the Centre for Topological Data Analysis for hosting M.J. Jimenez, enabling the collaboration that led to this paper. Her visit was funded by Convocatoria de la Universidad de Sevilla para la recualificacion del sistema universitario español, 2021-23, supported by the European Union, NextGenerationEU.

\printbibliography

\newpage

\appendix
\section{General Position Assumptions}\label{sec: general position assumptions}

\subsection{Genericity Results}\label{sec: genericity of general position assumptions}
Here we prove that the general position assumptions we impose through the main manuscript are satisfied by generically chosen point clouds.
We begin by explaining the sense in which we use the term ``generic'', using the machinery of semi-algebraic geometry.
We will state without proof some properties of semi-algebraic sets and semi-algebraic functions, and we refer the interested reader to \cite{dries_tame_1998} for more details.

Let $d, n\geq 1$ be fixed and let $E_n = \RR^{d \times n}$, so that $E_n$ is the space of $d$-dimensional point clouds containing $n$ points.
A \textit{semi-algebraic set} in $E_n$ is a finite union of subsets $A \subset E_n$, such that for each such $A$ there exist polynomials $f, g_1, \ldots, g_p \in \RR[t_1, \ldots, t_{d\times n}]$ with
\begin{equation}
	A = \{x \in E_n \mid f(x) = 0, g_1(x) > 0, \ldots, g_p(x) > 0 \}.
\end{equation}
The collection of semi-algebraic sets is closed under finite unions, finite intersections, complements, coordinate permutations, coordinate projections, and topological closure.
It is a classical result of Lojasiewicz (\cite{lojasiewicz_triangulation_1964}, see also \cite{dries_tame_1998}) that every semi-algebraic set $A$ has a decomposition into a finite union of pairwise disjoint open \textit{cells}, each of which is a homeomorphic copy of $\RR^m$ for some $m$.
The \textit{dimension} of $A$ is defined to be the maximum of the dimension of these cells.
By invariance of domain, this definition does not depend on the choice of decomposition.

We will use the fact that for semi-algebraic sets $A, B$ we have
\begin{enumerate}
	\item $\dim(A) = \dim(\topcl{A})$,
	\item $\dim(A \cup B) = \max(\dim(A), \dim(B))$.
\end{enumerate}
We will also use the fact that the class of semi-algebraic sets contains all algebraic (i.e. Zariski-closed) sets, and that the dimension of an algebraic set as an affine variety coincides with its dimension as a semi-algebraic set.

Finally, a \textit{semi-algebraic function} is a function $f:A \to \RR^m$ whose graph is semi-algebraic.
The domain and image of semi-algebraic functions are semi-algebraic, and the preimage of a semi-algebraic set under a semi-algebraic function is also semi-algebraic.

We say that a property $P$ is \textit{generic} in $E_n$ if the set of points in $E_n$ not satisfying that property is contained in a closed semi-algebraic set $A$ with $\dim(A) < \dim(E_n) = nd$.
In this case, from the cellular decomposition of $A$ we can infer that the set of points in $E_n$ that satisfy $P$ contains an open dense subset of $E_n$, whose complement is a nowhere dense subset of $E_n$ with empty interior and zero Lebesgue measure.

\begin{proposition}\label{thm: algebraic genericity of GP3}
	The set of points in $E_n$ that do not satisfy \Cref{prop: GP3} is an algebraic set of $E_n$ with dimension at most $nd-1$.
	Hence, \Cref{prop: GP3} is generic.
\end{proposition}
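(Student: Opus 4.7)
My approach is to identify, for each combinatorial datum of a subset $P \subset X$ and an ordered partition $(P_0,\ldots,P_k)$, the locus in $E_n$ where \Cref{prop: GP3} fails on that specific datum as a proper algebraic subset of $E_n$, and then take the union over the finitely many such data. First I fix indices $1 \le i_1 < \cdots < i_m \le n$ with $m \le d+k+1$, defining $P = \{x_{i_1},\ldots,x_{i_m}\}$, together with an ordered partition $P = P_0 \sqcup \cdots \sqcup P_k$ into non-empty parts with $|P_j|=m_j$. Labelling $P_j = \{p_j^{(0)},\ldots,p_j^{(m_j-1)}\}$ and setting $v_{j,l} := p_j^{(l)}-p_j^{(0)}$ for $l \ge 1$, I observe that the two equalities in \Cref{prop: GP3} collapse into a single condition: the $m-k-1$ vectors $\{v_{j,l}\}$ must be linearly independent in $\RR^d$. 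Indeed, the second equality forces each $P_j$ to be affinely independent (so each family $\{v_{j,l}\}_{l \ge 1}$ is independent and spans $\Lin(P_j)$), and the first equality forces the sum of the $\Lin(P_j)$ to be direct, so the concatenated family is independent; conversely, independence of the concatenation implies both.

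Since $m-k-1 \le d$, this linear independence is equivalent to the non-vanishing of at least one $(m-k-1)\times(m-k-1)$ minor of the $d \times (m-k-1)$ matrix $M(X)$ whose columns are the $v_{j,l}$. The entries of $M(X)$ are linear in the coordinates of $X \in E_n$, so the minors are polynomials on $E_n$ and the failure locus $F_{P,\pi}$, where all of them vanish, is Zariski-closed. To show $F_{P,\pi}$ is a proper subset of $E_n$, I construct an explicit witness: pick distinct standard basis vectors $e_{\iota(j,l)}$ of $\RR^d$ indexed by the $m-k-1$ pairs $(j,l)$, place the base points $p_j^{(0)}$ at distinct locations (e.g.\ large integer multiples of $e_d$) to ensure distinctness of $P$, and set $p_j^{(l)} = p_j^{(0)} + e_{\iota(j,l)}$. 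The remaining $n-m$ points of $X$ may be chosen arbitrarily. By construction the $v_{j,l}$ form part of a basis of $\RR^d$, so $M(X)$ has full column rank and $X \notin F_{P,\pi}$.

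A proper Zariski-closed subset of $\RR^{nd}$ has dimension at most $nd-1$, and there are only finitely many combinatorial data $(P,\pi)$, so the total failure set for \Cref{prop: GP3} is a finite union of such proper algebraic sets and is therefore itself algebraic of dimension at most $nd-1$. Genericity then follows from the semi-algebraic preliminaries recalled earlier. The only non-routine step is the reduction in the first paragraph, which requires carefully matching the two equalities of \Cref{prop: GP3} against the single linear-independence condition on the concatenated difference vectors; everything else is either an explicit polynomial identity or a concrete construction of a witness configuration.
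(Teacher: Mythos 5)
Your proof is correct, and it takes a genuinely different route from the paper's. The paper's proof first performs two combinatorial reductions — showing that a bad partition of a subset $Y \subset X$ can be extended to a bad partition of all of $X$ by adjoining singletons, and that a bad partition with fewer than the maximal allowed number of parts can be merged down to one with exactly $k_{n,d}+1 = \max(n-d, 1)$ parts — and then identifies each failure stratum $B_{n,\Pcal}$ as isomorphic to a product $E_{k+1} \times \Dcal_{d,n}$ of an affine space and a determinantal variety, appealing to the known dimension formula $\dim \Dcal_{d,n} = (n-k-2)(d+1)$ to obtain the explicit bound $(n-1)(d+1)-(k+1) \leq nd-1$. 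You skip both reductions and the determinantal variety theory entirely: you observe directly that for every fixed combinatorial datum $(P,\pi)$ the failure locus is cut out by the vanishing of all maximal minors of a matrix with polynomial (indeed linear) entries, hence is Zariski-closed, and you exhibit a witness configuration to show it is a \emph{proper} closed subset; the bound $\dim \leq nd-1$ then follows from the general fact that a proper Zariski-closed subset of affine space is nowhere dense and hence has semi-algebraic codimension at least one. Your route is more elementary and self-contained; what it gives up is the precise dimension count (the paper's formula shows the bound $nd-1$ is actually attained with equality when $n > d+1$) and the structural insight that the generic violation of \Cref{prop: GP3} occurs already on a partition of the entire point set. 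One small point worth tightening in your witness construction: when $m-k-1 = d$ every standard basis vector is used as some $e_{\iota(j,l)}$, so placing the base points $p_j^{(0)}$ at integer multiples of $e_d$ can create collisions of the form $p_j^{(l)} = p_{j'}^{(0)}$; the cleaner move is to note that you only need to show a single minor is a nonzero polynomial, and this is immediate by evaluating at the configuration where all $p_j^{(0)}$ coincide at the origin and the $p_j^{(l)}$ for $l \geq 1$ are distinct standard basis vectors — distinctness of the ambient point cloud is not required for this polynomial-nonvanishing check.
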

It follows that the set of points in $E$ that satisfy \Cref{prop: GP3} is generic, since proper algebraic subsets of $E_n$ are closed.
\begin{proof}
	For $X \in E_n$ and a partition $P =(P_0, \ldots, P_k)$ of $X$ into non-empty disjoint subsets, we say that the partition is \textit{good} if either $|X| > d + k + 1$ or $\dim\left(\sum_{i=0}^k \Lin(P_i)\right) = |X| - (k+1)$.
	We say that a partition is \textit{bad} if it is not good.
	Now $X \in E$ fails to satisfy \Cref{prop: GP3} if there exists an ordered subset $Y\subset X$ and a bad partition $(P_0, \ldots, P_{k})$ of $Y$.
	In such a situation, there is a partition $P'$ of $X$ given by
	\begin{equation*}
		\{P_0, \ldots, P_k\} \cup \bigcup_{x \in X \setminus Y} \{x\}.
	\end{equation*}
	We claim that this is a bad partition of $X$.
	First note that $P'$ has $k + 1 + |X| - |Y|$ elements, and
	\begin{equation*}
		|X| = |Y| + |X| - |Y| \leq d + k + 1 + |X| - |Y|.
	\end{equation*}
	Moreover,
	\begin{equation*}
		\dim\left(\sum_{i=0}^{k+|X|-|Y|} \Lin(P'_i)\right) = \dim \left(\sum_{i=0}^k \Lin(P_i)\right) < |Y| - (k + 1) = |X| - (|X| - |Y| + k + 1),
	\end{equation*}
	so $P'$ is a bad partition of $X$ as claimed.
	Therefore, the set of $X \in E_n$ that do not satisfy \Cref{prop: GP3} is given by
	\begin{equation}
		B_n \defeq \{ X \in E_n \mid \text{there exists a bad partition of $X$} \}.
	\end{equation}

	Now suppose $X \in B_n$ has a bad partition $(P_0, \ldots, P_{k})$ with $|X| < d + k + 1$.
	We claim that there is a bad partition $P' = (P'_0, \ldots, P'_{k-1})$ for $X$.
	Since $P$ is bad, either $P_i$ is an affine dependent set for some $i$ or the sum $\sum_{i = 0}^k \Lin(P_i)$ is not a direct sum.
	In the former case, simply set $P'_i = P_i$ for $i < k-1$ and $P'_{k-1} = P_{k-1} \cup P_k$; then it is obvious that $P'$ is a bad partition.
	On the other hand suppose $k \geq 1$ and $\sum_{i=0}^k \Lin(P_i)$ is not a direct sum.
	Let $j$ be the least integer for which $\Lin(P_j) \cap \sum_{i<j} \Lin(P_i) \neq 0$.
	Then set
	\begin{equation*}
		P'_i = \begin{cases}
			P_i              & i < j, \\
			P_i \cup P_{i+1} & i = j,  \\
			P_{i-1}          & i > j.
		\end{cases}
	\end{equation*}
	It is then clear that $\Lin(P'_j) \cap \sum_{i < j} \Lin(P'_i)\neq 0$ so $P'$ is a bad partition of $X$.

	By induction if $X \in B_n$ then there is a bad partition for $X$ with the minimum possible number of partition elements.
	Setting $k_{n,d} \defeq \max(n-d-1, 0)$, we have
	\begin{equation}
		B_n = \{ X \in E_n \mid \exists \text{ a bad partition } (P_0, \ldots, P_{k}) \text{ of } X \}.
	\end{equation}

	Now we show that this set is algebraic.
	Let $\mathfrak{S}_n$ be the set of ordered partitions of $\{1, \ldots, n\}$ into $k+1$ non-empty disjoint subsets.
	Suppose $\Pcal = (\Pcal_0, \ldots, \Pcal_k) \in \mathfrak{S}_n$ and $\Pcal_i = \{m_1, \ldots, m_{|\Pcal_i|}\}$.
	For $X \in E_n$, let $M_{\Pcal_i}(X)$ be the $d \times (|\Pcal_i| - 1)$ matrix whose columns are given by $x_{m_j} - x_{m_1}$ for $j \in \{2, \ldots, |\Pcal_i|\}$.
	Then the column span of $M_{\Pcal_i}(X)$ is $\Lin(P_i)$, where $P_i$ is the subset of $X$ corresponding to the indices in $\Pcal_i$.
	Let $M_{\Pcal}(X)$ be the $d \times (n - k -1)$ matrix $[M_{\Pcal_0}(X), \ldots, M_{\Pcal_k}(X)]$, i.e., the columnwise concatenation of the $M_{\Pcal_i}(X)$.
	Then $\Pcal$ induces a bad partition of $X$ if and only if $M_{\Pcal}(X)$ has rank at most $n - k - 2$.

	The set of $d \times (n-k-1)$ matrices of rank at most $n-k-2$, which we denote by $\Dcal_{d, n}$, is known to be a $(n-k-2)(d+1)$-dimensional affine variety belonging to a family of affine varieties called \textit{determinantal varieties} (see e.g. Proposition 1.1(b) in \cite{bruns_determinantal_1988}).
	Let $B_{n, \Pcal}\subset B_n$ be the set of $n$-tuples for which $\Pcal$ induces a bad partition; it follows that $B_{n, \Pcal}$ is isomorphic to the product $E_{k+1} \times \Dcal_{d, n}$ which is an affine variety of dimension $d(k+1) + (n-k-2)(d+1) = (n-1)(d+1) - (k+1)$.
	Substituting $k = \max(n -d -1, 0)$ we get
	\begin{equation*}
		\dim(B_{n,\Pcal}) = (n-1)(d+1) - (k+1) \leq nd - 1.
	\end{equation*}

	On the other hand $B_n$ is a finite union of such affine varieties:
	\begin{equation*}
		B_n = \bigcup_{\Pcal \in \mathfrak{S}_n} B_{n, \Pcal}.
	\end{equation*}
	Since each variety in the union has dimension at most $nd-1$, this shows that $B_n$ is an algebraic subset of $E^n$ with dimension at most $nd-1$.
\end{proof}

\begin{proposition}\label{thm: algebraic genericity of GP2}
	\Cref{prop: GP2} is generic.
\end{proposition}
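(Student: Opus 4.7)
The plan mirrors the argument used for \Cref{thm: algebraic genericity of GP3}: we express the failure set of \Cref{prop: GP2} as a finite union of algebraic subvarieties of $E_n$, each of dimension at most $nd-1$. (If $n \leq d+1$ the statement is vacuous, so I would assume $n \geq d+2$.)

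First I would observe that $X \in E_n$ fails \Cref{prop: GP2} if and only if there exist $c \in \{1, \ldots, n-d-1\}$, a subset $P \subset \{1, \ldots, n\}$ of size $d+c+1$, and a surjective map $\nu : P \to [c]$, such that the points $\{x_i : i \in P\}$ lie on a common $c$-stack with colouring $\nu$. Indeed, if GP2 fails via some colouring $\mu : X \to [s+1]$ and a stack passing through $d+s+2$ points, then restricting to the colours actually used by those points and dropping excess points (keeping at least one of each used colour) yields such a triple $(c, P, \nu)$; conversely, any such triple extends to a colouring of all of $X$ witnessing a failure. Since there are finitely many such triples, it suffices to bound the dimension of the corresponding failure piece $B_{P, \nu, c}$ for each.

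Next, using the correspondence between $c$-stacks in $\RR^d$ and $(d+c-2)$-spheres in $\RR^{d+c-1}$ (\Cref{sec: chromatic Delaunay triangulation}), I would reformulate $B_{P, \nu, c}$ as the preimage, under the affine embedding $\pi : E_n \hookrightarrow (\RR^{d+c-1})^n$ induced by the chromatic lift $X \mapsto X^\nu$, of the locus where the $d+c+1$ lifted points $\{x_i^\nu : i \in P\}$ lie on a common sphere in $\RR^{d+c-1}$. Since the colouring $\nu$ is surjective, a common sphere automatically intersects each of the $c$ affine $d$-planes carrying the colour classes, so it does correspond to a $c$-stack. The cospherical locus in $(\RR^{d+c-1})^{|P|}$ is cut out by a single polynomial, namely the standard $(d+c+1) \times (d+c+1)$ determinant whose $i$-th row is $(\|y_i\|^2,\, 2 y_i^T,\, 1)$. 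Pulling back along $\pi$ gives a single polynomial equation on $E_n$, so $B_{P, \nu, c}$ is algebraic.

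The remaining and main step is to verify that this polynomial is not identically zero on $E_n$, for then $\dim B_{P, \nu, c} \leq nd - 1$. I would do so by exhibiting an explicit $X \in E_n$ whose lifted points in $P$ are in non-cospherical position. The space of $(d+c-2)$-spheres in $\RR^{d+c-1}$ has dimension $d+c$ (centre plus radius), so having placed the first $d+c$ points of $P$ in sufficiently generic position in $\RR^d$, the set of spheres in $\RR^{d+c-1}$ passing through their lifts is a single sphere $S$. The locus of $x \in \RR^d$ for which the lift of $x$ with colour $m$ lies on $S$ is the intersection $S \cap (\RR^d \times \{e_m\})$ (projected to $\RR^d$), which is a proper algebraic subset of $\RR^d$ (a sphere of dimension $d-1$, when non-empty); a generic choice of the final point of $P$ therefore avoids it, and the remaining $x_j$ for $j \notin P$ may be chosen arbitrarily. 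The main technical subtlety will be verifying that the incremental conditions imposed by successive lifted points are genuinely independent (so that after $d+c$ points the family of through-spheres is truly $0$-dimensional), but this is a standard transversality check that can be carried out inductively using the affine structure of $\pi$. Taking the finite union over all triples $(c, P, \nu)$ then exhibits $B_n$ as a finite union of algebraic subsets of $E_n$, each of dimension at most $nd-1$, completing the proof.
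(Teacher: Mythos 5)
Your proposal is correct but takes a genuinely different route from the paper's proof. The paper proceeds by induction on $n$: writing $C_n$ for the failure set of \Cref{prop: GP2}, it decomposes $C_n$ as the union of $C_{n-1}\times\RR^d$, $(B_{n-1}\times\RR^d)\cap C_n'$, and a residual piece $C_n''$ supported over tuples whose first $n-1$ points satisfy \Cref{prop: GP3}; on that residual piece the stack through $Y$ is \emph{unique} by \Cref{thm: dimension of chromatic lift}, which lets the paper parametrize $C_n''$ as the image of a semi-algebraic map $B_{n-1}^c\times S\to E_n$ of dimension $nd-1$. Your argument is direct and non-inductive: you decompose $C_n$ combinatorially over triples $(c,P,\nu)$, observe that each piece is cut out (after the chromatic lift) by a single Cayley--Menger-type determinant, and verify that this polynomial is not identically zero by an explicit generic construction. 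This is arguably more elementary, and it establishes that $C_n$ is contained in a genuine \emph{algebraic} hypersurface union rather than a general semi-algebraic set.

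Two small points to tighten. First, the $(d+c+1)\times(d+c+1)$ determinant with rows $(\|y_i\|^2,\,2y_i^T,\,1)$ vanishes precisely when the lifted points are cospherical \emph{or cohyperplanar}; so its zero locus is slightly larger than the cospherical locus, but since all you need is a low-dimensional algebraic container for $B_{P,\nu,c}$, this does no harm. Second, to show the determinant is not identically zero you must exhibit a configuration that is neither cospherical nor cohyperplanar; your construction (choose the first $d+c$ lifted points affinely independent, then choose the last off the unique through-sphere) in fact handles both at once, because affinely independent $d+c$ points in $\RR^{d+c-1}$ have full affine span, ruling out any common hyperplane regardless of the last point --- but you should state this. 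The ``transversality check'' you flag is routine: affine independence of the lifted points reduces to a nonvanishing determinant in the $x_i$, which follows from the same spanning argument used in \Cref{thm: dimension of chromatic lift}.
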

\begin{proof}
	Let $C_n$ be the set of $X \in E_n$ that do not satisfy \Cref{prop: GP2}, and let $B_{n-1}$ be the set of $X \in E_{n-1}$ that do not satisfy \Cref{prop: GP3}.
	Let $C_n' = C_n \setminus (C_{n-1} \times \RR^d)$.
	Then we have
	\[
		C_n = (C_{n-1} \times \RR^d) \cup ((B_{n-1} \times \RR^d) \cap C_n') \cup ((B_{n-1}^c \times \RR^d) \cap C_n').
	\]
	By induction, we may assume that $C_{n-1} \times \RR^d$ is contained in a semi-algebraic set $A$ of dimension at most $nd-1$.
	\Cref{thm: algebraic genericity of GP3} shows that $(B_{n-1} \times \RR^d) \cap C_n'$ is contained in an algebraic set $A'$ of dimension at most $nd-1$.
	Therefore, it suffices to consider $C_n'' \defeq (B_{n-1}^c \times \RR^d) \cap C_n'$.
	We claim
	\begin{equation}\locallabel{thm: main claim}
		\text{$C_n''$ is contained in a semi-algebraic set $A''$ of dimension at most $nd-1$.}\tag{$*$}
	\end{equation}
	Then we have
	\[
		C_n \subset A \cup A' \cup A'' &\subset \topcl{A} \cup \topcl{A'} \cup \topcl{A''}\\
		\text{ and } \dim(\topcl{A} \cup \topcl{A'} \cup \topcl{A''}) &= \max(\dim(\topcl{A}), \dim(\topcl{A'}), \dim(\topcl{A''}))\\
		&= \max(\dim(A), \dim(A'), \dim(A''))\\
		&\leq nd-1.
	\]
	This shows that \Cref{prop: GP2} is generic.

	To prove \localref{thm: main claim}, we begin by simplifying the conditions that need to be checked to determine whether a given $X \in E_n$ satisfies \Cref{prop: GP2}.
	First we claim that we need only consider stacks that pass through all of $X$.
	To see this, suppose $X \in E_n$ violates \Cref{prop: GP2}.
	Then there is a colouring $\mu: X \to [k+1]$, an ordered subset $Y \subset X$ with $|Y| > d+ k + 1$ and $\mu(Y) = [k+1]$, and a $[k+1]$-stack $S$ that passes through $Y$.
	Let $k' = k + |X| - |Y|$, and let $\mu': X \to [k'+1]$ be the colouring of $X$ that agrees with $\mu$ on $Y$ and assigns a distinct new colour to every point of $X \setminus Y$.
	Let $S'$ be the $[k'+1]$-stack with the same centre as $S$, comprised of the spheres from $S$ with an additional sphere for each point of $X \setminus Y$ and passing through that point.
	Then $S'$ is a $[k'+1]$-stack passing through $X$, and
	\begin{equation*}
		|X| = |Y| + |X| - |Y| > d + k + 1 + |X| - |Y| = d + k' + 1.
	\end{equation*}
	Say that a colouring $\mu: X \to [k+1]$ is \textit{bad} if $k < n - d - 1$ and there exists a $[k+1]$-stack passing through $X$.
	Then by the above arguments we have
	\begin{equation*}
		C_n = \{X \in E_n \mid \text{there exists a bad colouring of $X$}\}.
	\end{equation*}
	We note that if $n \leq d + 1$ then $n-d-1 \leq 0$ and there are no bad $[k+1]$-colourings.
	Hence, $C_n = \emptyset$ for $n \leq d + 1$, and we henceforth assume that $n > d + 1$.

	Our second claim is that if $X$ violates \Cref{prop: GP2} for some colouring $\mu: X \to [k+1]$ then we can assume that $k$ is as large as possible.
	To see this, suppose $X \in C_n$ and $\mu$ is a bad $[k+1]$-colouring of $X$ with $k < n - d -2$.
	Let $S$ be a $[k+1]$-stack passing through $X$.
	Then there is at least one sphere in $S$, say $S_i$, that passes through $x, y \in X$ with $\mu(x) = \mu(y)$.
	Define a new colouring $\mu': X \to [k+2]$ that coincides with $\mu$ on $X \setminus \{y\}$ and sends $y$ to a new colour, i.e., $\mu'(y) = k+1$.
	Define the $[k+2]$-stack $S'$ that comprises the spheres of $S$, plus an additional copy of $S_i$ labelled with the colour $k+1$.
	Then $S'$ is a $[k+2]$-stack that passes through $X$ (with respect to the colouring $\mu'$), and $k+1 < n - d -1$.
	We conclude that
	\begin{equation}
		C_n = \{X \in E_n \mid \text{there exists a bad $[n-d-1]$-colouring of $X$}\} \tag{$\ast$}.
	\end{equation}

	Our third claim is that $X$ is in $C_n''$ if and only if $X=(Y, x)$ for some $Y \in B_{n-1}^c$ and $x \in \RR^d$ satisfying the following property: there is an $[n-d-1]$-colouring $\mu$ of $Y$ and an $[n-d-1]$-stack passing through $Y$, such that $x$ lies on this stack.
	One direction is clear: if $(Y, x)$ satisfies the property above then clearly $(Y, x) \in C_n'$.
	On the other hand suppose $X = (Y, x) \in (B_{n-1}^c \times \RR^d) \cap C_n'$.
	$Y$ satisfies \Cref{prop: GP3} by construction, so it is sufficient to check that $(Y, x)$ satisfy property (2) above.
	Let $\mu$ be a bad $[n-d-1]$ colouring of $X$ and $S_{\mu}$ an $[n-d-1]$-stack passing through $x$, which attests to the membership of $X$ in $C_n'$.
	Suppose $x$ lies on a sphere of $S_{\mu}$ with no other points.
	Without loss of generality we can assume that this sphere is labelled with the colour $n-d-2$.
	After discarding this sphere, we are left with a bad $[n-d-2]$-colouring of $Y$ and hence $Y \in C_{n-1}$ using ($\ast$).
	This contradicts the fact that $C_n'$ is disjoint from $C_{n-1} \times \RR^d$.
	We conclude that $x$ must lie on a sphere of $S_\mu$ that passes through some point of $Y$, and hence $\mu$ restricts to an $[n-d-1]$-colouring of $Y$.

	Now we can return to proving (\localref{thm: main claim}).
	Let $\Sfrak$ be the set of surjective functions $[n-1] \to [n-d-1]$, which is finite.
	For arbitrary $Y \in B_{n-1}^c$ and $\mu \in \Sfrak$ there is an induced colouring of $Y$ which we also denote by $\mu$.
	The correspondence between stacks and spheres (see \Cref{sec: chromatic Delaunay triangulation}) shows that if there is an $[n-d-1]$-stack in $\RR^d$ with respect to $\mu$ that passes through $Y$, then there is a corresponding circumsphere for the chromatic lift $Y^{\mu}$ in $\RR^{n-2}$.
	Conversely, by \Cref{thm: dimension of chromatic lift} and using the fact that $Y \in B_{n-1}^c$, the chromatic lift $Y^{\mu}$ is a set of $(n-1)$ affine independent points in $\RR^{n-2}$, so there is a unique circumsphere for $Y^{\mu}$ in $\RR^{n-2}$, and this corresponds to an $[n-d-1]$-stack (with respect to the colouring $\mu$) passing through $Y$.
	We conclude that there are only finitely many $[n-d-1]$-stacks passing through $Y$, each of which corresponds to an $[n-d-1]$-colouring of $Y$.
	Therefore, for $Y \in B_{n-1}^c$ and $\mu \in \Sfrak$ we let $S_{\mu}(Y)$ denote the corresponding $[n-d-1]$-stack passing through $Y$.
	Then we have
	\begin{equation*}
		C_n'' = \bigcup_{\mu \in \Sfrak} \bigcup_{Y \in B_{n-1}^c} \{Y\} \times S_{\mu}(Y).
	\end{equation*}
	Since $\Sfrak$ is finite it suffices to show that $\bigcup_{Y \in B_{n-1}^c} \{Y \} \times S_{\mu}(Y)$ is a semi-algebraic set of dimension at most $nd-1$, for each $\mu \in \Sfrak$.
	Henceforth, let $\mu$ be fixed.

	For any $Y \in B_{n-1}^c$, the centre $c_{\mu}(Y)$ of $S_{\mu}(Y)$ is the unique solution to a linear equation of the form $A_{\mu}(Y) c_{\mu}(Y) = b_{\mu}(Y)$, where $A_{\mu}(Y)$ is a $d \times d$ non-singular matrix, $b_{\mu}(Y)$ is a point in $\RR^d$, and the entries of $A_{\mu}(Y)$ and $b_{\mu}(Y)$ are polynomials in $Y$ that depend on $\mu$.
	Consider the polynomial function
	\[
		B_{n-1}^c \times \RR^d &\to \RR^d,\\
		(Y, x) &\mapsto A_{\mu}(Y)x - b_{\mu}(Y).
	\]
	The preimage of $0$ under this function is a semi-algebraic set, and this pre-image is exactly the set of pairs $(Y, c_{\mu}(Y))$ where $Y \in B_{n-1}^c$ and $c_{\mu}(Y)$ is the centre of the unique $[n-d-1]$-stack passing through $Y$.
	It follows that the function $c_{\mu} : B_{n-1}^c \to \RR^d$ is semi-algebraic, and using this fact it is straightforward to verify that the vector $r_{\mu}(Y)$ of the radii of the spheres in the stack $S_{\mu}(Y)$ is a semi-algebraic function $B_{n-1}^c \to \RR^{n-d-1}$.
	Let $S$ denote a fixed stack of $n-d-1$ concentric spheres of distinct radii centred at the origin in $\RR^d$, which is a semi-algebraic set of dimension $d-1$.
	We then have an injective map $F_{\mu}: B_{n-1}^c \times S \to E_n$ that sends $\{Y\} \times S$ to $\{Y\} \times S_{\mu}(Y)$.
	By the above remarks this map is semi-algebraic, so we have
	\begin{equation*}
		\dim\left( \bigcup_{Y \in B_{n-1}^c} \{Y\} \times S_{\mu}(Y) \right) = \dim(B_{n-1}^c \times S) \leq \dim(E_{n-1}\times S) = nd-1.\qedhere
	\end{equation*}
\end{proof}

We can also formulate the genericity of \Cref{prop: GP2} and \Cref{prop: GP3} in probabilistic terms, as in the following proposition.
We refer the reader to \cite{schneider_stochastic_2008} for technical details about Poisson processes.
\begin{proposition}\label{thm: probabilistic genericity of GP2 and GP3}
	Let $X$ be a Poisson point process in $\RR^d$ $(d \geq 1)$ whose intensity measure $\Theta$ is absolutely continuous with respect to the Lebesgue measure on $\RR^d$.
	Then (the realization of) $X$ satisfies \Cref{prop: GP2} and \Cref{prop: GP3} almost surely.
\end{proposition}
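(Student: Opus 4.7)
The plan is to combine the earlier genericity results (Propositions \ref{thm: algebraic genericity of GP2} and \ref{thm: algebraic genericity of GP3}) with the theory of factorial moment measures for Poisson processes. First I would observe that both \Cref{prop: GP2} and \Cref{prop: GP3} are conditions formulated purely in terms of finite subsets of $X$: the realization of $X$ fails \Cref{prop: GP3} if and only if, for some $n \geq 1$, there exists an $n$-subset of $X$ lying in the set $B_n \subset (\RR^d)^n$ constructed in the proof of \Cref{thm: algebraic genericity of GP3}; analogously for \Cref{prop: GP2} and the sets $C_n$ from \Cref{thm: algebraic genericity of GP2}. Since the bad sets $B_n, C_n$ are permutation-symmetric (they depend on the underlying set, not the ordering), it is equivalent to ask that no ordered $n$-tuple of distinct points of $X$ lies in $B_n$ or $C_n$. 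By countable additivity, it suffices to prove this for each fixed $n \geq 1$.

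Next I would invoke the standard fact that for a Poisson point process with intensity measure $\Theta$, the $n$-th factorial moment measure on $(\RR^d)^n$ is simply the $n$-fold product $\Theta^{\otimes n}$. By the Campbell formula, for any measurable $A \subset (\RR^d)^n$ the expected number of ordered $n$-tuples of pairwise distinct points of $X$ lying in $A$ equals $\Theta^{\otimes n}(A)$ (see, e.g., Chapter 3 of Schneider--Weil~\cite{schneider_stochastic_2008}, already cited in the manuscript). Therefore, whenever $\Theta^{\otimes n}(A) = 0$, almost surely no such $n$-tuple exists.

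The final step is to verify that $\Theta^{\otimes n}(B_n) = \Theta^{\otimes n}(C_n) = 0$. By \Cref{thm: algebraic genericity of GP2} and \Cref{thm: algebraic genericity of GP3}, both $B_n$ and $C_n$ are contained in closed semi-algebraic subsets of $(\RR^d)^n \cong \RR^{nd}$ of dimension at most $nd - 1$; by the Łojasiewicz cellular decomposition, any such set has Lebesgue measure zero in $\RR^{nd}$. Since $\Theta$ is absolutely continuous with respect to Lebesgue measure on $\RR^d$, the product measure $\Theta^{\otimes n}$ is absolutely continuous with respect to Lebesgue measure on $\RR^{nd}$, so $\Theta^{\otimes n}$ vanishes on any Lebesgue-null set. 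Combining this with the previous step yields the result.

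There is no real obstacle here beyond correctly quoting the factorial moment formula and making precise the (completely routine) observation that a property is symmetric in its points. The conceptual work — namely, exhibiting the bad locus as a semi-algebraic set of codimension at least one — has already been carried out in \Cref{thm: algebraic genericity of GP2,thm: algebraic genericity of GP3}, so this proposition is essentially a translation of those deterministic genericity statements into the probabilistic setting.
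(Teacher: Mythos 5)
Your proposal is correct and follows essentially the same route as the paper: both reduce to bounding the expected number of bad $n$-tuples, identify that expectation (via the Slivnyak--Mecke/Campbell machinery for Poisson processes) with the integral of an indicator against the product intensity measure $\Theta^{\otimes n}$, and conclude from the Lebesgue-null bound of \Cref{thm: algebraic genericity of GP2,thm: algebraic genericity of GP3} together with absolute continuity. Your phrasing is slightly more careful in naming $\Theta^{\otimes n}$ as the \emph{factorial} moment measure, but this is a cosmetic refinement of the same argument.
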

\begin{proof}
	We follow the notation from the proofs of \Cref{thm: algebraic genericity of GP3} and \Cref{thm: algebraic genericity of GP2} so $E = \RR^d$, $B_n$ and $C_n$ are the set of $n$-tuples of points in $E^n$ that violate \Cref{prop: GP3} and \Cref{prop: GP2} respectively.
	Let $E^{n+1}_{\neq} \defeq \{(x_1, \ldots, x_{n+1}) \in E^{n+1} \mid x_i \neq x_j \text{ for } i \neq j\}$ and let $X^{n+1}_{\neq}$ be the random measure induced by $X$ on $E^{n+1}_{\neq}$.
	Then the statement of the proposition is equivalent to the assertion that $X^n_{\neq}(B_n\cup C_n) = 0$ almost surely for all $n\geq 1$.

	Let $\Theta^n$ be the $n$\textsuperscript{th} moment measure of $X$.
	Then by the Slivnyak--Mecke formula \cite[][Corollary 3.2.4]{schneider_stochastic_2008} we have
	\[
		\mathbb{E}X^n_{\neq}(B_n\cup C_n) = \mathbb{E}\left(\sum_{(x_1, \ldots, x_{n}) \in X^n_{\neq}}\mathbf{1}_{B_n \cup C_n}(x_1, \ldots, x_{n})\right) = \int_{B_n\cup C_n} d\Theta^n.
	\]
	By \Cref{thm: algebraic genericity of GP3,thm: algebraic genericity of GP2}, the set $B_n\cup C_n$ has measure zero in $E^n$.
	Then the absolute continuity of $\Theta$ implies that the integral on the right is zero.
	Therefore, $X_{\neq}^n(B_n) = 0$ almost surely.
\end{proof}

\subsection{General Position and Chromatic Genericity}\label{sec: general position vs chromatic genericity}
In Section 4.1 of~\cite{Montesano2025chromatic}, the authors define the notion of `chromatic genericity', and they use this condition to prove that the chromatic alpha filtration function is a generalized discrete Morse function.
Here we clarify the relationship between their definition and our notion of general position.

The authors provide several equivalent definitions of chromatic genericity, and the one we use here is as follows.
For a set of points $P \subset \RR^d$, let $E(P)$ denote the maximal affine subspace of $\RR^d$ of points equidistant to all the points in $P$.
A finite set $X \subset \RR^d$ is said to be \textit{chromatically generic} if whenever $P_0, \ldots, P_k$ are non-empty disjoint subsets of $X$, then $E = E(P_0) \cap \ldots \cap E(P_k)$ is either empty or has codimension equal to $(\sum_{i=0}^k |P_i|) - k - 1$.

\begin{proposition}\label{thm: general position vs chromatic genericity}
	 If $X \subset \RR^d$ is in general position then it is chromatically generic.
\end{proposition}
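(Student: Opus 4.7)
The plan is to relate the two conditions in the definition of chromatic genericity (the codimension of the intersection, or emptiness) to \Cref{prop: GP3} and \Cref{prop: GP2} respectively. Given non-empty disjoint subsets $P_0, \ldots, P_k \subset X$, let $P = \bigcup_i P_i$ and $n = |P|$. The first observation I would make is a standard computation: for any $P_i$, the bisector $E(P_i)$ is an affine subspace of $\RR^d$ whose normal space (i.e., the lineality space of its orthogonal complement) is exactly $\Lin(P_i)$. This is because the equation $\|x - p\|^2 = \|x - q\|^2$ for $p, q \in P_i$ simplifies to a linear equation whose normal vector is $p - q$, and such vectors span $\Lin(P_i)$. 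Consequently, whenever $E \defeq \bigcap_{i=0}^k E(P_i)$ is non-empty, its codimension equals $\dim\bigl(\sum_{i=0}^k \Lin(P_i)\bigr)$.

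With this observation in hand, I would split into two cases based on whether $n \leq d + k + 1$. In the easy case $n \leq d + k + 1$, I would directly apply \Cref{prop: GP3} to the partition $(P_0, \ldots, P_k)$ of $P$ to conclude that $\dim\bigl(\sum_i \Lin(P_i)\bigr) = n - k - 1$, and hence (by the observation above) that if $E$ is non-empty then $\mathrm{codim}(E) = n - k - 1$, as required.

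The harder case, and I expect the main conceptual step, is $n > d + k + 1$, where \Cref{prop: GP3} does not apply. Here I would argue by contradiction: if $E$ were non-empty, pick $x \in E$. Then for each $i$ all points in $P_i$ lie at a common distance $r_i$ from $x$, so the concentric spheres $S_i$ of radius $r_i$ about $x$ form a $[k+1]$-stack passing through $P$, relative to the colouring $\mu: P \to [k+1]$ given by $\mu|_{P_i} \equiv i$. But \Cref{prop: GP2}, applied to this colouring, forces any such stack to pass through at most $d + k + 1$ points, contradicting $n > d + k + 1$. Hence $E$ must be empty in this regime, which is exactly what chromatic genericity demands (since the required codimension $n - k - 1$ would exceed $d$ and so is unachievable by a non-empty affine subspace of $\RR^d$).

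The two cases together cover every possibility, giving the result. The main obstacle is really just making sure the stacks–bisectors correspondence is set up cleanly enough that invoking \Cref{prop: GP2} is transparent; everything else is a routine application of the definitions.
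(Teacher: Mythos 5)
Your proof is correct and takes essentially the same approach as the paper: identify $E(P_i)$ as a translate of $\Lin(P_i)^{\perp}$ so that $\codim(E) = \dim\bigl(\sum_i \Lin(P_i)\bigr)$ when $E$ is non-empty, invoke \Cref{prop: GP3} to pin down that dimension when $|P| \leq d+k+1$, and use the stacks-vs-bisectors correspondence plus \Cref{prop: GP2} to force emptiness when $|P| > d+k+1$. The only cosmetic difference is that you organize the argument as an explicit two-case split, whereas the paper dispatches the large-$|P|$ case first and then does the codimension chain, but the underlying ideas are identical.
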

\begin{proof}
	Let $X$ be in general position, let $P_0, \ldots, P_k$ be disjoint non-empty subsets of $X$, and let $P = \cup_{i=0}^k P_i$.
	Let $\mu = \{X_0, \ldots, X_k\}$ be any colouring of $X$ such that $X_k \cap P = P_k$.
	Let $E = E(P_0) \cap \ldots \cap E(P_k)$ be as in the definition above.
	Notice that $E$ is the affine subspace of $\RR^d$ such that for all $x \in E$ there is a stack centred on $x$ and passing through $P$.
	If $|P| > d + k + 1$ then by \Cref{prop: GP2} such a stack cannot exist, so $E$ must be empty.
	Otherwise, we have that each $E(P_i)$ is a translate of $\Lin(P_i)^{\perp}$, the orthogonal complement to $\Lin(P_i)$.
	Then either $E$ is empty, in which case we are done, or
	\begin{align*}
		\codim(E) &= \codim\left(\bigcap_{i=0}^k E(P_i)\right)\\
		&= \codim\left(\bigcap_{i = 0}^k \Lin(P_i)^{\perp}\right)\\
		&= \codim\left(\left(\sum_{i=0}^k \Lin(P_i)\right)^{\perp}\right)\\
		&= \dim\left(\sum_{i=0}^k \Lin(P_i)\right)\\
		&= |P| - k - 1
	\end{align*}
	where the last equality follows from \Cref{prop: GP3}.
\end{proof}

\begin{remark}
	\Cref{eg: trapezium} is chromatically generic but does not satisfy general position.
	This example and \Cref{thm: general position vs chromatic genericity} show that our notion of general position is strictly stronger than chromatic genericity.
\end{remark}

\end{document}